\documentclass[11pt,reqno]{article}
\usepackage{a4wide}
\usepackage{mathrsfs}
\usepackage{amsfonts}
\usepackage{amsmath}
\usepackage{stmaryrd}
\usepackage{amssymb}
\usepackage{amsthm}
\usepackage{mathrsfs}
\usepackage{url}
\usepackage{amsfonts}
\usepackage{amscd}
\usepackage{indentfirst}
\usepackage{enumerate}
\usepackage{amsmath,amsfonts,amssymb,amsthm}
\usepackage{amsmath,amssymb,amsthm,amscd}
\usepackage{graphicx,mathrsfs}
\usepackage{appendix}
\usepackage{authblk}
\usepackage[numbers,sort&compress]{natbib}
\usepackage{tikz}
\usepackage{xcolor}
\usepackage[colorlinks, linkcolor=blue, citecolor=blue]{hyperref}
 \usepackage{verbatim}

\newtheorem{teo}{Theorem}[section]
\newtheorem{lemma}[teo]{Lemma}
\newtheorem{prop}[teo]{Proposition}
\newtheorem{cor}[teo]{Corollary}
\newtheorem{rem}[teo]{Remark}
\newtheorem{defi}[teo]{Definition}

\numberwithin{equation}{section}
\newcommand\N{\mathbb N}

\newcommand\R{\mathbb R}

\renewcommand\S{\mathbb S}

\newcommand\mbb\mathbb
\newcommand\mbf\mathbf
\newcommand\mcal\mathcal
\newcommand\mfrak\mathfrak
\newcommand\mrm\mathrm
\newcommand\msf\mathsf
\renewcommand\a\alpha
\renewcommand\b\beta
\newcommand\g\gamma
\newcommand\G\Gamma
\renewcommand\d\delta
\newcommand\D\Delta
\newcommand\e\varepsilon
\newcommand\z\zeta
\renewcommand\t\theta
\newcommand\Th\Theta
\newcommand\la\lambda
\newcommand\La\Lambda
\newcommand\s\sigma
\newcommand\si\varsigma
\newcommand\Si\Sigma
\newcommand\ups\upsilon
\newcommand\U\Upsilon
\newcommand\ph\varphi
\renewcommand\o\omega
\renewcommand\O\Omega
\newcommand\wt\widetilde
\newcommand\wh\widehat
\newcommand\ol\overline
\newcommand\ul\underline
\newcommand\mr\mathring
\newcommand\ub\underbrace
\newcommand\pa\partial
\newcommand\n\nabla
\newcommand\fa\forall
\newcommand\ex\exists
\newcommand\es\emptyset
\newcommand\wk\rightharpoonup
\newcommand\inc\hookrightarrow
\newcommand\linf\varliminf
\newcommand\lsup\varlimsup
\newcommand\os\overset
\newcommand\us\underset
\newcommand\sr\stackrel
\newcommand\Ot\Leftarrow
\newcommand\To\Rightarrow
\newcommand\map\mapsto
\newcommand\ot\leftarrow
\newcommand\lot\longleftarrow
\newcommand\lto\longrightarrow
\newcommand\tot\leftrightarrow
\newcommand\ltot\longleftrightarrow
\newcommand\sm\backslash
\renewcommand\Cup\bigcup
\renewcommand\Cap\bigcap
\newcommand\sub\subset
\newcommand\Sub\Subset
\newcommand\sne\subsetneq
\newcommand\bus\supset
\newcommand\Bus\Supset

\newcommand\eq\equiv
\newcommand\ox\otimes
\newcommand\Ox\bigotimes
\newcommand\pl\oplus
\newcommand\Pl\bigoplus
\newcommand\x\times
\renewcommand\c\circ
\newcommand\q\quad
\renewcommand\l\left
\renewcommand\r\right
\newcommand\fr\frac

\allowdisplaybreaks
\newcommand{\Ma}{\color{blue}}
\newcommand{\Lau}{\color{magenta}}
\newcommand{\Ying}{\color{red}}

\def\sideremark#1{\ifvmode\leavevmode\fi\vadjust{\vbox to0pt{\vss
			\hbox to 0pt{\hskip\hsize\hskip1em
				\vbox{\hsize2.1cm\tiny\raggedright\pretolerance10000
					\noindent #1\hfill}\hss}\vbox to15pt{\vfil}\vss}}}%

\title{Qualitative properties of eigenfunctions in domains with small holes}
\author[1]{Laura Abatangelo}
\author[2]{Massimo Grossi}
\author[3]{Ying Li}
\affil[1]{Politecnico di Milano, Dipartimento di Matematica}
\affil[2]{Sapienza Università di Roma, Dipartimento di Scienze di base ed applicate per l'Ingegneria}
\affil[3]{Central China
Normal University, School of Mathematics and Statistics}
\date{July 2026}

\begin{document}
	
	\maketitle
\begin{abstract}
  In this paper we study qualitative properties of the eigenvalues and eigenfunctions of $-\Delta$ with Dirichlet boundary condition in a smooth bounded domain $\Omega$ with a small circular hole. In the literature, this is known as a "singular perturbation", in contrast with the "regular perturbation" case,
considered for example by Micheletti and Uhlenbeck in \cite{Micheletti} and \cite{Uhlenbeck}.

Denoting by $\O_\e:=\O\setminus B(P,\e)$ where $B(P,\e)$ is the ball centered at $P$ and radius $\e$, for $P\in\O$ and $\e$ small enough we investigate 
\begin{itemize}
\item quantitative
estimates for the eigenfunctions of $-\Delta$ in $\O_\e$;
\item the simplicity of the eigenvalues of $-\Delta$ in $\O_\e$;
\item the behavior of nodal sets of the eigenfunctions of $-\Delta$ in $\O_\e$.
\end{itemize}

A key ingredient in our analysis consists of pointwise estimates on the so-called $u$-capacitary potential firstly introduced in \cite{afhl}.
\end{abstract}	
	\section{Introduction and main results}
	    	The aim of this paper is to study the asymptotic behavior of eigenvalues and eigenfunctions of the Dirichlet Laplacian in a bounded domain when a small hole is removed.

    Let $\Omega\subset\R^N$ be a smooth bounded domain, $N\geq2$. 
	We denote by $\lambda$ any  eigenvalue of the Dirichlet Laplacian on $\Omega$ and by $u$ any of the corresponding $L^2$-normalized eigenfunction, namely,
	\begin{equation}\label{eq:eigenvalueproblem}
		\begin{cases}
			-\Delta u=\lambda u~&\mbox{in}\ \Omega,\\
			u=0~&\mbox{on}\ \partial\Omega.
		\end{cases}
	\end{equation}

    We focus on the case when we remove a small ball $B(P,\e)$ centered at $P\in\Omega$ of radius $\e$ from the domain $\O$. We then set the perturbed problem 
		\begin{equation}\label{eq:perturbedeigenvalueproblem}
			\begin{cases}
				-\Delta u_\e=\lambda_\e u_\e~&\mbox{in}\ \O_\e,\\
				u_\e=0~&\mbox{on}\ \partial\O_\e,
			\end{cases}
		\end{equation}
    where
    \begin{equation}\label{ome}
        \O_\e:=\O\setminus B(P,\e).
    \end{equation}
Analogously to the previous case, we let $(\lambda_\varepsilon, u_\varepsilon)$ be the pair playing the same role as $(\lambda, u)$.

    It is well known that the eigenvalues of $-\Delta$ in $D\subset\R^N$ are given by a sequence satisfying
    $$0<\la_1(D)<\la_2(D)\le..\le \la_k(D)\le..$$
They can be characterized via the Rayleigh quotient
\[
\lambda_k(D) = \min_{\substack{V \subset H_0^1(D) \\ \dim V = k}} \max_{u \in V \setminus \{0\}} 
\frac{\int_D |\nabla u|^2\,dx}{\int_D u^2\,dx}.
\]

 A natural question is how these eigenvalues (and the corresponding eigenfunctions) change when $D=\O_\e$ and $\e$ is small. We address to this case as the {\em singular perturbation}, differently from the most studied "regular perturbation", where only small deformation of $\partial\Omega$ are allowed.

Let us make a brief history of this second case.
Early systematic studies go back to the work of Courant and Hilbert, where qualitative stability of eigenvalues under domain inclusion was already observed.

The study of the dependence of eigenvalues on domain perturbations has a long history. A major milestone is the work of Hadamard, who introduced a formal way to compute derivatives of eigenvalues under smooth deformations of the boundary. Consider a family of domains $\Omega_t=\O+tV$ for a smooth vector field $V$ and $t$ small. Then one obtains the celebrated Hadamard variation formula: for a simple eigenvalue $\la_k$ we have
\[
\frac{d}{dt}\bigg|_{t=0} \lambda_k(\Omega_t)
=
- \int_{\partial \Omega} \left(\frac{\partial u_k}{\partial \nu}\right)^2 (V \cdot \nu)\, d\sigma,
\]
where $u_k$ is the normalized eigenfunction associated to $\lambda_k(\Omega)$.


 Another fundamental contribution was given by Micheletti \cite{Micheletti} and Uhlenbeck\cite{Uhlenbeck} which proved that, for generic smooth domains, the eigenvalues of the Dirichlet Laplacian are simple, showing that multiple eigenvalues are unstable under small perturbations.

A different regime arises  when the perturbation is not smooth but {\em singular}, when for instance $\O_\e$ verifies \eqref{ome}.
In this setting, more general perturbations may also be considered. Instead of $B(P,\e)$, one may take $\e K$, where $K$ is an arbitrary domain in $\R^N$, or families of sets $K_\e \to K$, where $K$ can also be a set with positive $(N-1)$-dimensional Hausdorff measure (see, for instance, \cite{afhl}). 
In this context a crucial role is played by the {\em capacity} of a set $K$ in $\O$ whose definition is recalled below.

For a compact set $K\subset\O$ the capacity of $K$ in $\O$ is defined as 
    \begin{equation}\label{defCap2}
        \hbox{Cap}_\O(K)=\inf \left\{\int_\O |\nabla f|^2\ dx:\ f\in H^1_0 (\O) \ \hbox{and}\ f-\eta_K\in H_0^1(\O\setminus K) \right\},
    \end{equation}
    where $\eta_K$ is a fixed smooth function such that supp$\eta_K\subset\O$ and $\eta_K\equiv1$ in a neighborhood of $K$. It is easy to prove that the infimum is achieved by a function $V_K\in H_0^1(\O)$ such that $V_K-\eta_K\in H_0^1(\O\setminus K)$, so that $\hbox{Cap}_\O(K)=\int_\O |\nabla V_K|^2\ dx$.

 If the eigenvalue $\lambda(\O)$ is simple, for a family of compact sets $(K_\e)_{\e>0}$  concentrating to a compact set $K$ with $\hbox{Cap}_\O(K)=0$, then  \cite[Theorem 1.2]{Courtois1995}  provides that, for some explicit constant $\mu_K$,
    \begin{equation}\label{eq-afh-1.4}
        \lambda(\O\setminus K_\e)=\lambda(\O)+\mu_K\hbox{Cap}_\O(K_\e)+o(\hbox{Cap}_\O(K_\e)).
    \end{equation}
This type of result was also developed in various forms by Ozawa, Rauch and Taylor, and later refined by many authors in the context of potential theory and homogenization. 

Observe that, even in the simple case where $K_\e=B(P,\e)\to K=\{P\}$, the previous estimates are meaningful when the eigenvalue $\lambda$ is simple and the corresponding eigenfunction satisfies $u(P)\neq 0$.

The reason for these restrictions is twofold: indeed, if $\la$ is a simple eigenvalue of $-\Delta$ in $\O$, then there exists a family of eigenvalues $\la_\e$ of $-\Delta$ in $\O_\e$ such that $\la_\e \to \la$. This uniquely determines $\lambda_\varepsilon$.  Secondly, if $u(P)=0$, then the leading term in the expansions in \eqref{eq-afh-1.4} vanishes.

The case where $\lambda$ is simple and $u$ vanishes at $P$ was studied by Abatangelo, Felli, Hillairet, and L\'ena \cite{afhl} where introduced the $u$-capacity  $\hbox{Cap}_\O(K_\e,u)$ to obtain a sharp asymptotic expansion in case of simple eigenvalues. The definition, which will play an important role in this paper, is the following,
    \begin{defi}\label{intr-def-Cap}
    Given a function $u\in H_0^1(\O)$, consider the u-capacity of a compact set $K_\e\subset\O$, defined as in \eqref{defCap2} with  $\eta_K$ replaced by $u$.
    The infimum in \eqref{defCap2} is achieved by a unique function $V_{K_\e,u}\in H_0^1(\O)$, so that $\hbox{Cap}_\O(K_\e,u)=\int_\O|\nabla V_{K_\e,u}|^2\ dx$. We call $V_{K_\e,u}$ the capacitary potential associated with $u$ and $K_\e$.

    Furthermore, $V_{K_\e,u}$ is the unique weak solution of the Dirichlet problem 
    \begin{equation*}
		\begin{cases}
			\Delta V_{K_\e,u} =0 &\text{in }\Omega\setminus K_\e,\\
			V_{K_\e,u} =0 &\text{on }\partial \Omega,\\
			V_{K_\e,u} =u &\text{on }K_\e,
		\end{cases}
	\end{equation*}
    where, by weak solution, we mean that $V_{K_\e,u}\in H_0^1(\O)$, $u-V_{K_\e,u}\in H_0^1(\O\setminus K_\e)$ and 
    $$\int_{\O}\nabla V_{K_\e,u}\cdot\nabla\phi\ dx=0,\quad \forall\phi\in H^1_0(\O\setminus K_\e).
    $$
    \end{defi}
    As the paper \cite{afhl} shows, sharp estimates on $ \lambda(\O\setminus K_\e)$ derive from asymptotic behavior of $\hbox{Cap}_\O(K_\e,u)$.
Although this definition is applicable in more general contexts, we will only use it in the case where $K_\e=B(P,\e)$. In this setting, it becomes natural to introduce the notion of the order of vanishing of $u$ at $P$.
\begin{defi}\label{defvanishing}
        Let $k\in\N$, we say that  the vanishing order of $u$  at $P$ is equal to $k$, if $D^\beta u(P)=0$ for any $|\beta|\leq k-1$ and  there exists  $ \alpha=(\alpha_1,\cdots,\alpha_N)$ such that $|\alpha|=k$ and $D^\alpha u(P)\neq 0$.
    \end{defi}
In the case of eigenfunctions, the number $k$ in Definition \ref{defvanishing} is finite by the analyticity of the operator $-\Delta - \lambda$. In this framework we call $k$ the \emph{vanishing order} of $u$ at $P$.

  Thus, by \cite[Theorem 1.14]{afhl} and \cite[Theorem 1.7]{alm2} the expansion of a {\em simple} eigenvalue $\lambda_\e$ corresponding to   our problem \eqref{eq:perturbedeigenvalueproblem} becomes
  
	\begin{equation}\label{eq:eigenvaluesbehavior}
	\ 	\lambda_\e - \lambda = C \e^{N-2+2k}(1+o(1)) \qquad\qquad\qquad{\hbox{for}\  N\ge 3},
	\end{equation}
    \begin{equation}\label{eq:N=2eigenvaluesbehavior}
      \lambda_\e - \lambda =
        \begin{cases}
            \frac{C}{|\log\e|}(1+o(1)), \quad& if\ k=0,\\
            C\e^{2k}(1+o(1)), & if\  k\ge 1,
        \end{cases}
       \quad {\hbox{for}\  N=2},
    \end{equation}
as $\e\to 0$, for some positive constant $C>0$ where $\la$ is the simple eigenvalue associated to $\O$.

Afterward, Abatangelo, L\'ena and Musolino \cite{alm1,alm2,alm2026} studied the case that $\lambda$ is multiple and $u$ can vanish. As remarked before, this situation presents substantial difficulties. 

Observe that when $\la$ is an eigenvalue of multiplicity $m$, although one can still prove the existence of $m$ eigenvalues $\la_{j,\e},\to\la$, $j=1,\ldots,m$, one of the main problem is to derive the corresponding asymptotics according to $j=1,\ldots,m$.

Let us start when the multiplicity of the eigenvalue $\la$ is {\em two}. One of the results in \cite{alm1,alm2} is that under the condition
\begin{equation}\label{ii1}
\big(u_1(P),u_2(P)\big)\ne(0,0)
\end{equation}
(here $u_1,u_2$ are the two eigenfunctions
related to $\la$) then we have $two$ simple eigenvalues $\la_{1,\e},\la_{2,\e}\to\la$ and their asymptotic expansion is given by (here $N\ge3$)
\begin{equation}
        \begin{cases}
            \la_{1,\e}=\displaystyle\la +(N-2)N\o_N\left(u_1^2(P)+u_2^2(P)+o(1)\right)\e^{N-2}, \vspace{3mm}\\
            \la_{2,\e}=\displaystyle\la +o\big(\e^{N-2}\big).
        \end{cases}
			\end{equation}
            An important remark is that the two eigenvalues $\la_{1,\e}$ and $\la_{2,\e}$ converge to $\lambda$ at different rates. This immediately implies that they are distinct for sufficiently small $\e>0$.

The case of multiple eigenvalues, namely $m(\lambda)\ge 2$, is much more delicate. Indeed, 
the eigenvalues $\lambda_\varepsilon$ of $\Omega_\varepsilon$ bifurcate from $\lambda$ and may converge to it with the {\em same} asymptotic rate. In this situation, the coefficients of the leading-order terms in their asymptotic expansions play a crucial role.

This makes the problem considerably more involved. Moreover, in general, it is no longer true that, if $m(\lambda)>2$, all the approximating eigenvalues are simple.

When dealing with a multiple eigenvalue, it is clearly necessary to fix a basis of eigenfunctions. Our choice falls on a decomposition of the eigenspace defined by the order of vanishing of the eigenfunctions at the point $P$.
 
\begin{prop}\label{dec} \cite[Proposition 1.10]{alm1}
 Suppose that $\la$ is an eigenvalue with multiplicity $m(\la)\ge2$. Then there exists a decomposition of $E(\lambda)$ into an $L^2$-orthogonal sum of non-trivial subspaces
\[E(\lambda)=E_0\oplus E_1\oplus \dots \oplus E_p\]
(with $p\ge 0$), associated with a increasing sequence of integers 
\[0\leq k_0<k_1<\dots<k_p,\]
such that,
\begin{equation}\label{intr-eqdefE}
			\begin{split}
				&E_0=span\{u_1,\cdots,u_{d_0}\}
                \hbox{ with the vanishing order of } u_i \hbox{ at } P \hbox{ equal to } k_0 \hbox{ for }i=1,\cdots,d_0,\\
                &E_1=span\{u_{1+d_0},\cdots,u_{d_0+d_1}\}
                \hbox{ with the vanishing order of } u_i \hbox{ at } P \hbox{ equal to } k_1 \hbox{ for }i=1+d_0,\cdots,d_0+d_1,\\
				&\cdots\\
                &E_p=span\{u_{1+d_0+\cdots+d_{p-1}},\cdots,u_{m(\la)}\} \hbox{ with the vanishing order of } u_i \hbox{ at } P \hbox{ equal to } k_p \\&\hbox{ for }i=1+d_0+\cdots+d_{p-1},\cdots,m(\lambda).
			\end{split}
		\end{equation}
 which means that, for any $J\in \{0,1,\dots,p\} $, all non-zero functions in $E_J$ have an order of vanishing at $P$ equal to $k_J$. For future reference, we denote by $d_J\ge1$ the dimension of $E_J$, so that 
\[   \sum_{J=0}^p d_J=m(\lambda). 
\]
Moreover, if $k_0=0$, we always have $d_0=1$.
\end{prop}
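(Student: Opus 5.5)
We build the decomposition from a filtration of $E(\lambda)$ by vanishing order at $P$. For $j\ge 0$ set
\[
F_j:=\{u\in E(\lambda):\ D^\beta u(P)=0 \text{ for all } |\beta|\le j-1\},
\]
with $F_0=E(\lambda)$. Each $F_j$ is the kernel of finitely many linear functionals on a finite-dimensional space, so it is a linear subspace. The family is nested, $F_0\supseteq F_1\supseteq F_2\supseteq\cdots$.

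By analyticity (unique continuation for $-\Delta-\lambda$), a nonzero eigenfunction cannot vanish to infinite order at $P$. Hence $\bigcap_j F_j=\{0\}$, and since the dimensions are finite, $F_j=\{0\}$ for $j$ large. Let $k_0<k_1<\dots<k_p$ be the indices where the filtration strictly drops, i.e. those $k$ with $F_{k+1}\subsetneq F_k$. We take $k_0$ to be the smallest such index, so $F_{k_0}=E(\lambda)$; then $k_0$ is the minimal vanishing order over all nonzero elements of $E(\lambda)$.

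Define $E_J:=F_{k_J}\ominus F_{k_J+1}$, the $L^2$-orthogonal complement of $F_{k_J+1}$ inside $F_{k_J}$, and fix an $L^2$-orthonormal basis of each $E_J$. The $E_J$ are nontrivial by the choice of the $k_J$. Since no drop occurs between consecutive $k_J$'s, we have $F_{k_J+1}=F_{k_{J+1}}$. Telescoping then gives the orthogonal sum
\[
E(\lambda)=E_0\oplus E_1\oplus\dots\oplus E_p .
\]

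Now take a nonzero $v\in E_J$. It lies in $F_{k_J}$ but not in $F_{k_J+1}$, because $E_J\cap F_{k_J+1}=\{0\}$. So all derivatives of $v$ of order $\le k_J-1$ vanish at $P$, while some derivative of order $k_J$ does not; that is, the vanishing order is exactly $k_J$. This holds for every nonzero element of $E_J$, in particular for each basis vector, which gives \eqref{intr-eqdefE}.

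It remains to show $d_0=1$ when $k_0=0$. In that case $F_1=\{u\in E(\lambda):u(P)=0\}$ is the kernel of the single functional $u\mapsto u(P)$. This functional is nonzero because $F_1\neq F_0$, so $F_1$ has codimension one and $d_0=\dim E_0=1$.

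The only step that is not pure linear algebra is termination of the filtration, i.e. finiteness of the vanishing order of a nonzero eigenfunction. This follows from real-analyticity of eigenfunctions in $\Omega$ together with connectedness of $\Omega$, as noted after Definition \ref{defvanishing}, and I would simply invoke it.
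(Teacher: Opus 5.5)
Your argument is correct and is the standard filtration construction. You take the subspaces $F_j$ of eigenfunctions vanishing to order at least $j$ at $P$ and form successive $L^2$-orthogonal complements at the jumps; $d_0=1$ comes from the codimension-one kernel of $u\mapsto u(P)$, and termination comes from analyticity plus connectedness of $\Omega$. This is essentially the argument behind \cite[Proposition 1.10]{alm1}, which the paper quotes without reproducing a proof.
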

Some examples of the previous decomposition will be shown in the preliminaries (see Section \ref{pre}).
\begin{rem}\label{ir}
We observe that in the decomposition of Proposition \ref{dec}, it is assumed that the dimension of the subspaces $E_i$ is greater than or equal to $1$. We stress that this is not for free!

Indeed, in order to get it, we need to impose some restrictions on the eigenfunctions of the eigenspace $E_i$.

For example, if $E_0=span\{u_1,\cdots,u_m\}$  then $E_0\ne\emptyset$ with $k_0=0$ if and only if
\begin{equation}\label{i19}
(u_1(P),..,u_m(P))\neq(0,..,0)
\end{equation}
and in this case $l=1$.
However, we note that this assumption is widely satisfied by the eigenfunctions $u$ in any domain $\O$.

Analogously, for $d_J\ge 1$, for example, $m=2$,
we have that the corresponding condition is the existence of $\alpha,\beta\in\mathbb{N}^N$ with $|\alpha|=|\beta|=k_J$ such that 
\begin{equation}\label{ii2}
(D^\alpha u_1(P),D^\beta u_2(P))\neq (0,0) .
\end{equation}
\end{rem}

  In what follows we sum up the organization of the paper. Each one of the following subsections describes the content of the forecoming paper sections.

  \subsection{Refined estimate on $V_{\e,u}$}\label{is0}
Although the results of this section may appear rather technical, they actually play a crucial role throughout the paper. In contrast to the works \cite{afhl,alm1,alm2}, where estimates are established in $L^p$ spaces, here we shall derive pointwise estimates for sufficiently small $\e$. It is worth emphasizing that the Green function will play a central role in this analysis. We believe that this approach may also prove useful in the study of other related problems. The main result of this section is the following,

\begin{lemma}\label{into-l:improvedL2Ve}
Set,
\begin{equation}\label{An2}
A(2,k)=
    \begin{cases}
        2\pi&\hbox{if }k=0,1\\
      \frac{2\pi}{2\times \cdots\times2(k-1)} &\hbox{if }k\ge2 
    \end{cases}
    \qquad\hbox{for }N=2 
\end{equation}
and
\begin{equation}\label{An3}
A(N,k)=
    \begin{cases}
       N(N-2)\omega_N&\hbox{if }k=0\\
       \frac{N(N-2)\omega_N}{(N-2)N\cdots(N-4+2k)} &\hbox{if }k\ge1 
    \end{cases}
    \qquad\hbox{for }N\ge3 .
\end{equation}
Then, as $\e\to 0$, uniformly for $x\in\O_\e$, the following pointwise estimate of $V_{\e,u}$  holds.
\vskip0.2cm
                For $N=2$,
                \begin{equation}\label{intoV1}
                \begin{aligned}
                 V_{\e,u}(x)=\begin{cases}
                    \displaystyle \frac{A(N,0)}{|\log \e|}u(P)G_\Omega(x,P)+O\left(\frac{1}{|\log \e|^2}G_\Omega(x,P)\right),\hbox{ if } k=0,\vspace{2mm}\\
                   \displaystyle A(N,k)\e^{2k}
					\sum_{|\alpha|=k}\frac1{\alpha!}
				\frac{\partial^ku(P)}{\partial x_1^{\alpha_1}\partial x_2^{\alpha_2}}\frac{\partial^kG_{\Omega}(x,P)}{\partial y_1^{\alpha_1}\partial y_2^{\alpha_2}}+O\left(\e\frac{\partial^kG_{\Omega}(x,P)}{\partial y_1^{\alpha_1}\partial y_2^{\alpha_2}}+\frac{\e^{2+k}}{|\log \e|}G_{\Omega}(x,P)\right),\hbox{ if }  k\ge 1.
				\end{cases}
                \end{aligned}
				\end{equation}                   
                For $N\ge 3$,
                \begin{equation}
                \begin{aligned}
                 V_{\e,u}(x)=\begin{cases}
                    \displaystyle A(N,0)\e^{N-2}u(P)G_\Omega(x,P)+O\left(\e^{N-1}G_\Omega(x,P)\right),\hbox{ if } k=0,\vspace{2mm}\\
                   \displaystyle A(N,k)\e^{N-2+2k}
					\sum_{|\alpha|=k}\frac1{\alpha!}
					\frac{\partial^ku(P)}{\partial x_1^{\alpha_1}\cdots\partial x_N^{\alpha_N}}+O\left(\e\frac{\partial^kG_{\Omega}(x,P)}{\partial y_1^{\alpha_1}\cdots\partial y_N^{\alpha_N}}+\e^{N+k}G_{\Omega}(x,P)\right),\hbox{ if }  k\ge 1,
				\end{cases}
                \end{aligned}
				\end{equation}
		 where $\alpha=(\alpha_1,\cdots,\alpha_N) $, $\alpha_1,\cdots,\alpha_N\in \N$, $|\alpha|=\alpha_1+,\cdots,+\alpha_N$, and $\alpha!=\alpha_1!\cdots\alpha_N!$.
            \end{lemma}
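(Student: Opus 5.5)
We prove Lemma \ref{into-l:improvedL2Ve} by building an explicit harmonic approximation of $V_{\e,u}$ in $\O_\e$ out of derivatives of the Green function, and then controlling the error with the maximum principle. Here $V_{\e,u}=V_{B(P,\e),u}$. Throughout we set $P=0$ and write $G_\O(x,y)=\Gamma(x-y)-H(x,y)$, with $\Gamma$ the fundamental solution and $H$ the smooth regular part.

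\textbf{Step 1: expansion of $u$ near $P$.} Since $u$ vanishes to order $k$ at $P$ and is analytic, on $\pa B(0,\e)$ we have
\[
u(x)=\sum_{|\alpha|=k}\frac{D^\alpha u(0)}{\alpha!}x^\alpha+O(\e^{k+1}),
\qquad p_k(x):=\sum_{|\alpha|=k}\frac{D^\alpha u(0)}{\alpha!}x^\alpha .
\]
The polynomial $p_k$ is not harmonic in general, because $\Delta p_k=-\lambda\cdot(\text{lower order terms of }u)$ at order $k-2$. We decompose it as
\[
p_k=h_k+|x|^2q_{k-2},
\]
with $h_k$ a harmonic homogeneous polynomial of degree $k$. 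On the sphere $|x|=\e$ the term $|x|^2q_{k-2}$ equals $\e^2q_{k-2}$. This is $O(\e^{k})$ with an extra structure, and it will contribute at lower multipole order, of size $\e^{k+2}$ or smaller after Kelvin transform.

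\textbf{Step 2: explicit solution in the exterior of the ball.} For a harmonic homogeneous $h_k$, the exterior Dirichlet problem in $\R^N\setminus B_\e$ with data $h_k$ is solved by the Kelvin transform
\[
\e^{N-2+2k}h_k(x)|x|^{-(N-2+2k)}.
\]
The key identity is that $h_k(x)|x|^{-(N-2+2k)}$ is a constant multiple of $h_k(\nabla_y)\Gamma(x-y)|_{y=0}$, that is, of $\sum_{|\alpha|=k}\frac{D^\alpha u(0)}{\alpha!}\pa_y^\alpha\Gamma(x,0)$. Up to the correction coming from $\Delta$ of non-harmonic parts, which is absorbed in the error, applying the operator to $x^\alpha$ reproduces $h_k$ via $(N-2)N\cdots(N-4+2k)$. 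Computing this constant yields $A(N,k)$. In dimension $N=2$ one has $\pa^\alpha\log$, which gives the factors $2\cdot4\cdots2(k-1)$. For $k=0$, $N=2$, the capacity of a disc in $\O$ is $2\pi/|\log\e|+O(|\log\e|^{-2})$, which explains the different rate.

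We then define the approximation
\[
W_\e(x)=A(N,k)\e^{N-2+2k}\sum_{|\alpha|=k}\frac{D^\alpha u(0)}{\alpha!}\pa_y^\alpha G_\O(x,0).
\]
It is harmonic in $\O_\e$ and vanishes on $\pa\O$. On $\pa B_\e$, replacing $G_\O$ by $\Gamma$ costs $\e^{N-2+2k}\pa^\alpha H=O(\e^{N-2+2k})$. Hence
\[
W_\e-u=O(\e^{k+1})\quad\text{on }\pa B_\e .
\]

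\textbf{Step 3: comparison of the error with the maximum principle.} The difference $R_\e=V_{\e,u}-W_\e$ is harmonic in $\O_\e$, vanishes on $\pa\O$, and has boundary data of size $O(\e^{k+1})$ on $\pa B_\e$. This data is expanded in spherical harmonics of degree $j$. Degrees $j\ge k+1$ produce exterior solutions bounded by $\e^{N-2+j+k+1}|\pa^{j}G|$, which is dominated by $\e\cdot\e^{N-2+2k}|\pa^kG|$ near the hole. The degree $j=0$ part, including the contribution of the $\e^2q_{k-2}$ term, has size $\e^{k+2}$ and yields $\e^{k+2}\cdot\e^{N-2}G_\O(x,0)$, which is the $\e^{N+k}G_\O$ term. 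In $N=2$ the monopole instead costs a factor $1/|\log\e|$, giving $\e^{k+2}G_\O/|\log\e|$.

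To make this rigorous we use explicit supersolutions of the form $C\e^{N-2+2k+1}|\pa^k_yG_\O|+C\e^{N+k}G_\O$. Since $|\pa^kG|$ is not itself superharmonic, we would apply the maximum principle separately to each harmonic component; as an alternative one can use the Poisson kernel bound for the exterior of a ball together with the comparison $G_\O\sim\Gamma$ near $P$.

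\textbf{Main obstacle.} We expect the main obstacle to be this last uniform comparison, in particular the intermediate-degree harmonics $0<j<k$ produced by the non-harmonic part $|x|^2q_{k-2}$, and proving the bounds uniformly up to $\pa\O$ where $\pa^\alpha G_\O$ vanishes. The plan is to combine boundary Harnack/Hopf-type estimates with the bound $|\pa_y^\alpha G_\O(x,0)|\lesssim \mathrm{dist}(x,\pa\O)$ away from $P$.
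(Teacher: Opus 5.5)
Your strategy (approximate $V_{\varepsilon,u}$ by the multipole $A(N,k)\varepsilon^{N-2+2k}\sum_{|\alpha|=k}\frac{D^\alpha u(P)}{\alpha!}\partial_y^\alpha G_\Omega(x,P)$, then bound the remainder with the maximum principle) is the paper's strategy. However, two steps are wrong or missing.

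\textbf{Harmonicity of $p_k$.} Your Step 1 claims $p_k$ is not harmonic in general. This is false, and the rest of your argument quietly relies on the opposite. All Taylor coefficients of $u$ at $P$ of order $<k$ vanish, so $\Delta p_k$ is the degree-$(k-2)$ homogeneous part of the expansion of $\Delta u=-\lambda u$. Hence $\Delta p_k=0$ (Lemma~\ref{lem2212-1}) and $q_{k-2}=0$.

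This fact is exactly what makes $\sum_{|\alpha|=k}\frac{D^\alpha u(P)}{\alpha!}\partial_y^\alpha\Gamma(x-y)|_{y=P}$ a constant multiple of $p_k(x-P)|x-P|^{-(N-2+2k)}$. Equivalently, it gives the cancellation $\sum_{|\alpha|=k}\frac{D^\alpha u(P)}{\alpha!}Q_\alpha(x,P)=0$ of Corollary~\ref{Cor-Q}, and therefore $W_\varepsilon-u=O(\varepsilon^{k+1})$ on $\partial B(P,\varepsilon)$.

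If $q_{k-2}$ were nonzero, your treatment of it would fail. On $|x-P|=\varepsilon$ the term $\varepsilon^2 q_{k-2}$ has size $\varepsilon^{k}$, the same as the leading data, not $\varepsilon^{k+2}$. It would generate multipoles of degree $\le k-2$ that decay more slowly than $|x-P|^{-(N-2+k)}$, and the estimate would break. The ``intermediate-degree harmonics'' you call the main obstacle come from this error.

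\textbf{The remainder.} Step 3 is not carried out, and the crude version does not suffice. For $N\ge 3$, boundary data of size $O(\varepsilon^{k+1})$ only gives $|V_{\varepsilon,u}-W_\varepsilon|\le C\varepsilon^{N-1+k}G_\Omega(x,P)$. Away from $P$ this bound is not smaller than the main term $\varepsilon^{N-2+2k}$ once $k\ge 1$, which is exactly the range where the statement claims $\varepsilon^{N+k}G_\Omega$.

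The missing observation is that the $O(\varepsilon^{k+1})$ part of the mismatch is $p_{k+1}(x-P)=\sum_{|\alpha|=k+1}\frac{D^\alpha u(P)}{\alpha!}(x-P)^\alpha$. This polynomial is again harmonic, since $\Delta p_{k+1}$ is the degree-$(k-1)$ part of $-\lambda u$. So the paper adds the next multipole $A(N,k+1)\varepsilon^{N+2k}\sum_{|\alpha|=k+1}\frac{D^\alpha u(P)}{\alpha!}\partial_y^\alpha G_\Omega(x,P)$ to $W_\varepsilon$.

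After this, the residual on $\partial B(P,\varepsilon)$ is $O(\varepsilon^{k+2})+O(\varepsilon^{N-2+2k})$, with no structure needed. The function $G_\Omega(\cdot,P)$ is positive, harmonic in $\Omega_\varepsilon$, zero on $\partial\Omega$, and comparable to $\varepsilon^{2-N}$ on $\partial B(P,\varepsilon)$. So for $N\ge 3$ the barrier $C(\varepsilon^{N+k}+\varepsilon^{2(N-2)+2k})G_\Omega(x,P)$ and the maximum principle finish the proof; for $N=2$ one divides by $|\log\varepsilon|$ instead.

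The added multipole is $O(\varepsilon^{N+2k}|x-P|^{-(N-1+k)})\le O(\varepsilon)\,\varepsilon^{N-2+2k}|x-P|^{-(N-2+k)}$ on $\Omega_\varepsilon$, which is the $O(\varepsilon)$ correction in the main term. You need no spherical harmonics, Poisson kernel or boundary Harnack inequality. Uniformity up to $\partial\Omega$ is automatic, because every comparison function vanishes there.

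Two smaller points:
\begin{itemize}
\item The degree-zero part of the boundary data also contains the $O(\varepsilon^{N-2+2k})$ contribution of the regular part of $G_\Omega$, which you dropped. It produces the $\varepsilon^{2(N-2)+2k}G_\Omega$ term. This term is not dominated by $\varepsilon^{N+k}G_\Omega$ when $N=3$, $k=0$; the paper absorbs it into the main term there. In $N=2$ it gives $\varepsilon^{2k}|\log\varepsilon|^{-1}G_\Omega$, which for $k=1$ is not $O(\varepsilon^{k+2}|\log\varepsilon|^{-1}G_\Omega)$ as you claim.
\item For $N=2$, $k=0$, your $W_\varepsilon$ does not match $u$ on $\partial B(P,\varepsilon)$. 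You must use $\frac{2\pi}{|\log\varepsilon|}u(P)G_\Omega(x,P)$, which equals $u(P)+O(|\log\varepsilon|^{-1})$ there, together with the barrier $C|\log\varepsilon|^{-2}G_\Omega(x,P)$. The capacity heuristic does not replace this step.
\end{itemize}
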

 So $V_{\varepsilon,u}$ admits an explicit expansion in terms of derivatives of the Green function of $\Omega$, where the leading order is determined by the vanishing order of $u$ at $P$.

A consequence of the previous formula is that for any bounded smooth domain $\O$, we have
$$
    \hbox{Cap}_\O(B(P,\e),u)=\int_\O|\nabla V_{\e,u}|^2\ dx=\e^{N-2+2{ k}}\sum_{\substack{|\alpha|={ k}\\
           |\beta|={ k}}}\frac{1}{\alpha!\beta!}\frac{\partial^{{ k}} u (P)}{\partial x_{1}^{\alpha_1}\cdots\partial x_{N}^{\alpha_N}}\frac{\partial^{{ k_L}} u (P)}{\partial x_{1}^{\beta_1}\cdots\partial x_{N}^{\beta_N}}C_{\alpha\beta}+O(\e^{N-1+2{ k}}).
    $$
The estimates on $V_{\e,u}$ established in the previous lemmas will be repeatedly used throughout the paper.
            
  \subsection{The multiple eigenvalue case: asymptotics of approximating eigenfunctions}\label{is1}

Let $\lambda$ be a multiple eigenvalue and $\lambda_{j,\varepsilon}\to\lambda$ the corresponding approximating eigenvalues. In this section, we study the asymptotic behavior of the eigenfunctions $u_{j,\varepsilon}\to u_j$, where $u_j$ belongs to the eigenspace $E(\lambda)$. One of the goals of this section is to characterize $u_j$. Clearly, closely related to this problem is the asymptotics of $\lambda_{j,\varepsilon}$.

Clearly, a simple eigenvalue $\la$ is approximated by an eigenvalue $\la_\e$ that converges to $\la$ and is, in turn, simple. Of course, proving that $u_\varepsilon \to u$ presents no difficulty.
Conversely, if an eigenvalue $\la$ has multiplicity $m(\la)\ge2$, it is well known that there exist $m(\la)$ eigenvalues $\la_{1,\e}, \dots, \la_{m(\la),\e}$ converging to $\la$. However, the simplicity of these approximating eigenvalues is far from trivial just as identifying the limit of the eigenfunctions $u_{1,\e}, \dots, u_{m(\la),\e}$ is far from obvious

 Regarding the convergence of the eigenvalues, significant progress in this direction has been made in \cite{alm1,alm2}, where it was shown that the approximating eigenvalues $\la_{1,\e}, \dots, \la_{m(\la),\e}$ converge to $\la$ at rates that depend on the vanishing order of the corresponding eigenfunction $u$ at $P$. More precisely, using the decomposition of $E(\la)$ in Proposition \ref{dec}, we have that
\begin{teo}[see \cite{alm1,alm2}]
 There exist a finite sequence of positive numbers $\{\mu_j\}_{j=1}^{m(\lambda)}$ such that, 
$$
\lambda_{j,\e}=\lambda+\mu_{j}\e^{N-2+2k_j}+o\left(\e^{N-2+2k_j}\right),\q \hbox{as }\e\to 0.
$$
\end{teo}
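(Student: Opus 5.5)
The plan is to go through the variational characterization of $\lambda_{j,\e}$ together with the $u$-capacity machinery of \cite{afhl,alm1}, working on the decomposition $E(\la)=E_0\oplus\dots\oplus E_p$ of Proposition \ref{dec}.

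\textbf{Step 1: reduction to a matrix problem.} Let $\Pi$ denote the $L^2$-projection onto $E(\la)$. For each $u\in E(\la)$ the function $u-V_{\e,u}$ lies in $H^1_0(\O_\e)$. On the $m(\la)$-dimensional space $\{u-V_{\e,u}: u\in E(\la)\}$ we will compute the Rayleigh quotient. By the weak harmonicity of $V_{\e,u}$ in $\O\setminus B(P,\e)$ one gets
\begin{equation*}
\int_{\O_\e}\nabla(u-V_{\e,u})\cdot\nabla(v-V_{\e,v})\,dx=\la\int_\O uv\,dx+\Big(\int_\O\nabla V_{\e,u}\cdot\nabla V_{\e,v}\,dx-\la\int_\O(uV_{\e,v}+vV_{\e,u})\,dx\Big),
\end{equation*}
and the $L^2$ Gram matrix is $\mathrm{Id}+O(\|V_{\e,\cdot}\|_{L^2})$. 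The pointwise estimates of Lemma \ref{into-l:improvedL2Ve} bound $V_{\e,u}$ in $L^2$ by a higher power of $\e$ than the capacity. Moreover, $\la\int_\O uV_{\e,v}\,dx$ can be rewritten through the equation of $u$ as a boundary term on $\pa B(P,\e)$ and absorbed. The upshot is that the quadratic form of $\la_\e-\la$ restricted to this trial space is, up to lower order, the bilinear $u$-capacity $\mathcal C_\e(u,v)=\int_\O\nabla V_{\e,u}\cdot\nabla V_{\e,v}\,dx$. A standard lemma on quasimodes (spectral stability, as in \cite{alm1}) then shows that the $m(\la)$ eigenvalues $\la_{j,\e}-\la$ agree with the eigenvalues of the matrix $\big(\mathcal C_\e(u_i,u_l)\big)_{i,l}$, up to errors that are small relative to each entry's scale.

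\textbf{Step 2: block scaling.} By Lemma \ref{into-l:improvedL2Ve}, for $u\in E_I$ and $v\in E_J$ one has $\mathcal C_\e(u,v)=\e^{N-2+k_I+k_J}\big(M_{IJ}(u,v)+o(1)\big)$. The leading coefficient $M_{IJ}$ comes from the expansion of $V_{\e,u}$ in derivatives of $G_\O(\cdot,P)$; it is obtained by integrating against the Taylor polynomials of $u$ and $v$ at $P$, or equivalently by evaluating on $\pa B(P,\e)$. On each diagonal block, $M_{JJ}$ is positive definite. This is because $\mathcal C_\e(u,u)=\mathrm{Cap}_\O(B(P,\e),u)\ge c\,\e^{N-2+2k_J}$ for every nonzero $u\in E_J$: its leading homogeneous polynomial is nonzero, and the formula after Lemma \ref{into-l:improvedL2Ve} gives a positive definite form $C_{\alpha\beta}$ on the harmonic leading polynomials.

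\textbf{Step 3: eigenvalues of the scaled matrix.} Conjugate by $D_\e=\mathrm{diag}(\e^{-(N-2)/2-k_J}\mathrm{Id}_{d_J})$. This turns the matrix into one that converges to the block matrix $(M_{IJ})$, whose off-diagonal blocks also remain bounded. Its eigenvalues, however, are not simply those of the limit. Instead one uses a Schur complement and min-max block by block, in the order of increasing $k_J$. The eigenvalues in the scale $\e^{N-2+2k_J}$ are then the eigenvalues of $M_{JJ}$ corrected by the Schur complement of the lower blocks, which is again positive definite. This yields positive $\mu_j$ and $\la_{j,\e}=\la+\mu_j\e^{N-2+2k_j}+o(\e^{N-2+2k_j})$, where $k_j$ is the vanishing order attached to the block containing index $j$. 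For $N=2$ and $k_0=0$ the scale $1/|\log\e|$ replaces $\e^{N-2}$; the argument is the same with the first case of \eqref{intoV1}.

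\textbf{Expected obstacle.} The delicate step is Step 3 together with the error control in Step 1. The errors must be relatively small compared with the smallest scale $\e^{N-2+2k_p}$ and not merely compared with the largest. This forces the use of the refined pointwise remainder $O(\e\,\pa^kG)$ of Lemma \ref{into-l:improvedL2Ve} instead of crude norms. It also requires care with the cross terms between blocks, which are of the geometric-mean size $\e^{N-2+k_I+k_J}$. I would first try the iterative Schur-complement argument, modelled on \cite{alm1}.
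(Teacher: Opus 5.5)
Your plan has a genuine gap in Step~1, at exactly the point you flag as the ``expected obstacle''. Your proposed cure does not close it.

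\textbf{The comparison step fails.} You invoke a standard quasimode / small-eigenvalue lemma to assert that the numbers $\la_{j,\e}-\la$ agree with the eigenvalues of $(\mathcal C_\e(u_i,u_l))$, with errors that are small relative to each block's scale. That lemma does not give this. Set $q(w,\psi)=\int\n w\cdot\n\psi-\la\int w\psi$ and $w=u-V_{\e,u}$. Then $q(w,\psi)=\la\int_{\O_\e}V_{\e,u}\psi$, so the quasimode defect is $\gamma\simeq\sup\{\|V_{\e,u}\|_{L^2}:u\in E(\la),\ \|u\|_{L^2}=1\}$. The lemma then yields only an \emph{absolute} error $O(\gamma^2)$.

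Because $\gamma$ is governed by $E_0$, this error is not $o(\e^{N-2+2k_J})$ for $J\ge1$:
\begin{itemize}
\item for $N\ge5$ and $k_0=0$, one has $\|V_{\e,u}\|_{L^2}^2\simeq\e^N$ on $E_0$, which is of the same order as the scale $\e^N$ of a block with $k_1=1$;
\item for $N=3$ it is $\e^2\gg\e^3$;
\item for $N=2$ and $k_0=0$ it is $|\log\e|^{-2}\gg\e^{2k_1}$.
\end{itemize}
So nothing beyond the first group is resolved.

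Neither of your proposed remedies helps. The refined remainder of Lemma~\ref{into-l:improvedL2Ve} only sharpens the matrix entries, and those are already fine. A Schur complement on $(\mathcal C_\e(u_i,u_l))$ only handles that matrix's own multiscale structure. Neither controls how a true eigenfunction of a higher group interacts with the large defects of the lower groups.

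\textbf{The missing ingredient} is control of the lower-block components of the true eigenfunctions. The paper does not prove this statement itself: it is quoted from \cite{alm1,alm2} and even used as an input in Lemma~\ref{thsa}. Its re-derivation (Theorem~\ref{sim} with Remark~\ref{rem-withoutalm}) works on the eigenfunction side:
\begin{itemize}
\item it writes $u_{j,\e}=\sum_i\alpha_{ij,\e}(u_i-V_{\e,u_i})+v_{j,\e}$ with $v_{j,\e}\perp E(\la)$;
\item it bounds $\|v_{j,\e}\|_{L^2}\le C|\la_{j,\e}-\la|+C\sum_i|\alpha_{ij,\e}|\,\|V_{\e,u_i}\|_{L^2}$, keeping the coefficients (Lemma~\ref{lem1511});
\item it tests $(\la_{j,\e}-\la)\int u_{j,\e}u_l=\la_{j,\e}\int u_{j,\e}V_{\e,u_l}$ against $u_l$ in lower blocks, which gives the decay of $\alpha_{lj,\e}$ in Lemma~\ref{lem2204}.
\end{itemize}
That decay is what makes $|\alpha_{ij,\e}|\,\|V_{\e,u_i}\|\,\|V_{\e,u_l}\|$ and $\alpha_{ij,\e}\int u_iV_{\e,u_l}$ negligible at scale $\e^{N-2+2k_J}$. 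Proceeding upward in $k_J$ then identifies $\mu_j$ as the eigenvalues of $G_{k_J}(P)$.

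Within your variational setting, an operator-level Feshbach--Grushin reduction onto $\mathrm{span}\{u_i-V_{\e,u_i}\}$ would also work. The second-order correction to the $(i,l)$ entry is $O(\|V_{\e,u_i}\|_{L^2}\|V_{\e,u_l}\|_{L^2})$. By \eqref{V3} and \eqref{eq10102025-3}, this is small relative to the geometric mean of the two diagonal scales (for instance $o(\e^{N-2+k_I+k_L})$ when $N\ge3$). Your block-scaling argument then goes through.

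\textbf{Two smaller points.}
\begin{itemize}
\item $\la\int_\O uV_{\e,v}$ is not lower order; by Lemma~\ref{lem1211-1} it is comparable to $\mathcal C_\e(u,v)$. It cancels exactly, since $q(u-V_{\e,u},v-V_{\e,v})=\mathcal C_\e(u,v)-\la\int_\O V_{\e,u}V_{\e,v}$.
\item The limiting off-diagonal blocks $M_{IJ}$ with $I\ne J$ vanish, because of the factor $\delta_{k_I}^{k_L}$ in Lemma~\ref{lem1211-1}. So no Schur correction survives and $\mu_j$ are the eigenvalues of $M_{JJ}=G_{k_J}(P)$. Without this, your claim that the Schur complement is positive definite would need positive definiteness of the whole block matrix, not only of its diagonal blocks.
\end{itemize}
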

This result shows the existence of different groups of eigenbranches sharing the same rate of convergence to the limit multiple eigenvalue. Nevertheless, eigenbranches in the same group may not split if  the coefficient in the leading term is the same. 
To the possible splitting of two eigenbranches sharing the same rate of convergence, a crucial role is played by the coefficients $\mu_j$. In \cite{alm2026}, it is proved that these real numbers arise from a suitable eigenvalue problem in finite dimensional spaces, but they can hardly be explicitly described. 

One of the main goals of Section \ref{simpl} is precisely to express the eigenvalues $\mu_j$. By doing so, we will achieve three main objectives:
\begin{enumerate}
    \item We will prove a result which ensure the simplicity of the eigenvalues in term of a matrix whose eigenvalues are more easily computable than \cite{alm2026}
    \item We will provide explicit geometric conditions (in terms of the vanishing order of the eigenfunction $u$ at $P$) that guarantee the simplicity of the approximating eigenvalues (see Remark \ref{ii3}).
    \item Most importantly, this new approach allows us to identify the pointwise limit of the eigenfunctions $u_{j,\e}$ for each $j = 1, \dots, m(\lambda)$.
\end{enumerate}
Finally, we observe that despite the similarity of our results to those in \cite{alm1,alm2}, the proofs and the techniques employed are rather different. We now provide an outline of the proofs and state the main results.

For the consistency of notation with Proposition \ref{dec}, when $J=0$, we interpret $j=1+d_0+\cdots+d_{J-1},\cdots,d_0+\cdots+d_J$ as $j=1,\cdots,d_0$.

The next result, which is a consequence of Lemma \ref{into-l:improvedL2Ve} establishes the announced criterion by providing an explicit condition for the simplicity of the eigenvalues. We also emphasize that for any $J=0,1,\cdots,p$, the matrix $G_{k_J}(P)$ plays a fundamental role in determining the limiting profile of the approximating eigenfunction $u_{j,\varepsilon}$ for $j=1+d_0+..+d_{J-1},\cdots,d_0+..+d_J$. Indeed, the eigenvectors of $G_{k_J}(P)$ determine the linear combination of the generators of $E(\lambda)$ to which $u_{j,\varepsilon}$ converges (see \eqref{L2approx}).

\begin{teo}\label{sim}
		Assume that 
        \begin{equation}\label{i15}
          E(\lambda)=E_0\oplus E_1\oplus\cdots\oplus E_p  
        \end{equation}
        and let $\{u_1,\dots,u_{m(\lambda)}\}$ be an orthonormal basis of $E(\lambda)$ associated to the order decomposition. Then we have the following results.
        
        $(1)$ If $(u_1(P),..,u_{m(\lambda)}(P))\neq(0,..,0)$ (and so $E_0\ne\emptyset$ with $k_0=0$ ) then $\la_{1,\e}$ is simple and it verifies 
		\begin{equation}\label{i16}
        \la_{1,\e}=
        \begin{cases}
            \displaystyle\la +(N-2)N\o_N\left(\sum_{i=1}^{m(\lambda)}u_i^2(P)+o(1)\right)\e^{N-2}, \ \ \hbox{if } N\ge 3,\vspace{3mm}\\
           \displaystyle\la +\frac{2\pi\left(\sum_{i=1}^{m(\lambda)}u_i^2(P)+o(1)\right)}{|\log\e|}, \ \ \hbox{if } N=2.\\
        \end{cases}
			\end{equation}
    Moreover, $u_{1,\e}\to u_1$ in $H^1_0(\O)$.
    \vskip0.2cm
    $(2)$ Otherwise, for any $J=0,1,\cdots,p$, 
			\begin{equation}\label{d12}
				\la_{j,\e}=\la +\big(\La_j+o(1)\big)\e^{N-2+2k_J}\quad\hbox{for  }j=1+d_0+..+d_{J-1},\cdots,d_0+..+d_J,
			\end{equation}
 where $\La_j,\ j=1+d_0+..+d_{J-1},\cdots,d_0+..+d_J,$ are eigenvalues of the matrix $G_{k_J}(P)$
$$
	\!\!\!\!\!\!\!\!\!\!\!\!\!\!\!\!\!\!\!\!\!\!\!G_{k_J}(P)=
\begin{bmatrix}
\displaystyle\sum_{\substack{|\alpha|={ k_J}\\
           |\beta|={ k_J}}}\frac{C_{\alpha\beta}}{\alpha!\beta!}\frac{\partial^{{ k_J}} u_{1+..+d_{J-1}} (P)}{\partial x_{1}^{\alpha_1}\cdots\partial x_{N}^{\alpha_N}}\frac{\partial^{{ k_J}} u_{1+..+d_{J-1}} (P)}{\partial x_{1}^{\beta_1}\cdots\partial x_{N}^{\beta_N}} &\dots & \displaystyle\sum_{\substack{|\alpha|={ k_J}\\
           |\beta|={ k_J}}}\frac{C_{\alpha\beta}}{\alpha!\beta!}\frac{\partial^{{ k_J}} u_{1+..+d_{J-1}} (P)}{\partial x_{1}^{\alpha_1}\cdots\partial x_{N}^{\alpha_N}}\frac{\partial^{{ k_J}} u_{d_0+..+d_J} (P)}{\partial x_{1}^{\beta_1}\cdots\partial x_{N}^{\beta_N}} \\
    \vdots&  &\vdots\\
  \displaystyle \sum_{\substack{|\alpha|={ k_J}\\
           |\beta|={ k_J}}}\frac{C_{\alpha\beta}}{\alpha!\beta!}\frac{\partial^{{ k_J}} u_{1+..+d_{J-1}} (P)}{\partial x_{1}^{\alpha_1}\cdots\partial x_{N}^{\alpha_N}}\frac{\partial^{{ k_J}} u_{d_0+..+d_J} (P)}{\partial x_{1}^{\beta_1}\cdots\partial x_{N}^{\beta_N}} &\dots & \displaystyle\sum_{\substack{|\alpha|={ k_J}\\
           |\beta|={ k_J}}}\frac{C_{\alpha\beta}}{\alpha!\beta!}\frac{\partial^{{ k_J}} u_{d_0+..+d_J} (P)}{\partial x_{1}^{\alpha_1}\cdots\partial x_{N}^{\alpha_N}}\frac{\partial^{{ k_J}} u_{d_0+..+d_J} (P)}{\partial x_{1}^{\beta_1}\cdots\partial x_{N}^{\beta_N}} \\
\end{bmatrix},$$
      where 
        \begin{equation}\label{defCalbe}
            C_{\alpha\beta}:=
            \begin{cases}
              \displaystyle 2(N-2+|\alpha|+|\beta|)\frac{\prod_{i=1}^N\Gamma(\frac{\alpha_i+\beta_i+1}{2})}{\Gamma(k+\frac{N}{2})},  &if\ \alpha_i+\beta_i\ is  \ even, \ i=1,\cdots,N,\vspace{2mm}\\
                0,& otherwise.
            \end{cases}
        \end{equation}
 In particular, if all the eigenvalues of $G_{k_J}(P)$ are simple then $\la_{j,\e}$ are simple.
 
 Moreover, there exist coefficients $\alpha_{ij}$ such that
        \begin{equation}\label{L2approx}
            u_{j,\e}\to \sum_{i=1+d_0+..+d_{J-1}}^{d_0+..+d_J}\alpha_{ij} u_i \hbox{ in } H^1_0(\O),
        \end{equation}
        and, for each $j=1+d_0+..+d_{J-1},\cdots,d_0+..+d_J$, the vectors $(\alpha_{1+d_0+..+d_{J-1}j},\cdots,\alpha_{d_0+..+d_Jj})$,  is an eigenvector of the matrix $G_{k_J}(P)$  corresponding to $\Lambda_j$.

	\end{teo}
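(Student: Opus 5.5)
The plan is to reduce the perturbed spectral problem near $\la$ to a finite-dimensional eigenvalue problem on $E(\la)$. The reduction uses the $u$-capacitary potentials. For $u\in E(\la)$, set $\Pi_\e u:=u-V_{\e,u}$, which belongs to $H^1_0(\O_\e)$. Consider the quadratic forms
\[
q_\e(u,v)=\int_{\O_\e}\n\Pi_\e u\cdot\n\Pi_\e v-\la\int_{\O_\e}\Pi_\e u\,\Pi_\e v .
\]
Integrating by parts, using $\Delta V_{\e,u}=0$ in $\O_\e$ and $-\Delta u=\la u$, gives
\[
q_\e(u,v)=\int_{\O}\n V_{\e,u}\cdot\n V_{\e,v}+O\Big(\la\int_{\O_\e}V_{\e,u}V_{\e,v}\Big)+O\big(\e^N\big),
\]
which is essentially the polarized $u$-capacity $\hbox{Cap}_\O(B(P,\e);u,v)$. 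The $L^2$ Gram matrix of $\{\Pi_\e u_i\}$ is $I+o(1)$, with errors controlled by $\|V_{\e,u_i}\|_{L^2}$. By Lemma \ref{into-l:improvedL2Ve} and the local integrability of $G_\O(\cdot,P)$ and its derivatives away from the hole, these errors are of strictly lower order than the capacities.

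By a standard quasimode/spectral-projection argument (min–max plus the spectral gap of $\la$ from the rest of the spectrum, as in \cite{Courtois1995,afhl}), the eigenvalues $\la_{j,\e}-\la$ coincide, up to relative errors $o(1)$ at each scale, with the eigenvalues of the symmetric matrix $Q_\e=(q_\e(u_i,u_l))_{i,l}$. The corresponding eigenfunctions are $H^1$-close to $\Pi_\e$ applied to the matching eigenvector combinations.

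The core computation is the asymptotics of the entries of $Q_\e$ via Lemma \ref{into-l:improvedL2Ve}. Since $V_{\e,u_i}$ is harmonic in $\O_\e$ and vanishes on $\pa\O$, we have
\[
\int_\O\n V_{\e,u_i}\cdot\n V_{\e,u_l}=-\int_{\pa B(P,\e)}u_l\,\pa_\nu V_{\e,u_i}.
\]
On $\pa B(P,\e)$ I substitute the Taylor expansion $u_l(x)=\sum_{|\b|=k}\frac{1}{\b!}D^\b u_l(P)(x-P)^\b+O(\e^{k+1})$. The normal derivative of the leading term of the expansion of $V_{\e,u_i}$ is handled through the singular part $\pa_y^\a\Gamma(x-P)$ of $\pa_y^\a G_\O(x,P)$, where $\Gamma$ is the fundamental solution. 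The resulting integrals of monomials over the sphere produce the Gamma-function factors; the parity condition defining $C_{\a\b}$ arises because odd moments vanish, and the normalizations $A(N,k)$ combine into $C_{\a\b}$.

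This computation yields
\[
Q_\e(i,l)=\e^{N-2+k_I+k_L}\big(M_{il}+o(1)\big),
\]
where $M$ restricted to $E_J\times E_J$ is exactly $G_{k_J}(P)$, and the cross blocks between $E_J$ and $E_L$ with $J\ne L$ are of order $\e^{N-2+k_J+k_L}$.

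The block-scale structure is then handled by rescaling. Set $D_\e=\mathrm{diag}(\e^{k_{J(i)}})$ and write $Q_\e=\e^{N-2}D_\e(M+o(1))D_\e$. The eigenvalues of a matrix of this graded form separate by scale: by a Schur-complement, or inductive min–max, argument on the nested subspaces $E_J\oplus\cdots\oplus E_p$, the eigenvalues of order $\e^{N-2+2k_J}$ are $\e^{N-2+2k_J}(\La_j+o(1))$, where $\La_j$ are eigenvalues of the diagonal block $G_{k_J}(P)$. For this I need $G_{k_J}(P)$ to be positive definite, which follows because it is the Gram matrix of the capacity form on $E_J$ and the capacity is nondegenerate on functions of exact vanishing order $k_J$. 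The eigenvectors converge to eigenvectors of $G_{k_J}(P)$ whenever the $\La_j$ are simple, and in general after passing to subsequences. This gives \eqref{d12} and \eqref{L2approx}. Simplicity of $\la_{j,\e}$ follows when the $\La_j$ are distinct, since branches at different scales are automatically separated.

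Case (1) is the instance $k_0=0$, $d_0=1$. There $G_0(P)=A(N,0)\,u_1(P)^2$, and after rotating the basis so that only $u_1(P)\ne0$ we get $u_1(P)^2=\sum_i u_i(P)^2$. For $N=2$ the same argument runs with the $1/|\log\e|$ expansion from \eqref{intoV1}.

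The main obstacle I expect is the error control. The remainders in Lemma \ref{into-l:improvedL2Ve} are of the form $O(\e\,\pa^kG)$, and I must show they contribute only $o(\e^{N-2+k_J+k_L})$ to the boundary integrals, and also to the off-diagonal blocks. The off-diagonal blocks need precision $o(\e^{N-2+k_J+k_L})$ so that, after the scale separation, they perturb the lower blocks by only $o(\e^{N-2+2k_L})$. I would first try to get this from the pointwise bounds restricted to $|x-P|=\e$, where $\pa^k G\sim\e^{2-N-k}$.
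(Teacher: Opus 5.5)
\textbf{Gap: the cross blocks must vanish at leading order.} The step that fails is the passage from your graded matrix to its diagonal blocks. For $Q_\e=\e^{N-2}D_\e(M+o(1))D_\e$ with $M$ positive definite, the eigenvalues of order $\e^{N-2+2k_J}$ are $\e^{N-2+2k_J}$ times the eigenvalues of the Schur complement $M_{JJ}-M_{J,<J}M_{<J,<J}^{-1}M_{<J,J}$. They are not, in general, the eigenvalues of $M_{JJ}=G_{k_J}(P)$. A cross block $E_J\times E_L$ with $L<J$ of exact order $\e^{N-2+k_J+k_L}$ feeds back $\e^{2(N-2+k_J+k_L)}/\e^{N-2+2k_L}=\e^{N-2+2k_J}$, which is exactly the scale you are trying to resolve. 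Only the blocks with $L>J$ are harmless.

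So you need the leading coefficients of the cross blocks to vanish, i.e.\ $Q_\e(i,l)=o(\e^{N-2+k_I+k_L})$ whenever $k_I\neq k_L$. You write that these blocks are ``of order $\e^{N-2+k_J+k_L}$'' and propose to improve this using pointwise bounds on $|x-P|=\e$. Size bounds give exactly $O(\e^{N-2+k_I+k_L})$ and nothing better. For instance, for $k_I=0$, $k_L=2$ the leading term is a multiple of $u_i(P)\,\e^{-1}\int_{\partial B(P,\e)}g_l\,dS$. This term is not killed by parity of monomial moments, since $\int_{\mathbb{S}^{N-1}}x_1^2\,dS\neq0$.

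The missing idea is a structural cancellation. By Lemma~\ref{lem2212-1}, the leading Taylor polynomials $g_i,g_l$ are homogeneous harmonic polynomials, i.e.\ spherical harmonics of degrees $k_I\neq k_L$, so they are $L^2(\partial B(P,\e))$-orthogonal. This is the factor $\delta_{k_I}^{k_L}$ in Lemma~\ref{lem1211-1}, obtained there from Groemer's theorem. It is what makes the limit matrix block diagonal, with cross terms $O(\e^{N-1+k_I+k_L})$. Without it, your argument does not identify $\La_j$ with the eigenvalues of $G_{k_J}(P)$.

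\textbf{Further repairs.} Three other points also need fixing.

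First, the identity $\int_\O\nabla V_{\e,u_i}\cdot\nabla V_{\e,u_l}=-\int_{\partial B(P,\e)}u_l\,\partial_\nu V_{\e,u_i}$ holds only with $\O_\e$ on the left. The missing piece is $\int_{B(P,\e)}\nabla u_i\cdot\nabla u_l$. For $u_i,u_l\in E_J$ with $k_J\ge1$ it has the same order $\e^{N-2+2k_J}$ as the entry, carrying the fraction $k_J/(N-2+2k_J)$ of the capacity (see the Remark after Lemma~\ref{lem1211-1}). As written, you would therefore obtain $\frac{N-2+k_J}{N-2+2k_J}G_{k_J}(P)$ instead of $G_{k_J}(P)$.

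Second, a quasimode reduction with relative error $o(1)$ at every scale is not the standard statement. The usual Rayleigh--Ritz bound has absolute error of order $\max_i\|V_{\e,u_i}\|_{L^2}^2$. For $N\ge5$ and $k_0=0$ this is already $\e^N$, the scale of the $k=1$ branches. What saves the reduction is the product form of the error. Let $\Pi$ be the spectral projector on the cluster and $f_i=u_i-V_{\e,u_i}$, so that $(-\Delta_\e-\la)f_i=\la V_{\e,u_i}$. Then
\[
\langle(-\Delta_\e-\la)\Pi f_i,\Pi f_l\rangle=q_\e(u_i,u_l)+O\big(\|V_{\e,u_i}\|_{L^2}\|V_{\e,u_l}\|_{L^2}\big),
\]
and the error is $o(\e^{N-2+k_I+k_L})$ by \eqref{eq:normaL2Ve}. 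You need to state and use this.

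Third, Lemma~\ref{into-l:improvedL2Ve} is only a pointwise expansion, so it does not control the normal derivative of its remainder on $\partial B(P,\e)$. The paper handles this by moving the derivative onto the explicit $\partial_y^\alpha G_\O(\cdot,P)$ with Green's identity \eqref{eq27102025-4}. It then bounds the rest with the rescaled Schauder estimate of Lemma~\ref{lem2810}.

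\textbf{Comparison with the paper.} With these repairs, your Rayleigh--Ritz plus graded Schur complement scheme is a legitimate alternative route, and it does not need the rates of \cite{alm1,alm2} as input. The paper instead decomposes $u_{j,\e}=\sum_i\alpha_{ij,\e}(u_i-V_{\e,u_i})+v_{j,\e}$, tests against $u_l$, and proves decay of the lower-block coefficients $\alpha_{lj,\e}$ (Lemmas~\ref{thsa} and~\ref{lem2204}). That decay is the eigenvector counterpart of your Schur complement.
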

\begin{rem}
   The numbers $\Lambda_j$ appearing in \eqref{d12} are independent of the choice of the orthonormal basis of $E(\la)$. The proof of this fact  is postponed to Lemma \ref{ind} in Section \ref{simpl}. 
\end{rem}
\begin{rem}\label{ii3}
The condition $(u_1(P),..,u_l(P))\neq(0,..,0)$ not only guarantees that $E_0\ne\emptyset$ with $k_0=0$ (see Remark \ref{ir}) but also turns out to be necessary for the validity of the results in Theorem \ref{sim}. 
In fact, even in the simple case that $E(\la)=\{u_1,u_2\}$
if $u_1(P)=u_2(P)=0$  then it is no longer true that the eingenvalue $\la_{1,\e}$ is simple.

An example is given by $N=2$, $\Omega=B_1(0)$, and $\la$ being the second eigenvalue. In this case, if $P=0$, it is immediate that the any two eigenfunctions satisfy $u_1(P)=u_2(P)=0$. On the other hand, if we remove a small ball $B_\e(0)$, the second eigenvalue in the annular domain will continue to be double for every small $\e>0$ and so the simplicity of $\la_{1,\e}$ does not hold. 

Moreover, even the estimate \eqref{i16} is no true. Actually in this case it is possible to show that
\begin{equation}\label{i17}
        \la_{1,\e},\la_{2,\e}=\left(C_0+o(1)\right)\e^N.
			\end{equation}
Clearly, the same problem arises in any (possibly non-symmetric) domain such that the nodal set of two eigenfunctions $u_1$ and $u_2$ intersect somewhere at a point $P$. In this case, we expect that, ``generically'',  the coefficient $C_0$ appearing in \eqref{i17} is different for the two eigenvalues $\la_{1,\e},\la_{2,\e}$. Consequently, the simplicity of the two eigenvalues would be recovered. However, despite its undoubted interest, we will not pursue this matter further.
\end{rem}

For the sake of clarity, we state the previous result in the case of $E(\lambda)=E_0\oplus E_1$ with $k_0=0$ and $k_1=1$.
	\begin{cor}\label{cor4.2}
		Let $N\ge 3$,
		$E(\la)=E_0\oplus E_1,$  with $k_0=0$ and $k_1=1$. Moreover assume that \eqref{i19} holds.
        
		Then we have that $\la_{1,\e}$ is simple and it verifies 
			\begin{equation}\label{eq0412-1}
				\la_{1,\e}=\la +(N-2)N\o_N\left(\sum_{i=1}^{m(\la)}u_i^2(P)+o(1)\right)\e^{N-2}\hbox{ and }u_{1,\e}\to u_1 \hbox{ in } H^1_0(\O).
			\end{equation}
		Next,
		\begin{equation}\label{eq0412-2}
			\la_{j,\e}=\la +\big(\La_j+o(1)\big)\e^N\quad\hbox{ for }j=2,..,m(\la), 
		\end{equation}
and
\begin{equation}\label{eq0412-2bis}
			u_{j,\e}\to \sum_{i=2}^{m(\lambda)}\alpha_{ij}u_i\hbox{ in } H^1_0(\O)\quad\hbox{ for }j=2,..,m(\la),
		\end{equation}       
		where $\La_j$, $j=2,..,m(\la),$ are eigenvalues of the matrix 
			\begin{equation*}\label{mat*}
				G(P)=N\o_N\big(\nabla u_i(P)\cdot\nabla u_l(P)
				\big)=N\o_N\begin{pmatrix}
					|\nabla u_2(P)|^2&\cdots&\nabla u_2(P)\cdot\nabla u_{m(\la)}(P)\\
				\vdots&  &\vdots\\
					\nabla u_{m(\la)}(P)\cdot\nabla u_2(P)&\cdots&
					|\nabla u_{m(\la)}(P)|^2
				\end{pmatrix},
			\end{equation*}
       and the vector $(\alpha_{2j},\cdots,\alpha_{m(\lambda)j})$, $j=2,..,m(\la),$ are eigenvectors of the matrix $G(P)$.
            
		In particular, if all the eigenvalues of $G(P)$ are simple, then $\la_{j,\e}$ are simple for any $j=2,..,m(\la).$ \\
	\end{cor}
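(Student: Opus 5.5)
The plan is to obtain Corollary \ref{cor4.2} as the specialization of Theorem \ref{sim} to the case $N\ge3$, $p=1$, $k_0=0$, $k_1=1$. The substantive steps are checking that part (1) applies to $E_0$ and computing the matrix $G_{k_1}(P)$ explicitly when $k_1=1$.

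First, under \eqref{i19} Proposition \ref{dec} gives $d_0=1$. We choose the orthonormal basis so that $E_0=\mathrm{span}\{u_1\}$ and $E_1=\mathrm{span}\{u_2,\dots,u_{m(\la)}\}$, with $u_i(P)=0$ for $i\ge2$. Part (1) of Theorem \ref{sim} then yields directly the simplicity of $\la_{1,\e}$, the expansion \eqref{eq0412-1}, and the convergence $u_{1,\e}\to u_1$ in $H^1_0(\O)$. Note that $\sum_i u_i^2(P)=u_1^2(P)$ here.

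Second, for $J=1$ we apply part (2) with $k_1=1$. This gives $\la_{j,\e}=\la+(\La_j+o(1))\e^{N-2+2}=\la+(\La_j+o(1))\e^N$ for $j=2,\dots,m(\la)$, which is \eqref{eq0412-2}. The convergence \eqref{eq0412-2bis}, with eigenvector coefficients, is \eqref{L2approx} verbatim. It remains to identify $G_1(P)$.

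When $|\alpha|=|\beta|=1$, write $\alpha=e_a$ and $\beta=e_b$, so that $\alpha!=\beta!=1$.
\begin{itemize}
\item If $a\neq b$, then $\alpha_a+\beta_a=1$ is odd, so $C_{\alpha\beta}=0$ by \eqref{defCalbe}.
\item If $a=b$, then $\alpha_a+\beta_a=2$ and all other components vanish. Hence
\[
C_{\alpha\alpha}=2N\,\frac{\Gamma(3/2)\,\Gamma(1/2)^{N-1}}{\Gamma(1+N/2)}=N\,\frac{\pi^{N/2}}{\Gamma(1+N/2)}=N\omega_N,
\]
using $\Gamma(3/2)=\tfrac12\sqrt\pi$ and $\omega_N=\pi^{N/2}/\Gamma(1+N/2)$.
\end{itemize}
Therefore the $(i,l)$ entry of $G_1(P)$ is $N\omega_N\sum_a \partial_a u_i(P)\,\partial_a u_l(P)=N\omega_N\,\nabla u_i(P)\cdot\nabla u_l(P)$, which is the matrix $G(P)$ in the statement.

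The final claim follows from the last assertion of Theorem \ref{sim}: simple eigenvalues of $G(P)$ give simple $\la_{j,\e}$, since eigenbranches with distinct leading coefficients at the common rate $\e^N$ separate for small $\e$. They are also separated from $\la_{1,\e}$, because that branch has the different rate $\e^{N-2}$.

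The main obstacle is only bookkeeping: matching the normalization of $C_{\alpha\beta}$ to $N\omega_N$ (the ``$k$'' in the Gamma denominator must be read as $(|\alpha|+|\beta|)/2=1$), and confirming that part (2) is applicable to the block $J=1$ even though $k_0=0$. The intended reading is that part (2) addresses the blocks with $k_J\ge1$.
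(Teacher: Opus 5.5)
Your derivation is correct, but it runs in the opposite direction from the paper. The paper does not specialize Theorem \ref{sim}. It proves the corollary from scratch and then uses that argument as the template for Theorem \ref{sim}, whose proof opens with ``we repeat the proof of Corollaries \ref{cor4.2} and \ref{cor1510-1}''.

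Concretely, the paper proceeds as follows:
\begin{enumerate}
\item It writes $u_{j,\e}=\sum_i\alpha_{ij,\e}(u_i-V_{\e,u_i})+v_{j,\e}$ and tests the identity \eqref{2d2} against each $u_l$.
\item It evaluates $\int_{\O_\e}u_iV_{\e,u_l}$ by Lemma \ref{lem1211-1}. For $k_I=k_L=1$ this gives $\frac{N\o_N}{\la}\e^N\nabla u_i(P)\cdot\nabla u_l(P)$, which is exactly your computation $C_{e_ae_b}=N\o_N\delta_{ab}$, used there without comment.
\item It controls $v_{j,\e}$ by Lemma \ref{lem1511}.
\item It passes to the limit at two scales. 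Dividing by $\e^{N-2}$ gives a rank-one limit matrix, which identifies $\la_{1,\e}$ and forces $\alpha_{1j}=0$ for $j\ge2$. Dividing by $\e^N$, the cross term $\alpha_{1j,\e}O(\e^N)$ then vanishes, leaving the eigenvalue problem for $G(P)$.
\end{enumerate}

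What each route buys: the direct route only needs $\alpha_{ij,\e}\to\alpha_{ij}$ (Remark \ref{M1}). The written proof of Theorem \ref{sim} instead relies on the decay estimates of Lemma \ref{lem2204} and on the a priori rates from \cite{alm1,alm2}. Your route is shorter and makes the normalization of $G(P)$ explicit. It is not circular, because the proof of Theorem \ref{sim} re-runs the argument for general blocks rather than citing the corollary.

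On your worry about ``Otherwise'': the proof of Theorem \ref{sim} treats every block $J=0,\dots,p$, starting with $J=0$ for arbitrary $k_0$, and part (1) is just the $k_0=0$ instance of that first step. So part (2) applies to the block $J=1$ here. It is not restricted to blocks with $k_J\ge1$, as you suggested, though that misreading is harmless for your purposes.
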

    An analogous result holds if $N=2$ (see Corollary \ref{cor1510-1} in Section \ref{simpl}).
    \begin{rem}\label{ir1}
It follows from the previous corollary that if the multiplicity of $\la$ is $2$, then the approximating eigenvalues $\la_{1,\e}$ and $\la_{2,\e}$ are both simple and their asymptotic expansions are given by \eqref{eq0412-1} and \eqref{eq0412-2}. This result had nonetheless been proven in \cite{alm1,alm2}.
\end{rem}

\begin{rem}\label{ii4}
We prove here that the simplicity condition for the eigenvalues of the matrix $G(P)$ in Corollary \ref{cor4.2} is sharp. Again, let $\Omega=B_1(0)$, $\la$ be the second eigenvalue, and $P=0$. Using the same notation as in Example 2 in Subsection \ref{ss2}, we have 
that the  Bessel functions $J_n(s)$ satisfy that (see Section 2.12 of \cite{milne1945})
\begin{equation}\label{PropBess}
    J_n'(s)=\frac{1}{2}\left(J_{n-1}(s)-J_{n+1}(s)\right),\ \ J_0(0)=1 \ \ \hbox{and}\ \  \lim_{s\to0}\frac{J_n(s)}{\frac{1}{n!}(\frac{s}{2})^n}=1.
\end{equation}
For $P=0$, we have  $u_1(0)=u_2(0)=0$ and
        \begin{equation}
            \nabla u_1(0)=(\frac{j_{1,1}}{2},0)\ \ and \  \  \nabla u_2(0)=(0,\frac{j_{1,1}}{2}).
        \end{equation}
Then matrix $G(0)$ in \eqref{mat*} becomes
$$G(0)=
\begin{pmatrix}
    \frac{j_{1,1}^2}{4}&0\\
                   0&\frac{j_{1,1}^2}{4}\\
\end{pmatrix}
$$
which has the eigenvalues $\Lambda_2=\Lambda_3$. So the condition on the simplicity of the eigenvalues of $G(0)$ fails.
On the other hand, from the standard separation of variables, we get that the eigenvalue $\lambda_{2,\e}$ is multiple. It shows the sharpness of the condition: in general if the eigenvalues $\Lambda_j$ of $G_{k_J}$ are not simple then the eigenvalues $\lambda_{j,\e}$ may fail to be simple.

The same example can be provided if $\O=B_1(0)\subset\R^3$
The corresponding matrix $G(0)$ here becomes
\begin{equation}
  G(0)
                =  3\omega_3\begin{pmatrix}
					\frac{j_{1,1}^2}{9}& 0& 0\\
                   0& \frac{j_{1,1}^2}{9}& 0\\
                   0& 0& \frac{j_{1,1}^2}{9}\\
				\end{pmatrix}.
\end{equation}
Hence $G(0)$ has one multiple eigenvalue and as before from the standard separation of variables we get that $\lambda_{2,\e}$ is multiple. 
\end{rem}
The validity of condition \eqref{ii1} (as well as \eqref{ii2}) as the point $P$ varies in $\Omega$ is an extremely interesting open problem. This problem is connected to the potential "generic" simplicity of the eigenvalues under small displacements of the point. The subsequent question is natural: 
\vskip0.1cm
\noindent {\bf Question.} {\em Is it true that small movements of the ball $B(P,\e)$ make all the eigenvalues simple?} 
\vskip0.1cm
This question represents the natural analogue of the result by Micheletti and Uhlenbeck, where small deformations of $\partial\Omega$ are replaced by small displacements of the point $P$.
In the next example, we will show that, as the question is posed, the answer is negative!

{\bf Example}  Again, let $\Omega=B_1(0)$, $\la$ be the second eigenvalue, and $P=0$ as in Remark \ref{ii4}. We will prove that
\begin{equation}\label{ir4}
\la_{2,\e}\hbox{ has }
\begin{cases}
\hbox{multiplicity }3&\hbox{if }P=0\\
\hbox{multiplicity }2&\hbox{if }P\ne0
\end{cases}
\end{equation}
This proves that it is not possible to ``move'' the point $P$ and obtain simple eigenvalues.

The claim for $P=0$ in \eqref{ir4} is immediate because if $P=0$ then $\O_\e=B_1\setminus B(0,\e)$ and it is known that in this case the second eigenvalue $\la_{2,\e}$ is still multiple with explicit eigenfunctions in term of the Bessel functions.

If $P\ne0$, up to rotation we can assume that $P=(0,0,|P|)$. A direct calculation by separation of variables shows that, in this case, a double eigenvalue occurs for every $\e>0$. Since this property holds for every point $P$ belonging to the $z$-axis, we have that \eqref{ir4} follows.

In a forthcoming paper, we intend to further investigate this phenomenon, addressing the question of for which domains the generic simplicity of eigenvalues holds when a small ball is removed from a domain $\O$.

Further properties and examples illustrating the sharpness of our results are established in Section~\ref{simpl}.

  \subsection{Quantitative estimates on the eigenfunction $u_\e$}\label{is2}
 A second main result concerns the asymptotic behavior of eigenfunctions. As mentioned above, the standard regularity theory guarantees only that $u_\e \to u$ uniformly on compact subsets of $\O$ that do not contain $P$. Using the estimates for the function $V_{\e,u}$, combined with suitable spectral-theoretic arguments, we substantially strengthen this convergence result by deriving quantitative estimates for the difference $u_\e - u$. Analogously to the case of the eigenvalues, one may wonder what the influence of the vanishing order of $u$ at $P$ might be on the asymptotic estimate of $u_\varepsilon$.
 
 In the case of {\bf simple} eigenvalues, we show that the perturbed eigenfunction $u_\varepsilon$ can be approximated by (obviously) the eigenfunction $u$ and its associated capacitary correction $V_{\e,u}$. More precisely, one has

 \begin{teo}\label{intr-T1}
Let $(\lambda,u)$ a pair satisfying \eqref{eq:eigenvalueproblem} and $u_\e$ the $L^2(\Omega)$-normalized solution to \eqref{eq:perturbedeigenvalueproblem}. Then, denoting by $k$ the vanishing order of $u$ at $P$ (see definition \ref{defvanishing}),
     \begin{equation}\label{d0}
			u_\e(x)-u(x)+V_{\e,u}(x)= \begin{cases}
                O\left(\frac{1}{|\log\e|}\right)\quad& if  \ N=2  \ and\ k=0\\
                O\big(\e^2|\log\e|\big) & if  \ N=2  \ and\ k=1\\
                O\big(\e^{k+\frac{N}{2}}\big) & if  \ N=2  \ and\ k\ge 2\\
                O\big(\e\big) & if  \ N=3  \ and\ k=0\\
                O\big(\e^{k+\frac{N}{2}}\big) & if  \ N=3  \ and\ k\ge 1\\
                 O\big(\e^{k+ 2(1-\delta)}\big) & if  \ N\ge 4  \ and\ k\ge 0\\
			\end{cases}
		\end{equation}
		as $\e\to0$, uniformly for $x\in\O_\e$ and for any $0<\delta<\frac{1}{2}$.
	\end{teo}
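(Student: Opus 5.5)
We set $w_\e := u_\e - u + V_{\e,u}$, with $u_\e$ extended by zero in $B(P,\e)$ and with the sign of $u_\e$ chosen so that $\int_\O u_\e u\,dx>0$. The plan is to write an equation for $w_\e$ on $\O_\e$, estimate it in $L^2$ by spectral arguments, and then pass to pointwise bounds through the Green function of $\O_\e$.

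\textbf{The equation for $w_\e$.} Since $V_{\e,u}$ is harmonic in $\O_\e$ and coincides with $u$ on $\pa B(P,\e)$, the function $w_\e$ vanishes on $\pa\O_\e$. Moreover
\begin{equation*}
-\Delta w_\e=\lambda_\e u_\e-\lambda u=\lambda w_\e+(\lambda_\e-\lambda)u_\e-\lambda V_{\e,u}\quad\hbox{in }\O_\e .
\end{equation*}
The size of $\lambda_\e-\lambda$ is given by \eqref{eq:eigenvaluesbehavior}--\eqref{eq:N=2eigenvaluesbehavior}. The size of $V_{\e,u}$ comes from Lemma \ref{into-l:improvedL2Ve}. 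Since $|\partial^k_yG_\O(x,P)|\lesssim |x-P|^{2-N-k}$ (with a logarithm when $N=2$, $k=0$), we get
\begin{equation*}
|V_{\e,u}(x)|\lesssim \e^{N-2+2k}|x-P|^{2-N-k}\quad\hbox{for }|x-P|\ge\e,
\end{equation*}
and, directly from the maximum principle, $|V_{\e,u}|\lesssim \e^k$ near the hole. Integrating these bounds gives $\|V_{\e,u}\|_{L^2(\O)}$. For $N\ge4$ the integral $\int_\e |x|^{2(2-N-k)}$ is dominated by $r\sim\e$, which yields $\|V_{\e,u}\|_{L^2}\sim \e^{k+N/2}$. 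For small $N$ the mass sits away from the hole, and the bound picks up $\e^{N-2+2k}$ together with logarithms. Comparing the resulting rates with \eqref{d0} is how I expect the case distinction there to arise.

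\textbf{$L^2$ bound for $w_\e$.} Since $\lambda$ is simple and $\lambda_\e\to\lambda$, I will decompose $w_\e=c_\e u+w_\e^\perp$ with $w_\e^\perp\perp u$ in $L^2(\O)$. On the orthogonal complement, $-\Delta-\lambda$ is invertible with a uniform gap. The main technical point here is that $w_\e^\perp$ is only in $H^1_0(\O_\e)$ after correction; I will handle this either by testing the weak equation against $H^1_0(\O_\e)$ functions, or by using that the $k$-th eigenfunction on $\O_\e$ converges, so the spectral gap on $\O_\e$ is uniform. Testing the equation against $w_\e^\perp$ then gives
\begin{equation*}
\|w_\e^\perp\|_{H^1}\lesssim |\lambda_\e-\lambda|+\|V_{\e,u}\|_{L^2}.
\end{equation*}
The coefficient $c_\e$ is fixed by the normalization: $1=\|u_\e\|^2=\|u+w_\e-V_{\e,u}\|^2$ gives $c_\e=O(\|V_{\e,u}\|_{L^2}+\|w_\e\|^2)$, since the cross term $\int uV_{\e,u}$ is of the same order.

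\textbf{From $L^2$ to pointwise.} I will write
\begin{equation*}
w_\e(x)=\int_{\O_\e}G_{\O_\e}(x,y)\,f_\e(y)\,dy,\qquad f_\e:=\lambda w_\e+(\lambda_\e-\lambda)u_\e-\lambda V_{\e,u},
\end{equation*}
and use $0\le G_{\O_\e}\le G_\O$. The term containing $V_{\e,u}$ is estimated with the pointwise bound above, by the convolution $\int |x-y|^{2-N}\e^{N-2+2k}|y-P|^{2-N-k}\,dy$. For $N\ge4$ this produces $\e^{k+2}$ up to the loss $\e^{-2\delta}$ coming from the near-hole region, which matches the exponent $k+2(1-\delta)$ in \eqref{d0}. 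The term $\lambda w_\e$ is handled by a bootstrap: starting from the $L^2$ bound, I apply Hölder with $G_\O(x,\cdot)\in L^{q}$ for $q<N/(N-2)$ and iterate a finite number of times, each step improving the integrability of $w_\e$, until an $L^\infty$ bound is reached. Each iteration loses an arbitrarily small power of $\e$, which is again the source of $\delta$. The remaining term, $(\lambda_\e-\lambda)u_\e$, is at most $\e^{N-2+2k}$ and is always of lower order.

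\textbf{Main obstacle.} I expect the delicate part to be the low-dimensional cases, $N=2$ (with its logarithms) and $N=3$. There $\|V_{\e,u}\|_{L^2}$ and $\lambda_\e-\lambda$ are comparable or larger, so the orthogonal projection onto $u$ has to be tracked precisely. My first attempt will be to compute $\int_\O uV_{\e,u}$ via Green's identity,
\begin{equation*}
\int_\O uV_{\e,u}=\lambda^{-1}\int_{\pa B_\e}u\,\pa_\nu V_{\e,u},
\end{equation*}
which is explicitly of order $\e^{N-2+2k}$, and then to check that it produces the rates listed in \eqref{d0}.
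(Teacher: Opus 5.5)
Your architecture is the paper's: an $L^2$ bound for $w_\varepsilon$ from a spectral gap, then an $\varepsilon$-uniform upgrade to $L^\infty$. However, the $L^2$ step only works through your second option, not the first. Decomposing along $u$ in $L^2(\Omega)$ and testing against $w_\varepsilon^\perp$ fails for two reasons.

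\begin{itemize}
\item $w_\varepsilon^\perp=w_\varepsilon-c_\varepsilon u\notin H^1_0(\Omega_\varepsilon)$, so the equation, which only holds in $\Omega_\varepsilon$, cannot be tested against it.
\item Even on the right space, the form $\int|\nabla v|^2-\lambda\int v^2$ is indefinite on $u^\perp$ whenever $\lambda$ is not the first eigenvalue. Testing therefore gives no coercive bound.
\end{itemize}

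The correct version is as follows. Write $w_\varepsilon=c_\varepsilon u_\varepsilon+w_\varepsilon^\perp$ with $w_\varepsilon^\perp\perp u_\varepsilon$ in $L^2(\Omega_\varepsilon)$, and observe that $(-\Delta_\varepsilon-\lambda_\varepsilon)w_\varepsilon^\perp=(\lambda_\varepsilon-\lambda)u-\lambda_\varepsilon V_{\varepsilon,u}$. Then apply the resolvent inequality of Theorem \ref{th_spectral}, not an energy identity, with the uniform gap $\mathrm{dist}(\lambda_\varepsilon,\sigma(-\Delta_\varepsilon)\setminus\{\lambda_\varepsilon\})\ge\delta_0>0$. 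The coefficient $c_\varepsilon$ is controlled through $\|u-V_{\varepsilon,u}\|^2_{L^2(\Omega_\varepsilon)}=(1-c_\varepsilon)^2+\|w_\varepsilon^\perp\|^2_{L^2(\Omega_\varepsilon)}$ together with your sign choice. This is exactly Proposition \ref{p:main} and Corollary \ref{c:1}, written there for $\psi_\varepsilon=u-V_{\varepsilon,u}=u_\varepsilon-w_\varepsilon$.

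Cauchy--Schwarz on $\int uV_{\varepsilon,u}$ suffices, because $\|V_{\varepsilon,u}\|_{L^2(\Omega_\varepsilon)}$ is itself $O$ of every rate in \eqref{d0}. So the ``main obstacle'' you anticipate for $N=2,3$ does not arise. Separately, your Green identity for $\int_\Omega uV_{\varepsilon,u}$ omits the term $\lambda^{-1}\int_{\partial B(P,\varepsilon)}u\,\frac{\partial u}{\partial\nu}$ (with $\nu=(x-P)/\varepsilon$, compare \eqref{eq27102025-0}). For $k\ge1$ that term has the same order $\varepsilon^{N-2+2k}$.

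The genuine difference from the paper is the pointwise step. The paper runs Moser iteration (Theorem \ref{t1}, constant uniform in $\varepsilon$) with $f_\varepsilon\in L^q$, $q>N/2$, and reads $\|V_{\varepsilon,u}\|_{L^q}$ off Lemma \ref{l:improvedL2Ve}. The strict inequality $q>N/2$ forced by Moser is the only source of $\delta$ for $N\ge4$; for $N=2,3$ one takes $q<2$ and the $L^2$ term sets the rate.

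Your route uses $0\le G_{\Omega_\varepsilon}\le G_\Omega$, the pointwise bound on $V_{\varepsilon,u}$, an explicit convolution, and a bootstrap for the $\lambda w_\varepsilon$ term. It is more hands-on but sharper.
\begin{itemize}
\item The bootstrap loses only constants coming from $G_\Omega$, not powers of $\varepsilon$.
\item The convolution of $\varepsilon^{N-2+2k}|y-P|^{2-N-k}\chi_{\{|y-P|\ge\varepsilon\}}$ with $|x-y|^{2-N}$ is $O(\varepsilon^{k+2})$ for $N\ge4$, with an extra $|\log\varepsilon|$ only when $N=4$, $k=0$.
\item Hence you actually get $O(\varepsilon^{k+2})$ for $N\ge4$ instead of $O(\varepsilon^{k+2(1-\delta)})$. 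Your attributions of $\delta$ to the near-hole region and to the iteration are mistaken, though harmlessly so.
\end{itemize}

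Three small precisions:
\begin{itemize}
\item For $k\ge3$, the bound $|V_{\varepsilon,u}|\lesssim\varepsilon^{N-2+2k}|x-P|^{2-N-k}$ does not follow literally from Lemma \ref{l:improvedL2Ve}, because of the remainder $O(\varepsilon^{N+k})G_\Omega(x,P)$. Its convolution with $G_\Omega$ is nonetheless of higher order than the target.
\item You need $\|u_\varepsilon\|_{L^\infty}\le C$ uniformly in $\varepsilon$; the same bootstrap gives it.
\item The term $(\lambda_\varepsilon-\lambda)u_\varepsilon$ is of the same order as the target, not lower, when $k=0$ and $N=2,3$. That is still fine.
\end{itemize}
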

From the previous result, two consequences are obtained: the first is a quantitative estimate of the norm $\|u_\varepsilon - u\|$ on every compact subset $K \subset\subset \Omega \setminus \{P\}$. 
\begin{cor}\label{T2'}{\bf (Improved local estimate)}
		We have that
		\begin{equation}\label{d1}
			||u_\e-u||_{L^\infty(K)}=\begin{cases}
                O\left(\frac{1}{|\log\e|}\right)\quad& if  \ N=2  \ and\ k=0\\
                O\big(\e^2|\log\e|\big) & if  \ N=2  \ and\ k=1\\
                O\big(\e^{k+\frac{N}{2}}\big) & if  \ N=2  \ and\ k\ge 2\\
                O\big(\e\big) & if  \ N=3  \ and\ k=0\\
                O\big(\e^{k+\frac{N}{2}}\big) & if  \ N=3  \ and\ k\ge 1\\
                 O\big(\e^{k+ 2(1-\delta)}\big) & if  \ N\ge 4  \ and\ k\ge 0\\
            \end{cases}
		\end{equation}
as $\e\to 0$,		for any compact set $K\subset\O_\e$.
	\end{cor}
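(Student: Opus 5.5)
The plan is to deduce Corollary \ref{T2'} directly from Theorem \ref{intr-T1} by showing that, on a fixed compact set $K$ at positive distance from $P$, the capacitary potential $V_{\e,u}$ is of lower or equal order compared with the right-hand side of \eqref{d0}.

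\textbf{Step 1: triangle inequality.} Fix a compact $K\subset\O\setminus\{P\}$. Then $K\subset\O_\e$ for all small $\e$, and $\mathrm{dist}(K,P)\ge d_K>0$. I write
\[
\|u_\e-u\|_{L^\infty(K)}\le \|u_\e-u+V_{\e,u}\|_{L^\infty(K)}+\|V_{\e,u}\|_{L^\infty(K)}.
\]
The first term is bounded by \eqref{d0}, since that estimate is uniform on $\O_\e\supset K$.

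\textbf{Step 2: bound $V_{\e,u}$ on $K$ via Lemma \ref{into-l:improvedL2Ve}.} On $K$, the Green function $G_\O(x,P)$ and its $y$-derivatives $\partial^\alpha_y G_\O(x,P)$, $|\alpha|\le k$, are bounded by constants depending only on $K$, because there $x$ stays away from the singularity. The lemma then gives the following orders on $K$:
\begin{itemize}
\item $O(1/|\log\e|)$ for $N=2$, $k=0$;
\item $O(\e^{2k})$ for $N=2$, $k\ge1$;
\item $O(\e^{N-2+2k})$ for $N\ge3$.
\end{itemize}

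\textbf{Step 3: compare exponents.} The remaining work is to check that each of these orders is dominated by the corresponding entry of \eqref{d1}:
\begin{itemize}
\item For $N=2$, $k=0$, both terms are $O(1/|\log\e|)$.
\item For $N=2$, $k\ge2$, we have $2k\ge k+1$, which matches the entry $\e^{k+N/2}=\e^{k+1}$.
\item For $N=3$, $k=0$, $\e^{1}$ matches.
\item For $N=3$, $k\ge1$, $1+2k\ge k+\tfrac32$ holds.
\item For $N\ge4$, $N-2+2k\ge 2+k> k+2(1-\delta)$ holds.
\end{itemize}

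\textbf{Main obstacle: the case $N=2$, $k=1$.} Here the bound $\e^{2k}=\e^2$ on $V_{\e,u}$ is only $O(\e^2)$, which is indeed $\le \e^2|\log\e|$, so the claimed order still holds. One caveat is that the error terms in Lemma \ref{into-l:improvedL2Ve} as written are of the form $O(\e\,\partial^kG)$. Taken literally this is weaker than $\e^{2k}$, so I would interpret it as relative to the main term, i.e. $\e^{N-2+2k}\cdot O(\e)\,\partial^kG$. If the lemma is used only literally, I would instead fall back on an independent bound: by the maximum principle, $|V_{\e,u}|\le \max_{\pa B(P,\e)}|u|\,\cdot$ (capacitary profile). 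Since $|u|=O(\e^k)$ on $\pa B(P,\e)$, comparison with the harmonic function $\e^{N-2}|x-P|^{2-N}$ (or with $\log$-ratio functions in $N=2$), after subtracting a harmonic correction, gives $O(\e^{N-2+k})$ on $K$. This is enough in all cases except possibly the borderline ones, where the refined lemma is needed. Careful bookkeeping of which form of the lemma's remainder applies is the only real subtlety.
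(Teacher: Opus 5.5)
Your proposal is correct and is essentially the paper's own proof: the triangle inequality with the uniform bound \eqref{U1} from Theorem \ref{intr-T1}, plus an $L^\infty(K)$ bound on $V_{\e,u}$ read off from \eqref{V1} and \eqref{eq10102025-2}, whose precise form confirms your reading of the remainder as relative to the main term (so your maximum-principle fallback, which would be too weak for $N=2,3$ with $k\ge1$, is not needed). The one slip is in Step 2: you drop the remainders $O(\e^{N+k})G_\O(x,P)$ and $O(\e^{2+k}|\log\e|^{-1})G_\O(x,P)$, which actually dominate $\e^{N-2+2k}$ (resp.\ $\e^{2k}$) when $k\ge 3$; the paper keeps them, and since they are still below the target orders in \eqref{d1}, your conclusion is unaffected.
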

The second consequence is the pointwise behavior of $u_\varepsilon$ near $\partial B(P, \varepsilon)$. This will be important in describing the nodal domains of $u$ close to $\partial B(P, \varepsilon)$. Observe that this result cannot be obtained by means of the standard regularity theory.
\begin{cor}\label{T2}{\bf (Sharp boundary  estimate)}
		We have that, as $\e\to 0$,
        \begin{equation}\label{d2}
          u_\e(x)= 
          \begin{cases}
              \displaystyle  \left(1+\frac{\log |x-P|}{|\log\e|}\right)u(x)
              +O\left(\frac{1}{|\log\e|}\right), &if \ N=2  \ and\ k=0,\vspace{2mm}\\
			\displaystyle	  \left(1-\frac{\e^{2k}}{|x-P|^{2k}}\right)u(x)+O(\e^{2}|\log\e|),& if  \ N=2  \ and\ k=1,\vspace{2mm}\\
            \displaystyle        \left(1-\frac{\e^{N-2+2k}}{|x-P|^{N-2+2k}}
				\right)u(x)+O(\e^{k+1}), &otherwise,
                \end{cases}
        \end{equation}
		uniformly for $|x-P|\le C\e$.
	\end{cor}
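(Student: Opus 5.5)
The plan is to derive Corollary \ref{T2} directly from Theorem \ref{intr-T1}. Near the hole, $u_\e = u - V_{\e,u} + R_\e$, where $R_\e$ is the remainder in \eqref{d0}. So the whole task is to compute $V_{\e,u}(x)$ for $|x-P|\le C\e$ with enough precision, and to check that it is of the form $\frac{\e^{N-2+2k}}{|x-P|^{N-2+2k}}u(x)$ up to an admissible error.

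The first step is to recall why such a formula should hold. In the annulus $\e\le |x-P|\le C\e$, $u$ is well approximated by its leading homogeneous term $H_k(x-P)=\sum_{|\alpha|=k}\frac{1}{\alpha!}D^\alpha u(P)(x-P)^\alpha$. Since $-\Delta u=\lambda u$, $H_k$ is a harmonic polynomial of degree $k$, and $u = H_k + O(|x-P|^{k+1})$. The exterior harmonic function in $\R^N\setminus B(P,\e)$ equal to $H_k$ on $\partial B(P,\e)$ and decaying at infinity is the Kelvin-type function
\[
\frac{\e^{N-2+2k}}{|x-P|^{N-2+2k}}H_k(x-P).
\]

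The second step is to compare this with Lemma \ref{into-l:improvedL2Ve}. Write $G_\O(x,y)=\Gamma(x-y)-H(x,y)$ with $H$ smooth. The relevant derivatives are $D^\alpha_y\Gamma(x-y)$ at $y=P$ with $|\alpha|=k$. Contracting them with $D^\alpha u(P)/\alpha!$ yields, by the classical Maxwell/Hobson formula, exactly $c\,H_k(x-P)|x-P|^{-(N-2+2k)}$. The constants $A(N,k)$ in \eqref{An2}–\eqref{An3} are tuned so that $c\,A(N,k)=1$, which is consistent with $V_{\e,u}=u$ on $\partial B(P,\e)$. The regular part $H$ only contributes $O(\e^{N-2+2k})$, which is negligible.

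I do not expect the error term $O(\e\, \partial^k G)$ in the lemma to be usable near the hole, since there $\partial^k G\sim \e^{-(N-2+k)}$. Instead I would argue directly near the hole with a maximum principle. Set
\[
W=V_{\e,u}-\frac{\e^{N-2+2k}}{|x-P|^{N-2+2k}}H_k(x-P).
\]
This $W$ is harmonic in $B(P,R)\setminus B(P,\e)$. On $\partial B(P,\e)$ it equals $u-H_k=O(\e^{k+1})$. On $\partial B(P,R)$ it is small by Lemma \ref{into-l:improvedL2Ve}. Hence $W=O(\e^{k+1})$ in the annulus.

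It then remains to replace $H_k$ by $u$: in the region $|x-P|\le C\e$ the difference is $O(\e^{k+1})$, so this is harmless. Combining with \eqref{d0}, the remainder there is $O(\e^{k+N/2})$ or $O(\e^{k+2(1-\delta)})$. Both are $\le O(\e^{k+1})$ for $N\ge3$, or for $N=2$ with $k\ge2$. This gives the third line of \eqref{d2}.

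For $N=2$, $k=1$, the remainder in \eqref{d0} is $O(\e^2|\log\e|)$. The Kelvin error $u-H_1=O(|x-P|^2)=O(\e^2)$ is smaller, which gives the second line.

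For $N=2$, $k=0$, I would use $V_{\e,u}\approx u(P)\frac{\log|x-P|^{-1}+O(1)}{|\log\e|}$ from the first case of \eqref{intoV1}, since $G_\O(x,P)=\frac1{2\pi}\log\frac1{|x-P|}+O(1)$. Replacing $u(P)$ by $u(x)$ costs $O(\e)$, and everything else is $O(1/|\log\e|)$. This gives the first line.

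The main obstacle is the second step: the exact identification of the singular part of $\sum_\alpha\frac{1}{\alpha!}D^\alpha u(P)\,\partial_y^\alpha G_\O(x,P)$ with the Kelvin transform of $H_k$, including the constant. If matching the constants directly proves messy, I would bypass the lemma entirely in the annulus and argue only via the maximum principle comparison described above. The boundary values on $|x-P|=R$ are controlled by the lemma's far-field bound $V_{\e,u}=O(\e^{N-2+2k})$, which is smaller than $\e^{k+1}$ in all the relevant cases.
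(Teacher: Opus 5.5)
Your proof is correct. It reaches the corollary by a somewhat different route than the paper's.

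\textbf{Paper's route.} The paper inserts the global Green-function expansion \eqref{V1} (resp.\ \eqref{eq10102025-1}, \eqref{eq10102025-2}) into $u_\e=u-V_{\e,u}+O(\cdot)$ from \eqref{U1}. It then identifies the singular part of $\sum_{|\alpha|=k}\frac1{\alpha!}D^\alpha u(P)\,\pa_y^\alpha G_\O(x,P)$ with $A(N,k)^{-1}|x-P|^{-(N-2+2k)}g(x)$. This uses the explicit formula \eqref{eq10102025-4} and the cancellation of the $Q_\alpha$ terms (Corollary \ref{Cor-Q}, proved by induction in the Appendix). The regular part of $G_\O$ contributes only $O(\e^{N-2+2k})$.

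\textbf{Your route.} You compare $V_{\e,u}$ directly, by the maximum principle near the hole, with the Kelvin transform $\e^{N-2+2k}|x-P|^{-(N-2+2k)}g(x)$. Its harmonicity follows at once from that of $g$, which is exactly the Maxwell--Hobson identity the paper establishes by hand.

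\textbf{Comparison.} Your route avoids the Appendix and the bookkeeping of the constants $A(N,k)$. The price is that you only get information near the hole. The paper's expansion is global in terms of $\pa_y^k G_\O(\cdot,P)$, and that global form is what feeds Lemma \ref{lem1211-1} and the $L^p$ bounds. The paper's proof of \eqref{V1} is itself a maximum-principle comparison, so your argument is the local, stripped-down version of the same mechanism.

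\textbf{Two small remarks.}

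First, your worry that the lemma's error term is unusable near the hole comes from the introductory statement of Lemma \ref{into-l:improvedL2Ve}. There the prefactor $A(N,k)\e^{N-2+2k}$ in front of $O(\e)\,\pa_y^kG_\O$ has been dropped. In \eqref{V1} that term is $O(\e^{N-1+2k}|x-P|^{-(N-2+k)})=O(\e^{k+1})$ for $|x-P|\le C\e$, which is exactly how the paper uses it.

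Second, when $N\ge3$ or $k\ge1$ you do not need the lemma even for the outer boundary values. Run your comparison on all of $\O_\e$ instead of an annulus. On $\pa\O$ one has $W=-\e^{N-2+2k}|x-P|^{-(N-2+2k)}g=O(\e^{N-2+2k})$, and on $\pa B(P,\e)$ one has $W=O(\e^{k+1})$. Hence $W=O(\e^{k+1}+\e^{N-2+2k})=O(\e^{k+1})$ throughout $\O_\e$. This makes that part of your argument fully self-contained, apart from Theorem \ref{intr-T1}.
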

The previous results hold for simple eigenvalues. If the eigenvalue is multiple, the estimates we obtain are not too different. However a more involved decomposition is required. Since we believe this topic is somewhat technical, we prefer to avoid discussing it here in the introduction, and we refer the interested reader to the results in Section \ref{sec-function}.

  \subsection{The nodal set of $u_\e$}\label{is3}

Finally, we apply the previous results to the study of nodal sets of $u_\e$.

The geometry of the nodal domains of Laplacian eigenfunctions with zero Dirichlet boundary conditions is deeply intertwined with the shape of the domain. Geometric properties such as convexity and  curvature play a crucial role in determining the convexity of the nodal lines.

 A famous conjecture of Payne \cite{payne1967} states that  any second eigenfunction of problem \eqref{eq:eigenvalueproblem} for a bounded domain $\O\subset\R^2$ cannot have a closed nodal line. This conjecture has been extensively investigated in the literature and it was proved by Melas \cite{melas1992} 
(see \cite{PU1990,MHTONN1997,BCM2021} for more reference).

     On the other hand, the convexity of the domain turns out to be a necessary condition for the nodal line of the second eigenfunction to touch the boundary. Several notable counterexamples for domains with holes have been shown in \cite{MHTONN1997} and \cite{fl}. Moving on to the case of our domain $\O_\e=\O\setminus B(P,\e)$, Mukherjee and Saha \cite{MMSS2022} interested in seeing whether the nodal set of the second Dirichlet eigenfunction for $\O_\e$ intersects $\partial \O_\e$ for $\e$ small enough. In the literature this is called "Payne property". 
     
     They proved a perturbed version of the main result in \cite{melas1992}. (See  \cite[Proposition 4.2]{MMSS2022}.) More precisely, let $\Omega\subset\mathbb{R}^N$ be a convex domain with smooth boundary, and let $x_1,x_2,\ldots,x_l\in\Omega$ be points lying outside the nodal set of the second Dirichlet eigenfunction of $\Omega$. Define
\[
\Omega(\varepsilon):=\Omega\setminus\bigcup_{i=1}^l B(x_i,\varepsilon).
\]
If $\Omega$ satisfies the Payne property, then $\Omega(\varepsilon)$ also satisfies the Payne property for all sufficiently small $\varepsilon$.
     
In this paper, we move in the same direction as \cite{MMSS2022}, aiming to understand how the nodal domain of $u_\varepsilon$ behaves compared to that of $u$. Unlike \cite{MMSS2022}, we allow the point $P$ to belong to the nodal domain of $u$ and we consider eigenfunctions of any order in any dimension.

Of course the boundary estimates proved in Corollary \ref{T2} will play the crucial role.

Let us introduce the following notations.
Set
	\begin{equation}
		\mathcal{N_\e}=\overline{\{x\in\O_\e\hbox{ such that }u_\e=0\}}
	\end{equation}
	and 
	\begin{equation}
		\mathcal{N}=\overline{\{x\in\O\hbox{ such that }u=0\}}.
	\end{equation}
    We now consider the two different cases in which $P$ either belongs to the nodal set of $u$ or does not.
    \vskip0.2cm
{\bf The case $P\notin\mathcal{N}$}

Here we basically show that no other connected component of $\mathcal{N_\e}$ appears "near" $\partial B(P,\e)$. Our first result is the following,


	\begin{prop}\label{intr-p0}
Let $\O\subset\R^N$ with $N\ge2$. If $P\notin \mathcal{N}$, then		there exists $\delta>0$ such that  $\mathcal{N_\e}\cap B(P,\delta)=\emptyset$.
	\end{prop}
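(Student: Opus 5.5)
We argue under the standing assumption of Theorem \ref{intr-T1}: $\la$ is simple and $u_\e\to u$ with a fixed sign normalization, so that $u_\e\to u$ and not $-u$. Since $P\notin\mathcal N$, we have $k=0$ and $u(P)\ne0$; without loss of generality $u(P)>0$. By continuity we fix $\delta_0>0$ such that $u\ge c_0:=u(P)/2>0$ on $B(P,2\delta_0)\subset\O$. We must show that $u_\e>0$ on $B(P,\delta)\cap\O_\e$ for some $\delta>0$ independent of $\e$.

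We split the annular region into two zones: a far zone $C\e\le|x-P|\le\delta$ and a near zone $\e<|x-P|\le C\e$.

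In the far zone we use Theorem \ref{intr-T1} together with the expansion of $V_{\e,u}$ in Lemma \ref{into-l:improvedL2Ve} for $k=0$. Since $G_\O(x,P)\sim c_N|x-P|^{2-N}$ (respectively $\frac1{2\pi}\log\frac1{|x-P|}$ for $N=2$), we get for $N\ge3$
$$
u_\e(x)=u(x)-\Big(\tfrac{\e}{|x-P|}\Big)^{N-2}u(P)\,(1+o(1))+o(1)\ge c_0-C^{2-N}u(P)(1+o(1))-o(1),
$$
which is positive once $C$ is fixed large and $\e$ is small. For $N=2$, the corresponding term is $u(P)\frac{\log(1/|x-P|)}{|\log\e|}$ plus a bounded correction. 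This does not become small merely by taking $C$ large, so in the plane we treat the whole region $\e<|x-P|\le\delta$ at once, using the first line of Corollary \ref{T2} (with $k=0$), which holds for $|x-P|\le C\e$, and the first line of \eqref{d0} beyond that. Combining them gives
$$
u_\e(x)=\Big(1+\tfrac{\log|x-P|}{|\log\e|}\Big)u(x)+O\big(|\log\e|^{-1}\big).
$$
The bracket is nonnegative, but it can be of size $O(|\log\e|^{-1})$ close to the hole, so positivity is not immediate there.

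This near-boundary regime, where the leading term and the error are both small, is the main obstacle in every dimension. For $N\ge3$ the same difficulty occurs in the near zone: Corollary \ref{T2} gives $u_\e(x)=\big(1-\e^{N-2}|x-P|^{2-N}\big)u(x)+O(\e)$, and the leading factor vanishes on $\pa B(P,\e)$.

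I would handle this by a blow-up argument. Set $w_\e(y):=u_\e(P+\e y)$ for $|y|\ge1$. By the uniform estimates above, $w_\e$ converges locally uniformly to $u(P)\big(1-|y|^{2-N}\big)$ on $\{1\le|y|\le R\}$ (for $N=2$, after multiplying by $|\log\e|$, to $u(P)\log|y|$). The rescaled equation $-\Delta w_\e=\e^2\la_\e w_\e$ with $w_\e=0$ on $|y|=1$ gives $C^{1,\alpha}$ bounds up to the boundary, so the convergence holds in $C^1(\{1\le|y|\le R\})$. The limit has strictly positive radial derivative on $|y|=1$, so Hopf-type positivity transfers to $w_\e$: it vanishes on $|y|=1$ and has positive normal derivative there, hence $w_\e>0$ on $1<|y|\le R$ for $\e$ small.

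In the planar case, for $R\le|y|$ and $|x-P|\le\delta$, the bracket is at least $\log R/|\log\e|$. We choose $R$ large so that $c_0\log R$ dominates the constant in the $O(|\log\e|^{-1})$ error; this requires that the constant in Theorem \ref{intr-T1} be uniform, which we assume. Gluing the near, intermediate and far zones shows that $u_\e>0$ on $B(P,\delta)\cap\O_\e$, so $\mathcal N_\e\cap B(P,\delta)=\emptyset$.
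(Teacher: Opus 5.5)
Your argument for $N\ge3$ works and takes a different route from the paper. The planar case, however, has a gap exactly where you located the main obstacle.

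\textbf{The gap for $N=2$.} You claim that $|\log\varepsilon|\,w_\varepsilon\to u(P)\log|y|$ locally uniformly on $\{1\le|y|\le R\}$ ``by the uniform estimates above''. Those estimates do not give this. The first lines of \eqref{d0} and \eqref{d2} only give $u_\varepsilon(P+\varepsilon y)=u(P+\varepsilon y)\frac{\log|y|}{|\log\varepsilon|}+O(|\log\varepsilon|^{-1})$. After multiplying by $|\log\varepsilon|$ this becomes $|\log\varepsilon|\,w_\varepsilon(y)=u(P)\log|y|+O(1)$. On $1\le|y|\le R$ the $O(1)$ error is as large as the profile $u(P)\log|y|$ itself. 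So the blow-up limit is not identified, and the Hopf-type transfer has no limit profile to transfer positivity from. For $N\ge3$ this problem does not arise: the profile $u(P)(1-|y|^{2-N})$ is of order one and the error in Corollary \ref{T2} is $O(\varepsilon)$.

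\textbf{How to repair it.} The step can be rescued, but it needs an additional Liouville-type argument. Take a subsequential $C^1_{loc}$ limit $W$ of $|\log\varepsilon|\,w_\varepsilon$ on $\{|y|\ge1\}$; the bounds come from boundary Schauder estimates for $-\Delta v=\varepsilon^2\lambda_\varepsilon v$ with $v=0$ on $|y|=1$. Then $W$ is harmonic in $\{|y|>1\}$ and vanishes on $|y|=1$. Because the estimate above holds on every fixed annulus with a constant $K$ independent of $y$, you also get $|W(y)-u(P)\log|y||\le K$ on the whole exterior domain. Hence $W-u(P)\log|y|$ is a bounded harmonic function outside the unit disc with zero boundary values. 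By Kelvin inversion and removability of the point singularity it vanishes identically. This identifies the limit uniquely, and your $C^1$ transfer then goes through. Your treatment of the intermediate planar zone $R\varepsilon\le|x-P|\le\delta$ is correct. The uniformity of the constant that you ``assume'' there is already part of the statement of Theorem \ref{intr-T1}.

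\textbf{Comparison with the paper.} The paper bypasses all fine asymptotics at scale $\varepsilon$. It uses only that $u_\varepsilon>u(P)/2$ on $\partial B(P,\delta)$, which follows from Corollary \ref{T2'}. It then chooses $\delta$ so small that Lemma \ref{sml} rules out any nodal domain of $u_\varepsilon$ inside $B(P,\delta)$. A negative region of $u_\varepsilon$ in $B(P,\delta)\setminus\overline{B(P,\varepsilon)}$ would produce such a nodal domain, since it cannot reach $\partial B(P,\delta)$. On the other hand, $u_\varepsilon\ge0$ with an interior zero is excluded by the strong maximum principle. That argument is independent of the dimension and never distinguishes the logarithmic case. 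Your blow-up route, once repaired, yields more: the precise profile near the hole and a positive normal derivative on $\partial B(P,\varepsilon)$. The price is that it depends on the sharp estimates of Theorem \ref{intr-T1} and Corollary \ref{T2}, together with the extra uniqueness argument in the plane.
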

Furthermore, the number of nodal regions of $u_\e$ will be not greater than the number of nodal regions of $u$.
    \begin{teo}\label{intr-th-0804}
   Let $\O\subset\R^N$ with $N\ge2$. If $P\notin \mathcal{N}$, then 
          $$
    \sharp\{\hbox{nodal regions of } u_{\e}\}\leq \sharp \{\hbox{nodal regions of } u\}.
    $$
    \end{teo}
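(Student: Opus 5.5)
We prove Theorem \ref{intr-th-0804} by showing that every nodal region of $u_\e$ contains a point where $u$ has the same sign and where the two functions are uniformly close. We then show that distinct nodal regions of $u_\e$ cannot collapse into the same nodal region of $u$.

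\emph{Step 1: the region near the hole.} Since $P\notin\mathcal N$, we may assume $u(P)>0$ and $u\ge c_0>0$ on $B(P,2\delta)$. By Proposition \ref{intr-p0}, $u_\e$ does not vanish in $B(P,\delta)\cap\O_\e$. Combining the sign of $u$ with Corollary \ref{T2} for $k=0$ near $\partial B(P,\e)$, and with Corollary \ref{T2'} away from $P$, we get $u_\e>0$ there. Hence $B(P,\delta)\cap\O_\e$ lies in a single positive nodal region of $u_\e$, namely the one attached to the nodal region $\O^+_P$ of $u$ containing $P$.

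\emph{Step 2: the map from nodal regions of $u_\e$ to nodal regions of $u$.} Let $D_\e$ be a nodal region of $u_\e$, say with $u_\e>0$ in $D_\e$.
\begin{itemize}
\item \emph{Each $D_\e$ meets a compact set where $u$ has the right sign.} $D_\e$ cannot be contained in the shrinking set $\{|u|\le\eta\}\cup B(P,\delta)$. Indeed, if it lies in $B(P,\delta)$, Step 1 applies directly. Otherwise we use the Faber--Krahn inequality: $\la_1(D_\e)=\la_\e\to\la$, so $|D_\e|$ is bounded below. Since $|\{|u|\le\eta\}|\to0$ as $\eta\to0$, $D_\e$ must intersect $\{|u|>\eta\}\setminus B(P,\delta)$ for small fixed $\eta$.
\item \emph{On that set the signs agree.} On the compact set $K_\eta=\{|u|\ge\eta\}\setminus B(P,\delta)$, Corollary \ref{T2'} gives $u_\e\to u$ uniformly. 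So for $\e$ small, $u_\e$ and $u$ have the same sign on $K_\eta$. Consequently $D_\e$ contains a point where $u>\eta$.
\end{itemize}
We then assign to $D_\e$ the nodal region $\Phi(D_\e)$ of $u$ containing such a point.

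\emph{Step 3: injectivity of $\Phi$.} Let $D$ be a nodal region of $u$ with $u>0$. We choose $\eta$ so small that $D\cap\{u>\eta\}$ is connected. This is possible because $D$ is a connected open set, so it is exhausted by an increasing family of connected compact subsets. The uniform closeness on $K_\eta$ then makes $u_\e>0$ on this connected set, which therefore lies in a single nodal region of $u_\e$. Since there are finitely many nodal regions of $u$, a single $\eta$ works for all of them. Every $D_\e$ with $\Phi(D_\e)=D$ contains a point of $\{u>\eta\}\cap D$, so $D_\e$ is that unique region, and $\Phi$ is injective. The hole is handled by Step 1: $B(P,\delta)$ is absorbed into the component associated with $\O_P^+$, which does not destroy connectedness. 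Injectivity of $\Phi$ gives the inequality in Theorem \ref{intr-th-0804}.

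\emph{Main obstacle.} The delicate point is the connectedness claim in Step 3. The clean way to obtain it is to pick a connected compact $K\subset D$ with $|D\setminus K|$ small, and to set $\eta=\min_K u$. Step 2 must then be rerun with $K$ in place of $\{u>\eta\}$. For this we need the total measure of the sets $D\setminus K$, over all nodal regions $D$ of $u$, together with $B(P,\delta)$, to be smaller than the Faber--Krahn lower bound for $|D_\e|$. This is achievable since the Faber--Krahn bound is a fixed positive constant.
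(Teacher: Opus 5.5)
\textbf{Assessment.} Your argument is correct, but only in the repaired form of your last paragraph. It takes a genuinely different route from the paper.

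\textbf{The paper's route.} The paper first proves a localization claim: for every $\tau>0$ and $\e$ small, $\mathcal N_\e$ lies in the $\tau$-tubular neighbourhood of $\mathcal N$. The real work there is ruling out zeros of $u_\e$ near $\partial\O$ away from $\mathcal N$, which the paper does with the Hopf lemma, $C^2$ convergence up to $\partial\O$ (Lemma \ref{GTlem6.5}) and the implicit function theorem. It then asserts that a surplus nodal region of $u_\e$ must sit inside that thin neighbourhood, and excludes it with Lemma \ref{sml}.

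\textbf{Your route.} You never locate $\mathcal N_\e$. Interior uniform convergence plus the Faber--Krahn lower bound on $|D_\e|$ force every nodal region of $u_\e$ to meet one of the cores $K_D$. Each core has one sign for $u_\e$, so it lies in a single nodal region.

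\textbf{Comparison.} Your approach needs no boundary regularity. It also makes rigorous the counting step the paper leaves implicit. Passing from the localization of $\mathcal N_\e$ to the existence of a nodal region of $u_\e$ inside the $\tau$-neighbourhood needs some control, such as connectedness of each nodal region of $u$ minus that neighbourhood, and your cores sidestep exactly this. What the paper's route buys is the localization claim itself, which it reuses in Corollary \ref{cor-intersect}.

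\textbf{Points to fix.}
\begin{itemize}
\item The first version of Step 3 is not valid as written. Exhausting $D$ by connected compacta does not make $D\cap\{u>\eta\}$ connected, so the cores must be used from the outset.
\item You then need $|\mathcal N|=0$ (by analyticity) to get $|\O\setminus\bigcup_D K_D|=\sum_D|D\setminus K_D|$ in the Faber--Krahn budget.
\item For the nodal region of $u$ containing $P$, take $K_D$ path-connected, e.g.\ a finite chain of closed balls. Then $(K_D\cup B(P,\delta))\setminus\overline{B(P,\e)}$ stays connected, and Step 1 gives $u_\e>0$ on it.
\item $\Phi$ need not be single-valued: one $D_\e$ may contain several cores, for instance when two same-sign regions of $u$ merge. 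What yields the inequality is that distinct nodal regions of $u_\e$ are disjoint, each contains at least one core, and each core lies in exactly one of them.
\end{itemize}
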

    An immediate corollary is the following,
   \begin{cor}
 Let $\O\subset\R^N$ with $N\ge2$ and $P\notin \mathcal{N}$. If $\sharp \{\hbox{nodal regions of } u\}=2$, then $\sharp \{\hbox{nodal regions of } u_\e\}= 2$.
   \end{cor}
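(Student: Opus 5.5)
The corollary should follow from Theorem \ref{intr-th-0804} combined with a lower bound. The upper bound is immediate: since $P\notin\mathcal N$, Theorem \ref{intr-th-0804} gives
\[
\sharp\{\hbox{nodal regions of } u_\e\}\le \sharp\{\hbox{nodal regions of } u\}=2 .
\]
It therefore remains to show that $u_\e$ has at least two nodal regions, i.e. that $u_\e$ changes sign in $\O_\e$ for $\e$ small.

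To get the lower bound I would use the convergence of $u_\e$ to $u$ away from $P$. If $u$ has two nodal regions, then $u$ changes sign, so there are points $x_+,x_-\in\O\setminus\{P\}$ with $u(x_+)>0$ and $u(x_-)<0$. Take $K=\{x_+,x_-\}$, a compact set contained in $\O_\e$ for all small $\e$. By Corollary \ref{T2'}, $\|u_\e-u\|_{L^\infty(K)}\to0$, so $u_\e(x_+)>0>u_\e(x_-)$ for $\e$ small. Hence $u_\e$ has at least one positive and one negative nodal region, which gives the equality.

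Two points need care, and neither looks like a real obstacle.

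\emph{Sign of $u_\e$.} The eigenfunction $u_\e$ is determined only up to sign, and in the multiple case its limit is a combination of eigenfunctions as in \eqref{L2approx}. The statement is about a sign-invariant quantity, so we may choose the normalization of $u_\e$ so that it converges to $u$. In the simple-eigenvalue setting of Theorem \ref{intr-T1} this convergence is exactly what Corollary \ref{T2'} provides.

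\emph{Nondegeneracy.} Because $u$ changes sign, it is not the first eigenfunction, and the two sign regions are both nonempty open sets. So the points $x_\pm$ exist and the argument goes through.
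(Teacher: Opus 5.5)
Your proposal is correct and follows essentially the same route as the paper. The upper bound comes from Theorem~\ref{intr-th-0804}. The lower bound comes from uniform convergence $u_\e\to u$ away from $P$, which forces $u_\e$ to change sign: the paper uses \eqref{eq0104-1} on $\O\setminus B(P,\delta)$, while you apply Corollary~\ref{T2'} to the two points $x_\pm$.
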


     For a second eigenfunction $u_2$, it has 2 nodal regions. If $u_2(P)\neq 0$, then by Theorem \ref{intr-th-0804}, it follows that $\sharp \{\hbox{nodal regions of } u_\e\}=2$. We obtain  the same result as \cite[Proposition 2.6]{MukherjeeSaha2025} in the case  $N=2$, for the set $\O_\e=\O\setminus B(P,\e)$ as follows.  It   generalizes the "Payne property".
\begin{cor}
     Let $N=2$ and assume that $P\notin \mathcal{N}$ and $\sharp \{\hbox{nodal regions of } u\}=2$.  If the nodal set $\mathcal{N}$ of $u$ intersects $\partial \O$ at exactly 2 points, then the nodal set $\mathcal{N}_\e$ of $u_\e$ intersects $\partial \O$ at exactly 2 points. If the nodal set $\mathcal{N}$ of $u$ does not intersect $\partial \O$, then the nodal set $\mathcal{N}_\e$ of $u_\e$ does not intersect $\partial \O$. 
   \end{cor}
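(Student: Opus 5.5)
The plan is to view this corollary as an application of Theorem \ref{intr-th-0804}, plus a topological argument on how the nodal set meets $\partial\O$. Since $P\notin\mathcal N$ and $u$ has two nodal regions, Theorem \ref{intr-th-0804} gives at most two nodal regions for $u_\e$. We may assume $u_\e$ is not the first eigenfunction of $\O_\e$ (it converges to $u$, which changes sign), so by orthogonality to the positive first eigenfunction $u_\e$ changes sign. Hence $u_\e$ has exactly two nodal regions $\O_\e^\pm$, and $\mathcal N_\e$ is their common boundary inside $\O_\e$. By Proposition \ref{intr-p0}, $\mathcal N_\e$ stays at distance at least $\delta$ from $P$. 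Therefore $\partial B(P,\e)$ lies entirely in one nodal region, the one with the sign of $u(P)$. This follows from Corollary \ref{T2} with $k=0$: $u_\e$ has a fixed sign near the hole.

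Next we control $\mathcal N_\e$ near $\partial\O$. Away from $P$, on compact sets $K$ we have $u_\e\to u$ uniformly by Corollary \ref{T2'}. By standard elliptic boundary regularity in $\O\setminus B(P,\delta)$, where the problem is a smooth perturbation with right-hand sides converging, we also have $u_\e\to u$ in $C^{1,\alpha}$ up to $\partial\O$. So $\partial_\nu u_\e\to\partial_\nu u$ uniformly on $\partial\O$.

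\textbf{Case 1: $\mathcal N$ meets $\partial\O$ at exactly two points $Q_1,Q_2$.} Away from $Q_1,Q_2$, Hopf's lemma gives $\partial_\nu u\neq0$ on $\partial\O$. Uniform $C^1$ convergence then shows that $u_\e$ has the same sign as $u$ in a collar of $\partial\O$ outside small neighbourhoods of $Q_i$. So $\mathcal N_\e\cap\partial\O$ is contained in small arcs around $Q_1,Q_2$. Near each $Q_i$, the sign of $\partial_\nu u$ changes along $\partial\O$ across $Q_i$, so $\partial_\nu u_\e$ changes sign on the corresponding arc and the nodal set reaches the boundary there. This gives at least one intersection point near each $Q_i$.

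For exactness, I would argue as follows. At $Q_i$ the nodal line of $u$ meets $\partial\O$ transversally, since $u$ vanishes to first order in boundary coordinates. The $C^2$ convergence near the boundary, with the perturbation being smooth there, then makes $\partial_\tau\partial_\nu u_\e\neq0$ on the arc. Consequently $\partial_\nu u_\e$ has exactly one zero on each arc, and these zeros are the only points of $\mathcal N_\e\cap\partial\O$. The main obstacle is this nondegeneracy at $Q_i$: we need $\nabla^2 u(Q_i)\neq0$ in the tangential–normal direction, i.e.\ that the nodal curve hits the boundary with vanishing order exactly one. I would try to obtain it from the two-nodal-region assumption: a higher vanishing order at $Q_i$ would produce at least three nodal arcs emanating from $Q_i$, and hence more than two nodal regions. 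If needed, one can instead interpret ``exactly 2 points'' up to the topology of the nodal arcs. Since two nodal regions force $\mathcal N_\e$ to be a single curve, it has at most two endpoints on $\partial\O$.

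\textbf{Case 2: $\mathcal N\cap\partial\O=\emptyset$.} Hopf's lemma gives $|\partial_\nu u|\ge c>0$ on all of $\partial\O$, and $u$ has a fixed sign in a collar. Uniform $C^1$ convergence then gives $u_\e\neq0$ in a fixed collar of $\partial\O$. Hence $\mathcal N_\e\cap\partial\O=\emptyset$.
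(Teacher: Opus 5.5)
Your framework is sound and goes further than the paper's proof. The paper uses only two ingredients: the Claim from the proof of Theorem \ref{th-0804} (that $\mathcal{N}_\e$ lies in a $\tau$-neighbourhood of $\mathcal{N}$, obtained from Hopf's lemma and $C^2$ convergence near $\partial\O$), and Lemma \ref{sml} to exclude a closed nodal loop. This gives your Case 2 in the same way. In Case 1, however, it only localizes $\mathcal{N}_\e\cap\partial\O$ near $Q_1,Q_2$, and ``exactly 2'' is essentially asserted. What is actually needed is your counting argument: $C^2$ convergence up to $\partial\O$, Lemma \ref{LCS}, and strict monotonicity of $\partial_\nu u_\e$ along short boundary arcs.

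The gap is the nondegeneracy $\partial_\tau\partial_\nu u(Q_i)\neq0$. You flag it, but the justification you sketch does not work as written. Suppose $u$ vanishes to order $m\ge2$ at $Q_i$, so that after flattening and odd reflection $u\approx c\,r^m\sin(m\theta)$. Then exactly $m-1$ nodal arcs enter $\O$ at $Q_i$, so degeneracy ($m\ge3$) produces at least \emph{two} arcs, not three. The sign change of $\partial_\nu u$ across $Q_i$, which you use for existence, rests on the same fact. If the number $a_i=m-1$ of arcs at $Q_i$ is even, $\partial_\nu u$ has the same sign on both sides of $Q_i$, and then $\partial_\nu u_\e$ need not vanish near $Q_i$ at all.

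The step from ``$a_i\ge2$'' to ``more than two nodal regions'' is a global topological statement that you have not proved, and it depends on $\O$. If $\O$ is simply connected, apply Euler's formula to the plane graph $\mathcal{N}\cup\partial\O$, whose vertices are $Q_1,Q_2$ (of degree $2+a_i$) and the interior singular points (of even degree $\ge4$). This gives
\[
\sharp\{\hbox{nodal regions of } u\}\ \ge\ 1+\tfrac12(a_1+a_2)+\sharp\{x\in\mathcal{N}\cap\O:\ \nabla u(x)=0\},
\]
so two nodal regions force $a_1=a_2=1$, i.e.\ $m=2$ at each $Q_i$, and your argument closes.

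If $\O$ is multiply connected the implication fails. Two arcs joining one point of the outer boundary to one point of an inner boundary bound a lens whose complement in $\O$ is connected. This is compatible with two nodal regions, exactly two contact points, and $a_1=a_2=2$.

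Your fallback (``two nodal regions force $\mathcal{N}_\e$ to be a single curve with at most two endpoints'') is the same unproved topological claim. It is false in an annulus: two disjoint arcs from the outer to the inner boundary give two regions and four contact points.

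So either assume $\O$ simply connected and include the Euler count, or work in the paper's configuration (I), where $\mathcal{N}$ is a simple arc from $Q_1$ to $Q_2$. In the latter case one arc leaves each $Q_i$, the local structure gives $m=2$, and the rest of your proof goes through.

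A minor point: Corollary \ref{T2} with $N=2$, $k=0$ cannot fix the sign of $u_\e$ near $\partial B(P,\e)$. For $|x-P|\le C\e$ its main term is of the same order $1/|\log\e|$ as the error. Use Proposition \ref{intr-p0} and $u_\e\to u$ on $\partial B(P,\delta)$ instead.
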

    \vskip0.2cm
   {\bf The case $P\in\mathcal{N}$}
   
Here the situation becomes considerably more involved. For this reason we restrict our attention to the case $N = 2$ only.
\cite[Theorem 2.5 (ii)]{cheng1976} shows that 
        there exists $\delta_0>0$ such that the nodal line $\mathcal{N}\cap B(P,\delta_0)$ is a union of $k$ curves that only intersect at point $P$. 

        We will prove that this property naturally extends to the boundary of the ball $\partial B(P,\varepsilon)$, i.e the nodal line 
$\mathcal{N}_{\e}$ intersects $\partial B(P,\e)$ with $2k$ points (see, for example, the  figure below  $k=2$).
   \begin{teo}\label{intr-apmain}
   If $N=2$, and $k\ge 1$ is the vanishing order of $u$, then 
        $\mathcal{N}_{\e}$ intersects $\partial B(P,\e)$ at $2k$ points.
    \end{teo}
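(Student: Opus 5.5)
The plan is to blow up $u_\e$ at scale $\e$ around $P$, identify the limit profile in the annulus $\{1\le|y|\le 3\}$ explicitly, and read off the nodal set near $|y|=1$ from the radial derivative on the unit circle. The approach is local and uses Corollary \ref{T2} as the only input from the previous analysis. Since $u_\e=0$ on $\partial B(P,\e)$, the points of $\mathcal N_\e\cap\partial B(P,\e)$ are the endpoints on the circle of nodal arcs of $u_\e$ coming from inside $\O_\e$. These are exactly the points where the normal derivative $\pa_r u_\e$ changes sign along the circle.

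\emph{Step 1 (local form of $u$).} Put polar coordinates $(r,\t)$ centred at $P$. Since $k$ is the vanishing order and $\Delta u=-\la u$, the lowest-order homogeneous part of $u$ at $P$ is a nonzero harmonic polynomial of degree $k$. Hence
$$u(P+x)=r^k\big(a\cos k\t+b\sin k\t\big)+O(r^{k+1}),\qquad (a,b)\ne(0,0).$$

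\emph{Step 2 (blow-up).} Set $w_\e(y):=\e^{-k}u_\e(P+\e y)$ for $|y|\ge1$. Then $w_\e$ satisfies $-\Delta w_\e=\la_\e\e^2 w_\e$ in the annulus, with $w_\e=0$ on $|y|=1$. Corollary \ref{T2}, combined with Step 1, gives uniformly on $1\le|y|\le 3$
$$w_\e(y)=W(y)+o(1),\qquad W(y):=\big(r^k-r^{-k}\big)\big(a\cos k\t+b\sin k\t\big).$$
The error is $O(\e)$ for $k\ge2$ and $O(\e|\log\e|)$ for $k=1$. Note that $W$ is harmonic and vanishes on $r=1$.

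\emph{Step 3 (upgrade to $C^2$ convergence).} Apply boundary Schauder estimates to $w_\e-W$ on $\{1\le|y|\le 2\}$. The equation $-\Delta(w_\e-W)=\la_\e\e^2w_\e$ has a right-hand side of size $O(\e^2)$, the boundary data vanish on $|y|=1$, and $w_\e-W\to0$ in $L^\infty$ on the larger annulus. This yields $w_\e\to W$ in $C^2(\{1\le|y|\le2\})$.

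\emph{Step 4 (counting the zeros on the circle).} On $r=1$ we have
$$\pa_rW=2k\big(a\cos k\t+b\sin k\t\big)=2k\sqrt{a^2+b^2}\,\cos\big(k(\t-\t_0)\big).$$
This has exactly $2k$ zeros $\t_1,\dots,\t_{2k}$, each simple, with $\pa_\t\pa_rW(1,\t_i)\ne0$. By the $C^1$ convergence of $\pa_r w_\e(1,\cdot)$ and the implicit function theorem, $\pa_r w_\e(1,\cdot)$ also has exactly $2k$ zeros $\t_i^\e\to\t_i$, each simple. Away from small neighbourhoods of the $\t_i$ it is bounded away from $0$.

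\emph{Step 5 (structure of the nodal set near the circle).} Since $w_\e$ vanishes on $r=1$, write $w_\e=(r-1)g_\e(r,\t)$ with
$$g_\e(r,\t)=\int_0^1\pa_rw_\e\big(1+s(r-1),\t\big)\,ds.$$
By Step 3, $g_\e\to g:=W/(r-1)$ in $C^1$ near $r=1$, and $g(1,\t)=\pa_rW(1,\t)$. The nodal set of $w_\e$ in $1<r<1+\eta$ is therefore the zero set of $g_\e$.
\begin{itemize}
\item Near $(1,\t_i^\e)$ we have $\pa_\t g_\e\ne0$. By the implicit function theorem, the zero set of $g_\e$ there is a single $C^1$ curve $\t=\phi_i^\e(r)$ starting at $(1,\t_i^\e)$.
\item Elsewhere in a thin collar $1\le r\le1+\eta$, with $\eta$ independent of $\e$, we have $|g_\e|\ge c>0$, so there are no nodal points.
\end{itemize}
Hence the closure of $\{u_\e=0\}\cap\O_\e$ meets $\partial B(P,\e)$ at exactly the $2k$ points $P+\e(\cos\t_i^\e,\sin\t_i^\e)$.

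\emph{Main obstacle.} The delicate step is Step 3. Corollary \ref{T2} is only a uniform $C^0$ statement, whereas the counting in Steps 4 and 5 needs $C^1$ control of $\pa_r w_\e$ up to the boundary circle. I would handle this by rescaling to the fixed annulus, which is where the Dirichlet condition and the smallness $\la_\e\e^2$ of the zero-order term make standard boundary elliptic estimates applicable. In the case $k=1$ one must also check that the error $\e^2|\log\e|$ in Corollary \ref{T2}, after division by $\e^k=\e$, still tends to $0$; it does, since $\e|\log\e|\to0$.
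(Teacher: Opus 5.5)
Your proposal is correct and follows essentially the paper's route. Your $W$ is exactly the rescaling $\e^{-k}h_\e(P+\e\,\cdot)$ of the comparison function in Lemma \ref{lem2901-1}, and your Step 3 is that lemma's scale-$\e$ Schauder estimate (via Lemma \ref{FurGT}). Step 4 is the paper's implicit-function count of the $2k$ simple zeros of the radial derivative on the circle. The only difference is the last step: the paper invokes Lin's Lemma \ref{LCS} (a boundary point lies in the nodal set iff the normal derivative vanishes there), whereas your factorization $w_\e=(r-1)g_\e$ proves this directly and shows in addition that, in a collar $\{\e<|x-P|<(1+\eta)\e\}$, the nodal set consists of exactly $2k$ $C^1$ arcs, one leaving each of the $2k$ points.
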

\begin{figure}[h]
    \centering
    \includegraphics[width=0.8\linewidth]{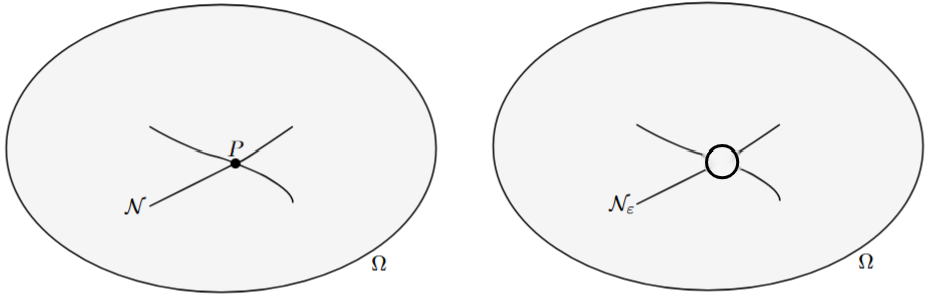}
    \label{fig:placeholder}
\end{figure}

The figure above shows how the vanishing order $k$ of an eigenfunction $u$ determines the local behavior of the nodal set in a neighborhood of $B(P,\varepsilon)$.

The paper is organized as follows. In Section \ref{pre}, we present the preliminaries, where we review several known results that are essential for our analysis and explain the motivation  of our approach.  In Section \ref{sec-Vu}, we  improve the estimates of the $u$-capacitary potential. In Section \ref{simpl}, we establish the asymptotic expansions of the perturbed eigenvalues $\lambda_\varepsilon$ and investigate their simplicity. The asymptotic behavior of the corresponding eigenfunctions $u_\varepsilon$ is analyzed in Section \ref{sec-function}. Applications of these estimates to the nodal sets and nodal domains of the perturbed eigenfunctions are presented in Section \ref{sec-application}. Finally, several auxiliary theorems and technical lemmas used throughout the paper are collected in Section \ref{sec-appendix}.


	\section{Preliminaries and known facts}\label{pre}
     Throughout the paper,  let $\alpha = (\alpha_1,\ldots,\alpha_N) \in \mathbb{N}^N$ be a multi-index and set 
    $$
    |\alpha| = \alpha_1 + \cdots + \alpha_N,\q \alpha!=\alpha_1!\cdots\alpha_N!,\q x^\alpha = x_1^{\alpha_1} \cdots x_N^{\alpha_N} .
    $$
    We begin by recalling some well-known properties of the zeros of eigenfunctions.  
    \subsection{Vanishing order and local expansion}
\begin{lemma}\label{lem1}
		If $u$ is an eigenfunction of the Dirichlet Laplacian on $\Omega$, that is an  open bounded connected domain in $\R^N$ containing $P$.
        Then $u\in C^{\infty}(\Omega)$ and there exists $k\in\N$  and there exist  $k\in\N$ and a harmonic homogeneous polynomial  $g$ of degree $k$ such that  
    \[
    r^{-k}u(P+rx) \to g(x) \quad \text{in }C^{1,\tau}(\overline B_1) \text{ as }r\to0 
    \] 
    for any $\tau\in(0,1)$. 

    In this case, 
    \[
    g(x)= |x|^k \Psi\left(\frac{x}{|x|}\right),
    \]
    where $\Psi$ denotes a spherical harmonic, that is a function such that 
		\[
		-\Delta_{\S^{N-1}} \Psi = k(N-2+k) \Psi. 
		\]  
	\end{lemma}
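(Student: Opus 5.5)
Smoothness comes first. Interior elliptic regularity applied to $-\Delta u=\lambda u$ gives $u\in C^\infty(\Omega)$, and in fact $u$ is real analytic in $\Omega$, since $-\Delta-\lambda$ is an elliptic operator with constant coefficients. So I expand $u$ in its Taylor series at $P$. Because $\Omega$ is connected and $u\not\equiv0$, unique continuation (or simply analyticity) shows that not all derivatives of $u$ vanish at $P$. Hence there is a smallest integer $k$ with $D^\alpha u(P)\neq0$ for some $|\alpha|=k$; this is exactly the vanishing order of Definition \ref{defvanishing}. I then set
\[
g(x)=\sum_{|\alpha|=k}\frac{D^\alpha u(P)}{\alpha!}x^\alpha ,
\]
which is a nonzero homogeneous polynomial of degree $k$.

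Harmonicity of $g$ comes from comparing homogeneous parts in the equation. Write $u(P+x)=g(x)+R(x)$, where $R$ collects the terms of degree $\ge k+1$, so $R(x)=O(|x|^{k+1})$ together with its derivatives at the corresponding orders. In the identity $-\Delta u(P+x)=\lambda u(P+x)$, the right-hand side is $O(|x|^k)$. On the left, $\Delta g$ is homogeneous of degree $k-2$, while $\Delta R=O(|x|^{k-1})$. Comparing the Taylor terms of degree $k-2$ on both sides gives $\Delta g=0$. If $k\le1$ this is automatic.

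The convergence follows by scaling. Set $w_r(x)=r^{-k}u(P+rx)$. Taylor's theorem with remainder, applied to $u$ and its first and second derivatives, gives $w_r-g=r^{-k}R(rx)$. Derivatives up to order $2$ of this difference are $O(r)$ uniformly on $\overline B_1$, because each derivative of $R$ of order $j$ is $O(|rx|^{k+1-j})$ and produces a factor $r^{j}$ under the scaling. Hence $w_r\to g$ in $C^2(\overline B_1)$, and in particular in $C^{1,\tau}(\overline B_1)$ for every $\tau\in(0,1)$. If one prefers to avoid analyticity, the same conclusion follows from $\Delta w_r=-\lambda r^2w_r$ together with uniform $C^{1,\tau}$ Schauder bounds.

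Finally, since $g$ is homogeneous of degree $k$, I write $g(x)=|x|^k\Psi(x/|x|)$ with $\Psi=g|_{\S^{N-1}}$. In polar coordinates the Laplacian reads
\[
\Delta=\partial_r^2+\frac{N-1}{r}\partial_r+r^{-2}\Delta_{\S^{N-1}} .
\]
Applying it to $r^k\Psi$ gives $r^{k-2}\bigl(k(k-1)+(N-1)k\bigr)\Psi+r^{k-2}\Delta_{\S^{N-1}}\Psi=0$. This yields $-\Delta_{\S^{N-1}}\Psi=k(N-2+k)\Psi$.

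The only delicate point is establishing that $k$ is finite, i.e.\ that $u$ does not vanish to infinite order at $P$. This follows from analyticity of solutions of $-\Delta u=\lambda u$ (or from the strong unique continuation theorem) combined with the connectedness of $\Omega$. Everything else is a routine computation.
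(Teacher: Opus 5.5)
Your proof is correct and follows essentially the same route as the paper, which justifies the lemma in Remark~\ref{rmi1}: real analyticity of $u$, Taylor expansion at $P$, and harmonicity of the lowest-order homogeneous part $g$ obtained by comparing degrees in $-\Delta u=\lambda u$; your scaling and spherical-harmonic computations supply the routine steps the paper leaves implicit. One small slip: for derivatives of order $j>k+1$ (this only happens for $k=0$, $j=2$) the bound should read $D^jR=O(1)$ rather than $O(|rx|^{k+1-j})$, and this still gives $D^j(w_r-g)=O(r^{j-k})=O(r)$.
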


    \begin{rem}\label{rmi1}
    
        Actually, since $u$ is real-analytic in $\O$ (see \cite[Theorem 7.5.1]{hormander1963linear}), we use the Taylor expansion in a neighborhood of the point $P$, 
\begin{equation*}
    u(P+x)=\sum_{l=0}^\infty\sum_{|\beta|=l}\frac{1}{\beta!}D^\beta u(P)x^\beta=\sum_{l=0}^\infty g_l(x)=\sum_{l=\kappa(u)}^\infty g_l(x), 
\end{equation*}
where $\beta=(\beta_1,\cdots,\beta_N)$, $\beta_1,\cdots,\beta_N\in\N$, $|\beta|=\beta_1+\cdots+\beta_N$,
$$
 D^\beta u(x):=\frac{\partial^{|\beta|}u(x)}{\partial x_1^{\beta_1}\cdots\partial x_N^{\beta_N}},\ \ 
g_l(x):=\sum_{|\beta|=l}\frac{1}{\beta!}D^\beta u(P)x^\beta.
$$
Here we denote by $\kappa(u)$ the smallest $l$ such that 
$g_l(x)\neq 0.$
Since $u$ satisfies \eqref{eq:eigenvalueproblem}, it follows that 
\begin{equation*}
    -\sum_{l=\kappa(u)}^\infty\Delta g_l(x)= \lambda\sum_{l=\kappa(u)}^\infty  g_l(x) ,
\end{equation*}
where $\Delta g_l(x)$ is also a homogeneous polynomial of degree $l-2$. Then we see that $g_{\kappa(u)}$ must be harmonic. In view of the above analysis, it is obvious that the \emph{vanishing order} $k$ of $u$ at $P$ in Lemma \ref{lem1} is actually $\kappa(u)$ and 
\begin{equation}\label{eq-1304-2}
    g(x)=\sum_{|\beta|=k}\frac{1}{\beta!}D^\beta u(P)x^\beta.
\end{equation}
    \end{rem}
Combining Remark \ref{rmi1} and Definition \ref{defvanishing}, we have the following Lemma.
\begin{lemma}\label{lem2212-1}
    If the the vanishing order of $u$  at $P$ is equal to $k$, then $g(x)$ defined as \eqref{eq-1304-2} is harmonic.
\end{lemma}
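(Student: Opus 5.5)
The statement to prove is Lemma \ref{lem2212-1}: if $u$ has vanishing order $k$ at $P$, then the polynomial $g$ in \eqref{eq-1304-2} is harmonic. The plan is to compare homogeneous components in the Taylor expansion of the eigenvalue equation, as sketched in Remark \ref{rmi1}.

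First, by Definition \ref{defvanishing}, $D^\beta u(P)=0$ for all $|\beta|\le k-1$, and $D^\alpha u(P)\neq0$ for some $|\alpha|=k$. Hence in the Taylor expansion
\[
u(P+x)=\sum_{l\ge0}g_l(x), \qquad g_l(x)=\sum_{|\beta|=l}\frac{1}{\beta!}D^\beta u(P)x^\beta,
\]
we have $g_l\equiv0$ for $l<k$. We also have $g_k\not\equiv0$, because distinct monomials are linearly independent and the coefficient of $x^\alpha$ is nonzero. So $\kappa(u)=k$ and $g=g_k$.

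Second, we use real-analyticity of $u$ near $P$, which holds by \cite[Theorem 7.5.1]{hormander1963linear}. The series converges, together with all its derivatives, uniformly on a small ball $B(0,\rho)$. Termwise differentiation gives
\[
-\Delta u(P+x)=-\sum_{l\ge k}\Delta g_l(x).
\]
Here $\Delta g_l$ is homogeneous of degree $l-2$, or zero. The eigenvalue equation gives
\[
-\sum_{l\ge k}\Delta g_l=\lambda\sum_{l\ge k}g_l \quad\text{on } B(0,\rho).
\]

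Third, we match homogeneous parts of degree $k-2$. On the right-hand side, all terms have degree at least $k$, so the degree-$(k-2)$ part is zero. On the left-hand side, the only term of degree $k-2$ is $-\Delta g_k$.

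To justify the matching, observe that both sides are convergent power series representing the same analytic function. Power series coefficients are unique, since they are the derivatives at $0$ divided by factorials. Equivalently, applying $D^\gamma$ at $x=0$ with $|\gamma|=k-2$ extracts exactly the coefficient of $x^\gamma$ in $\Delta g_k$ on the left and $0$ on the right. Thus $\Delta g_k\equiv0$, so $g$ is harmonic.

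If $k\le1$, then $g$ is affine and the claim is trivial.

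I expect no real obstacle. The only point needing care is the termwise identification of homogeneous components. This can be avoided by computing $D^\gamma(\Delta u+\lambda u)(P)=0$ directly. For $|\gamma|=k-2$, the term $D^\gamma u(P)$ vanishes, so $\Delta D^\gamma u(P)=0$. This is precisely the vanishing of the coefficients of $\Delta g_k$, and it requires only $u\in C^\infty$, not analyticity.
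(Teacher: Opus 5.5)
Your proof is correct and follows essentially the paper's argument in Remark~\ref{rmi1}: you expand $u$ at $P$ into homogeneous Taylor components using real-analyticity, then compare the degree-$(k-2)$ parts of $-\Delta u=\lambda u$ to conclude $\Delta g_k\equiv 0$. Your closing variant sharpens this and needs only $C^k$ regularity near $P$ rather than analyticity: for $|\gamma|=k-2$, the coefficient of $x^\gamma$ in $\Delta g_k$ is exactly $D^\gamma\Delta u(P)/\gamma!=-\lambda D^\gamma u(P)/\gamma!=0$.
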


\subsection{Eigenspace decomposition}\label{ss2}
	Let us consider an eigenvalue $\la$ in $\O$ with multiplicity $m(\la)$
     and let $E(\la)$ denote the corresponding eigenspace.  The main properties of the decomposition of $E(\lambda)$ as the direct sum of the subspaces $E_i$ were established in \cite{alm1,alm2}. For the reader's convenience, we recall some of them here. 

     One of the most important properties we would like to emphasize is that the decomposition of the eigenspace $E(\la)$ in general depends on the position of the hole $B(P,\e)$. This means that in certain cases our results will depend on the position of $P$!
     \vskip0.2cm
{\bf Example 1} Suppose that the $\lambda=\la_1$ is the first eigenvalue and $u_1$ the corresponding eigenfunction. Then
\[
E = E_0 = \operatorname{span}\{u_1\}, \quad(\hbox{with $k_0=0$, since $u_1$ does not vanish}).
\]
More generally, if $\lambda$ is any simple eigenvalue, then again
\[
E = E_0= \operatorname{span}\{u_1\},
\]
where $k_0$ is the smallest index for which there exists a non-zero derivative of $u$.
   \vskip0.2cm
 As can be seen, in this case the position of the point $P$ plays no role. In the next example, things will go differently.
   \vskip0.2cm
{\bf Example 2}   
 Assume that  $N=2$, $\lambda=\lambda_2$ and $\O=B(0,1)\subset\R^2$.
The eigenfunctions corresponding to $\lambda_2$  are  
$$
u_1(x)=J_1(j_{1,1}r)\frac{x_1}{r},\ \ u_2(x)=J_1(j_{1,1}r)\frac{x_2}{r},
$$
where $r=\sqrt{x_1^2+x_2^2}$,  $J_1$ is the Bessel function of the first kind and $j_{1,1}$ is the first positive zero of $J_1$.

Now if $P\ne0$ then $\big(u_1(P),u_2(P)\big)\ne(0,0)$ and then (assuming that $u_1(P)\ne0$),
$$E_0=\operatorname{span}\{u_1\}\quad\big(\hbox{since $u_1(P)\ne0$}\big) $$
and
$$E_1= \operatorname{span}\{u_2(P)u_1-u_1(P)u_2\}\quad\big(\hbox{since $\nabla\big(u_2(P)u_1-u_1(P)u_2\big)\ne\vec{0}$}\big)$$
where the last assertion follows from the fact that $\big(u_1(P),u_2(P)\big)\ne(0,0)$ and $\nabla u_1(P)$ is not parallel to  $\nabla u_2(P)$.
\vskip0.2cm
However, if $P=0$, the previous decomposition does not occur since both eigenfunctions vanish with the same  order at $P=0$.
 Therefore, in the decomposition of $E(\lambda)$ it is necessary to consider the next order of vanishing (which, in this case, is $1$). Indeed, observing that, by the properties of the Bessel functions $j_{1,1}$ is the first positive zero point of $j_1(r)$ we have,
 \begin{equation}\label{bes}
            \nabla u_1(0)=\left(\frac{j_{1,1}}{2},0\right)\ \ and \  \  \nabla u_2(0)=\left(0,\frac{j_{1,1}}{2}\right).
        \end{equation}
we get that $E(\la)=E_0$ with $k_0=1$ and
$$E_1=\operatorname{span}\{u_1,u_2\}\quad\big(\hbox{since $u_1(0)=u_2(0)=0$ and  $\nabla\big(C_1u_1+C_2u_2\big)\ne\vec{0}$ for any $(C_1,C_2)\ne(0,0)$.}\big)$$

\vskip0.2cm
{\bf Example 3: }
Assume that $N=3$, $\lambda=\lambda_2$ and $\O=B(0,1)\subset\R^3$. Here dim$E(\la)=3$ and the eigenfunctions corresponding  $\lambda$  are 
\begin{equation}
    u_1(x)=j_1(j_{1,1} r)\frac{x_1}{r},\ \  u_2(x)=j_1(j_{1,1} r)\frac{x_2}{r},\ \  u_3(x)=j_1(j_{1,1} r)\frac{x_3}{r},\ \ 
\end{equation}
where $r=\sqrt{x_1^2+x_2^2+x_3^2}$, $j_1(r)=\frac{\sin r}{r^2}-\frac{\cos r}{r}$.
\vskip0.1cm
Case 1. $P$ does not belong to any of the coordinate axes $x_1$, $x_2$, or $x_3$.

This case is very similar to the corresponding case in Example $2$. Here we have that $E(\la)=E_0\oplus E_1$, with dim$E_1=2$ with
$$E_0=\operatorname{span}\{u_1\}$$
and
$$E_1= \operatorname{span}\{\big(u_2(P)u_1-u_1(P)u_2,u_3(P)u_1-u_1(P)u_3\big)\}.$$
\vskip0.2cm
Case 2. Let $P$ lie on one of the Cartesian axes $x_1, x_2$ or $x_3$, with $P \neq 0$. We assume that $P=(0,0,\frac{1}{2})\in B(0,1)$. Since $j_{1,1}$ is the first zero point of $j_1(r)$, we have that $j_1(\frac{j_{1,1}}{2})\neq 0$.
Then 
\begin{equation}
    u_1(P)=0,\ \ u_2(P)=0,\ \ u_3(P)\neq 0,
\end{equation}
and 
\begin{equation}
    \nabla u_2(P)=(2j_1(\frac{j_{1,1}}{2}),0,0) ,\ \ \nabla u_3(P)=(0,2j_1(\frac{j_{1,1}}{2}),0).\ \ 
\end{equation}
It is immediately seen that we obtain the same decomposition as in the previous case, i.e. $E(\lambda)=E_0\oplus E_1$ with $k_0=0$, $k_1=1$. This shows that it is sufficient for at least one of the eigenfunctions not to vanish at $P$ in order to recover the same decomposition as in the case where all three eigenfunctions are nonzero at $P$.
\vskip0.2cm
Case 3. $P=0$. Here $u_1(0)=u_2(0)=u_3(0)=0$ and straightforward computation shows that, 
\begin{equation}
    \nabla u_1(0)=(\frac{j_{1,1}}{3},0,0),\ \    \nabla u_2(0)=(0,\frac{j_{1,1}}{3},0),\ \ \nabla u_3(0)=(0,0,\frac{j_{1,1}}{3}).
\end{equation}
Then $dim E(\lambda)=3$ and $E(\lambda)=E_0$ with $k_0=1$ and dim$E_0=3$. More precisely we have that
$$E_0= \operatorname{span}\{u_1,u_2,u_3\}
$$

 \subsection{Known results}
We start recalling a suitable version of the known spectral theorem
\begin{teo}\label{th_spectral}\cite[Proposition 8.20]{helffer2013} Let $T$ be a self-adjoint continuous operator on a Hilbert space $(\mathcal{H},\|\cdot\|)$. Then $$d(\la,\sigma(T))\|x\|\leq \|(T-\la)x\|,$$ for all $x\in D(T)$, where $D(T)$ is the domain of $T$ and $\sigma(T)$ is the spectrum of $T$.  
\end{teo}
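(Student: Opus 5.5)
The statement is the abstract spectral estimate $d(\la,\sigma(T))\|x\|\le\|(T-\la)x\|$ for a self-adjoint $T$ on $\mathcal H$. The plan is to prove it with the spectral theorem in its functional-calculus (projection-valued measure) form. Let $E$ be the spectral resolution of $T$, so that for every $x\in D(T)$
\[
\|(T-\la)x\|^2=\int_{\sigma(T)}|t-\la|^2\,d\langle E(t)x,x\rangle .
\]
The measure $d\langle E(t)x,x\rangle$ is a positive measure supported on $\sigma(T)$, with total mass $\|x\|^2$.

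On $\sigma(T)$ we have the pointwise bound $|t-\la|\ge d(\la,\sigma(T))$. Inserting it into the integral gives
\[
\|(T-\la)x\|^2\ge d(\la,\sigma(T))^2\int_{\sigma(T)}d\langle E(t)x,x\rangle=d(\la,\sigma(T))^2\|x\|^2 ,
\]
and taking square roots yields the claim. If $\la$ is complex rather than real, the same argument goes through verbatim.

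Alternatively, one can avoid the spectral measure and work from the resolvent. If $\la\in\sigma(T)$ the inequality is trivial, since the left-hand side vanishes. Otherwise $(T-\la)^{-1}$ is bounded and normal, so its norm equals its spectral radius. By spectral mapping, $\sigma((T-\la)^{-1})=\{(t-\la)^{-1}:t\in\sigma(T)\}$, so this spectral radius is $1/d(\la,\sigma(T))$. Then
\[
\|x\|=\|(T-\la)^{-1}(T-\la)x\|\le d(\la,\sigma(T))^{-1}\|(T-\la)x\| .
\]

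The only nontrivial ingredient in this second route is the identity $\|R\|=r(R)$ for normal operators, which I would simply cite. That, or the spectral theorem itself in the first route, is the step where the real content lies; everything else is routine.
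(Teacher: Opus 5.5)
The paper does not prove this statement; it quotes it from \cite[Proposition 8.20]{helffer2013}. Your spectral-measure argument is the standard proof of that result and is correct, and so is your resolvent route, with one adjustment: for unbounded $T$ one has $\sigma((T-\lambda)^{-1})=\{(t-\lambda)^{-1}:t\in\sigma(T)\}\cup\{0\}$, which leaves the spectral radius unchanged.

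Your first argument also uses no boundedness of $T$, and that is what is actually needed. Despite the word ``continuous'' in the statement, the paper applies the estimate to the unbounded operator $-\Delta_\varepsilon$ restricted to $\ker\Pi_\varepsilon$ in the proof of Proposition \ref{p:main}.
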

We shall apply the previous theorem to the Hilbert space $\mathcal{H}=H_0^1(\O)$ and the operator $-\Delta$.

Next we recall some known estimates on solution of linear elliptic problems. We mainly emphasizes the dependence of the domain in the statement.
	\begin{teo}\label{t1}
		Suppose that $\phi_\e\in H^1_0(\O_\e)$ verifies
		\begin{equation}
			-\Delta\phi_\e=\la_0\phi_\e+f_\e\ in\ \O_\e
		\end{equation}
		with $f_\e\in L^q(\O_\e)$ with $q>\frac N2$.
		Then we have that
		\begin{equation}\label{t2}
			||\phi_\e||_{L^\infty(\O_\e)}\le C \big(||\phi_\e||_{L^2(\O_\e)}+
			||f_\e||_{L^q(\O_\e)}\big),
		\end{equation}
		where we have that $C\le c_0(N)|\O|^p$  for some $p>0$ and any $\e>0$.
	\end{teo}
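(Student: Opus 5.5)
The statement to prove is Theorem \ref{t1}: a uniform $L^\infty$ bound for $\phi_\e$, with a constant that depends only on $N$ and on $|\O|$ and not on $\e$. The plan is a Moser iteration, or equivalently a De Giorgi truncation, carried out directly in $H^1_0(\O_\e)$. Two features of $\O_\e$ make the bound uniform. First, the function $\phi_\e$ vanishes on all of $\partial\O_\e$. Second, extending $\phi_\e$ by zero produces a function in $H^1_0(\O)$, and in fact in $H^1(\R^N)$. Consequently the Sobolev inequality $\|v\|_{L^{2^*}}\le S_N\|\nabla v\|_{L^2}$ holds on $\O_\e$ with the universal constant $S_N$ of $\R^N$, whatever the shape of the hole. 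For $N=2$ we use instead the Sobolev or Gagliardo–Nirenberg inequality for zero-extended functions on a set of finite measure; its constant depends only on $|\O|$. No boundary regularity of $\partial\O_\e$ is ever used, which is why the constant does not degenerate as $\e\to0$.

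The first step is to test the equation with the truncations $(\phi_\e-t)^+$ for $t>0$; these are admissible in $H^1_0(\O_\e)$. This gives
\[
\int_{A_t}|\nabla(\phi_\e-t)^+|^2=\la_0\int_{A_t}\phi_\e(\phi_\e-t)^+ +\int_{A_t}f_\e(\phi_\e-t)^+ ,\qquad A_t=\{\phi_\e>t\}.
\]
The second step bounds the right-hand side by Hölder's inequality, using $q>N/2$:
\[
\int f_\e(\phi_\e-t)^+\le\|f_\e\|_{L^q}\,\|(\phi_\e-t)^+\|_{L^{2^*}}\,|A_t|^{1-1/q-1/2^*}.
\]
The zero-order term $\la_0\phi_\e$ is handled by moving it into the forcing. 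Set $g=\la_0\phi_\e+f_\e$. A preliminary bootstrap (the same iteration run finitely many times, with $L^2\to L^{2^*}\to\cdots$ gains) bounds $\|\phi_\e\|_{L^{s}}$ for some $s>N/2$ in terms of $\|\phi_\e\|_{L^2}+\|f_\e\|_{L^q}$. Alternatively, one can restrict to levels $t\ge t_0:=\|\phi_\e\|_{L^2}$. By Chebyshev's inequality, $|A_t|\le \|\phi_\e\|_2^2/t^2$, which makes $|A_t|$ small on these levels, and the $\la_0$ term is then absorbed by the smallness of $|A_t|^{2/N}$ via Sobolev.

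The third step combines these estimates with Sobolev's inequality. Writing $h>t$, one obtains the standard De Giorgi inequality
\[
(h-t)^2|A_h|^{2/2^*}\le C\,|A_t|^{1+\gamma}\,(\ldots),\qquad \gamma>0 .
\]
The De Giorgi iteration lemma (Stampacchia's lemma) then yields $\sup\phi_\e\le t_0+C\big(\|\phi_\e\|_{L^2}+\|f_\e\|_{L^q}\big)|\O|^{p}$. Applying the same argument to $-\phi_\e$ gives the lower bound.

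The main obstacle is tracking constants. The Sobolev constant must be the universal one, which is guaranteed by the zero extension. The powers of $|\O_\e|\le|\O|$ entering through the Hölder exponents, and the dependence on $\la_0$, must also be followed explicitly. If $\la_0$ is not small, I would first try to absorb it as above, using the smallness of $|A_t|$ at high levels. Failing that, I would allow $C$ to depend on $\la_0$ as well, since in the applications $\la_0$ is a fixed eigenvalue.
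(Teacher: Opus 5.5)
Your proposal is correct and is essentially the paper's argument. The paper only invokes the classical Moser iteration (Theorem 8.15 of Gilbarg--Trudinger), whose constant depends on the domain only through $|\O|$ precisely because, as you note, the only functional inequality used is the Sobolev inequality for zero-extended $H^1_0$ functions; your De Giorgi--Stampacchia version, with a preliminary $L^s$ bootstrap ($s>N/2$) to handle $\la_0\phi_\e$, is an equivalent rendering, and letting $C$ depend on $\la_0$ (and on $q$) is in fact necessary, a dependence the paper's stated bound $C\le c_0(N)|\O|^p$ suppresses. Use the bootstrap route rather than your alternative of restricting to levels $t\ge\|\phi_\e\|_{L^2(\O_\e)}$: Chebyshev then only gives $|A_t|\le 1$, and the piece $\la_0 t\int(\phi_\e-t)^+$ of $\la_0\int\phi_\e(\phi_\e-t)^+$ is not absorbed by smallness of $|A_t|$, because it acts as a forcing of size $\la_0 t$ that grows with the level.
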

	\begin{proof}
		It is a classical application of Moser's iteration, see for example Theorem 8.15 in \cite{GT}.
		
        \end{proof}
\begin{lemma}\label{GTlem6.5}
    Let $\O$ be a $C^{2,\alpha}$ domain in $\R^N$, and let $u\in C^{2,\alpha}(\Bar{\O})$ be a solution of 
    \begin{equation*}
        \begin{cases}
          L u=f&\hbox{in}\  \O,\\
          u=0&\hbox{on}\    \partial\O,
        \end{cases}
    \end{equation*}
   where $Lu=\sum_{i,j=1}^N a^{ij}(x)D_{ij}u+\sum_{i=1}^N b^i(x)D_i u+c(x)u$ and  $f\in C^\alpha(\Bar{\O})$. It is assumed that  the coefficients of $L$ satisfy 
    $$
    a^{ij}(x)\xi_i\xi_j\ge \Lambda_1|\xi|^2,\ \forall x\in \O,\ \xi\in\R^N\ \ \hbox{and}\ \ 
    |a^{ij}|_{0,\alpha;\O},\ |b^i|_{0,\alpha;\O},\ |c(x)|_{0,\alpha;\O}\leq \Lambda_2,
    $$
    for some positive constant $\Lambda_1,\ \Lambda_2$.
    Then for some $\delta$ there is a ball $B=B(x_0,\delta)$ at each point $x_0\in \partial\O$ such that 
    \begin{equation}\label{eq0622-1}
        |u|_{2,\alpha;B\cap\O}\leq C(|u|_{0;\O}+|f|_{0,\alpha;\O}),
    \end{equation}
    where $C=C(N,\alpha,\Lambda_1,\Lambda_2,\O)$ and $|u|_{k,\alpha;\O}:=\sum_{j=0}^k|D^j u|_{0,\O}+[D^ku]_{\alpha;\O}$ with 
    $$|D^ju|_{0;\O}=\sup_{|\beta|=j}\sup_{x\in\O}|D^\beta u(x)|,\ \ [D^ku]_{\alpha;\O}=\sup_{|\beta|=k}\sup_{\substack{x,y\in\O\\x\neq y}}\frac{|D^\beta u(x)-D^\beta u(y)|}{|x-y|^\alpha}.$$
\end{lemma}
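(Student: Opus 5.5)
This is the classical Schauder boundary estimate, so I would follow the standard route in Gilbarg--Trudinger: flatten the boundary locally, prove the estimate on a half-ball, and then absorb the lower-order norms.

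Fix $x_0\in\partial\O$. Since $\O$ is $C^{2,\alpha}$, there is a $C^{2,\alpha}$ diffeomorphism $\Psi$ defined on a neighbourhood $U$ of $x_0$, with $\Psi(U\cap\O)\subset\R^N_+$ and $\Psi(U\cap\partial\O)\subset\{y_N=0\}$. The $C^{2,\alpha}$ norms of $\Psi$ and $\Psi^{-1}$ are bounded uniformly in $x_0$ by compactness of $\partial\O$.

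Set $w=u\circ\Psi^{-1}$. It solves $\tilde L w=\tilde f$ in a half ball $B^+_R$, with $w=0$ on the flat part of the boundary. The new operator $\tilde L$ is again uniformly elliptic, and its ellipticity constant and $C^\alpha$ norms are controlled by $\Lambda_1$, $\Lambda_2$ and the norms of $\Psi$. The main point to check here is that the transformed leading coefficients $\tilde a^{kl}=a^{ij}\partial_i\Psi_k\partial_j\Psi_l$ are only $C^\alpha$, which is enough. The first derivatives of $\Psi$ enter the new first-order coefficients through $D^2\Psi$, which is $C^{\alpha}$ because $\Psi\in C^{2,\alpha}$.

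On the half ball I would use the flat-boundary Schauder estimate. First freeze the coefficients at a point and reduce to a constant-coefficient operator, which a linear change of variables preserving the half-space turns into $\Delta$. For $\Delta$ on $B^+_R$ with zero data on $\{y_N=0\}$, I would extend the solution by odd reflection and apply the interior Newtonian potential estimates; this gives the $C^{2,\alpha}$ bound near the flat boundary (GT Lemma 4.12/Theorem 4.11). Variable coefficients are then handled by the standard perturbation argument: write $a^{ij}(y_0)D_{ij}w = \tilde f+(a^{ij}(y_0)-a^{ij})D_{ij}w-\dots$, apply the estimate in small half balls, and use the weighted interior norms $|\cdot|^*_{2,\alpha}$ with the interpolation inequalities to absorb the term $\mu^\alpha[D^2w]_\alpha$ and the lower-order derivatives. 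This yields
\[
|w|_{2,\alpha;B^+_{R/2}}\le C\big(|w|_{0;B^+_R}+|\tilde f|_{0,\alpha;B^+_R}\big).
\]

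I expect the absorption step to be the main obstacle. It works because the smallness of the oscillation of the leading coefficients on balls of radius $\mu$ beats the $C^\alpha$ seminorm, and the interpolation inequality $|Dw|_0+|D^2w|_0\le \epsilon[D^2w]_\alpha+C_\epsilon|w|_0$ does the rest.

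Pulling back by $\Psi$, the norms transform with constants depending only on $\Psi$. Choosing $\delta$ so that $B(x_0,\delta)\cap\O\subset\Psi^{-1}(B^+_{R/2})$ gives \eqref{eq0622-1}, with $C$ depending on $N,\alpha,\Lambda_1,\Lambda_2$ and $\O$ (through the uniform bounds on $\Psi$). Since $\partial\O$ is compact, $\delta$ can be taken independent of $x_0$.
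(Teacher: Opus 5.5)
Your proposal is correct and is exactly the Gilbarg--Trudinger argument: flatten the boundary with a $C^{2,\alpha}$ chart, prove the half-ball Schauder estimate by freezing coefficients, using a half-space-preserving linear change of variables and absorbing lower-order terms via interpolation, then pull back — and that is all the paper relies on, since its proof only cites Lemma 6.5 of Gilbarg--Trudinger with zero boundary data. One detail of your flat-boundary step needs correcting: reflecting $u$ itself oddly does not reduce matters to interior estimates when $f\neq0$ on $\{y_N=0\}$, because the reflected right-hand side is discontinuous there and $D_{NN}u=f$ changes sign across the plane; as in the proof of Gilbarg--Trudinger Theorem 4.11, one first subtracts the potential $w$ of $f$ built with the half-space Green kernel, whose second derivatives are H\"older up to the flat boundary, and reflects only the harmonic function $u-w$ (and the weighted norms on the half ball should be the primed ones, weighted only by the distance to the curved part of the boundary).
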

\begin{proof}
 See   Lemma 6.5 in \cite{GT} for the proof. 
\end{proof}

Now we introduce Lemma \ref{GTlem6.5} to $\O_\e=\O\setminus B(P,\e)$, then the estimates of $u$ around $\partial \O_\e$ is as follows.
\begin{lemma}\label{FurGT}
    Let $\O_\e=\O\setminus B(P,\e)$ and $u_\e\in C^{2,\alpha}(\Bar{\O}_\e) $ be a solution of 
    $$\begin{cases}
        -\Delta u_\e-\lambda_\e u=f_\e \ &in\  \O_\e,\\
        u_\e=0 \  &on\  \partial\O_\e,
    \end{cases}$$
    with $|\lambda_\e|\leq \Lambda$  and  $f_\e\in C^\alpha(\Bar{\O}_\e)$. Then the following estimates hold. \begin{enumerate}
    \item[(i)]
    There exist $\delta>0$ and $A=A(\delta,\Omega)>0$ such that for each $x_0\in \partial\O$, the following estimate holds,
      \begin{equation}\label{eq0304-1}
      |u_\e|_{2,\alpha;B(x_0,\delta)\cap\O}\leq C(|u_\e|_{0;\mathcal{B}_{A\delta}}+|f_\e|_{0,\alpha;\mathcal{B}_{A\delta}}),
 \end{equation}
 where $C=C(N,\alpha,\Lambda,\O)$ and $\mathcal{B}_{A\delta}:=\{x\in\O\ |\ dist(x,\partial\O)\leq A\delta\}$.
  \item[(ii)]  
    There exist  $\delta>0$ and $c=c(N,\alpha,\Lambda)>1$ such that for each $x_0\in \partial B(P,\e)$, the following estimate holds,
    \begin{equation*}
         \e|Du_\e|_{0;B(x_0,\e\delta)\cap \O_\e }+\e^2|D^2u_\e|_{0;B(x_0,\e\delta)\cap \O_\e }\leq C(|u_\e|_{0;\mathcal{O}_\e}+\e^2|f_\e|_{0;\mathcal{O}_\e}+\e^{2+\alpha}[f_\e]_{\alpha;\mathcal{O}_\e}),
    \end{equation*}
    where $C=C(N,\alpha,\Lambda)$ and $\mathcal{O}_\e:=B(P,c\e)\setminus B(P,\e)$.
    \end{enumerate}
\end{lemma}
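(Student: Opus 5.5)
The estimates in Lemma \ref{FurGT} follow from Lemma \ref{GTlem6.5} by two devices. For (i) we localize near $\partial\O$ with a cutoff. For (ii) we use a blow-up rescaling near the small sphere $\partial B(P,\e)$, which turns the $\e$-dependent geometry into a fixed one. Throughout we view the equation as $-\Delta u_\e=\lambda_\e u_\e+f_\e$ (reading the zeroth-order term as $\lambda_\e u_\e$), with $c(x)=\lambda_\e$ bounded by $\Lambda$. This ensures that $\Lambda_1=1$ and $\Lambda_2=\max(1,\Lambda)$ in Lemma \ref{GTlem6.5} are independent of $\e$.

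\textbf{Part (i).} Since $B(P,\e)$ stays at distance at least $d_0:=\frac12\mathrm{dist}(P,\partial\O)>0$ from $\partial\O$ for small $\e$, the strip $\mathcal{B}_{A\delta}$ does not meet the hole once $A\delta<d_0$. We take a smooth cutoff $\eta$ with $\eta\equiv1$ on $\mathcal{B}_{A\delta/2}$ and support in $\mathcal{B}_{A\delta}$, with derivatives bounded in terms of $\delta$ and $A$ only.

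The function $w=\eta u_\e$ vanishes on $\partial\O$ and solves
\[
-\Delta w-\lambda_\e w=\eta f_\e-2\nabla\eta\cdot\nabla u_\e-u_\e\Delta\eta
\]
in a fixed smooth subdomain $D\subset\O$ with $D\supset\mathcal{B}_{A\delta/2}$, independent of $\e$. Applying Lemma \ref{GTlem6.5} in $D$ gives
\[
|w|_{2,\alpha;B(x_0,\delta)\cap\O}\le C\big(|u_\e|_{0;\mathcal{B}_{A\delta}}+|f_\e|_{0,\alpha;\mathcal{B}_{A\delta}}+|\nabla u_\e|_{0,\alpha;\mathrm{supp}\nabla\eta}\big).
\]
The last term is an interior gradient H\"older norm on a set at positive distance from $\partial\O$ and from the hole. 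We remove it by interior Schauder estimates, which bound it by $|u_\e|_0+|f_\e|_{0,\alpha}$ on a slightly larger strip; enlarging $A$ absorbs this. Since $w=u_\e$ on $B(x_0,\delta)\cap\O$ for $\delta$ small relative to $A\delta/2$, estimate \eqref{eq0304-1} follows.

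\textbf{Part (ii).} We set $v(y):=u_\e(P+\e y)$ for $y\in B(0,c)\setminus B(0,1)$. Then
\[
-\Delta v-\e^2\lambda_\e v=\e^2 f_\e(P+\e y)=:g(y),
\]
with $v=0$ on $|y|=1$. The coefficient $\e^2\lambda_\e$ is bounded by $\Lambda$, and the annulus $B(0,c)\setminus\overline{B(0,1)}$ is a fixed domain. The scaled data satisfy
\[
|g|_{0,\alpha}\le \e^2|f_\e|_0+\e^{2+\alpha}[f_\e]_\alpha.
\]
Applying part (i) (really Lemma \ref{GTlem6.5} with the cutoff argument) to $v$ near the inner sphere $|y|=1$, with $c$ chosen so the cutoff region fits in the annulus, gives
\[
|Dv|_0+|D^2v|_0\le C\big(|v|_{0;B_c\setminus B_1}+|g|_{0,\alpha}\big)
\]
on $B(y_0,\delta)\cap\{|y|>1\}$ for $|y_0|=1$. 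Scaling back, $Dv=\e Du_\e$ and $D^2v=\e^2D^2u_\e$, and $B(y_0,\delta)$ corresponds to $B(x_0,\e\delta)$, which yields the claim.

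The main obstacle is the cutoff step. One has to control the commutator terms $\nabla\eta\cdot\nabla u_\e$ in $C^\alpha$ purely in terms of sup norms of $u_\e$ and norms of $f_\e$ on a region of controlled size. This needs an interior estimate on an enlarged region, which is exactly why the statements involve the larger sets $\mathcal{B}_{A\delta}$ and $\mathcal{O}_\e$ with $A,c>1$. It also requires checking that every constant depends only on $N,\alpha,\Lambda$ (and $\O$ in part (i)), never on $\e$. In (ii) this is ensured by the rescaling to a fixed annulus.
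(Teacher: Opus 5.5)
Your proposal is correct and follows the paper's proof. Part (ii) is exactly the paper's blow-up $v(y)=u_\e(P+\e y)$, followed by a boundary Schauder estimate near $|y|=1$ and scaling back. For (i), the paper only observes that the proof of Lemma~\ref{GTlem6.5} (flattening the boundary plus a half-ball estimate) is already local, so your cutoff-plus-interior-Schauder localization of its global statement is an equivalent way to reach the same bound. Working on the fixed annulus $B(0,c)\setminus B(0,1)$ in (ii) also has a benefit: it makes explicit that the constants do not depend on $\e$, which the paper leaves implicit when it applies (i) on the $\e$-dependent domain $\tilde\O_\e$.

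There is one small slip to fix. Lemma~\ref{GTlem6.5} requires $w=\eta u_\e$ to vanish on the whole boundary of the domain where you apply it. That domain must therefore contain $\mathrm{supp}\,\eta$, not merely $\mathcal{B}_{A\delta/2}$; the simplest choice is $\O$ itself, with $w$ extended by zero across the hole.
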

\begin{proof}
Assertion $(i)$ follows directly from the proof of Lemma \ref{GTlem6.5}.

We now prove $(ii)$.
    Let $\tilde{u}_\e(y):=u_\e(P+\e y)$ and $\tilde{\O}_\e:=\{y\in \R^N \ such \ that \ P+\e y\in \O_\e\}$. Then  $\tilde {u}_\e$ satisfies 
    $$
    \begin{cases}
    -\Delta \tilde{u}_\e-\e^2\lambda_\e\tilde{u}_\e=\e^2\tilde{f}_\e\ &in\ \tilde{\O}_\e,\\
     \tilde{u}_\e=0&on \ \partial \tilde{\O}_\e,\\
     \end{cases}
    $$
    where $\tilde{f}_\e(y)=f_\e(P+\e y)$ and $\partial B(0,1)\subset \partial \tilde{\O}_\e $. Applying $(i)$ to  $\tilde{u}_\e$, it follows that for some $\delta>0$  there exists a ball $B=B(y_0,\delta)$  at each $y_0\in \partial B(0,1)$ such that 
    \begin{equation*}
       |\tilde{u}_\e|_{2,\alpha;B\cap \tilde{\O}_\e} \leq C(|\tilde{u}_\e|_{0;B(0,1+A\delta)\setminus B(0,1)}+|\e^2\tilde{f}_\e|_{0,\alpha;B(0,1+A\delta)\setminus B(0,1)}),
    \end{equation*}
    where $C=C(N,\alpha,\Lambda)$.
  Let $c=1+A\delta$.  Then we obtain that at each $x_0\in \partial B(P,\e)$,
    \begin{equation*}
       \e|Du_\e|_{0;B(x_0,\e\delta)\cap \O_\e }+\e^2|D^2u_\e|_{0;B(x_0,\e\delta)\cap \O_\e }\leq  |\tilde{u}_\e|_{2,\alpha;B\cap \tilde{\O}_\e} \leq C(|u_\e|_{0;\mathcal{O}_\e}+\e^2|f_\e|_{0;\mathcal{O}_\e}+\e^{2+\alpha}[f_\e]_{\alpha;\mathcal{O}_\e}),
    \end{equation*}
    where $\mathcal{O}_\e:=B(P,c\e)\setminus B(P,\e)$.
\end{proof}

 \begin{lemma}\label{LCS}
     Suppose that $\O$ is a bounded smooth domain $\R^2$ and $\varphi$ satisfies that
     \begin{equation*}
         \begin{cases}
             -\Delta \varphi =\lambda\varphi\ & \hbox{in } \O,\\
             \varphi=0 \ &\hbox{on } \partial \O,\\
         \end{cases}
     \end{equation*}
     and $x_0\in \partial\O$. Then $\frac{\partial\varphi}{\partial\nu}(x_0)=0$ iff $x_0\in \mathcal{N}$, where $\mathcal{N}=\overline{\{x\in\O\hbox{ such that }\varphi=0\}}$ and $\frac{\partial\varphi}{\partial\nu}$ is the outnormal derivative of $\varphi$ on the boundary.
 \end{lemma}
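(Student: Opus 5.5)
The plan is to prove the two implications separately. The tools are the Hopf boundary lemma for one direction and a first-order Taylor expansion in tubular coordinates near $\partial\O$ for the other. Throughout I use that $\varphi\in C^{2}(\overline\O)$ (indeed $C^\infty(\overline\O)$) by standard elliptic regularity up to the smooth boundary. The argument does not actually use $N=2$.

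\emph{Step 1: if $x_0\notin\mathcal N$ then $\frac{\partial\varphi}{\partial\nu}(x_0)\neq0$.} Since $\mathcal N$ is closed and $x_0\notin\mathcal N$, there is $r>0$ with $B(x_0,r)\cap\mathcal N=\emptyset$. By the smoothness of $\partial\O$, I choose $r$ small enough that $U:=B(x_0,r)\cap\O$ is connected. Then $\varphi$ has no zeros in $U$, so it has a constant sign there; say $\varphi>0$ in $U$, replacing $\varphi$ by $-\varphi$ otherwise. Consequently $-\Delta\varphi=\lambda\varphi>0$ in $U$, so $\varphi$ is superharmonic in $U$. It attains its minimum value $0$ at $x_0\in\partial U\cap\partial\O$, and $\partial\O$ satisfies the interior ball condition at $x_0$. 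The Hopf lemma then yields $\frac{\partial\varphi}{\partial\nu}(x_0)<0$.

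\emph{Step 2: if $\frac{\partial\varphi}{\partial\nu}(x_0)\neq0$ then $x_0\notin\mathcal N$.} By continuity of $\nabla\varphi$ up to the boundary, there are $\rho>0$ and $c>0$ with $|\frac{\partial\varphi}{\partial\nu}(y)|\ge c$ for all $y\in\partial\O\cap B(x_0,\rho)$. I use tubular coordinates $x=y-t\nu(y)$ with $y\in\partial\O\cap B(x_0,\rho)$ and $t\in(0,t_0)$; these parametrize a one-sided neighborhood $W$ of $x_0$ in $\O$ for small $\rho,t_0$. Since $\varphi(y)=0$ on $\partial\O$, Taylor's formula gives
\[
\varphi(y-t\nu(y))=-t\,\frac{\partial\varphi}{\partial\nu}(y)+O(t^2),
\]
with the $O(t^2)$ term bounded uniformly by $\|D^2\varphi\|_{L^\infty}$. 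Shrinking $t_0$ so that the remainder is below $ct/2$, we get $|\varphi|\ge ct/2>0$ in $W$. Hence $W$ contains no zeros of $\varphi$. Since $W\cup(\mathbb R^N\setminus\overline\O)$ contains a full neighborhood of $x_0$, the point $x_0$ is not in the closure of $\{\varphi=0\}\cap\O$, that is, $x_0\notin\mathcal N$.

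Combining the two steps gives the equivalence. The only delicate points are choosing $r$ so that $U$ is connected, which is needed to get a single sign in Step 1, and the uniformity of the Taylor remainder in Step 2. Both follow from the regularity of $\partial\O$ and of $\varphi$ up to the boundary, so I do not expect a real obstacle.
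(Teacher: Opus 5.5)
Your proof is correct. Note that the paper gives no argument of its own: it simply cites Lemma 1.2 of \cite{CSLin1987}. Your proof is the standard self-contained one, with two ingredients:
\begin{itemize}
\item the Hopf lemma, which shows that a boundary point with no nearby interior zeros has $\partial_\nu\varphi(x_0)\neq0$;
\item a first-order expansion along normals (equivalently, the implicit function theorem at a point where $\nabla\varphi=\partial_\nu\varphi\,\nu\neq0$), which shows that such a nondegenerate boundary point has no interior zeros nearby.
\end{itemize}

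What your route buys over the citation is that it is visibly local, dimension-free, and valid on any smooth boundary component. So it covers the lemma's actual use in Theorem \ref{apmain}, namely for $u_\varepsilon$ on $\Omega_\varepsilon$ at points of the inner circle $\partial B(P,\varepsilon)$. The same two ingredients, Hopf and then the implicit function theorem, are what the paper itself uses inline, in arbitrary dimension, in the Claim inside the proof of Theorem \ref{th-0804}.

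Three minor touch-ups:
\begin{itemize}
\item In Step 1 you implicitly use $\lambda>0$ to get superharmonicity. This holds for a Dirichlet eigenvalue.
\item You can bypass the connectedness of $B(x_0,r)\cap\Omega$ entirely. Work in a small interior ball tangent to $\partial\Omega$ at $x_0$; $\varphi$ automatically has one sign there, and Hopf applies directly.
\item In Step 2, the accurate phrasing is that $V\cap\Omega\subset W$ for some neighborhood $V$ of $x_0$. Boundary points lie in neither $W$ nor $\mathbb{R}^N\setminus\overline{\Omega}$, so your set $W\cup(\mathbb{R}^N\setminus\overline{\Omega})$ does not literally contain a neighborhood of $x_0$.
\end{itemize}
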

\begin{proof}
  See  Lemma 1.2 in \cite{CSLin1987} for the proof.
  \end{proof}

	\begin{lemma}\label{sml}
		Suppose that  $v$ is a solution to
		\begin{equation*}
			\begin{cases}
				-\Delta v=a(x)v&in\ \mathcal{D},\\
				v=0&on\ \partial \mathcal{D},
			\end{cases}  
		\end{equation*}
		with $a\in L^\frac N2(\mathcal{D})$ if $N\ge3$ and $a\in L^\infty(\mathcal{D})$ for $N=2$. Denoting by $S$ the best Sobolev constant we have that if holds
		\begin{equation*}
			\begin{cases}
				||a||_{L^\frac N2(\mathcal{D})}<S,&if\ N\ge3,\\
				||a||_{L^\infty(\mathcal{D})}<\frac{4\pi}{|\mathcal{D}|},&if\ N=2,
			\end{cases}  
		\end{equation*}
		then $v\equiv0$.
	\end{lemma}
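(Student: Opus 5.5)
The plan is to test the equation against $v$ itself and then use the Sobolev inequality (for $N\ge3$) or a Faber--Krahn type bound (for $N=2$). I assume, as the statement implicitly requires, that $v\in H_0^1(\mathcal{D})$ is a weak solution. Multiplying $-\Delta v=a(x)v$ by $v$ and integrating by parts (the boundary term vanishes since $v=0$ on $\partial\mathcal{D}$) gives
\begin{equation*}
\int_{\mathcal{D}}|\nabla v|^2\,dx=\int_{\mathcal{D}}a(x)v^2\,dx .
\end{equation*}
The whole argument reduces to bounding the right-hand side by a constant strictly smaller than one times $\int_{\mathcal{D}}|\nabla v|^2$.

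For $N\ge3$, I apply H\"older with exponents $\frac N2$ and $\frac N{N-2}$:
\begin{equation*}
\int_{\mathcal{D}}a v^2\le \|a\|_{L^{\frac N2}(\mathcal{D})}\,\|v\|^2_{L^{2^*}(\mathcal{D})},
\end{equation*}
with $2^*=\frac{2N}{N-2}$. I then use the Sobolev inequality $S\|v\|^2_{L^{2^*}}\le\int|\nabla v|^2$, valid for $H_0^1(\mathcal{D})$ after extension by zero, with $S$ normalized as the best constant in this form. This yields
\begin{equation*}
\int|\nabla v|^2\le \frac{\|a\|_{L^{N/2}}}{S}\int|\nabla v|^2 .
\end{equation*}
Since $\|a\|_{L^{N/2}}<S$, it follows that $\nabla v\equiv0$, and hence $v\equiv0$ in $H_0^1$.

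For $N=2$, I bound $\int a v^2\le\|a\|_{L^\infty}\int v^2$ and use $\int|\nabla v|^2\ge\lambda_1(\mathcal{D})\int v^2$. This gives $(\lambda_1(\mathcal{D})-\|a\|_\infty)\int v^2\le0$. The remaining step is the lower bound $\lambda_1(\mathcal{D})\ge\frac{4\pi}{|\mathcal{D}|}$. By Faber--Krahn, $\lambda_1(\mathcal{D})\ge\lambda_1(\mathcal{D}^*)=\frac{\pi j_{0,1}^2}{|\mathcal{D}|}$, where $\mathcal{D}^*$ is the disc of the same area, and $j_{0,1}^2\approx5.78>4$. If one prefers to avoid the sharp constant, I would also check whether a simpler estimate suffices, for instance an isoperimetric or coarea argument giving $\lambda_1\ge 4\pi/|\mathcal{D}|$ directly. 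Either way, $\|a\|_\infty<4\pi/|\mathcal{D}|\le\lambda_1(\mathcal{D})$ forces $\int v^2=0$.

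The main obstacle is only bookkeeping of constants: making sure the normalization of $S$ matches the assumed inequality, and justifying the planar lower bound $\lambda_1(\mathcal{D})\ge 4\pi/|\mathcal{D}|$. Both are classical, so the proof is essentially immediate once the energy identity is written.
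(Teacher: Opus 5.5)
Your proof is correct. It is also more self-contained than the paper's, which gives no argument and simply cites Grossi--Molle for $N\ge3$ and De Carli--Hudson for $N=2$.

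\textbf{Case $N\ge3$.} Your chain (test the equation with $v$, H\"older with exponents $\frac N2,\frac N{N-2}$, then Sobolev) is the standard argument behind the cited lemma, so there is nothing to compare.

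\textbf{Case $N=2$.} Here your route genuinely differs. You invoke Faber--Krahn, $\lambda_1(\mathcal D)\ge \pi j_{0,1}^2/|\mathcal D|$ with $j_{0,1}^2\approx 5.78$. This clears the stated threshold with room to spare, and in fact proves the lemma under the weaker hypothesis $\|a\|_{L^\infty(\mathcal D)}<\pi j_{0,1}^2/|\mathcal D|$.

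The constant $4\pi$ in the statement is the non-sharp one produced by an elementary isoperimetric level-set argument; this is presumably the type of result the paper takes from De Carli--Hudson. The argument runs as follows. Assume $v>0$ in $\mathcal D$ and $a\le M$, and set $\mu(t)=|\{v>t\}|$. Then $\int_{\{v=t\}}|\nabla v|=\int_{\{v>t\}}av\le M\|v\|_\infty\mu(t)$. Combining Cauchy--Schwarz, the coarea formula and the planar isoperimetric inequality gives $4\pi\mu(t)\le\mathcal{H}^1(\{v=t\})^2\le(-\mu'(t))\,M\|v\|_\infty\,\mu(t)$. Integrating over $t\in(0,\|v\|_\infty)$ yields $M|\mathcal D|\ge4\pi$.

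\textbf{Comparison.} The level-set route avoids symmetrization and uses only the isoperimetric inequality, but it must be run separately on each nodal domain when $v$ changes sign. Your route relies on the heavier Faber--Krahn theorem. In exchange, it treats sign-changing $v$ and sign-changing $a$ at once through the $H_0^1$ energy identity. It is more than enough for the uses in Section~\ref{sec-application}, where $a\equiv\lambda$ and $\mathcal D$ is a nodal domain of small measure.
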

	\begin{proof}
		See  \cite[Lemma 2.5]{GrossiMolle} for the case $N\ge3$ and   \cite[Theorem 4.1]{DeCarliHudson} for the case $N=2$. 
	\end{proof}

	\section{Improved estimates on the $u$-capacitary potential}\label{sec-Vu}
In this section we prove Theorem \ref{intr-T1} and Corollaries \ref{T2'}, \ref{T2}.

From now on, we will assume that $f(\e,x)=O(\e^lg(x) )$ means
$$|f(\e,x)|\le C\e^l|g(x)|$$
where the constant $C$ does not depend on $x$.
    
	Given a $L^2$- normalized eigenfunction $u$ to the problem \eqref{eq:eigenvalueproblem},  assume that the vanishing order of $u$ at $P$ is $k$
     (see Definition \ref{defvanishing}).

    Recalling the definition of $u$-capacity potential $V_{\e,u}$ of  $B(P,\e)$ in Definition \ref{intr-def-Cap}, $V_{\e,u}$ is the unique solution to the problem
	\begin{equation}\label{eq:Veproblem}
		\begin{cases}
			\Delta V_{\e,u} =0 &\text{in }\Omega\setminus B(P,\e)\\
			V_{\e,u} =0 &\text{on }\partial \Omega\\
			V_{\e,u} =u &\text{on }\overline{ B(P,\e)},
		\end{cases}
	\end{equation}
 and its energy $\int_{\Omega} |\nabla V_{\e,u}|^2\,dx$  is called $u$-capacity of $B(P,\e)$ with respect to the domain $\Omega$. 
	 Let us recall (e.g., see \cite[Theorem 1.13]{afhl} for $N=2$ and \cite[Theorem 1.6]{alm2} for $N\ge3$)  that there exists a positive constant $ C$  such that 
  \begin{equation}\label{capu}
         \int_{\Omega} |\nabla V_{\e,u}|^2\,dx=
         \begin{cases}
           \displaystyle \frac{ C\,(1+o(1))}{|\log\e|} , &if \ N=2\ and\ k=0,\\
           \displaystyle C\,\e^{N-2+2k}(1+o(1)),& otherwise,
         \end{cases}
    \end{equation}
    and (see \cite[Lemma A.1]{afhl})
	\begin{equation*}
	||V_{\e,u}||^2_{L^2(\Omega)} = o\left( \int_{\Omega} |\nabla V_{\e,u}|^2\,dx \right),
	\end{equation*}
    obtaining
     \begin{equation}\label{eq:normaL2Ve}
         ||V_{\e,u}||^2_{L^2(\Omega)}=
         \begin{cases}
           \displaystyle o\left(\frac{1}{|\log\e|}\right) , &if \ N=2\ and\ k=0,\\
           \displaystyle o\left(\e^{N-2+2k}\right),& otherwise.
         \end{cases}
    \end{equation}

	Let us denote by $G_\O(x,y)$ the Green function of $-\Delta$ with zero Dirichlet boundary conditions. We have the classical decomposition,
	\begin{equation}\label{green}
		G_\O(x,y)=H_\O(x,y)+ 
		\begin{cases}
			\frac1{N(N-2)\omega_N}\frac1{|x-y|^{N-2}},&if\ N\ge3,\\
			-\frac1{2\pi}\log|x-y|,& if\  N=2.
		\end{cases}
	\end{equation}
	Here $H_\O(x,y)$ is the regular part of the Green function which is known to be bounded and harmonic in $\O$ in $x$ and $y$. Next lemma links the expansion of 
$V_{\e,u}$ with the Green function $G_\O(x,y)$.

\begin{lemma}\label{l:improvedL2Ve}
 For $N\ge 3$, let 	$A(N,0)=N(N-2)\omega_N$ and 
			$A(N,k)=\frac{N(N-2)\omega_N}{(N-2)N\cdots(N-4+2k)}$, for any $k\ge1$. 
			We have that
				\begin{equation}\label{V1}
				\!\!\!\!\!\!\!\!\!\!\!\!V_{\e,u}(x)=A(N,k)\e^{N-2+2k}
					\sum_{|\alpha|=k}\frac1{\alpha!}
					\left(\frac{\partial^ku(P)}{\partial x_1^{\alpha_1}\cdots\partial x_N^{\alpha_N}}+O(\e)\right)
					\left(\frac{\partial^kG_{\Omega}(x,P)}{\partial y_1^{\alpha_1}\cdots\partial y_N^{\alpha_N}}+O(\e)\right)+O(\e^{N+k})G_{\Omega}(x,P),
				\end{equation}
			as $\e\to0$,	uniformly for $x\in\O_\e$.
			Furthermore, we obtain
			\begin{equation}\label{V3}
				\begin{aligned}
				&\hbox{For $k=0$,}\quad
					||V_{\e,u}||_{L^p(\O_\e)}=
					\begin{cases}
					\displaystyle	O(\e^{N-2}),  &if\ 1\le p<\frac N{N-2},\\
						O(\e^{N-2}|\log\e|), &if\  p=\frac{N}{N-2},\\
						O(\e^{\frac{N}{p}}),   &if\  p>\frac{N}{N-2}.\\
					\end{cases}\\
				&\hbox{For $k=1$,}\quad
					||V_{\e,u}||_{L^p(\O_\e)}=
					\begin{cases}
						O(\e^{N}),  &if\ 1\le p<\frac N{N-1},\\
						O(\e^{N}|\log\e|), &if\  p=\frac{N}{N-1},\\
						O(\e^{\frac{N}{p}+1}),   &if\  p>\frac{N}{N-1}.\\
					\end{cases}\\
			&	\hbox{For $k= 2$,}\quad
					||V_{\e,u}||_{L^p(\O_\e)}=
					\begin{cases}
						O(\e^{N+2}|\log\e|), &if\  p=1,\\
						O(\e^{\frac{N}{p}+2}),   &if\  p>1.\\
					\end{cases}\\
		&	\hbox{For $k\ge 3$,}\quad
					||V_{\e,u}||_{L^p(\O_\e)}=	O(\e^{\frac{N}{p}+k}),\quad if \ p\ge 1.
				\end{aligned}
			\end{equation}
		\end{lemma}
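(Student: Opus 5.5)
We construct an explicit approximation of $V_{\e,u}$ from the Green function and then control the remainder with the maximum principle on $\O_\e$. By Remark \ref{rmi1}, $u(P+y)=g_k(y)+O(|y|^{k+1})$ near $P$, and $g_k$ is harmonic and homogeneous of degree $k$ (Lemma \ref{lem2212-1}). The exterior harmonic function on $\R^N\setminus B(0,\e)$ that equals $g_k$ on $\pa B(0,\e)$ and decays at infinity is $\e^{N-2+2k}g_k(y)|y|^{-(N-2+2k)}$ (Kelvin transform).

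The key identity is that, for $|\alpha|=k$, the function $\sum_{|\alpha|=k}\frac{1}{\alpha!}D^\alpha u(P)\,\pa_y^\alpha \Gamma(x-y)|_{y=P}$, with $\Gamma$ the fundamental solution, is a harmonic function homogeneous of degree $-(N-2+k)$. By the classical Maxwell/Hobson formula it equals $c_{N,k}\, g_k(x-P)|x-P|^{-(N-2+2k)}$ with
\[
c_{N,k}^{-1}=(N-2)N\cdots(N-4+2k)\cdot\frac{1}{N(N-2)\o_N}
\]
up to sign conventions. This is where $A(N,k)$ comes from: one computes $\pa_y^\alpha|x-y|^{2-N}$ and keeps only the top-degree term, the lower ones vanishing because $g_k$ is harmonic (this is the same as applying $g_k(\nabla)$ to $|x|^{2-N}$).

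Define the candidate
\[
W_\e(x)=A(N,k)\e^{N-2+2k}\sum_{|\alpha|=k}\frac1{\alpha!}D^\alpha u(P)\,\pa_y^\alpha G_\O(x,P).
\]
$W_\e$ is harmonic in $\O\setminus\{P\}$ and vanishes on $\pa\O$. Using the splitting \eqref{green}, on $\pa B(P,\e)$ the singular part gives exactly $g_k(x-P)=u(x)+O(\e^{k+1})$. The regular part contributes $\e^{N-2+2k}\,\pa^\alpha_y H_\O=O(\e^{N-2+2k})$, which is also $O(\e^{k+1})$ for $N\ge3$, since $N-2+2k\ge k+1$ (if $k=0$ and $N=3$ it is $O(\e)$, which is fine). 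So $R_\e:=V_{\e,u}-W_\e$ is harmonic in $\O_\e$, vanishes on $\pa\O$, and is $O(\e^{k+1})$ on $\pa B(P,\e)$.

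To estimate $R_\e$ we use a barrier rather than a constant bound. Comparing with the capacitary-type function $\e^{N-2}G_\O(x,P)/\min_{\pa B_\e}G_\O(\cdot,P)\sim N(N-2)\o_N\e^{N-2}G_\O(x,P)$, the maximum principle gives $|R_\e(x)|\le C\e^{k+1}\e^{N-2}G_\O(x,P)=C\e^{N-1+k}G_\O(x,P)$. To reach the stated $O(\e^{N+k})G_\O$ together with the $O(\e)$ corrections inside the brackets in \eqref{V1}, we refine the boundary data. We subtract the next Taylor pieces $g_{k+1},g_{k+2},\dots$ of $u$ in the same way: each $g_{k+j}$ is harmonic modulo $\la$-corrections, and its harmonic projection yields terms $\e^{N-2+2(k+j)}\pa^{k+j}G$, which can be absorbed into the $O(\e)$ factors multiplying $\pa^kG$ (since $\e^{j}\,\pa^{k+j}G\lesssim \pa^{k}G$ type bounds hold for $|x-P|\ge\e$). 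The $H_\O$ contributions are treated as lower order. This bookkeeping, in particular handling the non-harmonic part $\Delta g_{k+2}=-\la g_k$ and matching the error terms to the precise shape of \eqref{V1}, is the main obstacle. I would first try to handle it by writing $u=h+O(|y|^{k+2})$ with $h$ harmonic near $P$ (solve $\Delta h=0$ with the same Taylor jet up to order $k+1$), which makes the exterior harmonic extension exact up to $O(\e^{k+2})$ data.

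The $L^p$ bounds \eqref{V3} then follow by integrating \eqref{V1}. Since $|\pa^kG_\O(x,P)|\le C|x-P|^{2-N-k}$, we get $|V_{\e,u}(x)|\le C\e^{N-2+2k}|x-P|^{2-N-k}$ on $\O_\e$, together with $|V_{\e,u}|\le C\e^k$ near the hole by the maximum principle. Then
\[
\int_{\O_\e}|V_{\e,u}|^p\le C\e^{p(N-2+2k)}\int_\e^{1}r^{p(2-N-k)+N-1}\,dr,
\]
and the three regimes, according to the sign of $p(N-2+k)-N$, give exactly the stated cases. Borderline exponents produce the logarithms, and for $k\ge2$ with $p>1$, or for $k\ge3$, the integral is dominated by $r\sim\e$, giving $\e^{N/p+k}$.
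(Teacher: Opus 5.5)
Your plan follows the paper's proof: a Green-function ansatz whose singular part reproduces the Taylor jet of $u$ on $\partial B(P,\varepsilon)$, a maximum-principle comparison with a multiple of $G_\Omega(\cdot,P)$, and integration of the resulting pointwise bound. Citing Hobson's formula $g(\partial_y)|x-y|^{2-N}=(N-2)N\cdots(N-4+2k)\,g(x-y)|x-y|^{2-N-2k}$, for harmonic homogeneous $g$ of degree $k$, is a cleaner substitute for the appendix computation behind Corollary \ref{Cor-Q}. (Your displayed constant is $c_{N,k}$ itself; $A(N,k)=c_{N,k}^{-1}$.) Two points need fixing.

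First, the refinement is not optional, and there is no obstacle at order $k+2$.
\begin{itemize}
\item For $k=0$ your one-term bound $O(\varepsilon^{N-1})G_\Omega$ is absorbed by the $O(\varepsilon)$ factors. For $k\ge1$ it is not: at $|x-P|\sim1$ it has size $\varepsilon^{N-1+k}$, whereas every error term in \eqref{V1} is $O(\varepsilon^{N+k})$ there.
\item To get boundary data $O(\varepsilon^{k+2})$ you only need to add $g_{k+1}$. It is itself harmonic, because $\Delta g_{k+1}$ is the degree-$(k-1)$ part of $-\lambda u$, namely $-\lambda g_{k-1}=0$.
\item So your $h$ is just $g_k+g_{k+1}$, and the right candidate is exactly the paper's $w_\varepsilon$. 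It is your $W_\varepsilon$ plus $A(N,k+1)\varepsilon^{N+2k}\sum_{|\beta|=k+1}\frac{1}{\beta!}D^\beta u(P)\,\partial_y^\beta G_\Omega(x,P)$, whose singular part equals $g_{k+1}(x-P)$ on $\partial B(P,\varepsilon)$ by the same identity.
\item Since $N+k\ge4$, the regular parts contribute $O(\varepsilon^{N-2+2k})=O(\varepsilon^{k+2})$ on the sphere. The barrier then yields $|V_{\varepsilon,u}-w_\varepsilon|\le C\varepsilon^{N+k}G_\Omega(x,P)$.
\item The added term is absorbed into the $O(\varepsilon)$ factors. Its singular part is bounded by $C\varepsilon^{N+2k}|x-P|^{1-N-k}\le C\varepsilon\cdot\varepsilon^{N-2+2k}|x-P|^{2-N-k}$ on $\Omega_\varepsilon$, and its regular part is $O(\varepsilon^{N+2k})$.
\end{itemize}
Neither $g_{k+2}$ nor any $\lambda$-correction ever enters.

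Second, in the $L^p$ step the pointwise bound $|V_{\varepsilon,u}|\le C\varepsilon^{N-2+2k}|x-P|^{2-N-k}$ does not follow from \eqref{V1} when $k\ge3$. The remainder $O(\varepsilon^{N+k})G_\Omega(x,P)$ exceeds it by the factor $\varepsilon^{2-k}|x-P|^{k}$, which is unbounded at distance of order one from $P$. Integrate that term separately, as the paper does with its $J$. It contributes
\begin{itemize}
\item $O(\varepsilon^{N+k})$ if $p<\frac{N}{N-2}$,
\item $O(\varepsilon^{N+k}|\log\varepsilon|^{1/p})$ if $p=\frac{N}{N-2}$,
\item $O(\varepsilon^{N/p+k+2})$ if $p>\frac{N}{N-2}$,
\end{itemize}
which is always below the rates in \eqref{V3}. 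For $k\le2$ your pointwise bound is valid and the case analysis goes through as you describe.
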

		\begin{proof}[\bf Proof of \eqref{V1}.]
			Let us show that the function 
            \begin{equation}\label{eqwe}
				\begin{aligned}
					w_\e=&A(N,k)\e^{N-2+2k}	\sum_{|\alpha|=k}\frac1{\alpha!}
					\frac{\partial^ku(P)}{\partial x_1^{\alpha_1}\cdots\partial x_N^{\alpha_N}}
					\frac{\partial^kG_{\Omega}(x,P)}{\partial y_1^{\alpha_1}\cdots\partial y_N^{\alpha_N}}\\
					&+A(N,k+1)\e^{N+2k}	\sum_{|\alpha|=k+1}\frac1{\alpha!}
					\frac{\partial^{k+1}u(P)}{\partial x_1^{\alpha_1}\cdots\partial x_N^{\alpha_N}}
					\frac{\partial^{k+1}G_{\Omega}(x,P)}{\partial y_1^{\alpha_1}\cdots\partial y_N^{\alpha_N}}
				\end{aligned}      
            \end{equation}
			verifies
			\begin{equation}
				\begin{cases}
					\Delta w_\e=0&in\ \O_\e,\\
					w_\e=0&on\ \partial\O.
				\end{cases}
			\end{equation}
			  Let $Q_\alpha(x,P)$ be a polynomial of degree $k$ depending on $\alpha=(\alpha_1,\cdots,\alpha_N)$ such that \eqref{eq0712-1} holds,
				\begin{equation}\label{eq0712-1}
					\frac{\partial^k\left(|x-y|^{2-N}\right)}{\partial y_1^{\alpha_1}\cdots\partial y_N^{\alpha_N}}\Bigg|_{y=P}=C(N,k)\frac{(x_1-P_1)^{\alpha_1}\cdots(x_N-P_N)^{\alpha_N}+Q_\alpha(x,P)}
					{|x-P|^{N-2+2k}},
				\end{equation}
			where  $C(N,0)=1$, $C(N,k)=(N-2)N\cdots(N-4+2k)$ for any $k\ge 1$. 
            Since the vanishing order of $u$ at $P$ is $k$, it follows from Corollary \ref{Cor-Q} in the Appendix that for any $k\ge 0$
             \begin{equation}\label{eq1704-1}
					\sum_{|\alpha|=k}\frac1{\alpha!}
					\frac{\partial^ku(P)}{\partial x_1^{\alpha_1}\cdots\partial x_N^{\alpha_N}}Q_\alpha(x,P)=0,
				\end{equation}   
                and 
                \begin{equation}\label{eq1704-2}
					\sum_{|\alpha|=k+1}\frac1{\alpha!}
					\frac{\partial^{k+1}u(P)}{\partial x_1^{\alpha_1}\cdots\partial x_N^{\alpha_N}}Q_\alpha(x,P)=0.
				\end{equation}   
                     Therefore, for any $k\ge 0$,
                     by \eqref{eqwe} we get
                \begin{align*}
					\!\!\!\!\!\!\!\!\!\!\!\!\!\!\!\!\!\!\!\!\!
                    w_\e(x)=&A(N,k)\e^{N-2+2k}
					\sum_{|\alpha|=k}\frac1{\alpha!}
					\frac{\partial^ku(P)}{\partial x_1^{\alpha_1}\cdots\partial x_N^{\alpha_N}}
					\frac{\partial^k\left(\frac1{N(N-2)\omega_N}|x-y|^{2-N}+H_{\Omega}(x,y)\right)}{\partial y_1^{\alpha_1}\cdots\partial y_N^{\alpha_N}}\Bigg|_{y=P}\\
                    &+A(N,k+1)\e^{N+2k}
					\sum_{|\alpha|=k+1}\frac1{\alpha!}
					\frac{\partial^{k+1}u(P)}{\partial x_1^{\alpha_1}\cdots\partial x_N^{\alpha_N}}
					\frac{\partial^{k+1}\left(\frac1{N(N-2)\omega_N}|x-y|^{2-N}+H_{\Omega}(x,y)\right)}{\partial y_1^{\alpha_1}\cdots\partial y_N^{\alpha_N}}\Bigg|_{y=P}\\
                    =&\frac{\e^{N-2+2k}}{|x-P|^{N-2+2k}}\sum_{|\alpha|=k}\frac1{\alpha!}
					\frac{\partial^ku(P)}{\partial x_1^{\alpha_1}\cdots\partial x_N^{\alpha_N}}(x_1-P_1)^{\alpha_1}\cdots(x_N-P_N)^{\alpha_N}+O(\e^{N-2+2k})\\
                    &+\frac{\e^{N+2k}}{|x-P|^{N+2k}}\sum_{|\alpha|=k+1}\frac1{\alpha!}
					\frac{\partial^{k+1}u(P)}{\partial x_1^{\alpha_1}\cdots\partial x_N^{\alpha_N}}(x_1-P_1)^{\alpha_1}\cdots(x_N-P_N)^{\alpha_N}+O(\e^{N+2k}).\\
				\end{align*}
                For $|x-P|=\e$, we have
                \begin{equation}\label{eq11102025-1}
                    \begin{aligned}
                    w_\e(x)=&\sum_{|\alpha|=k}\frac1{\alpha!}
					\frac{\partial^ku(P)}{\partial x_1^{\alpha_1}\cdots\partial x_N^{\alpha_N}}(x_1-P_1)^{\alpha_1}\cdots(x_N-P_N)^{\alpha_N}\\
                    &+\sum_{|\alpha|=k+1}\frac1{\alpha!}
					\frac{\partial^{k+1}u(P)}{\partial x_1^{\alpha_1}\cdots\partial x_N^{\alpha_N}}(x_1-P_1)^{\alpha_1}\cdots(x_N-P_N)^{\alpha_N}+O(\e^{N-2+2k}).
                    \end{aligned}
                \end{equation}
Since $V_{\e,u}(x)=u(x)$ on $\partial B(P,\e)$, by using Taylor expansion for $x\in\partial B(P,\e)$ and combining with \eqref{eq11102025-1}, we obtain
\begin{equation}\label{eq1901-1}
			\begin{aligned}
				&V_{\e,u}(x)-w_\e(x)=u(x)-w_\e(x)\\
				=&\sum_{|\alpha|=k}\frac1{\alpha!}
					\frac{\partial^ku(P)}{\partial x_1^{\alpha_1}\cdots\partial x_N^{\alpha_N}}(x_1-P_1)^{\alpha_1}\cdots(x_N-P_N)^{\alpha_N}\\
                    &+\sum_{|\alpha|=k+1}\frac1{\alpha!}
					\frac{\partial^{k+1}u(P)}{\partial x_1^{\alpha_1}\cdots\partial x_N^{\alpha_N}}(x_1-P_1)^{\alpha_1}\cdots(x_N-P_N)^{\alpha_N}+O(\e^{k+2})\\
				&-\sum_{|\alpha|=k}\frac1{\alpha!}
					\frac{\partial^ku(P)}{\partial x_1^{\alpha_1}\cdots\partial x_N^{\alpha_N}}(x_1-P_1)^{\alpha_1}\cdots(x_N-P_N)^{\alpha_N}\\
                    &-\sum_{|\alpha|=k+1}\frac1{\alpha!}
					\frac{\partial^{k+1}u(P)}{\partial x_1^{\alpha_1}\cdots\partial x_N^{\alpha_N}}(x_1-P_1)^{\alpha_1}\cdots(x_N-P_N)^{\alpha_N}+O(\e^{N-2+2k})\\
                    =&O(\e^{k+2})+O(\e^{N-2+2k}).
			\end{aligned}
            \end{equation}
		If we directly estimate $V_{\e,u}-w_\e$ by the maximum principle for the harmonic function, the estimation is a little rough. So we use the Green function.\\ On $|x-P|=\e$, $(\e^{N+k}+\e^{2(N-2)+2k})G_{\Omega}(x,P)=\frac1{N(N-2)\omega_N}(\e^{k+2}+\e^{N-2+2k})+O(\e^{N+k})+O(\e^{2(N-2)+2k})$. Hence we can select a large $C>0$ such that   
				\begin{equation}
					\begin{cases}
						\Delta\Big[V_{\e,u}-w_\e-C(\e^{N+k}+\e^{2(N-2)+2k})G_{\Omega}(x,P)\Big]=0&in\ \O_\e,\\
						V_{\e,u}-w_\e-C(\e^{N+k}+\e^{2(N-2)+2k})G_{\Omega}(x,P)=0&on\ \partial\O,\\
						V_{\e,u}-w_\e-C(\e^{N+k}+\e^{2(N-2)+2k})G_{\Omega}(x,P)\le 0&on\ |x-P|=\e,
					\end{cases}
				\end{equation}
				and so by the maximum principle for harmonic function we get
				\begin{equation}
					V_{\e,u}-w_\e \le C(\e^{N+k}+\e^{2(N-2)+2k})G_{\Omega}(x,P)\quad\hbox{in }\O_\e,
				\end{equation}
				and since the same holds for the reverse inequality, we deduce that \begin{equation}\label{eq-0922-1}
					|	V_{\e,u}-w_\e |\leq C(\e^{N+k}+\e^{2(N-2)+2k})G_{\Omega}(x,P)\quad\hbox{in }\O_\e.
				\end{equation}
                Actually,
                 \begin{align*}
					&A(N,k+1) \e^{N+2k}	\sum_{|\alpha|=k+1}\frac1{\alpha!}
					\frac{\partial^{k+1}u(P)}{\partial x_1^{\alpha_1}\cdots\partial x_N^{\alpha_N}}
					\frac{\partial^{k+1}G_{\Omega}(x,P)}{\partial y_1^{\alpha_1}\cdots\partial y_N^{\alpha_N}}\\
                    =&\frac{\e^{N+2k}}{|x-P|^{N+2k}}\sum_{|\alpha|=k+1}\frac1{\alpha!}
					\frac{\partial^{k+1}u(P)}{\partial x_1^{\alpha_1}\cdots\partial x_N^{\alpha_N}}(x_1-P_1)^{\alpha_1}\cdots(x_N-P_N)^{\alpha_N}+O(\e^{N+2k})\\
                    =& O(\e)\frac{\e^{N-2+2k}}{|x-P|^{N-2+2k}}\sum_{|\alpha|=k}(x_1-P_1)^{\alpha_1}\cdots(x_N-P_N)^{\alpha_N}+O(\e^{N+2k})\\
                    =&O(\e)\e^{N-2+2k}	\sum_{|\alpha|=k}
					\frac{\partial^k\Big(G_{\Omega}(x,P)-CH_{\Omega}(x,P)\Big)}{\partial y_1^{\alpha_1}\cdots\partial y_N^{\alpha_N}}+O(\e^{N+2k})\\
                    =&O(\e)\e^{N-2+2k}	\sum_{|\alpha|=k}
					\frac{\partial^kG_{\Omega}(x,P)}{\partial y_1^{\alpha_1}\cdots\partial y_N^{\alpha_N}}+O(\e^{N-1+2k}).\\
                \end{align*}
                This implies that 
                 \begin{equation}\label{eq11102025-2}
                 \begin{aligned}
                   V_{\e,u}(x)=&A(N,k)\e^{N-2+2k}
					\sum_{|\alpha|=k}\frac1{\alpha!}
					\left(\frac{\partial^ku(P)}{\partial x_1^{\alpha_1}\cdots\partial x_N^{\alpha_N}}
                    +O(\e)\right)
					\left(\frac{\partial^kG_{\Omega}(x,P)}{\partial y_1^{\alpha_1}\cdots\partial y_N^{\alpha_N}}
                    +O(\e)\right)\\
                    &+O(\e^{N+k}+\e^{2(N-2)+2k})G_{\Omega}(x,P).  
                 \end{aligned}
				\end{equation}
                As final observation, we can see that $2(N-2)+2k<N+k$ if and only if $N=3$ and $k=0$. 
               In this case, the term $O(\e^2)G_\O(x,P)$ can be absorbed in the main term $A(3,0)\e (u(P)+O(\e))(G_\O(x,P)+O(\e))$.
                Except in this case, the term $O(\e^{2(N-2)+2k})$
                 is of lower order than $O(\e^{N+k})$, and therefore we have
   \begin{equation}\label{eq11102025-3}
					V_{\e,u}(x)=A(N,k)\e^{N-2+2k}
					\sum_{|\alpha|=k}\frac1{\alpha!}
					\left(\frac{\partial^ku(P)}{\partial x_1^{\alpha_1}\cdots\partial x_N^{\alpha_N}}+O(\e)\right)
					\left(\frac{\partial^kG_{\Omega}(x,P)}{\partial y_1^{\alpha_1}\cdots\partial y_N^{\alpha_N}}+O(\e)\right)+O(\e^{N+k})G_{\Omega}(x,P).
				\end{equation}
				{\bf Proof of \eqref{V3} .}\\
                By \eqref{eq11102025-3}, we have
				\begin{equation}
                \begin{aligned}
					||V_{\e,u}||_{L^p(\O_\e)}^p\le& C\e^{(N-2+2k)p}\int_{\O_\e}\sum_{|\alpha|=k}\Big|\frac{\partial^kG_{\Omega}(x,P)}{\partial y_1^{\alpha_1}\cdots\partial y_N^{\alpha_N}}\Big|^p +C\e^{(N+k)p}
					\int_{\O_\e}|G_{\Omega}(x,P)|^p\\
                    \le & C\e^{(N-2+2k)p}\int_{\O_\e} \frac{1}{|x-P|^{(N-2+k)p}}+C\e^{(N+k)p}
					\int_{\O_\e}\frac1{|x-P|^{(N-2)p}}=:I+J.\\
                     \end{aligned}
				\end{equation}
				 We estimate the terms $I$ and $J$ separately, the estimate \eqref{V3} it will follow from the combination of the two. For $k=0$,
				\begin{equation}
					I=
					\begin{cases}
						O(\e^{(N-2)p}),  &if\ 1\le p<\frac N{N-2},\\
						O(\e^{(N-2)p}|\log\e|), &if\  p=\frac{N}{N-2},\\
						O(\e^{N}),   &if\  p>\frac{N}{N-2}.\\
					\end{cases}
				\end{equation}
				For $k=1$,
				\begin{equation}
					I=
					\begin{cases}
						O(\e^{Np}),  &if\ 1\le p<\frac N{N-1},\\
						O(\e^{Np}|\log\e|), &if\  p=\frac{N}{N-1},\\
						O(\e^{N+p}),   &if\  p>\frac{N}{N-1}.\\
					\end{cases}
				\end{equation}
				For $k= 2$,
				\begin{equation}
					I=
					\begin{cases}
						O(\e^{(N+2)p}|\log\e|), &if\  p=1,\\
						O(\e^{N+2p}),   &if\  p>1.\\
					\end{cases}
				\end{equation}
				For $k\ge 3$,
				\begin{equation}
					I=	O(\e^{N+kp}),\quad if \ p\ge 1.
				\end{equation}
                For $k\ge 0$, 
				\begin{equation}
					J=
					\begin{cases}
						O(\e^{(N+k)p}),&if\ 1\le p<\frac N{N-2},\\
						O(\e^{(N+k)p}|\log\e|),&if\  p=\frac N{N-2},\\
						O(\e^{N+(k+2)p}),&if\ p>\frac N{N-2}.
					\end{cases}
				\end{equation}
                Then combining the above equations, we complete the proof of  \eqref{V3}.
				\end{proof} 	   	 
			In next lemma we consider the case $N=2$
		\begin{lemma}\label{2l:improvedL2Ve}
			 For $N=2$, let $A(N,0)=A(N,1)=2\pi$ and $A(N,k)=\frac{2\pi}{2\times \cdots\times2(k-1)}$ for any $k\ge 2$.
               We have that
				\begin{equation}\label{eq10102025-1}
				\hbox{if } k=0,\quad	V_{\e,u}(x)=A(N,0)\frac{1}{|\log \e|}u(p)G_\Omega(x,P)+O\left(\frac{1}{|\log \e|^2}\right)G_\Omega(x,P),
				\end{equation}
                \begin{equation}\label{eq10102025-2}
				\hbox{if } k\ge 1,\quad 	V_{\e,u}(x)=A(N,k)\e^{2k}
					\sum_{|\alpha|=k}\frac1{\alpha!}
					\left(\frac{\partial^ku(P)}{\partial x_1^{\alpha_1}\partial x_2^{\alpha_2}}+O(\e)\right)
					\left(\frac{\partial^kG_{\Omega}(x,P)}{\partial y_1^{\alpha_1}\partial y_2^{\alpha_2}}+O(\e)\right)+O\left(\frac{\e^{2+k}}{|\log \e|}\right)G_{\Omega}(x,P),
				\end{equation}
				as $\e\to 0$, uniformly for $x\in\O_\e$. Furthermore we have that
\begin{equation}\label{eq10102025-3}
				\begin{aligned}
				&\hbox{For $k=0$,}\quad
					||V_{\e,u}||_{L^p(\O_\e)}=O\left(\frac{1}{|\log \e|}\right), \quad if\ p\ge 1.
					\\
				&\hbox{For $k=1$,}\quad
					||V_{\e,u}||_{L^p(\O_\e)}=
					\begin{cases}
						O(\e^{2}),  &if\ 1\le p<2,\\
						O(\e^{2}|\log\e|), &if\  p=2,\\
						O(\e^{\frac{2}{p}+1}),   &if\  p>2.\\
					\end{cases}\\
			&	\hbox{For $k= 2$,}\quad
					||V_{\e,u}||_{L^p(\O_\e)}=
					\begin{cases}
						O(\e^{4}|\log\e|), &if\  p=1,\\
						O(\e^{\frac{2}{p}+2}),   &if\  p>1.\\
					\end{cases}\\
		&	\hbox{For $k\ge 3$,}\quad
					||V_{\e,u}||_{L^p(\O_\e)}=	O(\e^{\frac{2}{p}+k}),\quad if \ p\ge 1.
				\end{aligned}
			\end{equation}

            \end{lemma}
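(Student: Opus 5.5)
The plan is to follow the scheme of the proof of Lemma \ref{l:improvedL2Ve}. The only difference is that in dimension two the fundamental solution is $-\frac1{2\pi}\log|x-y|$, so $G_\O(x,P)\sim\frac{|\log\e|}{2\pi}$ on $\partial B(P,\e)$ instead of $\sim\e^{2-N}$. This changes the size of the comparison function used in the maximum principle. In every case I construct an explicit harmonic function $w_\e$ in $\O_\e$, vanishing on $\pa\O$, that matches $u$ on $\pa B(P,\e)$ up to a small error. I then bound $V_{\e,u}-w_\e$ by a multiple of $G_\O(\cdot,P)$ via the maximum principle. Finally I read off the $L^p$ bounds \eqref{eq10102025-3} by integrating the pointwise expansion.

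\emph{Case $k=0$.} Set $w_\e=a_\e G_\O(x,P)$ with $a_\e:=u(P)\big(\frac{|\log\e|}{2\pi}+H_\O(P,P)\big)^{-1}$, so that $a_\e=\frac{2\pi}{|\log\e|}u(P)+O(|\log\e|^{-2})$. On $|x-P|=\e$ we have $G_\O(x,P)=\frac{|\log\e|}{2\pi}+H_\O(P,P)+O(\e)$. Hence $V_{\e,u}-w_\e=u(x)-u(P)+O(\e/|\log\e|)=O(\e)$ there, and this is $\le C\frac{\e}{|\log\e|}G_\O(x,P)$. Both functions are harmonic in $\O_\e$ and vanish on $\pa\O$, so the maximum principle gives $|V_{\e,u}-w_\e|\le C\frac{\e}{|\log\e|}G_\O(\cdot,P)$ in $\O_\e$. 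Writing $a_\e=\frac{A(2,0)}{|\log\e|}u(P)+O(|\log\e|^{-2})$ then yields \eqref{eq10102025-1}.

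\emph{Case $k\ge1$.} I take $w_\e$ as in \eqref{eqwe}, with weights $A(2,k)\e^{2k}$ and $A(2,k+1)\e^{2k+2}$. The key identity is the two-dimensional analogue of \eqref{eq0712-1}:
\[
\pa^k_y\big(-\tfrac1{2\pi}\log|x-y|\big)\big|_{y=P}=\tfrac{c_k}{2\pi}\,\frac{(x-P)^\alpha+Q_\alpha(x,P)}{|x-P|^{2k}},\qquad c_k=2\cdot4\cdots2(k-1),
\]
with $c_1=1$. The normalization of $A(2,k)$ in \eqref{An2} is exactly $2\pi/c_k$. By Corollary \ref{Cor-Q} the $Q_\alpha$ terms cancel after contraction with $D^\alpha u(P)$ and with $D^\alpha u(P)$, $|\alpha|=k+1$. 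On $\pa B(P,\e)$ the singular parts of $w_\e$ therefore reproduce the Taylor polynomials of $u$ of orders $k$ and $k+1$. This leaves $u-w_\e=O(\e^{k+2})+A(2,k)\e^{2k}\sum_{|\alpha|=k}\frac{1}{\alpha!}D^\alpha u(P)\,\pa^\alpha_yH_\O(x,P)$.

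I split the regular part as $\pa^\alpha_yH_\O(P,P)+O(\e)$. The $O(\e^{2k+1})$ remainder is harmless. The constant $b_\e:=A(2,k)\e^{2k}\sum_{|\alpha|=k}\frac{1}{\alpha!}D^\alpha u(P)\,\pa^\alpha_y H_\O(P,P)$ is removed by adding to $w_\e$ the correction $-b_\e\big(\frac{|\log\e|}{2\pi}+H_\O(P,P)\big)^{-1}G_\O(x,P)$. This correction has size $O(\e^{2k}/|\log\e|)\,G_\O(x,P)$. After this correction the boundary error is $O(\e^{k+2}+\e^{2k+1})$. Since $G_\O\ge c|\log\e|$ on the circle, the maximum principle applied to $V_{\e,u}-w_\e\mp C\frac{\e^{k+2}}{|\log\e|}G_\O$ gives the error term in \eqref{eq10102025-2}, at least for $k\ge2$. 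There, as in the $N\ge3$ proof, the $(k+1)$-order block of $w_\e$ is rewritten as $O(\e)\e^{2k}\pa^kG_\O+O(\e^{2k+1})$.

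\emph{Main obstacle.} The delicate point is $k=1$. The correction $O(\e^{2}/|\log\e|)G_\O$ is not obviously of the form $O(\e^{3}/|\log\e|)G_\O$. My first attempt is to absorb it into the main term, exploiting that $\e^2G_\O/|\log\e|\le C\e^2$ and comparing this with $\e^2|\pa_yG_\O(x,P)|$, which dominates near $P$. Failing that, I would check whether the constant $\pa_yH_\O(P,P)$ contribution can be incorporated into the $O(\e)$ corrections inside the product in \eqref{eq10102025-2}, in the same way the $N=3$, $k=0$ case was handled.

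\emph{$L^p$ bounds.} The bounds \eqref{eq10102025-3} follow by integrating $|\pa^kG_\O(x,P)|^p\lesssim|x-P|^{-kp}$ and $|G_\O|^p\lesssim|\log|x-P||^p$ over $\O_\e$. The $|\log\e|$ losses at $p=2/k$ and the $\e^{2/p+k}$ rate come from the borderline and supercritical regimes of $\int_\e^1 r^{1-kp}\,dr$.
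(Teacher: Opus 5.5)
Your $k=0$ argument is correct. Normalizing by $\frac{|\log\varepsilon|}{2\pi}+H_\Omega(P,P)$ gives a sharper intermediate remainder, $O\big(\frac{\varepsilon}{|\log\varepsilon|}\big)G_\Omega$, than the paper's direct comparison of $V_{\varepsilon,u}-\frac{2\pi}{|\log\varepsilon|}u(P)G_\Omega$ with $|\log\varepsilon|^{-2}G_\Omega$; expanding $a_\varepsilon$ then gives \eqref{eq10102025-1}. For $k\ge2$ your argument is the paper's ``same computation as for $N\ge3$'': the boundary mismatch is $O(\varepsilon^{k+2}+\varepsilon^{2k})=O(\varepsilon^{k+2})$, so your $b_\varepsilon$-correction is not even needed.

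The gap is at $k=1$, and neither fallback can close it. The first works only within $O(\varepsilon)$ of the hole. Absorbing $\frac{\varepsilon^2}{|\log\varepsilon|}G_\Omega$ (which really is of order $\varepsilon^2\frac{\log(1/|x-P|)}{|\log\varepsilon|}$) into $O(\varepsilon^3|\nabla_yG_\Omega(x,P)|+\varepsilon^3)$ needs $\varepsilon^2\lesssim\varepsilon^3/|x-P|$, i.e. $|x-P|\lesssim\varepsilon$. The second fails because the $N=3$, $k=0$ trick needs the extra term to be an $O(\varepsilon)$ relative perturbation of the main term: there it is $\varepsilon\cdot O(\varepsilon)G_\Omega$ against $\varepsilon G_\Omega$. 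Here $\frac{\varepsilon^2}{|\log\varepsilon|}G_\Omega$ against $\varepsilon^2\nabla_yG_\Omega$ is only an $O(1/|\log\varepsilon|)$ relative perturbation at fixed $x$.

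In fact no argument can close it, because the $k=1$ case of \eqref{eq10102025-2} is false in general, and your own construction shows this. Take $w_\varepsilon$ to be the two-block function and $\psi_\varepsilon=G_\Omega(\cdot,P)/\big(\frac{|\log\varepsilon|}{2\pi}+H_\Omega(P,P)\big)$. Then $V_{\varepsilon,u}-w_\varepsilon+b_\varepsilon\psi_\varepsilon$ is harmonic in $\Omega_\varepsilon$, vanishes on $\partial\Omega$, and is $O(\varepsilon^3)$ on $\partial B(P,\varepsilon)$. Hence it is $O\big(\frac{\varepsilon^3}{|\log\varepsilon|}\big)G_\Omega$, which yields
\begin{align*}
V_{\varepsilon,u}(x)={}&2\pi\varepsilon^2\,\nabla u(P)\cdot\nabla_yG_\Omega(x,P)-\frac{4\pi^2\varepsilon^2\,\nabla u(P)\cdot\nabla_yH_\Omega(P,P)}{|\log\varepsilon|+2\pi H_\Omega(P,P)}\,G_\Omega(x,P)\\
&+O\big(\varepsilon^3|\nabla_yG_\Omega(x,P)|+\varepsilon^3\big)+O\Big(\frac{\varepsilon^3}{|\log\varepsilon|}\Big)G_\Omega(x,P).
\end{align*}
At a fixed $x_0\neq P$, the second term has exact order $\varepsilon^2/|\log\varepsilon|$ whenever $\nabla u(P)\cdot\nabla_yH_\Omega(P,P)\neq0$. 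The claimed formula allows only $O(\varepsilon^3)$ there. A concrete instance is the unit disk with $u=J_0(j_{0,2}|x|)$ and $P$ on its nodal circle $|P|=j_{0,1}/j_{0,2}$. There $\nabla_yH_\Omega(P,P)=-\frac{P}{2\pi(1-|P|^2)}$ is parallel to $\nabla u(P)=j_{0,2}J_0'(j_{0,1})\frac{P}{|P|}\neq0$.

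The paper's one-line proof has the same hole. Carried out literally, it produces the remainder $O\big(\frac{\varepsilon^{k+2}+\varepsilon^{2k}}{|\log\varepsilon|}\big)G_\Omega(x,P)$, which is $O\big(\frac{\varepsilon^{2}}{|\log\varepsilon|}\big)G_\Omega(x,P)$ at $k=1$. That weaker statement is what you should prove for $k=1$, and it is enough. On $\Omega_\varepsilon$ we have $G_\Omega(x,P)\le\frac{|\log\varepsilon|}{2\pi}+C$, so the extra term is $O(\varepsilon^2)$ pointwise and $O(\varepsilon^2/|\log\varepsilon|)$ in every $L^p$. Therefore your $L^p$ integration still gives the $k=1$ line of \eqref{eq10102025-3}, and the later $N=2$, $k=1$ estimates, whose remainders are $O(\varepsilon^2|\log\varepsilon|)$, are unaffected.
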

	\begin{proof}[\bf Proof of \eqref{eq10102025-1}.]
    First we consider the case of $k=0$.
	    Let 
        \begin{equation}
             \hat{w}_\e(x)=A(N,0)\frac{1}{|\log \e|}u(p)G_\Omega(x,P).
        \end{equation}
        We observe that 
       $$
       \begin{cases}
           \Delta \hat{w}_\e=0&in\ \O_\e,\\
					\hat{w}_\e=0&on\ \partial\O,\\
             \hat{w}_\e(x)=u(P)+O\left(\frac{1}{|\log \e|}\right)&on\ |x-P|=\e,
       \end{cases}
       $$
        and 
        \begin{equation}
            V_{\e,u}(x)=u(x)=u(P)+O(\e).
        \end{equation}
        By choosing $C>0$ large enough, it follows that
        \begin{equation}
            \begin{cases}
                 \Delta\left(V_{\e,u}- \hat{w}_\e-C\frac{1}{|\log \e|^2}G_\Omega(x,P)\right)=0&in\ \O_\e,\\
					V_{\e,u}- \hat{w}_\e-C\frac{1}{|\log \e|^2}G_\Omega(x,P)=0&on\ \partial\O,\\
                    V_{\e,u}- \hat{w}_\e-C\frac{1}{|\log \e|^2}G_\Omega(x,P)\leq 0 & on \ |x-P|=\e.
            \end{cases}
        \end{equation}
        So by the maximum principle for the harmonic function we get
        \begin{equation}\label{eq11102025-4}
             V_{\e,u}- \hat{w}_\e-C\frac{1}{|\log \e|^2}G_\Omega(x,P)\leq 0.
        \end{equation}
        Similarly, we can obtain  
         \begin{equation}\label{eq11102025-5}
             V_{\e,u}- \hat{w}_\e+C\frac{1}{|\log \e|^2}G_\Omega(x,P)\ge 0.
        \end{equation}
        Combining \eqref{eq11102025-4} and \eqref{eq11102025-5}, we prove the equation \eqref{eq10102025-1}.
        Then, by direct computations, for $k=0$, $||V_{\e,u}||_{L^p(\O_\e)}=O\left(\frac{1}{|\log \e|}\right) $, if $ \ p\ge 1.$
        
{\bf Proof of \eqref{eq10102025-2} and \eqref{eq10102025-3}.}\\
For $k\ge1$, we define 
  \begin{equation}
				\begin{aligned}
					\hat{w}_\e(x)=&A(N,k)\e^{2k}	\sum_{|\alpha|=k}\frac1{\alpha!}
					\frac{\partial^ku(P)}{\partial x_1^{\alpha_1}\partial x_2^{\alpha_2}}
					\frac{\partial^kG_{\Omega}(x,P)}{\partial y_1^{\alpha_1}\partial y_2^{\alpha_2}}\\
					&+A(N,k+1)\e^{2(k+1)}	\sum_{|\alpha|=k+1}\frac1{\alpha!}
					\frac{\partial^{k+1}u(P)}{\partial x_1^{\alpha_1}\partial x_2^{\alpha_2}}
					\frac{\partial^{k+1}G_{\Omega}(x,P)}{\partial y_1^{\alpha_1}\partial y_2^{\alpha_2}}.
				\end{aligned}      
            \end{equation}
                   For $N=2$, define $Q_\alpha(x,y)$ as a polynomial of degree $k$ depending  on $\alpha_1$ and $\alpha_2$ such that \eqref{eq10102025-5} holds,
                    \begin{equation}\label{eq10102025-5}
                        \frac{\partial^k(-\log|x-y|)}{\partial y_1^{\alpha_1}\partial y_2^{\alpha_2}} = C(N,k)\frac{(x_1 - y_1)^{\alpha_1}(x_2 - y_2)^{\alpha_2}+Q_\alpha(x,y)}{|x - y|^{2k}},
                    \end{equation}
                    where $C(N,0)=C(N,1)=1$ and $C(N,k)=2\times\cdots\times2(k-1)$ for any $k\ge 2$.
                   Following  the same computation in Theorem \ref{l:improvedL2Ve}, we obtain \eqref{eq10102025-2} and \eqref{eq10102025-3}. 
	\end{proof}
	{\bf Proof of Lemma \ref{into-l:improvedL2Ve}} It is a direct consequence of Lemma \ref{2l:improvedL2Ve} and Lemma \ref{l:improvedL2Ve}.
\vskip0.2cm\noindent
Next we prove some estimates which will be important in the rest of the paper.

       \begin{lemma}\label{lem2810}
      Assume that  the vanishing order of an eigenfunction $u_l$ at $P$ is equal to $k$. Then
\begin{equation*}
\e\frac{\partial V_{\e,u_l}}{\partial \nu}=O(\e^{k}) \ \ \text{on }\ \partial B(P,\e).
\end{equation*}
\end{lemma}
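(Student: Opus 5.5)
The plan is to write $V_{\e,u_l}=u_l+(V_{\e,u_l}-u_l)$ and exploit the fact that $\phi_\e:=V_{\e,u_l}-u_l$ vanishes on $\partial B(P,\e)$. This lets us apply the rescaled boundary Schauder estimate of Lemma \ref{FurGT}(ii) near the hole. Since $\Delta V_{\e,u_l}=0$ in $\O_\e$ and $-\Delta u_l=\la u_l$, we have
\begin{equation*}
-\Delta\phi_\e=\la u_l \quad\hbox{in } B(P,c\e)\setminus B(P,\e),\qquad \phi_\e=0 \ \hbox{ on } \partial B(P,\e).
\end{equation*}
So $\phi_\e$ fits Lemma \ref{FurGT}(ii) with $\la_\e=0$ and $f_\e=\la u_l$.

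Only the inner boundary $\partial B(P,\e)$ and the annulus $\mathcal O_\e=B(P,c\e)\setminus B(P,\e)$ enter this estimate. In its proof, the rescaled function $y\mapsto\phi_\e(P+\e y)$ is controlled near $|y|=1$ using data on $1<|y|<c$ only. Hence the outer boundary condition plays no role. The lemma then gives, for every $x_0\in\partial B(P,\e)$,
\begin{equation*}
\e|D\phi_\e|_{0;B(x_0,\e\delta)\cap\O_\e}\le C\Big(|\phi_\e|_{0;\mathcal O_\e}+\e^2|u_l|_{0;\mathcal O_\e}+\e^{2+\alpha}[u_l]_{\alpha;\mathcal O_\e}\Big).
\end{equation*}

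Next we bound each term on $\mathcal O_\e$, where $|x-P|\le c\e$.

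\emph{Size of $u_l$.} Since $u_l$ vanishes to order $k$ at $P$, Taylor's formula gives $|u_l|=O(\e^k)$ and $|\nabla u_l|=O(\e^{k-1})$ for $k\ge1$, with $|\nabla u_l|=O(1)$ when $k=0$. It follows that $[u_l]_{\alpha;\mathcal O_\e}\le C\sup_{\mathcal O_\e}|\nabla u_l|\,\e^{1-\alpha}$. Therefore the last two terms in the estimate are $O(\e^{k+2})$ (or better).

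\emph{Size of $V_{\e,u_l}$.} For $N\ge3$ we use \eqref{V1}. Since $\partial^k_yG_\O(x,P)=O(|x-P|^{-(N-2+k)})$ and $G_\O(x,P)=O(|x-P|^{2-N})$, on $\mathcal O_\e$ we get
\begin{equation*}
V_{\e,u_l}=O\big(\e^{N-2+2k}\e^{-(N-2+k)}\big)+O\big(\e^{N+k}\e^{2-N}\big)=O(\e^k).
\end{equation*}
For $N=2$ and $k\ge1$ we argue in the same way with \eqref{eq10102025-2}. For $N=2$ and $k=0$ the maximum principle gives $|V_{\e,u_l}|\le\max|u_l|=O(1)$ directly, which is all that is required. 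Consequently $|\phi_\e|_{0;\mathcal O_\e}=O(\e^k)$, and hence $\e|\nabla\phi_\e|=O(\e^k)$ on $\partial B(P,\e)$.

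Finally,
\begin{equation*}
\e\frac{\partial V_{\e,u_l}}{\partial\nu}=\e\frac{\partial u_l}{\partial\nu}+\e\frac{\partial\phi_\e}{\partial\nu}.
\end{equation*}
The first term is $\e\cdot O(\e^{k-1})=O(\e^k)$ when $k\ge1$, and $O(\e)=O(1)$ when $k=0$. Together with the bound on $\phi_\e$, this proves the claim.

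The step I expect to need the most care is the pointwise bound on $V_{\e,u_l}$ inside the thin annulus $\mathcal O_\e$. The expansions \eqref{V1} and \eqref{eq10102025-2} are uniform in $x\in\O_\e$, so they apply there. The point to check is that the homogeneity of $\partial^k_yG_\O$ produces exactly the cancellation $\e^{N-2+2k}\cdot\e^{-(N-2+k)}=\e^k$. I also need to check that the $O(\e)$ corrections in \eqref{V1}, which multiply $\e^{N-2+2k}$ by a quantity of order $|x-P|^{-(N-2+k)}$ or smaller, do not spoil this bound.
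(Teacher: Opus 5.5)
Your proof is correct and follows the paper's argument. Like the paper, you split $V_{\e,u_l}=u_l+(V_{\e,u_l}-u_l)$, apply the rescaled boundary estimate of Lemma \ref{FurGT}(ii) to the difference (which in fact vanishes on all of $\partial\O_\e$, since $V_{\e,u_l}=u_l=0$ on $\partial\O$), and bound the sup terms using Lemmas \ref{l:improvedL2Ve} and \ref{2l:improvedL2Ve}. Two small remarks: the correct equation is $\Delta(V_{\e,u_l}-u_l)=\la u_l$ (your sign is flipped, which is harmless), and the step you flagged is immediate from the maximum principle, since $|V_{\e,u_l}|\le\max_{\partial B(P,\e)}|u_l|=O(\e^k)$ on all of $\O_\e$ for every $N$ and $k$.
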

\begin{proof}
      Since $V_{\e,u_l}$ satisfies \eqref{eq:Veproblem} with $u=u_l$ it is easy to see that
 \begin{equation*}
     	\begin{cases}
			\Delta (V_{\e,u_l}-u_l) =\lambda u_l &\text{in }\Omega_\e,\\
			V_{\e,u_l}-u_l=0 &\text{on }\partial \Omega,\\
			V_{\e,u_l}-u_l =0 &\text{on }\overline{ B(P,\e)}.
		\end{cases}
 \end{equation*}
 Using Lemma \ref{FurGT},  there exists $c=c(N,\alpha)>1$ such that, setting $\mathcal{O}_\e:=B(P,c\e)\setminus B(P,\e)$, we have
 \begin{equation}\label{eq270326}
    \e |D(V_{\e,u_l}-u_l)|_{0;\partial B(P,\e)}\leq C(|V_{\e,u_l}-u_l|_{0;\mathcal{O}_\e}+\e^2 |u_l|_{0;\mathcal{O}_\e}+\e^{2+\alpha}[u_l]_{\alpha;\mathcal{O}_\e}),
 \end{equation}
 where, for any set $\mathcal{D}$, $|D^ju|_{0;\mathcal{D}}$ and $[D^ku]_{\alpha;\mathcal{D}}$ are defined as in Lemma \ref{GTlem6.5}.
 Since the  vanishing order of $u_l$ at $P$ is  $k$,  it follows from \eqref{eq270326}, together with Lemma \ref{l:improvedL2Ve} and Lemma \ref{2l:improvedL2Ve}, that 
 \begin{equation*}
     \e|DV_{\e,u_l}|_{0;\partial B(P,\e)}\leq \e|Du_l|_{0;\partial B(P,\e)}+\e |D(V_{\e,u_l}-u_l)|_{0;\partial B(P,\e)}=O(\e^k).
 \end{equation*}
 The proof is complete.
    \end{proof}
    \begin{lemma}\label{lem1211-1}
        Assume that $u_i$ and $u_l$ vanish with order $k_I$ and $k_L$, respectively.  Then the following results hold.
         \begin{itemize}
             \item [$(1)$] Assume that either $N\ge 3$ or $N=2$ with $ k_I+k_L\ge 1$. Then
              \begin{equation}\label{eq1505-1}
    \begin{aligned}
        &\int_{\O_\e}u_i(x) V_{\e,u_l}(x)\ dx\\
           =&\frac{1}{\lambda }\e^{N-2+{ k_I}+{ k_L}}	\delta_{{ k_I}}^{{ k_L}}\sum_{\substack{|\alpha|={ k_I}\\
           |\beta|={ k_L}}}\frac{1}{\alpha!\beta!}\frac{\partial^{{ k_I}} u_i (P)}{\partial x_{1}^{\alpha_1}\cdots\partial x_{N}^{\alpha_N}}\frac{\partial^{{ k_L}} u_l (P)}{\partial x_{1}^{\beta_1}\cdots\partial x_{N}^{\beta_N}} C_{\alpha\beta} +O(\e^{N-1+{ k_I}+{ k_L}}),\\
              \end{aligned}
    \end{equation}
      where $\delta_{{ k_I}}^{{ k_L}}=\begin{cases}
            1 &if\ k_I=k_L\\
            0 &if\ k_I\neq k_L
        \end{cases}$ and $C_{\alpha\beta}$ is defined in \eqref{defCalbe}.
    \item[$(2)$] If $N=2$ and ${ k_I}={ k_L}=0$,
    \begin{equation}\label{eq0301-1}
        \int_{\O_\e}u_i(x) V_{\e,u_l}(x)\ dx=\frac{2\pi}{\lambda|\log \e|}u_i(P)u_l(P)+O\left(\frac{1}{|\log \e|^2}\right).
    \end{equation}
    
         \end{itemize}
    \end{lemma}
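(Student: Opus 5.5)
The idea is to turn the volume integral into a boundary integral on the sphere $\partial B(P,\e)$, where the Taylor expansions of $u_i$, $u_l$ and the behaviour of $V_{\e,u_l}$ are known precisely. Since $-\Delta u_i=\lambda u_i$ in $\O$ and $V_{\e,u_l}$ is harmonic in $\O_\e$, both vanishing on $\partial\O$, Green's second identity on $\O_\e$ gives
\begin{equation*}
\lambda\int_{\O_\e}u_i V_{\e,u_l}\,dx=\int_{\O_\e}\big(V_{\e,u_l}\Delta(-u_i)... \big)
\end{equation*}
more precisely
\begin{equation*}
\lambda\int_{\O_\e}u_iV_{\e,u_l}\,dx=\int_{\partial B(P,\e)}\Big(u_i\frac{\partial V_{\e,u_l}}{\partial\nu}-V_{\e,u_l}\frac{\partial u_i}{\partial\nu}\Big)d\sigma,
\end{equation*}
with $\nu$ the normal pointing into $B(P,\e)$; there are no contributions from $\partial\O$. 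On $\partial B(P,\e)$ we have $V_{\e,u_l}=u_l$, so the second term is $\int_{\partial B(P,\e)}u_l\,\partial_\nu u_i$. This explains the factor $1/\lambda$ in \eqref{eq1505-1}.

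The next step is to insert the expansions. On $|x-P|=\e$, Remark \ref{rmi1} gives $u_i=g_{k_I}(x-P)+O(\e^{k_I+1})$ and $\partial_\nu u_i=-\e^{-1}k_Ig_{k_I}(x-P)+O(\e^{k_I})$, with the analogous statements for $u_l$ by homogeneity. For $\partial_\nu V_{\e,u_l}$ I will differentiate the expansion \eqref{V1}, respectively \eqref{eq10102025-2}. Using \eqref{eq0712-1} and Corollary \ref{Cor-Q}, the leading part of $V_{\e,u_l}$ near $P$ is $\e^{N-2+2k_L}g_{k_L}(x-P)/|x-P|^{N-2+2k_L}$, the Kelvin transform of $g_{k_L}$. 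Its radial derivative at $r=\e$ is $-(N-2+k_L)\e^{-1}g_{k_L}$, so in the inward normal it equals $(N-2+k_L)\e^{-1}g_{k_L}(x-P)$. The errors should be $O(\e^{k_L})$: I intend to justify this through the harmonic comparison in the proof of Lemma \ref{l:improvedL2Ve} together with the gradient control of Lemma \ref{FurGT}(ii), in the same way as in Lemma \ref{lem2810}.

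Combining the two boundary terms, the integrand becomes $\e^{-1}(N-2+k_L+k_I)\,g_{k_I}g_{k_L}$ plus lower order terms. Integrating over the sphere of radius $\e$ then yields
\begin{equation*}
\e^{N-2+k_I+k_L}(N-2+k_I+k_L)\int_{\S^{N-1}}g_{k_I}(\theta)g_{k_L}(\theta)\,d\theta+O(\e^{N-1+k_I+k_L}).
\end{equation*}
If $k_I\neq k_L$, the spherical integral vanishes because spherical harmonics of different degrees are $L^2(\S^{N-1})$-orthogonal; this produces $\delta_{k_I}^{k_L}$. If $k_I=k_L=k$, I expand $g_k=\sum_{|\alpha|=k}\frac{1}{\alpha!}D^\alpha u(P)\theta^\alpha$ and use the classical monomial integral
\begin{equation*}
\int_{\S^{N-1}}\theta^{\alpha+\beta}\,d\theta=\frac{2\prod_i\Gamma(\frac{\alpha_i+\beta_i+1}{2})}{\Gamma(k+\frac N2)},
\end{equation*}
which holds when every $\alpha_i+\beta_i$ is even; otherwise the integral is zero. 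Since $|\alpha|+|\beta|=2k$, this reproduces $C_{\alpha\beta}$ in \eqref{defCalbe}, including the prefactor $(N-2+|\alpha|+|\beta|)$.

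For case (2), with $N=2$ and $k_I=k_L=0$, I will use \eqref{eq10102025-1} instead. Here $\partial_\nu V_{\e,u_l}\approx\frac{2\pi u_l(P)}{|\log\e|}\cdot\frac{1}{2\pi\e}$, while $\partial_\nu u_i=O(1)$, so the boundary integral equals $\frac{2\pi}{|\log\e|}u_i(P)u_l(P)+O(|\log\e|^{-2})$.

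\textbf{Main obstacle.} The hard part is the pointwise estimate of $\partial_\nu V_{\e,u_l}$ on $\partial B(P,\e)$ with error exactly $O(\e^{k_L})$, rather than the $O(\e^{k_L-1})$ that a crude use of Lemma \ref{lem2810} gives. My plan is to apply Lemma \ref{FurGT}(ii) to the difference $V_{\e,u_l}-w_\e$, which is harmonic. By \eqref{eq-0922-1} its sup on the annulus $\mathcal{O}_\e$ is $O(\e^{k+2})$ or better, so its gradient there is $O(\e^{k+1})$. The gradient of $w_\e$ can then be computed explicitly from the Green function expansion.
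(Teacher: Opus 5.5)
Your approach differs from the paper's exactly at the point you call the main obstacle. For $k_I\ge1$ the paper never needs a refined pointwise bound on $\partial_\nu V_{\e,u_l}$. In $\int_{\partial B(P,\e)}u_i\,\partial_\nu V_{\e,u_l}$ it replaces $u_i$ by its harmonic approximant $A(N,k_I)\e^{N-2+2k_I}\sum_{|\alpha|=k_I}\frac{1}{\alpha!}D^\alpha u_i(P)\,\partial_y^\alpha G_\O(x,P)$. The error $O(\e^{k_I+1})$ is paid against the crude bound $\e\,\partial_\nu V_{\e,u_l}=O(\e^{k_L})$ of Lemma \ref{lem2810}. Green's second identity between $V_{\e,u_l}$ and $\partial_y^\alpha G_\O(\cdot,P)$ then moves the normal derivative onto the explicit kernel. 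For $k_I=0$ the paper either integrates \eqref{V1} against $u_i$ using $u_i(P)=\lambda\int_\O u_iG_\O(\cdot,P)$, or uses the $L^1$ bound \eqref{V3}.

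Your pointwise route does work for $N\ge3$, and for $N=2$ with $k_L\ge1$, after one repair. Lemma \ref{FurGT}(ii) is stated for functions vanishing on $\partial B(P,\e)$, but $V_{\e,u_l}-w_\e=u_l-w_\e\neq0$ there. You need the boundary Schauder estimate with nonzero Dirichlet data; that data is $O(\e^{k_L+2}+\e^{N-2+2k_L})$ in rescaled $C^{2,\alpha}(\partial B_1)$. Alternatively, apply Lemma \ref{FurGT}(ii) to $V_{\e,u_l}-w_\e-\Phi$, where $\Phi$ is the radially constant extension of that data. Also, for $N=3$, $k_L=0$, \eqref{eq-0922-1} gives only $O(\e)$ on the annulus, not $O(\e^{k+2})$. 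This is harmless, since $O(\e^{k_L+1})$ is all you need.

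The genuine gap is $N=2$, $k_L=0$: statement (2), and the sub-case $k_I\ge1$, $k_L=0$ of (1), which your plan does not address. The only available expansion \eqref{eq10102025-1} has error $O(|\log\e|^{-2}G_\O)$, which is $O(|\log\e|^{-1})$ on $B(P,c\e)\setminus B(P,\e)$. The Schauder argument therefore gives only $\nabla(V_{\e,u_l}-\hat w_\e)=O(\e^{-1}|\log\e|^{-1})$. That is the same order as $\partial_\nu\hat w_\e\sim u_l(P)/(\e|\log\e|)$ itself. Your Kelvin-transform heuristic gives radial derivative $0$ here, yet $\partial_\nu V_{\e,u_l}$ is not $O(\e^{k_L})=O(1)$. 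Consequently:
\begin{itemize}
\item in (2) your boundary integral is known only up to $O(|\log\e|^{-1})$, not $O(|\log\e|^{-2})$;
\item in (1) you get $O(\e^{k_I}/|\log\e|)$ instead of $O(\e^{1+k_I})$.
\end{itemize}

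To close the gap in (2), avoid derivatives and integrate \eqref{eq10102025-1} against $u_i$, as the paper does. For $k_I\ge1$, $k_L=0$, either use the paper's Green-identity transfer, or compare $V_{\e,u_l}$ with $c_\e G_\O(\cdot,P)$, where $c_\e=u_l(P)/\big(\frac{1}{2\pi}|\log\e|+H_\O(P,P)\big)$. With this choice the mismatch on $\partial B(P,\e)$ is $O(\e)$, so the remainder has gradient $O(1)$ there. The singular part $\frac{c_\e}{2\pi\e}$ of $\partial_\nu(c_\e G_\O)$ then contributes only $\frac{c_\e}{2\pi\e}\int_{\partial B(P,\e)}u_i=O(\e^{k_I+1}/|\log\e|)$, because $\int_{\S^1}g_{k_I}=0$. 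The same comparison also settles (2), since $c_\e u_i(P)=\frac{2\pi}{|\log\e|}u_i(P)u_l(P)+O(|\log\e|^{-2})$.
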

        \begin{proof}
       If  $N\ge3$ and  ${ k_I}={ k_L}=0$, using  \eqref{V1}, we have 
        \begin{equation}
            \begin{aligned}
             & \int_{\O_\e}u_i(x) V_{\e,u_l}(x)\ dx\\
              =  &\int_{\O_\e}u_i(x)\left[N(N-2)\o_N\e^{N-2}\big(u_l(P)+O(\e)\big)\big(G_\O(x,P)+O(\e)\big)+O(\e^NG_\O(x,P))\right]\ dx\\
              =&\frac{N(N-2)\o_N}{\lambda}u_i(P)u_l(P)\e^{N-2}+O(\e^{N-1}),\\
            \end{aligned}
        \end{equation}
        where we use the Green representation formula that $u_i(P)=\lambda\int_{\O}u_i(x)G_\O(x,P)\ dx$. 
        
        If ${ k_I}=0$ and ${ k_L}\ge 1$, by  \eqref{V3}, we see that 
        \begin{equation}
            \int_{\O_\e}u_i(x) V_{\e,u_l}(x)\ dx=O\left(||V_{\e,u_l}||_{L^1(\O_\e)}\right)=O(\e^{N-1+{ k_L}}).
        \end{equation}
        The previous computations show the claim when ${ k_I}=0$. 
        
        In the following, we assume that ${ k_I}\ge1$.
        \begin{equation}\label{eq27102025-0}
            \begin{aligned}
                \int_{\O_\e}u_i(x) V_{\e,u_l}(x)\ dx&=\frac{1}{\la}\int_{\O_\e}-\Delta u_i(x) V_{\e,u_l}(x)\ dx\\
                 &=\frac{1}{\la}\int_{\partial B(P,\e)} \frac{\partial u_i (x)}{\partial \nu} u_l(x)\ d S_x-\frac{1}{\la}\int_{\partial B(P,\e)} u_i (x)\frac{\partial V_{\e,u_l}(x) }{\partial \nu}\ d S_x\\
                 &=:I_1+I_2.\\
            \end{aligned}
        \end{equation}

        Since $u_i$ and $u_l$ vanish with order $k_I$ and $k_L$, respectively,
         using Taylor expansion on $\partial B(P,\e)$, we have
        \begin{equation}\label{eq27102025-1}
        \begin{aligned}
u_l(x)&=\sum_{|\alpha|={ k_L}}\frac1{\alpha!}\frac{\partial^{{ k_L}}u_l(P)}{\partial x_1^{\alpha_1}\cdots\partial x_N^{\alpha_N}}(x_1-P_1)^{\alpha_1}\cdots(x_N-P_N)^{\alpha_N}+O(\e^{{ k_L}+1})\\
       \end{aligned}
        \end{equation}
        and 
        \begin{equation}\label{eq27102025-2}
        \begin{aligned}
        \frac{\partial u_i (x)}{\partial \nu}=&\sum_{s=1}^N \frac{\partial u_i (x)}{\partial x_s}\frac{x_s-P_s}{\e}\\
        =&\sum_{s=1}^N\sum_{|\beta|={ k_I}-1}\frac1{\beta!}\frac{\partial^{{ k_I}-1}\frac{\partial u_i }{\partial x_s}(P)}{\partial x_1^{\beta_1}\cdots\partial x_N^{\beta_N}}(x_1-P_1)^{\beta_1}\cdots(x_N-P_N)^{\beta_N}\frac{x_s-P_s}{\e}+O(\e^{{ k_I}})\\
        =&\frac{ k_I}{\e}\sum_{|\beta|=k_I}\frac{1}{\beta!}\frac{\partial^{{ k_I}} u_i (P)}{\partial x_1^{\beta_1}\cdots\partial x_N^{\beta_N}}(x_1-P_1)^{\beta_1}\cdots(x_N-P_N)^{\beta_N}+O(\e^{{ k_I}}).
         \end{aligned}
        \end{equation}
        A  directly computation then shows that  
        \begin{equation}\label{eq05112025-1}
        \begin{aligned}
           I_1= \frac{ k_I}{\lambda(N-2+k_I+k_L)}\e^{N-2+k_I+k_L}\sum_{\substack{|\alpha|={ k_I}\\
           |\beta|={ k_L}}}\frac{1}{\alpha!\beta!}\frac{\partial^{{ k_I}} u_i (P)}{\partial x_{1}^{\alpha_1}\cdots\partial x_{N}^{\alpha_N}}\frac{\partial^{{ k_L}} u_l (P)}{\partial x_{1}^{\beta_1}\cdots\partial x_{N}^{\beta_N}} C_{\alpha\beta}+O(\e^{N-1+{ k_I}+{ k_L}}).
            \end{aligned}
        \end{equation}
        
          For $I_2$, using \eqref{eqwe} and \eqref{eq1901-1}, we see that for $x\in \partial B(P,\e)$ 
        $$
       u_i(x)= A(N,{ k_I})\e^{N-2+2{ k_I}}	\sum_{|\alpha|={ k_I}}\frac1{\alpha!}
					\frac{\partial^{{ k_I}}u_i(P)}{\partial x_1^{\alpha_1}\cdots\partial x_N^{\alpha_N}}
					\frac{\partial^{{ k_I}}G_{\Omega}(x,P)}{\partial y_1^{\alpha_1}\cdots\partial y_N^{\alpha_N}}+O(\e^{{ k_I}+1}).
        $$
        Combining  with Lemma \ref{lem2810}, we get
        \begin{equation}\label{eq27102025-3}
        \begin{aligned}
            I_2&=-\frac{1}{\la}\int_{\partial B(P,\e)} \left[ A(N,{ k_I})\e^{N-2+2{ k_I}}	\sum_{|\alpha|={ k_I}}\frac1{\alpha!}
					\frac{\partial^{{ k_I}}u_i(P)}{\partial x_1^{\alpha_1}\cdots\partial x_N^{\alpha_N}}
					\frac{\partial^{{ k_I}}G_{\Omega}(x,P)}{\partial y_1^{\alpha_1}\cdots\partial y_N^{\alpha_N}}+O(\e^{{ k_I}+1})\right]\frac{\partial V_{\e,u_l}(x) }{\partial \nu}\ d S_x\\
                    &=-\frac{A(N,{ k_I})}{\la}\e^{N-2+2{k_I}}	\sum_{|\alpha|={ k_I}}\frac1{\alpha!}
					\frac{\partial^{{ k_I}}u_i(P)}{\partial x_1^{\alpha_1}\cdots\partial x_N^{\alpha_N}}\int_{\partial B(P,\e)} \frac{\partial^{{ k_I}}G_{\Omega}(x,P)}{\partial y_1^{\alpha_1}\cdots\partial y_N^{\alpha_N}}\frac{\partial V_{\e,u_l}(x) }{\partial \nu}\ d S_x+O(\e^{N-1+{ k_I}+{ k_L}}).\\
                    \end{aligned}
        \end{equation}
        Then we only need to consider $\int_{\partial B(P,\e)} \frac{\partial^{{ k_I}}G_{\Omega}(x,P)}{\partial y_1^{\alpha_1}\cdots\partial y_N^{\alpha_N}}\frac{\partial V_{\e,u_l}(x) }{\partial \nu}\ d S_x$. Since 
        \begin{equation*}
            \begin{cases}
			\Delta V_{\e,u_l} =0 &\text{in }\Omega\setminus B(P,\e),\vspace{2mm}\\
			V_{\e,u_l} =0 &\text{on }\partial \Omega\vspace{2mm},\\
			V_{\e,u_l} =u_l &\text{on }\partial B(P,\e),
		\end{cases} 
        \quad \text{and}\quad
         \begin{cases}
			\displaystyle \Delta \frac{\partial^{{ k_I}}G_{\Omega}(x,P)}{\partial y_1^{\alpha_1}\cdots\partial y_N^{\alpha_N}} =0 &\text{in }\Omega\setminus B(P,\e),\vspace{2mm}\\
			\displaystyle \frac{\partial^{{ k_I}}G_{\Omega}(x,P)}{\partial y_1^{\alpha_1}\cdots\partial y_N^{\alpha_N}} =0 &\text{on }\partial \Omega,\vspace{2mm}\\
		\end{cases}
        \end{equation*}
    we obtain that
    \begin{equation}\label{eq27102025-4}
        \begin{aligned}
            0&=\int_{\O_\e} V_{\e,u_l}\Delta \frac{\partial^{{ k_I}}G_{\Omega}(x,P)}{\partial y_1^{\alpha_1}\cdots\partial y_N^{\alpha_N}}-\frac{\partial^{{ k_I}}G_{\Omega}(x,P)}{\partial y_1^{\alpha_1}\cdots\partial y_N^{\alpha_N}}\Delta V_{\e,u_l}\ dx\vspace{2mm}\\
            &= \int_{\partial \O_\e} V_{\e,u_l}\frac{\partial}{\partial \nu}\frac{\partial^{{ k_I}}G_{\Omega}(x,P)}{\partial y_1^{\alpha_1}\cdots\partial y_N^{\alpha_N}}-\frac{\partial^{{ k_I}}G_{\Omega}(x,P)}{\partial y_1^{\alpha_1}\cdots\partial y_N^{\alpha_N}} \frac{\partial V_{\e,u_l}}{\partial \nu} \ dS_x\vspace{2mm}\\
            &= -\int_{\partial B(P,\e)} u_l\frac{\partial}{\partial \nu}\frac{\partial^{{ k_I}}G_{\Omega}(x,P)}{\partial y_1^{\alpha_1}\cdots\partial y_N^{\alpha_N}}\ dS_x+\int_{\partial B(P,\e)}\frac{\partial^{{ k_I}}G_{\Omega}(x,P)}{\partial y_1^{\alpha_1}\cdots\partial y_N^{\alpha_N}} \frac{\partial V_{\e,u_l}}{\partial \nu} \ dS_x.\vspace{2mm}\\
        \end{aligned}
    \end{equation}
    Thus we have 
    \begin{equation}
        \begin{aligned}
            I_2
                    =&-\frac{1}{\la}A(N,{ k_I})\e^{N-2+2{ k_I}}	\sum_{|\alpha|={ k_I}}\frac1{\alpha!}
					\frac{\partial^{{ k_I}}u_i(P)}{\partial x_1^{\alpha_1}\cdots\partial x_N^{\alpha_N}}\int_{\partial B(P,\e)} u_l\frac{\partial}{\partial \nu}\frac{\partial^{{ k_I}}\Gamma(x,P)}{\partial y_1^{\alpha_1}\cdots\partial y_N^{\alpha_N}}\ dS_x\\
                    &+O(\e^{2N-3+{ k_L}+2{{ k_I}}})+O(\e^{N-1+{{ k_I}}+{ k_L}})\vspace{2mm}\\
                    =&-\frac{1}{\la}A(N,{ k_I})\e^{N-2+2{ k_I}}	\sum_{\substack{s\\|\alpha|={ k_I}}}\frac1{\alpha!}
					\frac{\partial^{{ k_I}}u_i(P)}{\partial x_1^{\alpha_1}\cdots\partial x_N^{\alpha_N}}\int_{\partial B(P,\e)} u_l\frac{\partial}{\partial x_s}\left(\frac{\partial^{{ k_I}}\Gamma(x,P)}{\partial y_1^{\alpha_1}\cdots\partial y_N^{\alpha_N}}\right)\frac{x_s-P_s}{\e}\ dS_x\vspace{2mm}\\
                    &+O(\e^{N-1+{ k_I}+{ k_L}}).\vspace{2mm}\\
        \end{aligned}
    \end{equation}
   Next, using Lemma \ref{Cor-Q}, we deduce that for any $x\in \partial B(P,\e)$
        \begin{align*}
          &  \sum_{\substack{s\\|\alpha|={ k_I}}}\frac1{\alpha!}
					\frac{\partial^{{ k_I}}u_i(P)}{\partial x_1^{\alpha_1}\cdots\partial x_N^{\alpha_N}}\frac{\partial}{\partial x_s}\left(\frac{\partial^{{ k_I}}\Gamma(x,P)}{\partial y_1^{\alpha_1}\cdots\partial y_N^{\alpha_N}}\right)\frac{x_s-P_s}{\e}
                    \\
                    =&\frac{1}{A(N,{ k_I})}\sum_{\substack{s\\|\alpha|={ k_I}}}\frac1{\alpha!}
					\frac{\partial^{{ k_I}}u_i(P)}{\partial x_1^{\alpha_1}\cdots\partial x_N^{\alpha_N}}\frac{\partial}{\partial x_s}
                    \left[\frac{(x_1-P_1)^{\alpha_1}\cdots (x_N-P_N)^{\alpha_N}+Q_\alpha(x,P)}
					{|x-P|^{N-2+2{ k_I}}}\right]\frac{x_s-P_s}{\e}\\
                  =&-\frac{N-2+2{ k_I}}{A(N,{ k_I})}\sum_{\substack{s\\|\alpha|={ k_I}}}\frac1{\alpha!}
					\frac{\partial^{{ k_I}}u_i(P)}{\partial x_1^{\alpha_1}\cdots\partial x_N^{\alpha_N}}\frac{(x_1-P_1)^{\alpha_1}\cdots (x_N-P_N)^{\alpha_N}+Q_\alpha(x,P)}{|x-P|^{N+2{ k_I}}}\frac{(x_s-P_s)^2}{\e}\\
                  &+\frac{k_I}{A(N,{ k_I})\e}\sum_{|\alpha|={ k_I}}\frac1{\alpha!}
					\frac{\partial^{{ k_I}}u_i(P)}{\partial x_1^{\alpha_1}\cdots\partial x_N^{\alpha_N}}\frac{(x_1-P_1)^{\alpha_1}\cdots (x_N-P_N)^{\alpha_N}}
					{|x-P|^{N-2+2{ k_I}}}\\
                    &+\frac{1}{A(N,{ k_I})}\sum_{\substack{s\\|\alpha|={ k_I}}}\frac1{\alpha!}
					\frac{\partial^{{ k_I}}u_i(P)}{\partial x_1^{\alpha_1}\cdots\partial x_N^{\alpha_N}}\frac{\frac{\partial Q_\alpha(x,P)}{\partial x_s}}
					{|x-P|^{N-2+2{ k_I}}}\frac{x_s-P_s}{\e}\\
                   =&-\frac{N-2+{ k_I}}{A(N,{ k_I})\e} \sum_{|\alpha|={ k_I}}\frac1{\alpha!}
					\frac{\partial^{{ k_I}}u_i(P)}{\partial x_1^{\alpha_1}\cdots\partial x_N^{\alpha_N}}\frac{(x_1-P_1)^{\alpha_1}\cdots (x_N-P_N)^{\alpha_N}}
					{|x-P|^{N-2+2{ k_I}}}.\\
        \end{align*}
        Using Taylor expansion \eqref{eq27102025-1}  and arguing as in the computation of $I_1$, it follows that
        \begin{equation}\label{eq05112025-2}
            I_2=\frac{N-2+{ k_I}}{\lambda (N-2+k_I+k_L) }\e^{N-2+k_I+k_L}	\sum_{\substack{|\alpha|={ k_I}\\
           |\beta|={ k_L}}}\frac{1}{\alpha!\beta!}\frac{\partial^{{ k_I}} u_i (P)}{\partial x_{1}^{\alpha_1}\cdots\partial x_{N}^{\alpha_N}}\frac{\partial^{{ k_L}} u_l (P)}{\partial x_{1}^{\beta_1}\cdots\partial x_{N}^{\beta_N}} C_{\alpha\beta}
            +O(\e^{N-1+{ k_I}+{ k_L}}).
        \end{equation}
    Combining \eqref{eq27102025-0}, \eqref{eq05112025-1} and \eqref{eq05112025-2}, it follows that
    \begin{equation}
    \begin{aligned}
        &\int_{\O_\e}u_i(x) V_{\e,u_l}(x)\ dx\\
               =&\frac{N-2+2{ k_I}}{\lambda (N-2+k_I+k_L) }\e^{N-2+{ k_I}+{ k_L}}	\sum_{\substack{|\alpha|={ k_I}\\
           |\beta|={ k_L}}}\frac{1}{\alpha!\beta!}\frac{\partial^{{ k_I}} u_i (P)}{\partial x_{1}^{\alpha_1}\cdots\partial x_{N}^{\alpha_N}}\frac{\partial^{{ k_L}} u_l (P)}{\partial x_{1}^{\beta_1}\cdots\partial x_{N}^{\beta_N}} C_{\alpha\beta}
            +O(\e^{N-1+{ k_I}+{ k_L}}).\\
              \end{aligned}
    \end{equation}
    By Lemma \ref{lem2212-1}, we see that $\sum_{\substack{|\alpha|={ k_I}}}\frac{1}{\alpha!}\frac{\partial^{{ k_I}} u_i (P)}{\partial x_{1}^{\alpha_1}\cdots\partial x_{N}^{\alpha_N}}x_1^{\alpha_1}\cdots x_N^{\alpha_N}$ and $\sum_{\substack{
           |\beta|={ k_L}}}\frac{1}{\beta!}\frac{\partial^{{ k_L}} u_l (P)}{\partial x_{1}^{\beta_1}\cdots\partial x_{N}^{\beta_N}}x_1^{\beta_1}\cdots x_N^{\beta_N} $ are harmonic polynomials. If ${ k_I}\neq { k_L}$,
        using Theorem 3.2.1 in  \cite{Groemer_1996}, we have 
        \begin{equation}
            \sum_{\substack{|\alpha|={ k_I}\\
           |\beta|={ k_L}}}\frac{1}{\alpha!\beta!}\frac{\partial^{{ k_I}} u_i (P)}{\partial x_{1}^{\alpha_1}\cdots\partial x_{N}^{\alpha_N}}\frac{\partial^{{ k_L}} u_l (P)}{\partial x_{1}^{\beta_1}\cdots\partial x_{N}^{\beta_N}} \int_{\partial B(0,1)} x_{1}^{\alpha_1+\beta_1}\cdots x_{N}^{\alpha_N+\beta_N} \ d S_x=0.
        \end{equation}
         If ${ k_I}={ k_L}=k$,
        using equation 4.634 in \cite{Gradshteyn2007}, we have
        \begin{equation}
            \int_{\partial B(0,1)} x_{1}^{\alpha_1+\beta_1}\cdots x_{N}^{\alpha_N+\beta_N} \ d S_x=
            \begin{cases}
              \displaystyle \frac{2\prod_{i=1}^N\Gamma(\frac{\alpha_i+\beta_i+1}{2})}{\Gamma(k+\frac{N}{2})},  &if\ \alpha_i+\beta_i\ is  \ even, \ i=1,\cdots,N,\vspace{2mm}\\
                0,& otherwise.
            \end{cases}
        \end{equation}
 which gives the claim for $N\ge3$.

        For $N=2$,  repeating above calculations, we obtain the results for ${ k_I}\ge1$. Therefore, we only need to consider ${ k_I}=0$. If ${ k_I}={ k_L}=0$, using equation \eqref{eq10102025-1}, then
        \begin{equation}
            \begin{aligned}
                \int_{\O_\e}u_i(x) V_{\e,u_l}(x)\ dx= &\int_{\O_\e}u_i(x) \left[\frac{2\pi}{|\log \e|}u_l(p)G_\Omega(x,P)+O\left(\frac{1}{|\log \e|^2}G_\Omega(x,P)\right)\right]\ dx\\
                 =&\frac{2\pi}{|\log \e|}u_l(p)\int_{\O}u_i(x) G_\Omega(x,P)\ dx+O\left(\frac{1}{|\log \e|^2}\right)\\
                 =&\frac{2\pi}{\lambda|\log \e|}u_i(P)u_l(P)+O\left(\frac{1}{|\log \e|^2}\right).
            \end{aligned}
        \end{equation}
        If $k_I=0$ and ${ k_L}\ge1$, using equation \eqref{eq10102025-3}, we directly obtain that 
        $$ \int_{\O_\e}u_i(x) V_{\e,u_l}(x)\ dx=O\left(||V_{\e,u_l}||_{L^1(\O_\e)}\right)=O(\e^{1+{ k_L}}).
        $$
      This completes the proof.
    \end{proof}
    \begin{rem} Assume that the vanishing order of $u_l$ at $P$ is equal to $k_L$. As a byproduct of the proof of Lemma \ref{lem1211-1}, we derive the $u_l-$capacity of $B(P,\e)$ with respect to the domain $\O$.

    Indeed, by  definition of  $\hbox{Cap}_\O(B(P,\e),u_l)$, we have 
    $$
    \hbox{Cap}_\O(B(P,\e),u_l)=\int_\O|\nabla V_{\e,u_l}|^2\ dx=\int_{\O_\e}|\nabla V_{\e,u_l}|^2\ dx+ \int_{B(P,\e)}|\nabla u_l|^2\ dx.
    $$
        Using \eqref{eq05112025-2}, we  obtain 
        \begin{equation}
            \begin{aligned}
               & \int_{\O_\e}|\nabla V_{\e,u_l}|^2\ dx=\int_{\partial\O_\e}V_{\e,u_l}\frac{\partial V_{\e,u_l}}{\partial\nu}\ dS_x-\int_{\O_\e}V_{\e,u_l}\Delta V_{\e,u_l}\ dx=-\int_{\partial B(P,\e)}u_l\frac{\partial V_{\e,u_l}}{\partial\nu}\ dS_x\\
            =&\frac{N-2+{ k_L}}{N-2+2 k_L}\e^{N-2+2{ k_L}}	\sum_{\substack{|\alpha|={ k_L}\\
           |\beta|={ k_L}}}\frac{1}{\alpha!\beta!}\frac{\partial^{{ k_L}} u_l (P)}{\partial x_{1}^{\alpha_1}\cdots\partial x_{N}^{\alpha_N}}\frac{\partial^{{ k_L}} u_l (P)}{\partial x_{1}^{\beta_1}\cdots\partial x_{N}^{\beta_N}}C_{\alpha\beta}+O(\e^{N-1+2{ k_L}}).
            \end{aligned}
        \end{equation}
     Using \eqref{eq05112025-1},   a straightforward computation shows that
        \begin{equation}
        \begin{aligned}
            \int_{B(P,\e)}|\nabla u_l|^2\ dx=&\int_{\partial B(P,\e)}u_l\frac{\partial u_l}{\partial\nu}\ dS_x+\lambda\int_{B(P,\e)} u_l^2\ dx\\
            =&\frac{ k_L}{N-2+2 k_L}\e^{N-2+2{ k_L}}	\sum_{\substack{|\alpha|={ k_L}\\
           |\beta|={ k_L}}}\frac{1}{\alpha!\beta!}\frac{\partial^{{ k_L}} u_l (P)}{\partial x_{1}^{\alpha_1}\cdots\partial x_{N}^{\alpha_N}}\frac{\partial^{{ k_L}} u_l (P)}{\partial x_{1}^{\beta_1}\cdots\partial x_{N}^{\beta_N}}C_{\alpha\beta}+O(\e^{N-1+2{ k_L}}).
            \end{aligned}
        \end{equation}
       Thus, 
        \begin{align*}
            \hbox{Cap}_\O(B(P,\e),u_l)
            =&\e^{N-2+2{ k_L}}	\sum_{\substack{|\alpha|={ k_L}\\
           |\beta|={ k_L}}}\frac{1}{\alpha!\beta!}\frac{\partial^{{ k_L}} u_l (P)}{\partial x_{1}^{\alpha_1}\cdots\partial x_{N}^{\alpha_N}}\frac{\partial^{{ k_L}} u_l (P)}{\partial x_{1}^{\beta_1}\cdots\partial x_{N}^{\beta_N}}C_{\alpha\beta}+O(\e^{N-1+2{ k_L}}).
        \end{align*}
    \end{rem}

     We consider the canonical extension of $u_\e$ to $\Omega$, obtained by extending it by zero inside $B(P,\e)$.
     By the definition of $V_{\e,u_i}$, we see that $u_i-V_{\e,u_i}\in H_0^1(\O_\e)$ for $i=1,\cdots,m(\lambda)$.
 Now we will decompose $u_{j,\e} \hbox{ in } span\{u_1-V_{\e,u_1},\cdots,u_{m(\lambda)}-V_{\e,u_{m(\lambda)}}\}$ and its orthogonal space. 
\begin{equation}\label{eq15112025-1}
    u_{j,\e}=\sum_{i=1}^{m(\la)}\alpha_{ij,\e} (u_i-V_{\e,u_i})+ v_{j,\e},
\end{equation}
where 
$v_{j,\e}\in H_0^1(\O_\e)$ and 
\begin{equation}\label{eq15112025-2}
  0=\int_{\O_\e} \nabla v_{j,\e} \cdot\nabla(u_i-V_{\e,u_i})\ dx= -\int_{\O_\e}  v_{j,\e} \Delta (u_i-V_{\e,u_i})\ dx=\lambda\int_{\O_\e}  v_{j,\e} u_i\ dx,\ \ i=1,\cdots,m(\lambda).
\end{equation}
Then $v_{j,\e}$ satisfies the following 
\begin{equation}\label{eq2301-4}
    \begin{cases}
       \displaystyle -\Delta v_{j,\e}-\lambda_{j,\e}v_{j,\e}=f_{j,\e}\ \ &in \ \ \O_\e,\vspace{2mm}\\
       \displaystyle v_{j,\e}=0\ \ &on \ \ \partial\O_\e,\vspace{2mm}\\
      \displaystyle \int_{\O_\e}  v_{j,\e} u_i\ dx= 0,\ \ &i=1,\cdots,m(\lambda),
    \end{cases}
\end{equation}
where $f_{j,\e}=(\lambda_{j,\e}-\lambda)\sum_{i=1}^{m(\la)}\alpha_{ij,\e}  u_i-\lambda_{j,\e}\sum_{i=1}^{m(\la)}\alpha_{ij,\e}  V_{\e,u_i} .$
In the following  lemma, we will give an estimate of $||v_{j,\e}||_{L^2(\O_\e)}$.
     \begin{lemma}\label{lem1511}
        Assume that $v_{j,\e}$ and $\alpha_{ij,\e}$ are determined by \eqref{eq15112025-1} and \eqref{eq15112025-2}. Then,
         \begin{equation}
              ||v_{j,\e}||_{L^2(\O_\e)}\le C|\lambda_{j,\e}-\lambda|+C\sum_{i=1}^{m(\la)}|\alpha_{ij,\e}| ||  V_{\e,u_i}||_{L^2(\O_\e)}.
         \end{equation}

     \end{lemma}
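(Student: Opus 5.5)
The plan is to use a spectral gap for $-\Delta$ on $\O_\e$, restricted to the subspace orthogonal to the limiting eigenspace, and to test \eqref{eq2301-4} against $v_{j,\e}$ itself.

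Start from the identity obtained by multiplying the equation in \eqref{eq2301-4} by $v_{j,\e}$ and integrating over $\O_\e$:
\[
\int_{\O_\e}|\nabla v_{j,\e}|^2-\lambda_{j,\e}\int_{\O_\e}v_{j,\e}^2=\int_{\O_\e}f_{j,\e}v_{j,\e}.
\]
Now bound the left side from below. Since $v_{j,\e}\in H^1_0(\O_\e)\subset H^1_0(\O)$ (extension by zero) and $v_{j,\e}\perp u_i$ in $L^2(\O)$ for $i=1,\dots,m(\la)$, expand $v_{j,\e}$ in the eigenbasis of $-\Delta$ on $\O$. Only eigenfunctions with eigenvalues different from $\la$ appear. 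Splitting into the components with eigenvalue $<\la$ and $>\la$ gives only an indefinite quadratic form, so a direct Rayleigh bound is not enough. Instead I would argue via Theorem \ref{th_spectral} applied to $-\Delta$ on $\O$ acting on $E(\la)^\perp$, where $d(\la,\sigma)\ge\delta_0>0$ with $\delta_0$ the distance from $\la$ to the rest of the spectrum of $\O$.

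Concretely, define the extension of $v_{j,\e}$ by zero, $\tilde v$. I would work with the inverse operator $T=(-\Delta_\O)^{-1}$, a compact self-adjoint operator on $L^2(\O)$. From \eqref{eq2301-4}, in $\O_\e$ we have $v=\lambda_{j,\e}Tv+Tf+h$, where $h$ is harmonic in $\O_\e$ and compensates for the boundary flux on $\partial B(P,\e)$. This correction is awkward. The cleaner route is to argue by contradiction with normalization. Suppose $\|v_{j,\e}\|_{L^2}/(|\lambda_{j,\e}-\la|+\sum|\alpha_{ij,\e}|\|V_{\e,u_i}\|_{L^2})\to\infty$. Set $w_\e=v_{j,\e}/\|v_{j,\e}\|_{L^2}$. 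Then $w_\e$ solves $-\Delta w_\e-\lambda_{j,\e}w_\e=g_\e$ with $\|g_\e\|_{L^2}\to0$, using $|\alpha_{ij,\e}|$ bounded (from $\|u_{j,\e}\|=1$ and \eqref{eq:normaL2Ve}). Testing against $w_\e$ and using $\lambda_{j,\e}\to\la$ gives boundedness of $\|\nabla w_\e\|_{L^2}$. Hence, up to subsequences, $w_\e\rightharpoonup w$ in $H^1_0(\O)$ and $w_\e\to w$ strongly in $L^2(\O)$, so $\|w\|_{L^2}=1$.

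Next, pass to the limit in the weak formulation with test functions $\phi\in C_c^\infty(\O\setminus\{P\})$, which are admissible for small $\e$. This gives $-\Delta w=\la w$ in $\O\setminus\{P\}$. Since a point has zero capacity for $N\ge2$, these test functions are dense in $H^1_0(\O)$, and $w$ is an eigenfunction of $\O$ with eigenvalue $\la$, so $w\in E(\la)$. On the other hand, the orthogonality $\int w_\e u_i=0$ passes to the limit, giving $w\perp E(\la)$. Therefore $w=0$, contradicting $\|w\|_{L^2}=1$. This yields the bound with some constant $C$.

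Finally, control $f_{j,\e}$ by $\|f_{j,\e}\|_{L^2}\le C|\lambda_{j,\e}-\la|+C\sum_i|\alpha_{ij,\e}|\|V_{\e,u_i}\|_{L^2}$, using the boundedness of $\alpha_{ij,\e}$ and $\lambda_{j,\e}$.

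The main obstacle is the uniform boundedness of $\alpha_{ij,\e}$. It follows from \eqref{eq15112025-1}: taking the $L^2$ product with $u_l$ and using $\int v u_l=0$ gives $\alpha_{lj,\e}(1+o(1))+o(1)\sum_i|\alpha_{ij,\e}|=\int u_{j,\e}u_l$, which is bounded by $1$, because $\int u_iV_{\e,u_l}\to0$.
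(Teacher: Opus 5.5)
Your proof is correct and follows the paper's own argument: normalize $v_{j,\e}$ and argue by contradiction with $\|f_{j,\e}\|_{L^2}$ controlled by $|\lambda_{j,\e}-\lambda|+\sum_i|\alpha_{ij,\e}|\,\|V_{\e,u_i}\|_{L^2}$, take a weak limit in $H^1_0(\O)$, and observe that this limit must lie in $E(\lambda)$ while being $L^2$-orthogonal to it. Your extra steps fill in points the paper leaves implicit: passing to the limit with test functions in $C_c^\infty(\O\setminus\{P\})$ using the zero capacity of a point, and proving that the $\alpha_{ij,\e}$ are uniformly bounded by testing \eqref{eq15112025-1} against $u_l$.
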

     \begin{proof}
     Since $v_{j,\e}$ satisfies \eqref{eq2301-4},
to obtain the result, we only need to prove
   \begin{equation}\label{eq0531-1}
       ||{v}_{j,\e}||_{L^2(\O_\e)}\leq C||f_{j,\e}||_{L^2(\O_\e)},\ \ j=1,\cdots,m(\lambda).
   \end{equation}
   Indeed, if the above inequality has been proved, then we obtain that
   \begin{equation}
       \begin{aligned}
           ||v_{j,\e}||_{L^2(\O_\e)}
           \le& C||f_{j,\e}||_{L^2(\O_\e)}\\
           \le&C|\lambda_{j,\e}-\lambda|\sum_{i=1}^{m(\la)}|\alpha_{ij,\e} | || u_i||_{L^2(\O_\e)}+C\sum_{i=1}^{m(\la)}|\alpha_{ij,\e}| ||  V_{\e,u_i}||_{L^2(\O_\e)}\\
           \le&C|\lambda_{j,\e}-\lambda|+C\sum_{i=1}^{m(\la)}|\alpha_{ij,\e}| ||  V_{\e,u_i}||_{L^2(\O_\e)}.\\
           \end{aligned}
   \end{equation}

In the following, we prove the  lemma  by contradiction. If not, there would  exist a sequence ${v}_{j_0,\e_n}$, $\lambda_{j_0,\e_n}\to\lambda$ and $f_{j_0,\e_n}$, $\e_n\to 0$ as $n\to \infty$ that satisfy 
   \begin{equation}\label{eq11112025-1}
    \begin{cases}
       \displaystyle -\Delta {v}_{j_0,\e_n}-\lambda_{j_0,\e_n}{v}_{j_0,\e_n}=f_{j_0,\e_n}\ \ &\hbox{in} \ \ \O_{\e_n},\\
       \displaystyle {v}_{j_0,\e_n}=0\ \ &\hbox{on} \ \ \partial\O_{\e_n},\\
    \end{cases}
\end{equation}
and $      \displaystyle \int_{\O_{\e_n}}  v_{j_0,\e_n} u_i\ dx= 0$ for any $i=1,\cdots,m(\lambda),$
but 
\begin{equation*}
     ||{v}_{j_0,\e_n}||_{L^2(\O_{\e_n})}\geq n||f_{j_0,\e_n}||_{L^2(\O_{\e_n})}.
\end{equation*}

Without loss of generality, we assume that $||{v}_{j_0,\e_n}||_{L^2(\O_{\e_n})}=1$. 
Then  $||f_{j_0,\e_n}||_{L^2(\O_{\e_n})}\to 0$ as $n\to \infty$. Multiplying \eqref{eq11112025-1} by ${v}_{j_0,\e_n}$ and integrating on $\O_{\e_n}$, we obtain that
\begin{equation*}
    ||{v}_{j_0,\e_n}||_{H_0^1(\O_{\e_n})}^2\leq \lambda_{j_0,\e_n}||{v}_{j_0,\e_n}||_{L^2(\O_{\e_n})}^2+||f_{j_0,\e_n}||_{L^2(\O_{\e_n})}||{v}_{j_0,\e_n}||_{L^2(\O_{\e_n})}\leq \lambda+o_n(1).
\end{equation*}
Thus there exists a subsequence  which we still denote $\{{v}_{j_0,\e_n}\}$ such that 
$${v}_{j_0,\e_n}\rightharpoonup{v}_{j_0} \ \text{weakly in }H_0^1(\O)\ \  \text{and} \  \ {v}_{j_0,\e_n}\to{v}_{j_0} \text{ in }L^2(\O). $$
Then ${v}_{j_0}$ is a weak solution of 
\begin{equation*}
    \begin{cases}
       \displaystyle -\Delta {v}_{j_0}-\lambda{v}_{j_0}=0\ \ &\hbox{in} \ \ \O,\\
       \displaystyle {v}_{j_0}=0\ \ &\hbox{on} \ \ \partial\O,\\
    \end{cases}
\end{equation*}
which deduces that ${v}_{j_0}\in span\{u_1,\cdots,u_{m(\lambda)}\}$. 
Since ${v}_{j_0,\e_n}\to{v}_{j_0} \text{ in }L^2(\O)$,  it follows from
$
\int_{\O_{\e_n}}{v}_{j_0,\e_n}u_l \ dx=0$  for any  $ l=1,\cdots,m(\lambda)
$
that $\int_{\O}{v}_{j_0}u_l \ dx=0$ for any $l=1,\cdots,m(\lambda)$. Then ${v}_{j_0}=0$. However, by the assumption $||{v}_{j_0,\e_n}||_{L^2(\O_{\e_n})}=1$, we have  $||{v}_{j_0}||_{L^2(\O)}=1$ which  contradicts ${v}_{j_0}=0$. This completes the proof.
     \end{proof}
     \begin{rem}
      In above theorem,   it is essential to retain $\alpha_{ij,\e}$ in the second term of  \eqref{eq0531-1}.  In the proof of  Theorem \ref{sim}, we aim to show that $\int_{\O_\e}v_{j,\e}V_{\e,u_l}\ dx=o(1)|\lambda_{j,\e}-\lambda|$ for any $l,j=1+d_0+\cdots+d_{J-1},\cdots,d_0+\cdots+d_J$.  It follows from \eqref{V3} and \eqref{eq10102025-3} that
      $$
      \begin{aligned}
            \left  |\int_{\O_\e}v_{j,\e}V_{\e,u_l}\ dx\right|
            \leq& ||v_{j,\e}||_{L^2(\O_\e)} ||  V_{\e,u_l}||_{L^2(\O_\e)}\\
            \leq& o(1)|\lambda_{j,\e}-\lambda|+C\sum_{I=0}^p\sum_{i=1+d_0+\cdots+d_{I-1}}^{d_0+\cdots+d_I}|\alpha_{ij,\e}| ||  V_{\e,u_i}||_{L^2(\O_\e)}||  V_{\e,u_l}||_{L^2(\O_\e)}.\\
      \end{aligned}
      $$
 For any $I\ge J$,  $||  V_{\e,u_i}||_{L^2(\O_\e)}||  V_{\e,u_l}||_{L^2(\O_\e)}=o(1)|\lambda_{j,\e}-\lambda|$ for any $i=1+d_0+\cdots+d_{I-1},\cdots,d_0+\cdots+d_I$. In this case,  we only need the boundedness of $\alpha_{ij,\e}$. However, if $I<J$, $||  V_{\e,u_i}||_{L^2(\O_\e)}||  V_{\e,u_l}||_{L^2(\O_\e)}=o(1)|\lambda_{j,\e}-\lambda|$ may fail.  At this stage, a  decay estimate for the coefficients $\alpha_{ij,\e}$  (see Lemma \ref{lem2204}) is needed. Therefore,  it is necessary to keep the coefficients $\alpha_{ij,\e}$ in front of $||  V_{\e,u_i}||_{L^2(\O_\e)}$.
     \end{rem}

    	\section{\bf The simplicity of the approximating eigenvalues}\label{simpl}
        In this section we will study, for small $\e$, the simplicity of the eigenvalue $\la_\e$.

 For any $J=0,1,\cdots,p$, 
 it is shown in \cite{alm2} that for $N\ge3$ there exist positive real numbers $\mu_j\ne0$ for any $j=1+d_0+\cdots+ d_{J-1},\ldots,d_0+\cdots+d_J$ such that the eigenbranches departing from $\lambda$ have the following asymptotic behavior
    \begin{equation}
		\la_{j,\e}=\la+\mu_j\e^{N-2+2k_J} \quad \text{for any }j=1+d_0+\cdots +d_{J-1},\ldots,d_0+\cdots+d_J.
	\end{equation}
In this section we  revisit the above result in such a way so that the numbers $\mu_j$ appear more explicit, so giving clearer information about the possible simplicity of the eigenbranches.
	
Finally, we denote by $u_{1,\e},..,u_{m(\la),\e}$ the  perturbed eigenfunctions associated to $\la_{1,\e},..,\la_{m(\la),\e}$.

	\subsection{\bf Proof of the main theorems}

     We first introduce some notation.
     Let  $n_J:=\#\{\alpha:\ |\alpha|=k_J\}$ denote the number of elements in $\{\alpha:\ |\alpha|=k_J\}$. Then, we label the elements in this set as 
    $$
    \{\alpha:\ |\alpha|=k_J\}=\{\alpha^{(1)},\cdots,\alpha^{(n_J)}\}.
    $$
    We also label $\{\beta:\ |\beta|=k_J\}=\{\beta^{(1)},\cdots,\beta^{(n_J)}\}$ in the same way.
    Denote 
    \begin{equation}\label{defU}
         U_i=\left(D^{\alpha^{(1)}}u_i(P),\cdots,D^{\alpha^{(n_J)}}u_i(P)\right)^T\in M_{n_J\times 1},
    \end{equation}
    where 
    \begin{equation*}
        D^{\alpha^{(m)}}u_i(P)=\frac{\partial^{k_J} u_i (P)}{\partial x_{1}^{\alpha_1^{(m)}}\cdots\partial x_{N}^{\alpha_N^{(m)}}}.
    \end{equation*}
    Denote by
    \begin{equation}
         \mathcal{C}_{k_J}=\left(c_{mj}\right)\in M_{n_J\times n_J},
    \end{equation}
	where
    $$
    c_{mj}=\frac{C_{\alpha^{(m)}\beta^{(j)}}}{\alpha^{(m)}!\beta^{(j)}!}=\frac{(N+2k_J)(N-2+2k_J)}{\alpha^{(m)}!\beta^{(j)}!}\int_{ B(0,1)} x_{1}^{\alpha_1^{(m)}+\beta_1^{(j)}}\cdots x_{N}^{\alpha_N^{(m)}+\beta_N^{(j)}} \ d x.
    $$
    The following lemma provides a more convenient representation of the matrix $G_{k_J}$ in Theorem \ref{sim} , which will be useful in the subsequent computations.
    \begin{lemma}\label{defG}
        Under the same condition as Theorem \ref{sim}, there exists an invertible matrix $B_{k_J}$ such that the matrix $G_{k_J}$ in Theorem \ref{sim} can be equivalently written as
        $$
G_{k_J}=
\begin{bmatrix}
    \left(B_{k_J}U_{1+d_0+\cdots+d_{J-1}}\right)^TB_{k_J}U_{1+d_0+\cdots+d_{J-1}} &\dots & \left(B_{k_J}U_{1+d_0+\cdots+d_{J-1}}\right)^TB_{k_J}U_{d_0+\cdots+d_{J}}\\
    \vdots&  &\vdots\\
     \left(B_{k_J}U_{d_0+\cdots+d_{J}}\right)^TB_{k_J}U_{1+d_0+\cdots+d_{J-1}} &\dots & \left(B_{k_J}U_{d_0+\cdots+d_{J}}\right)^TB_{k_J}U_{d_0+\cdots+d_{J}}\\
\end{bmatrix}.
$$
Moreover, $G_{k_J}$ is a Gram matrix.
    \end{lemma}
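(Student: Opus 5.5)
The entries of $G_{k_J}$ are
\[
(G_{k_J})_{il}=\sum_{|\alpha|=|\beta|=k_J}\frac{C_{\alpha\beta}}{\alpha!\beta!}D^\alpha u_i(P)D^\beta u_l(P)=U_i^T\,\mathcal{C}_{k_J}\,U_l ,
\]
so the plan is to exhibit $\mathcal{C}_{k_J}$ as $B_{k_J}^TB_{k_J}$ with $B_{k_J}$ invertible. This reduces the whole statement to showing that $\mathcal{C}_{k_J}$ is symmetric and positive definite. Once this is known, the factorization $\mathcal{C}_{k_J}=B_{k_J}^TB_{k_J}$ with invertible $B_{k_J}$ follows from Cholesky, or from the symmetric square root $B_{k_J}=\mathcal{C}_{k_J}^{1/2}$. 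Then $(G_{k_J})_{il}=(B_{k_J}U_i)^T(B_{k_J}U_l)$, which is exactly the displayed form. It also shows that $G_{k_J}$ is the Gram matrix of the vectors $B_{k_J}U_i$, $i=1+d_0+\cdots+d_{J-1},\dots,d_0+\cdots+d_J$, with respect to the Euclidean inner product of $\R^{n_J}$.

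Symmetry is immediate, since $C_{\alpha\beta}$ depends only on $\alpha+\beta$ and $\alpha!\beta!$ is symmetric. For definiteness I use the integral representation given just before the lemma:
\[
c_{mj}=\frac{(N+2k_J)(N-2+2k_J)}{\alpha^{(m)}!\,\beta^{(j)}!}\int_{B(0,1)}x^{\alpha^{(m)}+\beta^{(j)}}\,dx .
\]
Before relying on it, I will check that it agrees with \eqref{defCalbe}. This uses $\int_{\partial B_1}x^\gamma\,dS=(N+|\gamma|)\int_{B_1}x^\gamma\,dx$, which follows from homogeneity in polar coordinates. With $|\gamma|=2k_J$ it gives the factor $N+2k_J$, and $2(N-2+2k_J)$ matches the prefactor in $C_{\alpha\beta}$ up to a positive constant; a positive constant does not affect positive definiteness.

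For any $\xi\in\R^{n_J}$, set $p_\xi(x)=\sum_m \xi_m x^{\alpha^{(m)}}/\alpha^{(m)}!$. Then
\[
\xi^T\mathcal{C}_{k_J}\xi=c\int_{B(0,1)}p_\xi(x)^2\,dx\ge0, \qquad c>0,
\]
with equality only if $p_\xi\equiv0$ on the ball. Since the monomials $x^\alpha$ with $|\alpha|=k_J$ are linearly independent, this forces $\xi=0$. So $\mathcal{C}_{k_J}$ is positive definite, hence invertible, and $B_{k_J}$ exists.

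The only step needing care is this reconciliation of constants between \eqref{defCalbe} (a sphere integral, with the stray $\Gamma(k+N/2)$) and the ball-integral formula for $c_{mj}$. If the two normalizations differ, I will instead use the sphere integral directly: $\int_{\partial B_1}p_\xi^2\,dS>0$ for nonzero homogeneous $p_\xi$, because a homogeneous polynomial vanishing on the sphere vanishes identically. This gives the same conclusion.
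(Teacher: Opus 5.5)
Your argument is correct and is essentially the paper's proof: show $\mathcal{C}_{k_J}$ is positive definite via $\xi^T\mathcal{C}_{k_J}\xi=(N+2k_J)(N-2+2k_J)\int_{B(0,1)}p_\xi^2\,dx$ and linear independence of the monomials, factor $\mathcal{C}_{k_J}=B_{k_J}^TB_{k_J}$, and read off $(G_{k_J})_{il}=(B_{k_J}U_i)^TB_{k_J}U_l$; your constant check works out exactly, because the factor $2$ belongs to the sphere-integral formula, so $C_{\alpha\beta}=(N-2+2k_J)(N+2k_J)\int_{B(0,1)}x^{\alpha+\beta}\,dx$. One caveat, to which the paper's proof also restricts its claim: the prefactor $(N+2k_J)(N-2+2k_J)$ vanishes when $N=2$, $k_J=0$, so your ``$c>0$'' (and the lemma itself) holds only for $N\ge3$ or $k_J\ge1$, while the case $N=2$, $k_J=0$ is instead governed by the $1/|\log\e|$ expansion.
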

    \begin{proof}
        First, we claim that $ \mathcal{C}_{k_J}$ is a positive definite matrix whenever $N=2$ and $k_J\ge 1$, or $N\ge 3$. 

         Assume that $A=\left(a_1,\cdots,a_{n_J}\right)^{T} \in M_{n_J\times 1} $. 
    \begin{equation}\label{eq0512-2}
        \begin{aligned}
            A^T\mathcal{C}_{k_J}A=&(N+2k_J)(N-2+2k_J)\sum_{m,j=1}^{n_J}\frac{1}{\alpha^{(m)}!\alpha^{(j)}!}\int_{ B(0,1)}a_m a_j x_{1}^{\alpha_1^{(m)}+\alpha_1^{(j)}}\cdots x_{N}^{\alpha_N^{(m)}+\alpha_N^{(j)}} \ d x\\
            =&(N+2k_J)(N-2+2k_J)\int_{ B(0,1)}\left[\sum_{m=1}^{n_J}\frac{a_m}{\alpha^{(m)}!}x_{1}^{\alpha_1^{(m)}}\cdots x_{N}^{\alpha_N^{(m)}} \right]^2\ d x\geq 0.
        \end{aligned}
    \end{equation}
	If $A^T\mathcal{C}_{k_J}A=0$, then 
    \begin{equation}\label{eq0512-3}
        \sum_{m=1}^{n_J}\frac{a_m}{\alpha^{(m)}!}x_{1}^{\alpha_1^{(m)}}\cdots x_{N}^{\alpha_N^{(m)}}=0\ \hbox{for any }   x\in B(0,1). 
         \end{equation}
      Thus, we have $a_m=0$ for any $m=1,\cdots,n_J$. Then we conclude that $\mathcal{C}_{k_J}$ is positive definite.
        
        It follows from the above claim that   $\mathcal{C}_{k_J}$  is positive definite  for any $k_J$. Therefore,  there exists an invertible matrix $B_{k_J}$ such that $\mathcal{C}_{k_J}=B_{k_J}^TB_{k_J}$. Thus, we have for any $i,l=1+d_0+\cdots+d_{J-1},\cdots,d_0+\cdots+d_J$,
   \begin{equation}
   \begin{aligned}
       \sum_{\substack{|\alpha|={ k_J}\\
           |\beta|={ k_J}}}\frac{1}{\alpha!\beta!}\frac{\partial^{{ k_J}} u_{i} (P)}{\partial x_{1}^{\alpha_1}\cdots\partial x_{N}^{\alpha_N}}\frac{\partial^{{ k_J}} u_{l} (P)}{\partial x_{1}^{\beta_1}\cdots\partial x_{N}^{\beta_N}} C_{\alpha\beta} =U_i^T\mathcal{C}_{k_J}U_l=\left(B_{k_J}U_i\right)^TB_{k_J}U_l,
            \end{aligned}
   \end{equation}
   where $U_i$ and $U_l$ are defined as \eqref{defU}.
Furthermore, we observe that the matrix in Theorem \ref{sim}
is a Gram matrix. This ends the proof.
    \end{proof}

    \subsection{Proof of Theorem \ref{sim}}
    First, we begin by proving  Corollary \ref{cor4.2} and its corresponding case for $N=2$, Corollary \ref{cor1510-1}, as special cases of Theorem \ref{sim}. 
 \begin{proof}[\bf Proof of Corollary \ref{cor4.2}]
        Recall that $u$ is any eigenfunction associated to $\la$. 
		Using the same notation of the previous section, we have that the function $u-V_{\e,u}$ solves
		\begin{equation*}
			\begin{cases}
				-\Delta(u-V_{\e,u})=\lambda u&in\ \O_\e,\\
				u-V_{\e,u}=0&on\ \partial\O_\e.
			\end{cases}
		\end{equation*}
		Multiplying by $u_{j,\e}$ and integrating twice we get
		\begin{equation}\label{2d2}
			(\la_{j,\e}-\la)\int_{\O_\e}u_{j,\e} u=\la_{j,\e}\int_{\O_\e}u_{j,\e}V_{\e,u}.
		\end{equation}
        We provide the same decomposition as in
		  \eqref{eq15112025-1} and \eqref{eq15112025-2}, as follows:
        \begin{equation}\label{2con}
    u_{j,\e}=\sum_{i=1}^{m(\la)}\alpha_{ij,\e} (u_i-V_{\e,u_i})+ v_{j,\e},
\end{equation}
 and
        \begin{equation}
   \int_{\O_\e} v_{j,\e} u_i\ dx= 0,\ \ i=1,\cdots,m(\lambda),
\end{equation}
		where $\{u_1,u_2,..,u_{m(\la)}\}$ is a system of orthonormal eigenfunctions of $E(\la)$ where $u_1\in E_0$ and $u_l\in E_1$, $l=2,\cdots,m(\la)$.
 Note that this decomposition will be crucial in dealing with the case where the order of vanishing of the $u$ is greater than $2$. In the present case, it can be omitted (see Remark \ref{M1}).

Using Lemma \ref{lem1511}, we obtain that 
\begin{equation}
    u_{j,\e}\to \sum_{i=1}^{m(\la)}\alpha_{ij} u_i \ \ \text{in}\ \ L^2(\O).
\end{equation}
  Multiplying \eqref{2con} by $u_l$  and integrating on $\O_\e$, it follows that $\alpha_{lj,\e}\to \alpha_{lj}$ for any $l,j=1,\cdots,m(\lambda)$.
		Moreover, by $\int_{\O_\e}u_{j,\e}^2=1$  for any $j$, we obtain 
        \begin{equation}
            \int_{\O}|\sum_{i=1}^{m(\la)}\alpha_{ij} u_i|^2=\sum_{i=1}^{m(\la)}\alpha_{ij}^2\int_{\O}u_i^2=\sum_{i=1}^{m(\la)}\alpha_{ij}^2=1,
        \end{equation}
        which  implies that
		\begin{equation}\label{nonz*}
			(\alpha_{1j},..,\alpha_{m(\la)j})\ne(0,..,0).
		\end{equation}
		Choosing $u=u_l$ for $l=1,..,m(\la)$, then  gets that the $LHS$ of \eqref{2d2} becomes
		\begin{equation}\label{2d4}
			LHS=(\la_{j,\e}-\la)\sum_{i=1}^{m(\la)}\alpha_{ij,\e}\int_{\O_\e}(u_i-V_{\e,u_i})u_l=(\la_{j,\e}-\la)(\alpha_{lj}+o(1)).
		\end{equation}
		The estimate of $LHS$ in \eqref{2d4} holds for any eigenvalues $\la_{j,\e}$.
        
        On the other hand, we will see that the computation of $RHS$ of \eqref{2d2} strongly depends on the order of vanishing at $P$ of $u_l$. Let us start the computations of the $RHS$ of \eqref{2d2} for a generic eigenfunction $u_l$. 
        Using Lemma \ref{lem1211-1} and Lemma \ref{lem1511}, we directly obtain the following.
		For $l=1$,
		\begin{equation}\label{2d5}
			\begin{split}
				RHS=&\big(\la+o(1)\big)\int_{\O_\e}\left(\sum_{i=1}^{m(\la)}\alpha_{ij,\e} (u_i-V_{\e,u_i})+ v_{j,\e}\right)V_{\e,u_1}\\
				=&\big(\la+o(1)\big)\left[\sum_{i=1}^{m(\la)}\alpha_{ij,\e}\int_{\O_\e}u_iV_{\e,u_1}-\sum_{i=1}^{m(\la)}\alpha_{ij,\e}\int_{\O_\e}V_{\e,u_i}V_{\e,u_1}+\int_{\O_\e}v_{j,\e}V_{\e,u_1}\right]\\
                =&\big(\la+o(1)\big)\alpha_{1j,\e}\left(\frac{(N-2)N\o_N}{\lambda}\e^{N-2}u_1^2(P)+O(\e^{N-1})\right)+\sum_{i=2}^{m(\la)}\alpha_{ij,\e}O(\e^{N})\\
                & +\sum_{i=1}^{m(\la)}|\alpha_{ij,\e}|||V_{\e,u_i}||_{L^2(\O_\e)}||V_{\e,u_1}||_{L^2(\O_\e)}+||v_{j,\e}||_{L^2(\O_\e)}||V_{\e,u_1}||_{L^2(\O_\e)}\\
				=&\frac{(\la+o(1))}{\la }(N-2)N\o_N\e^{N-2}\Big(\alpha_{1j}u_1^2(P)+o(1)\Big)+o(1)|\lambda_{j,\e}-\lambda|.
			\end{split}
		\end{equation}
		For $l=2,\cdots,m(\lambda)$, we obtain 
		\begin{equation}\label{2d6}
			\begin{split}
				RHS=&\big(\la+o(1)\big)\int_{\O_\e}\left(\sum_{i=1}^{m(\la)}\alpha_{ij,\e}(u_i-V_{\e,u_i})+v_{j,\e}\right)V_{\e,u_l}\\
				=&\big(\la+o(1)\big)\left[\sum_{i=1}^{m(\la)}\alpha_{ij,\e}\int_{\O_\e}u_iV_{\e,u_l}-\sum_{i=1}^{m(\la)}\alpha_{ij,\e}\int_{\O_\e}V_{\e,u_i}V_{\e,u_l}+\int_{\O_\e}v_{j,\e}V_{\e,u_l}\right]\\
                =&\big(\la+o(1)\big)\left[\alpha_{1j,\e}O(\e^{N})+\sum_{i=2}^{m(\la)}(\alpha_{ij}+o(1))\left(\frac{N\o_N}{\lambda}\e^{N}\nabla u_i(P)\cdot\nabla u_l(P)+O(\e^{N+1})\right)\right]\\
                &+\sum_{i=1}^{m(\la)}|\alpha_{ij,\e}|||V_{\e,u_i}||_{L^2(\O_\e)}||V_{\e,u_l}||_{L^2(\O_\e)}+||v_{j,\e}||_{L^2(\O_\e)}||V_{\e,u_l}||_{L^2(\O_\e)}\\
				=&\frac{(\la+o(1))}{\la }N\o_N\e^{N}\Bigg(\sum_{i=2}^{m(\la)}\alpha_{ij}\nabla u_i(P)\cdot\nabla u_l(P)+o(1)\Bigg)+\alpha_{1j,\e}O(\e^{N})+o(1)|\lambda_{j,\e}-\lambda|.
			\end{split}
		\end{equation}
		So by \eqref{2d2}, \eqref{2d4}, \eqref{2d5} and \eqref{2d6} we get
		\begin{equation}\label{eq-2509-1}
			\frac{\lambda_{j,\e}-\la}{\e^{N-2}}(\alpha_{1j}+o(1))=\frac{(\la+o(1))}{\la }(N-2)N\o_N\Bigg(\alpha_{1j}u_1^2(P)+o(1)\Bigg)
		\end{equation}
		and 
		\begin{equation}\label{eq-2509-2}
			\frac{\lambda_{j,\e}-\la}{\e^{N-2}}(\alpha_{lj}+o(1))=\frac{(\la+o(1))}{\la }N\omega_N\e^{2}\left(\sum_{i=2}^{m(\la)}\alpha_{ij}\nabla u_i(P)\cdot\nabla u_l(P)+o(1)\right)+\alpha_{1j,\e}O(\e^{2}),
		\end{equation}
		for any $l=2,\cdots,m(\la)$.
		Denoting by 
		\begin{equation}
			L_j=\lim\limits_{\e \to 0}\frac{\lambda_{j,\e}-\la}{\e^{N-2}},\quad j=1,2,\cdots,m(\la)
		\end{equation}
		and passing to the limit in \eqref{eq-2509-1} and \eqref{eq-2509-2}, we have 
		\begin{equation}\label{eq-2509-3}
			L_j\alpha_{1j}=(N-2)N\o_N\alpha_{1j}u^2_1(P),
		\end{equation}
		and 
		\begin{equation}
			L_j\alpha_{lj}=0, \quad l=2,\cdots,m(\la).
		\end{equation}
		This means that $L_j$ is an eigenvalue of the matrix $A=(a_{ij})$, $i,j=1,2,\cdots,m(\la)$,  where $a_{11}=(N-2)N\o_Nu^2_1(P)$ and $a_{ij}=0$ for other $i,j$. Moreover the vectors $\vec{\alpha}_j=(\alpha_{1j},\cdots,\alpha_{m(\la)j})$, $j=1,\cdots,m(\la)$ are the corresponding eigenvectors. By the orthonormality of the eigenfunctions $u_{j,\e}$, we get that the vectors $\vec{\alpha}_1,..\vec{\alpha}_{m(\la)}$ are orthogonal pairwise and $||\vec{\alpha}_j||=1$. 
It is immediate to verify that the eigenvalues of $A$ are given by
		\begin{equation}
			L_1=(N-2)N\o_Nu^2_1(P)\quad \text{with eigenvector $\vec{\alpha}_1=(1,0,\cdots,0)$}
		\end{equation}
		and 
		\begin{equation}
			L_j=0\quad \text{with eigenvector $\vec{\alpha}_j\perp\vec{\alpha}_1$, which means that $\alpha_{1j}=0,\, j=2,\cdots,m(\la)$.}
		\end{equation}
		This is equivalent to say that
		\begin{equation}\label{eq-2509-5}
			\lim\limits_{\e\to0}\frac{\la_{1,\e}-\la}{\e^{N-2}}
			=L_1=(N-2)N\o_Nu^2_1(P) \text{ with eigenfunction $u_{1,\e}\to u_1\in E_0$ in $L^2(\O)$.}
		\end{equation}
		and
		\begin{equation}\label{eq-2509-6}
			\lim\limits_{\e\to0}\frac{\la_{j,\e}-\la}{\e^{N-2}}
			=0\;\hbox{for any }j=2,..,m(\la) \text{ with eigenfunction $u_{j,\e}\to \sum_{i=2}^{m(\la)}\alpha_{ij}u_i\in E_1$ in $L^2(\O)$ .}
		\end{equation}
        
		Since $\alpha_{1j}=0$ for any $j=2,\cdots,m(\la)$, it follows that $\alpha_{1j,\e}\to 0$ as $\e\to 0$ for any $j=2,\cdots,m(\la)$.
		By \eqref{eq-2509-2}, we get 
		\begin{equation}\label{eq-2509-7}
			\frac{\lambda_{j,\e}-\la}{\e^{N}}(\alpha_{lj}+o(1))=\frac{(\la+o(1))}{\la }N\omega_N\left(\sum_{i=2}^{m(\la)}\alpha_{ij}\nabla u_i(P)\cdot\nabla u_l(P)\right)+o(1)
		\end{equation}

        for any $j,l=2,\cdots,m(\la)$.
		Denoting by 
		\begin{equation} \label{eq-2509-8*}
			\Lambda_j=\lim\limits_{\e \to 0}\frac{\lambda_{j,\e}-\la}{\e^{N}},\quad j=2,\cdots,m(\la),
		\end{equation}
		and passing to the limit in \eqref{eq-2509-7}, we have 
		\begin{equation}\label{eq-2509-8}
			\Lambda_j\alpha_{lj}=N\omega_N\sum_{i=2}^{m(\la)}\alpha_{ij}\nabla u_i(P)\cdot\nabla u_l(P),\quad l,j=2,\cdots,m(\la).
		\end{equation}
		Then we get that $\Lambda_j$ are eigenvalues of the matrix 
		\begin{equation*}
			A=N\omega_N\Big(\nabla u_i(P)\cdot\nabla u_l(P)\Big),\quad i,l=2,\cdots,m(\la).
		\end{equation*}
		So simply eigenvalues of $A$ generate simple eigenvalues $\lambda_{j,\e}$ for $\e$ small enough. In particular, if {\em all} the eigenvalues $\La_j$ of $A$ are simple then the eigenvalues $\lambda_{j,\e}$ are simple for $j=2,..,m(\la)$ and $\e$ small enough.
        This ends the proof.
        \end{proof}
    The next corollary concerns the case $N=2$.
     \begin{cor}\label{cor1510-1}
		 Suppose that $N=2$, and
		$E(\la)=E_0\oplus E_1,$
        with $k_0=0$ and $k_1=1$.
		Then we have that $\la_{1,\e}$ is simple and it verifies 
		\begin{equation}\label{2d11}
			\la_{1,\e}=\la +\frac{2\pi\left(u_1^2(P)+o(1)\right)}{|\log\e|}\hbox{ and }u_{1,\e}\to u_1\in E_0.
		\end{equation}
		Next, 
		\begin{equation}\label{2d12}
			\la_{j,\e}=\la +\big(\La_j+o(1)\big)\e^2\quad\hbox{ for }j=2,..,m(\la),
		\end{equation}
        and
\begin{equation}
			u_{j,\e}\to \sum_{i=2}^{m(\lambda)}\alpha_{ij}u_i\hbox{ in } H^1_0(\O)\quad\hbox{ for }j=2,..,m(\la),
		\end{equation}       
		where $\La_j$, $j=2,..,m(\la),$ are eigenvalues of the matrix 
			\begin{equation}\label{2mat}
				A=2\pi\big(\nabla u_i(P)\cdot\nabla u_l(P)
				\big)=2\pi\begin{pmatrix}
					|\nabla u_2(P)|^2&\cdots&\nabla u_2(P)\cdot\nabla u_{m(\la)}(P)\\
					\vdots&  &\vdots\\
					\nabla u_{m(\la)}(P)\cdot\nabla u_2(P)&\cdots&
					|\nabla u_{m(\la)}(P)|^2
				\end{pmatrix},
			\end{equation}
             and the vector $(\alpha_{2j},\cdots,\alpha_{m(\lambda)j})$, $j=2,..,m(\la),$ are eigenvectors of the matrix $G(P)$.
             
		In particular, if all the eigenvalues of $A$ are simple, then $\la_{j,\e}$ are simple for  $j=2,..,m(\la).$ 
	\end{cor}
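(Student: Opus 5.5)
The plan is to mirror the proof of Corollary \ref{cor4.2}, changing only the rates, which now come from Lemma \ref{2l:improvedL2Ve} and part (2) of Lemma \ref{lem1211-1}. Fix an orthonormal basis $u_1\in E_0$, $u_2,\dots,u_{m(\la)}\in E_1$. For each eigenpair $(\la_{j,\e},u_{j,\e})$ I will use the identity \eqref{2d2} with $u=u_l$ together with the decomposition \eqref{2con}. By Lemma \ref{lem1511} and \eqref{eq10102025-3}, $v_{j,\e}\to0$ in $L^2$. Hence $u_{j,\e}\to\sum_i\alpha_{ij}u_i$ in $L^2$, with $\alpha_{ij,\e}\to\alpha_{ij}$ and $\sum_i\alpha_{ij}^2=1$. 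The left-hand side of \eqref{2d2} is $(\la_{j,\e}-\la)(\alpha_{lj}+o(1))$, exactly as in \eqref{2d4}.

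\textbf{First scale.} For $l=1$, the right-hand side of \eqref{2d2} is estimated by \eqref{eq0301-1}. This gives $\la\int u_1V_{\e,u_1}=\frac{2\pi}{|\log\e|}u_1^2(P)+O(|\log\e|^{-2})$. The cross terms $\int u_iV_{\e,u_1}$ with $i\ge2$ are $O(\e^2)$ by part (1) of Lemma \ref{lem1211-1}, since there $k_I+k_L=1$ and $\delta$ vanishes. The terms $\int V_{\e,u_i}V_{\e,u_1}$ and $\int v_{j,\e}V_{\e,u_1}$ are bounded by products of $L^2$ norms. Using $\|V_{\e,u_1}\|_{L^2}=o(|\log\e|^{-1/2})$ from \eqref{eq:normaL2Ve} together with Lemma \ref{lem1511}, these contribute $o(|\log\e|^{-1})+o(1)|\la_{j,\e}-\la|$. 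For $l\ge2$, the right-hand side is $O(\e^2|\log\e|)$ plus the same kind of $o$-terms. Multiplying by $|\log\e|$ and letting $\e\to0$ gives $L_j\alpha_{1j}=2\pi u_1^2(P)\alpha_{1j}$ and $L_j\alpha_{lj}=0$ for $l\ge2$, where $L_j=\lim|\log\e|(\la_{j,\e}-\la)$. Orthonormality of the limit vectors then forces $L_1=2\pi u_1^2(P)$ with $\vec\alpha_1=e_1$, and $L_j=0$, $\alpha_{1j}=0$ for $j\ge2$. This proves \eqref{2d11} and the simplicity of $\la_{1,\e}$, since it is separated from the others at scale $1/|\log\e|$.

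\textbf{Second scale.} For $j,l\ge2$, I rescale by $\e^2$ and use part (1) of Lemma \ref{lem1211-1} with $k_I=k_L=1$. Here $C_{\alpha\beta}$ for $|\alpha|=|\beta|=1$ produces $2\pi\,\nabla u_i(P)\cdot\nabla u_l(P)$, up to the constants already computed there. This gives
\[
\la\int u_iV_{\e,u_l}=2\pi\e^2\,\nabla u_i(P)\cdot\nabla u_l(P)+O(\e^3).
\]
The term with $i=1$ is $\alpha_{1j,\e}O(\e^2)$, and it is $o(\e^2)$ because $\alpha_{1j,\e}\to0$.

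\textbf{Main obstacle.} The delicate step is the remainder $\int v_{j,\e}V_{\e,u_l}$ and $\sum_i\alpha_{ij,\e}\int V_{\e,u_i}V_{\e,u_l}$, which must be $o(\e^2)$. By \eqref{eq10102025-3}, $\|V_{\e,u_l}\|_{L^2}=O(\e^2|\log\e|)$ for $l\ge2$, so all products among $i,l\ge2$ are $O(\e^4\log^2\e)$. The difficulty is the $i=1$ contribution, which is $|\alpha_{1j,\e}|\,\|V_{\e,u_1}\|_{L^2}\|V_{\e,u_l}\|_{L^2}\lesssim|\alpha_{1j,\e}|\,\e^2|\log\e|^{1/2}$. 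Merely knowing $\alpha_{1j,\e}\to0$ is not enough here, and I need a decay rate. I would obtain it from the $l=1$ equation at the second scale. There $(\la_{j,\e}-\la)(\alpha_{1j,\e}+O(\|V\|))$ is $O(\e^2|\log\e|)$, while the right-hand side equals $\frac{2\pi}{|\log\e|}u_1^2(P)\,\alpha_{1j,\e}+O(\e^2|\log\e|)$. Since $\la_{j,\e}-\la=o(1/|\log\e|)$, this gives $\alpha_{1j,\e}=O(\e^2\log^2\e)$, which is enough; this is the role of the decay estimate in Lemma \ref{lem2204}. With that in hand, passing to the limit yields $\La_j\alpha_{lj}=2\pi\sum_{i\ge2}\alpha_{ij}\nabla u_i(P)\cdot\nabla u_l(P)$. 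So $\La_j$ are eigenvalues of $A$ in \eqref{2mat} with orthonormal eigenvectors $(\alpha_{ij})$, and simple eigenvalues of $A$ give simple $\la_{j,\e}$. Finally, $H^1_0$ convergence follows from $L^2$ convergence together with the convergence of the energies $\la_{j,\e}\to\la$.
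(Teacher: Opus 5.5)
Your proof is correct and is essentially the paper's own argument: you use the identity \eqref{2d2} and the decomposition \eqref{2con}, and pass to the limit at two scales as in Corollary \ref{cor4.2}, with \eqref{eq0301-1} giving the $1/|\log\varepsilon|$ scale and Lemma \ref{lem1211-1} with $k_I=k_L=1$ giving $2\pi\nabla u_i(P)\cdot\nabla u_l(P)$ at scale $\varepsilon^2$. Your ``main obstacle'' is not really one: Lemma \ref{2l:improvedL2Ve} gives the sharper bound $\|V_{\varepsilon,u_1}\|_{L^2(\Omega_\varepsilon)}=O(1/|\log\varepsilon|)$, instead of the $o(|\log\varepsilon|^{-1/2})$ from \eqref{eq:normaL2Ve}, so the $i=1$ terms are $|\alpha_{1j,\varepsilon}|\,O(\varepsilon^2)=o(\varepsilon^2)$ and $\alpha_{1j,\varepsilon}\to0$ already suffices, as the paper does (Remark \ref{M1}); if you keep your decay detour instead, note that it silently assumes $|\lambda_{j,\varepsilon}-\lambda|=O(\varepsilon^2|\log\varepsilon|)$, which you must first extract either from the $l\ge2$ equations (right-hand sides $O(\varepsilon^2|\log\varepsilon|)+o(1)|\lambda_{j,\varepsilon}-\lambda|$, with some $\alpha_{lj}\neq0$) or from the rates in \cite{alm1,alm2}.
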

    \begin{proof}
   The only  difference between the case $N=2$ with the case $N\ge3$ is the result of RHS for $l=1$. Using \eqref{eq0301-1}, we obtain for $l=1$
         \begin{equation*}
        \begin{aligned}
            LHS=&(\lambda_{j,\e}-\lambda)\left(\alpha_{1j}+o(1)\right),\\
            RHS=&\frac{(\la+o(1))}{\la |\log\e|}2\pi u_1(P)^2\Big(\alpha_{1j}+o(1)\Big)
       +o(1)|\lambda_{j,\e}-\lambda|.
             \end{aligned}
        \end{equation*}
        For $l=2,\cdots,m(\lambda)$, we have
         \begin{equation*}
        \begin{aligned}
            LHS=&(\lambda_{j,\e}-\lambda)\left(\alpha_{lj}+o(1)\right),\\
            RHS=&\frac{2\pi(\lambda+o(1))}{\lambda}\e^2\sum_{i=2}^{m(\lambda)}\Big( \alpha_{ij}\nabla u_i(P)\cdot\nabla u_l(P) +o(1)\Big)+\alpha_{1j,\e}O(\e^{2})
       +o(1)|\lambda_{j,\e}-\lambda|.
             \end{aligned}
        \end{equation*}
          Repeating the above computations used in the proof for $N\ge3$, we  obtain the result.
    \end{proof}

     \begin{rem}\label{M1}
  Note that  in the previous case where $E(\lambda)=E_0\oplus E_1$  with $k_0=0$ and $k_1=1$,   the estimates $\alpha_{ij,\e}-\alpha_{ij}=o(1)$, $i,j=1,\cdots,m(\lambda)$ are enough to complete the proof. But if $E(\lambda)= E_0\oplus\cdots\oplus E_p,\ k_p\geq k_0+ 2$,  we need more refined estimates of  $\alpha_{ij,\e}-\alpha_{ij}$.  Indeed,  when we repeat the above  Steps in proving Corollary \ref{cor4.2} and Corollary \ref{cor1510-1}, we see that $\alpha_{1j}=0$ for $j=2,\cdots,m(\lambda)$. Naturally, if we choose $u=u_l\in E_p$ in \eqref{2d2} with $k_p\ge k_0+2$, we conjecture to have $ RHS = C\e^{N-2+2k_p}(1+o(1))$.    However, $RHS$ of \eqref{2d2} contains the term  $\alpha_{1j,\e}\int_{\O_\e}u_1 V_{\e,u_l}$ and,  by Lemma \ref{lem1211-1}, $\int_{\O_\e}u_1 V_{\e,u_l}=O(\e^{N-1+k_0+k_p})$, which turns out not to be of lower order.   Luckily, by a refined computation of  $LHS$ and $RHS$, allow to get a sharp estimate of $\alpha_{ij,\e}$ (see Lemma \ref{thsa} and Lemma \ref{2thsa}). 
         \end{rem}
In view of the previous discussion, the proof of Theorem \ref{sim} in the case where the order of vanishing of $u$ at $P$ exceeds one reduces to estimating the coefficients $\alpha_{ij,\e}$. 

Since the notation is rather heavy, we believe it is better for the reader to first prove the result in the particular case where the space $E_i$ is one-dimensional, i.e. $dim(E_i)=1$, $i=0,1,\cdots p$ and $E_i=span\{u_i\}$. The general case follows along the same lines. So for any $j=1,2,\cdots,p$, the following estimate of $\alpha_{lj,\e}$, $l=0,1,\cdots,j-1$ holds.
 
\begin{lemma}\label{thsa}
If $N\ge3$, then
for any $j=1,\cdots,p$ and  any $l=0,1,\cdots,j-1$
    \begin{equation*}
    \alpha_{lj,\e}=\sum_{i=j}^{ p}\alpha_{ij,\e}O(\e^{1+{ k_i}-{ k_l}})+O(\e^{\frac{N-1}{2}+2{ k_j}-{ k_l}}).
\end{equation*}
\end{lemma}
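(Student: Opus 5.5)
We plan to test the eigenvalue equation against each lower-order eigenfunction $u_l$, $l<j$, and to exploit that the diagonal coefficient $\int_{\O_\e}u_lV_{\e,u_l}$ is of exact order $\e^{N-2+2k_l}$. This is much larger than $\lambda_{j,\e}-\lambda=O(\e^{N-2+2k_j})$, so the relation can be solved for $\alpha_{lj,\e}$. We work in the one-dimensional setting $E_i=span\{u_i\}$, $k_0<k_1<\dots<k_p$, and use the decomposition \eqref{eq15112025-1}--\eqref{eq15112025-2}.

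\emph{Step 1 (the identity).} Apply \eqref{2d2} with $u=u_l$, insert \eqref{eq15112025-1} and use $\int_{\O_\e}v_{j,\e}u_l=0$. This gives
\begin{equation*}
(\la_{j,\e}-\la)\Big(\alpha_{lj,\e}-\sum_i\alpha_{ij,\e}\int_{\O_\e}V_{\e,u_i}u_l\Big)
=\la_{j,\e}\Big[\sum_i\alpha_{ij,\e}\int_{\O_\e}u_iV_{\e,u_l}-\sum_i\alpha_{ij,\e}\int_{\O_\e}V_{\e,u_i}V_{\e,u_l}+\int_{\O_\e}v_{j,\e}V_{\e,u_l}\Big].
\end{equation*}
By Lemma \ref{lem1211-1}, the diagonal term is $\int_{\O_\e}u_lV_{\e,u_l}=c_l\e^{N-2+2k_l}(1+o(1))$ with $c_l=\frac1\la U_l^T\mathcal C_{k_l}U_l>0$; positivity follows from Lemma \ref{defG} and $U_l\neq0$. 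For $i\ne l$ the orders differ, so $\delta_{k_i}^{k_l}=0$ and $\int_{\O_\e}u_iV_{\e,u_l}=O(\e^{N-1+k_i+k_l})$. We take $\la_{j,\e}-\la=O(\e^{N-2+2k_j})$ from the rate theorem of \cite{alm1,alm2} recalled in the introduction. Since $k_j>k_l$, the left side term $(\la_{j,\e}-\la)\alpha_{lj,\e}$ is $o(\e^{N-2+2k_l})\alpha_{lj,\e}$ and is absorbed into the diagonal term. The remaining left side contributions carry the extra factor $\int_{\O_\e}V_{\e,u_i}u_l$, which is small by \eqref{V3}.

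\emph{Step 2 (error terms).} We bound $\int V_{\e,u_i}V_{\e,u_l}$ by $\|V_{\e,u_i}\|_{L^2}\|V_{\e,u_l}\|_{L^2}$. We bound $\int v_{j,\e}V_{\e,u_l}$ using Lemma \ref{lem1511}:
\[
\|v_{j,\e}\|_{L^2}\le C\e^{N-2+2k_j}+C\sum_i|\alpha_{ij,\e}|\|V_{\e,u_i}\|_{L^2},
\]
combined with the $L^2$ bounds in \eqref{V3}, where $\|V_{\e,u_i}\|_{L^2}\lesssim \e^{\frac N2+k_i}$ up to logarithmic losses in the small cases. After dividing by $c_l\e^{N-2+2k_l}$, each term multiplied by $\alpha_{ij,\e}$ contributes $\alpha_{ij,\e}O(\e^{1+k_i-k_l})$ or better. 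The term without coefficients contributes $\e^{N-2+2k_j}\|V_{\e,u_l}\|_{L^2}/\e^{N-2+2k_l}$. Taking the worst case of \eqref{V3} (e.g. $N=3$, $k_l=0$, with the logarithmic cases), this is $O(\e^{\frac{N-1}{2}+2k_j-k_l})$. Hence, for every $l<j$,
\begin{equation*}
\alpha_{lj,\e}=\sum_{i\ne l}\alpha_{ij,\e}O(\e^{1+k_i-k_l})+O(\e^{\frac{N-1}{2}+2k_j-k_l}).
\end{equation*}

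\emph{Step 3 (closing the system; main obstacle).} The difficulty is that the sum still contains indices $i<j$, $i\neq l$. When $i<l$ the factor $\e^{1+k_i-k_l}$ may even blow up, so a naive induction on $l$ fails. We plan to rescale by setting $\beta_l:=\e^{k_l}\alpha_{lj,\e}$ for $l<j$. The relations then become the linear system
\[
\beta_l=\sum_{i<j,\,i\neq l}O(\e)\,\beta_i+R_l,\qquad R_l:=\sum_{i\ge j}\alpha_{ij,\e}O(\e^{1+k_i})+O(\e^{\frac{N-1}2+2k_j}).
\]
Its matrix is $I+O(\e)$ and hence invertible for small $\e$, with uniformly bounded inverse. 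Therefore $|\beta_l|\le C\max_m|R_m|$. Dividing by $\e^{k_l}$ gives exactly the claimed estimate for $\alpha_{lj,\e}$. The delicate points are tracking the logarithmic cases of \eqref{V3} in low dimension and $k\le2$, which is what produces the exponent $\frac{N-1}{2}$, and checking that every $O(\cdot)$ constant is uniform so that the $O(\e)$ matrix is genuinely small.
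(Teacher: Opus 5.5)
Your proposal is correct and follows the paper's argument. You test \eqref{2d2} against $u_l$ and bound the cross and remainder terms via Lemma \ref{lem1211-1}, Lemma \ref{lem1511} and \eqref{V3} to reach \eqref{eqRelationa}. Your rescaling $\beta_l=\e^{k_l}\alpha_{lj,\e}$ with inversion of an $I+O(\e)$ matrix is a cleaner way of carrying out the paper's repeated back-substitution over the indices $i<j$. One harmless slip: your identity omits the terms $\int_{B(P,\e)}u_iu_l=O(\e^{N+k_i+k_l})$ on the left-hand side.
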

\begin{proof}
Using  Lemma \ref{lem1211-1}, Lemma \ref{lem1511} and \eqref{eq15112025-2}, we obtain that  the $LHS$ and $RHS$ of \eqref{2d2} becomes
\begin{equation}\label{eq0612-1}
            \begin{aligned}
                LHS=&(\lambda_{j,\e}-\lambda)\int_{\O_\e}\left[ \sum_{i=1}^{p}\alpha_{ij,\e} (u_i-V_{\e,u_i})+ v_{j,\e}\right]u_l\\
                =&(\lambda_{j,\e}-\lambda)\left[\alpha_{lj,\e}\int_{\O_\e}u_l^2- \alpha_{lj,\e}\int_{\O_\e}V_{\e,u_l}u_l+\sum_{i\neq l}\alpha_{ij,\e} \int_{\O_\e}(u_i-V_{\e,u_i})u_l\right]\\
                =&(\lambda_{j,\e}-\lambda)\left[\alpha_{lj,\e}\left(1-\int_{B(P,\e)}u_l^2\right)- \alpha_{lj,\e}\int_{\O_\e}V_{\e,u_l}u_l-\sum_{i\neq l}\alpha_{ij,\e} \left(\int_{B(P,\e)}u_iu_l+\int_{\O_\e}V_{\e,u_i}u_l\right)\right]\\
                =&(\lambda_{j,\e}-\lambda)\left[\alpha_{lj,\e}\left(1+o(1)\right)+\sum_{i\neq l}\alpha_{ij,\e} O(\e^{N-1+{ k_i}+{ k_l}})\right],\\
            \end{aligned}
        \end{equation}
        and
        \begin{equation}\label{eq0612-2}
            \begin{aligned}
                RHS=&\big(\la+o(1)\big)\int_{\O_\e}\left[ \sum_{i=1}^{p}\alpha_{ij,\e} (u_i-V_{\e,u_i})+ v_{j,\e}\right]V_{\e,u_l}\\
				=&\big(\la+o(1)\big)\left[ \sum_{i=1}^{p}\alpha_{ij,\e}\int_{\O_\e}u_iV_{\e,u_l}- \sum_{i=1}^{p}\alpha_{ij,\e}\int_{\O_\e}V_{\e,u_i}V_{\e,u_l}+\int_{\O_\e}v_{j,\e}V_{\e,u_l}\right]\\
				=&\frac{(\la+o(1))}{\la }\e^{N-2+2{ k_l}}\alpha_{lj,\e}\Bigg((B_{ k_l}U_l)^TB_{ k_l}U_l+o(1)\Bigg)+\sum_{i\neq l}\alpha_{ij,\e}O(\e^{N-1+{ k_i}+{ k_l}})
                +|\lambda_{j,\e}-\lambda|O(\e^{\frac{N-1}{2}+{ k_l}}),
            \end{aligned}
        \end{equation}
       where $U_l$ and $B_{ k_l}$ are defined by \eqref{defU} and Lemma \ref{defG}, respectively. 
Seeing the fact that $B_{ k_l}^TB_{ k_l}$ is positive definite, it follows that $(B_{ k_l}U_l)^TB_{ k_l}U_l$ is a positive number. Combining $\lambda_{j,\e}-\lambda=C\e^{N-2+2{ k_j}}(1+o(1))$, we obtain 
\begin{equation}\label{eqRelationa}
    \alpha_{lj,\e}=\sum_{i\neq l}\alpha_{ij,\e}O(\e^{1+{ k_i}-{ k_l}})+O(\e^{\frac{N-1}{2}+2{ k_j}-{ k_l}}).
\end{equation}
For  $l=0$, we have
\begin{equation}\label{eq0712-3}
    \alpha_{0j,\e}= \sum_{i=1}^{p}\alpha_{ij,\e}O(\e^{1+{ k_i-k_0}})+O(\e^{\frac{N-1}{2}+2{ k_j-k_0}}),
\end{equation}
and for $l=1$
\begin{equation}\label{eq0712-4}
    \alpha_{1j,\e}=\alpha_{0j,\e}O(\e^{1+{ k_0}-{ k_1}})+{ \sum_{i=2}^{p}}\alpha_{ij,\e}O(\e^{1+{ k_i}-{ k_1}})+O(\e^{\frac{N-1}{2}+2{ k_j}-{ k_1}}).
\end{equation}
Substituting \eqref{eq0712-3} into \eqref{eq0712-4}, it follows that
\begin{equation}\label{eq0712-5}
    \alpha_{1j,\e}={ \sum_{i=2}^{p}}\alpha_{ij,\e}O(\e^{1+{ k_i}-{ k_1}})+O(\e^{\frac{N-1}{2}+2{ k_j}-{ k_1}}).
\end{equation}
And then, plugging \eqref{eq0712-5} into \eqref{eq0712-3}, we get
\begin{equation}\label{eq0712-6}
   \alpha_{0j,\e}={ \sum_{i=2}^{p}}\alpha_{ij,\e}O(\e^{1+{k_i-k_0}})+O(\e^{\frac{N-1}{2}+2{ k_j-k_0}}).
\end{equation}

 Repetition of the above step  several times leads to the conclusion that
\begin{equation}
    \alpha_{lj,\e}=\sum_{i=j}^{ p}\alpha_{ij,\e}O(\e^{1+{ k_i}-{ k_l}})+O(\e^{\frac{N-1}{2}+2{ k_j}-{ k_l}}),\ \ for \ \ any\ \ l=0,\cdots,j-1.
\end{equation}
\end{proof}
     \begin{lemma}\label{2thsa}
If $N=2$,
for any $j=1,\cdots,p$, then it follows that
   \begin{equation}\label{eq0301-2}
 \alpha_{0j,\e}=
    \begin{cases}
       \displaystyle{ \sum_{i=j}^{p}}\alpha_{ij,\e}O(\e^{1+{ k_i}}|\log\e|)+O(\e^{2{ k_j}}), &if\ k_0=0,\\
      \displaystyle  { \sum_{i=j}^{p}}\alpha_{ij,\e}O(\e^{1+{k_i-k_0}})+O(\e^{2{ k_j}-{ k_0}+1}|\log\e|), &if\ k_0\ge 1,\\
    \end{cases}
\end{equation}
and
\begin{equation}
    \alpha_{lj,\e}={ \sum_{i=j}^{p}}\alpha_{ij,\e}O(\e^{1+{ k_i-k_l}})+O(\e^{2{ k_j}-{ k_l}+1}|\log\e|),\ \ l=1,\cdots,j-1.
\end{equation}
\end{lemma}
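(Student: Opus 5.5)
The plan is to repeat the argument of Lemma \ref{thsa}, now in dimension $N=2$. We test identity \eqref{2d2} with $u=u_l$, $l=0,\dots,j-1$, insert the decomposition \eqref{eq15112025-1}, and compute both sides with Lemma \ref{lem1211-1} (with $N=2$), Lemma \ref{lem1511} and the $L^p$ bounds \eqref{eq10102025-3}. For the eigenvalue gap we use $\lambda_{j,\e}-\lambda=C\e^{2k_j}(1+o(1))$ for $k_j\ge1$, from \eqref{eq:N=2eigenvaluesbehavior}. Note that $j\ge1$ forces $k_j\ge1$.

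\textbf{Left-hand side.} It becomes
\[
LHS=(\lambda_{j,\e}-\lambda)\Big[\alpha_{lj,\e}(1+o(1))+\sum_{i\ne l}\alpha_{ij,\e}\,O(r_{il})\Big].
\]
Here $r_{il}$ bounds $\int_{B(P,\e)}u_iu_l+\int_{\O_\e}V_{\e,u_i}u_l$. By Lemma \ref{lem1211-1}(1), $r_{il}=\e^{1+k_i+k_l}$ when $k_i+k_l\ge1$. The exceptional pair $k_i=k_l=0$ cannot occur, since there is only one subspace $E_0$.

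\textbf{Right-hand side.} It is
\[
(\lambda+o(1))\Big[\sum_i\alpha_{ij,\e}\int u_iV_{\e,u_l}-\sum_i\alpha_{ij,\e}\int V_{\e,u_i}V_{\e,u_l}+\int v_{j,\e}V_{\e,u_l}\Big].
\]
\begin{itemize}
\item \emph{Case $l\ge1$, or $l=0$ with $k_0\ge1$.} The diagonal term is $\lambda^{-1}\e^{2k_l}\alpha_{lj,\e}\big((B_{k_l}U_l)^TB_{k_l}U_l+o(1)\big)$, with a positive coefficient by Lemma \ref{defG}. The off-diagonal terms are $\alpha_{ij,\e}O(\e^{1+k_i+k_l})$, using $\delta_{k_I}^{k_L}=0$. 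For $\int V_{\e,u_i}V_{\e,u_l}$ and $\int v_{j,\e}V_{\e,u_l}$ we use Cauchy--Schwarz, Lemma \ref{lem1511} and $\|V_{\e,u_l}\|_{L^2}=O(\e^{1+k_l})$. The exception is $k_l=1$, where $\|V_{\e,u_l}\|_{L^2}=O(\e^2|\log\e|)$; this is the source of the $|\log\e|$. The result is a remainder $|\lambda_{j,\e}-\lambda|\,O(\e^{1+k_l}|\log\e|)$ plus quadratic terms $\alpha_{ij,\e}\|V_{\e,u_i}\|\,\|V_{\e,u_l}\|$, which are absorbed into $\alpha_{ij,\e}O(\e^{1+k_i+k_l})$ up to logs.
\item \emph{Dividing.} Dividing by $\e^{2k_l}$ and using $|\lambda_{j,\e}-\lambda|=O(\e^{2k_j})$ gives
\[
\alpha_{lj,\e}=\sum_{i\ne l}\alpha_{ij,\e}O(\e^{1+k_i-k_l})+O(\e^{2k_j-k_l+1}|\log\e|).
\]
\item \emph{Case $l=0$ with $k_0=0$.} Here the diagonal coefficient comes from \eqref{eq0301-1} and is of size $\frac{2\pi}{\lambda|\log\e|}u_0(P)^2$. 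Moreover $\|V_{\e,u_0}\|_{L^2}=O(1/|\log\e|)$ only, so we must divide by $1/|\log\e|$ instead. The off-diagonal terms $\int u_iV_{\e,u_0}=O(\e^{1+k_i})$ then yield $\alpha_{ij,\e}O(\e^{1+k_i}|\log\e|)$. The $v_{j,\e}$ and gap terms give $|\lambda_{j,\e}-\lambda|\cdot O(1)\cdot|\log\e|/|\log\e|=O(\e^{2k_j})$, matching the first line of \eqref{eq0301-2}. The quadratic term $\alpha_{ij,\e}\|V_{\e,u_i}\|\,\|V_{\e,u_0}\|\,|\log\e|=\alpha_{ij,\e}O(\e^{1+k_i}|\log\e|)$ (for $k_i\ge2$; a log more for $k_i=1$) is consistent with the stated form. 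I expect the main obstacle to be exactly this bookkeeping of logarithms, particularly for $k_i=1$ and $k_0=0$. If a stray $|\log\e|^2$ appears, I would bound $\int V_{\e,u_i}V_{\e,u_0}$ directly through the pointwise expansions \eqref{eq10102025-1}--\eqref{eq10102025-2} and integrability of $G_\O(\cdot,P)\,\pa^{k_i}G_\O(\cdot,P)$, instead of using Cauchy--Schwarz.
\end{itemize}

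\textbf{Triangular elimination.} Finally we eliminate the indices $i<j$, $i\ne l$, exactly as in \eqref{eq0712-3}--\eqref{eq0712-6}. We substitute the relation for $\alpha_{0j,\e}$ into the one for $\alpha_{1j,\e}$, and so on upward to $j-1$, then back-substitute downward. Each substitution multiplies the relevant factors: $\e^{1+k_0-k_l}\cdot\e^{1+k_i-k_0}=\e^{2+k_i-k_l}$. The only exception is a factor $\e^{1-k_l}|\log\e|$ when $k_0=0$, which is harmless since $\e|\log\e|\to0$. Hence every dependence on $\alpha_{ij,\e}$ with $i<j$ disappears, up to a factor $1+o(1)$ on the left which can be absorbed. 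The inhomogeneous terms combine to $O(\e^{2k_j-k_l+1}|\log\e|)$ for $l\ge1$ and to the stated bounds for $l=0$.
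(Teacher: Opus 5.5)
Your proposal is correct and follows the paper's proof essentially step by step: you test \eqref{2d2} with $u_l$, $l<j$, expand both sides through Lemma \ref{lem1211-1}, Lemma \ref{lem1511} and the $L^p$ bounds, treat $l=0$, $k_0=0$ separately via \eqref{eq0301-1}, and then remove the indices $i<j$ by successive substitution. The stray logarithm you fear for $k_i=1$, $k_0=0$ does not arise, because $\|V_{\e,u_i}\|_{L^2}\,\|V_{\e,u_0}\|_{L^2}\,|\log\e| = O(\e^{2}|\log\e|)\cdot O(|\log\e|^{-1})\cdot|\log\e| = O(\e^{1+k_i}|\log\e|)$, so Cauchy--Schwarz already gives exactly the stated form.
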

\begin{proof}
Using  Lemma \ref{lem1211-1}, Lemma \ref{lem1511} and  \eqref{eq15112025-2},  we obtain that for any $l=0,\cdots,j-1$, the $LHS$ of \eqref{2d2} becomes 
\begin{equation}\label{eq0201-1}
            \begin{aligned}
                LHS=&(\lambda_{j,\e}-\lambda)\int_{\O_\e}\left[\sum_{i=0}^{p}\alpha_{ij,\e} (u_i-V_{\e,u_i})+ v_{j,\e}\right]u_l\\
                =&(\lambda_{j,\e}-\lambda)\left[\alpha_{lj,\e}\int_{\O_\e}u_l^2- \alpha_{lj,\e}\int_{\O_\e}V_{\e,u_l}u_l+\sum_{i\neq l}\alpha_{ij,\e} \int_{\O_\e}(u_i-V_{\e,u_i})u_l\right]\\
                =&(\lambda_{j,\e}-\lambda)\left[\alpha_{lj,\e}\left(1-\int_{B(P,\e)}u_l^2\right)- \alpha_{lj,\e}\int_{\O_\e}V_{\e,u_l}u_l-\sum_{i\neq l}\alpha_{ij,\e} \left(\int_{B(P,\e)}u_iu_l+\int_{\O_\e}V_{\e,u_i}u_l\right)\right]\\
                 =&(\lambda_{j,\e}-\lambda)\left[\alpha_{lj,\e}\left(1+o(1)\right)+\sum_{i\neq l}\alpha_{ij,\e} O(\e^{1+{ k_i+k_l}})\right].\\
            \end{aligned}
        \end{equation}
        In the case  $N=2$, it is necessary to compute  $RHS$ of \eqref{2d2} separately, when  $k_0=0$ and $l=0$. If  $k_0=0$ and $l=0$, then
          \begin{equation}\label{eq0201-2}
            \begin{aligned}
                RHS=&\big(\la+o(1)\big)\int_{\O_\e}\left({ \sum_{i=0}^{p}}\alpha_{ij,\e} (u_i-V_{\e,u_i})+ v_{j,\e}\right)V_{u_l}\\
				=&\frac{(\la+o(1))}{\la |\log\e|}2\pi u_l^2(P)\alpha_{lj,\e}\left(1+o(1)\right)
        +{ \sum_{i=1}^{p}}\alpha_{ij,\e}O(\e^{1+{ k_i}})+O\left(\frac{\e^{2{ k_j}}}{|\log\e|}\right).\\
            \end{aligned}
        \end{equation}
        In all other cases, namely when  $k_0=0$ and  $l=1,\cdots,j-1$, or when $ k_0\ge1$ and  $l=0,\cdots,j-1$, we have
        \begin{equation}\label{eq0201-R2}
            \begin{aligned}
                RHS=&\big(\la+o(1)\big)\int_{\O_\e}\left({ \sum_{i=1}^{p}}\alpha_{ij,\e} (u_i-V_{\e,u_i})+ v_{j,\e}\right)V_{\e,u_l}\\
				=&\frac{(\la+o(1))}{\la }\e^{2{ k_l}}\alpha_{lj,\e}\Bigg((B_{ k_l}U_l)^TB_{ k_l}U_l+o(1)\Bigg)+\sum_{i\neq l}\alpha_{ij,\e}O(\e^{1+{ k_i}+{ k_l}})+O(\e^{2{ k_j}+{ k_l}+1}|\log\e|),\\
            \end{aligned}
        \end{equation}
          where $U_l$ and $B_{ k_l}$ are defined by \eqref{defU} and Lemma \ref{defG}, respectively.
 Seeing the fact that $B_{ k_l}^TB_{ k_l}$ is positive definite, it follows that $(B_{ k_l}U_l)^TB_{ k_l}U_l$ is a positive number. Then, we obtain 
\begin{equation}\label{eq0201-3}
    \alpha_{0j,\e}=
    \begin{cases}
       \displaystyle{ \sum_{i=1}^{p}}\alpha_{ij,\e}O(\e^{1+{ k_i}}|\log\e|)+O(\e^{2{ k_j}}), &if\ k_0=0,\\
      \displaystyle  { \sum_{i=1}^{p}}\alpha_{ij,\e}O(\e^{1+{ k_i-k_0}})+O(\e^{2{ k_j}-{ k_0}+1}|\log\e|), &if\ k_0\ge 1,\\
    \end{cases}
\end{equation}
\begin{equation}
    \alpha_{lj,\e}=\sum_{i\neq l}\alpha_{ij,\e}O(\e^{1+{ k_i-k_l}})+O(\e^{2{ k_j}-{ k_l}+1}|\log\e|),\ \ l=1,\cdots,j-1.
\end{equation}
For $l=1$,
\begin{equation}\label{eq0201-4}
    \alpha_{1j,\e}=\alpha_{0j,\e}O(\e^{1+{ k_0-k_1}})+{\sum_{i=2}^{p}}\alpha_{ij,\e}O(\e^{1+{ k_i-k_1}})+O(\e^{2{ k_j}-{ k_1}+1}|\log\e|).
\end{equation}
Substituting \eqref{eq0201-3} into \eqref{eq0201-4}, it follows that
\begin{equation}\label{eq0201-5}
    \alpha_{1j,\e}={ \sum_{i=2}^{p}}\alpha_{ij,\e}O(\e^{1+{ k_i-k_1}})+O(\e^{2{ k_j}-{ k_1}+1}|\log\e|).
\end{equation}
And then plugging \eqref{eq0201-5} into \eqref{eq0201-3}, we get
\begin{equation}\label{eq0201-6}
   \alpha_{0j,\e}=
    \begin{cases}
       \displaystyle{ \sum_{i=2}^{p}}\alpha_{ij,\e}O(\e^{1+{ k_i}}|\log\e|)+O(\e^{2{ k_j}}), &if\ k_0=0,\\
      \displaystyle  { \sum_{i=2}^{p}}\alpha_{ij,\e}O(\e^{1+{ k_i-k_0}})+O(\e^{2{ k_j}-{ k_0}+1}|\log\e|), &if\ k_0\ge 1,\\
    \end{cases}
\end{equation}

By repeating the above step  several times, we conclude that
\begin{equation*}
 \alpha_{0j,\e}=
    \begin{cases}
       \displaystyle{ \sum_{i=j}^{p}}\alpha_{ij,\e}O(\e^{1+{ k_i}}|\log\e|)+O(\e^{2{ k_j}}), &if\ k_0=0,\\
      \displaystyle  { \sum_{i=j}^{p}}\alpha_{ij,\e}O(\e^{1+{ k_i-k_0}})+O(\e^{2{ k_j}-{ k_0}+1}|\log\e|), &if\ k_0\ge 1,\\
    \end{cases}
\end{equation*}
and
\begin{equation*}
    \alpha_{lj,\e}={ \sum_{i=j}^{p}}\alpha_{ij,\e}O(\e^{1+{ k_i-k_l}})+O(\e^{2{ k_j}-{ k_l}+1}|\log\e|),\ \ l=1,\cdots,j-1.
\end{equation*}
\end{proof}

If $dim(E_{k_l})=1$, then from  the above calculations, we see that, for example in the case $N\ge 3$, the leading term of  $RHS$  is $\alpha_{lj,\e}(B_{ k_l}U_l)^TB_{ k_l}U_l\e^{N-2+2{ k_l}}$. Since $\mathcal{C}_{k_l}=B_{k_l}^TB_{k_l}$ is positive definite and $U_l\neq (0,\cdots,0)$, it follows that $(B_{ k_l}U_l)^TB_{ k_l}U_l$  is a positive number. Then we get the relationship \eqref{eqRelationa} between $\alpha_{ij,\e}$.  For the usual case, the leading term of $RHS$ becomes $\sum_{i=1+d_0+\cdots+d_{L-1}}^{d_0+\cdots+d_L}\alpha_{ij,\e}(B_{k_L}U_i)^TB_{k_L}U_l \e^{N-2+2k_L}$. At this point, the invertibility of the matrix $G_{k_L}=\Big((B_{k_L}U_i)^TB_{k_L}U_l\Big)$ plays a key role in deriving the relations among $\alpha_{ij,\e}$.

\begin{lemma}\label{lem2204}
 Assume that $J=1,\cdots,p$ with $1+d_0+\cdots+d_{J-1}\leq j\leq d_0+\cdots d_{J}$ and $0\leq L\leq J-1$ with $1+d_0+\cdots+d_{L-1}\leq l\leq d_0+\cdots d_{L}$. Then we have
    \begin{equation}\label{eq2301-3}
    \begin{cases}
        \displaystyle\alpha_{lj,\e}=\sum_{I=J}^p\sum_{i=1+d_0+\cdots+d_{I-1}}^{d_0+\cdots+d_I}\alpha_{ij,\e}O(\e^{1+k_I-k_L})+O(\e^{\frac{N-1}{2}+2k_J-k_L})&\ for \ \ N\ge3,\\
\displaystyle\alpha_{lj,\e}=\sum_{I=J}^p\sum_{i=1+d_0+\cdots+d_{I-1}}^{d_0+\cdots+d_I}\alpha_{ij,\e}O(\e^{1+k_I}|\log\e|)+O(\e^{2k_J}) &\ for \ \ N=2\ and\ k_L=0,\\
    \displaystyle\alpha_{lj,\e}=\sum_{I=J}^p\sum_{i=1+d_0+\cdots+d_{I-1}}^{d_0+\cdots+d_I}\alpha_{ij,\e}O(\e^{1+k_I-k_L})+O(\e^{2k_J-k_L+1}|\log\e|)&\ for \ \ N=2\ and\  k_L\ge 1.\\
    \end{cases}
\end{equation}
\end{lemma}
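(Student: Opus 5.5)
The plan is to repeat the argument of Lemma \ref{thsa} (and Lemma \ref{2thsa} for $N=2$) with blocks in place of single eigenfunctions. The one genuinely new ingredient is the invertibility of the Gram matrices $G_{k_L}$ from Lemma \ref{defG}.

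Fix $j$ in block $J$ and a lower block $L<J$. For each $l$ in block $L$, I test identity \eqref{2d2} with $u=u_l$ and insert the decomposition \eqref{eq15112025-1}. On the left-hand side, exactly as in \eqref{eq0612-1}, orthonormality of the $u_i$ together with Lemma \ref{lem1211-1} gives
\[
(\la_{j,\e}-\la)\Big[\alpha_{lj,\e}(1+o(1))+\sum_{i\neq l}\alpha_{ij,\e}O(\e^{N-1+k_I+k_L})\Big].
\]
On the right-hand side I use Lemma \ref{lem1211-1}. Terms with $i$ in a block $I\neq L$ are $O(\e^{N-1+k_I+k_L})$, because the leading term carries $\delta_{k_I}^{k_L}$. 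Terms with $i$ in block $L$ give the Gram entries $\e^{N-2+2k_L}\lambda^{-1}(B_{k_L}U_i)^TB_{k_L}U_l$ up to $O(\e^{N-1+2k_L})$. The terms $\int V_{\e,u_i}V_{\e,u_l}$ are bounded by products of $L^2$ norms, using \eqref{V3}. The term $\int v_{j,\e}V_{\e,u_l}$ is controlled by Lemma \ref{lem1511}, which yields $|\la_{j,\e}-\la|\,O(\e^{\frac{N-1}{2}+k_L})$ plus $\sum_i|\alpha_{ij,\e}|\,\|V_{\e,u_i}\|_2\|V_{\e,u_l}\|_2$; the latter is absorbed into the $\alpha_{ij,\e}O(\e^{N-1+k_I+k_L})$ terms. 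Finally I use $\la_{j,\e}-\la=O(\e^{N-2+2k_J})$ from the rate theorem of \cite{alm1,alm2}. Since $k_L<k_J$, the whole left-hand side is $o(\e^{N-2+2k_L})\alpha_{lj,\e}$ plus terms of the same type as the remainders.

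Collecting the equations for all $l$ in block $L$ gives a linear system
\[
\big(G_{k_L}+o(1)\big)\vec\alpha_{L,j,\e}=\sum_{I\neq L}\sum_{i\in I}\alpha_{ij,\e}O(\e^{1+k_I-k_L})+O(\e^{\frac{N-1}{2}+2k_J-k_L}),
\]
where $\vec\alpha_{L,j,\e}=(\alpha_{lj,\e})_{l\in L}$ and I have divided by $\e^{N-2+2k_L}$. The main step is to invert $G_{k_L}+o(1)$. Lemma \ref{defG} shows that $G_{k_L}$ is the Gram matrix of the vectors $B_{k_L}U_l$. These vectors are linearly independent: $B_{k_L}$ is invertible, and a nontrivial combination $\sum c_lU_l=0$ would produce a nonzero element of $E_L$ whose $k_L$-th derivatives all vanish at $P$, contradicting Proposition \ref{dec}. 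Hence $G_{k_L}$ is positive definite, its perturbation has uniformly bounded inverse, and I obtain the blockwise analogue of \eqref{eqRelationa}.

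The elimination then proceeds as in Lemma \ref{thsa}, by induction on the blocks $L=0,1,\dots,J-1$. Substituting the estimate for block $0$ into that for block $1$ produces factors $O(\e^{1+k_0-k_1})\,O(\e^{1+k_I-k_0})=O(\e^{2+k_I-k_1})$, which is better than needed, so the contributions of lower blocks disappear. After $J$ steps only blocks $I\ge J$ remain on the right, and the inhomogeneous terms keep the order $O(\e^{\frac{N-1}{2}+2k_J-k_L})$.

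For $N=2$ the same scheme runs with Lemma \ref{2thsa} as the template. The block with $k_L=0$ is exceptional: by \eqref{eq0301-1} its right-hand side is of order $|\log\e|^{-1}$, which introduces the extra $|\log\e|$ factor and the remainder $O(\e^{2k_J})$. The $L^2$ bounds come from \eqref{eq10102025-3}, and these account for the $\e^{2k_J-k_L+1}|\log\e|$ terms.

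I expect the main obstacle to be bookkeeping: one must check at each elimination step that all cross terms, including those with $I<L$ that were already eliminated, end up with exponents at least as good as claimed. This works because each substitution gains a factor $\e^{1+k_{L'}-k_L}$ or better.
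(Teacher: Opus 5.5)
Your proposal is correct and follows essentially the same route as the paper. You test \eqref{2d2} with $u_l$ from block $L$, invert the Gram matrix $G_{k_L}$ (nonsingular by the linear-independence argument coming from the definition of $E_L$), and eliminate the lower blocks as in Lemma \ref{thsa}. There is one harmless slip: for $i\neq l$ in the same block $L$, the left-hand cross terms $\int_{\O_\e}V_{\e,u_i}u_l$ are $O(\e^{N-2+2k_L})$, not $O(\e^{N-1+2k_L})$; since they are multiplied by $\la_{j,\e}-\la=O(\e^{N-2+2k_J})$, they are still absorbed into the $o(1)$ perturbation of $G_{k_L}$.
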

\begin{proof}

Now, we only consider $N\ge3$. The estimates of $N=2$  follow by the same argument.
By a similar computation as above Lemmas, we obtain that for $1+d_0+\cdots+d_{L-1}\leq l\leq d_0+\cdots d_{L}$
    \begin{equation*}
     \begin{aligned}
    LHS =&(\lambda_{j,\e}-\lambda)\left[\alpha_{lj,\e}\left(1+o(1)\right)+\sum_{i=1+d_0+\cdots+d_{L-1}}^{d_0+\cdots+d_L}\alpha_{ij,\e} O(\e^{N-2+2k_L})+\sum_{I\neq L}^p\sum_{i=1+d_0+\cdots+d_{I-1}}^{d_0+\cdots+d_I}\alpha_{ij,\e} O(\e^{N-1+k_I+k_L})\right]\\
                RHS=
				&\frac{(\la+o(1))}{\la }\e^{N-2+2k_L}\sum_{i=1+d_0+\cdots+d_{L-1}}^{d_0+\cdots+d_L}\alpha_{ij,\e}\Bigg((B_{k_L}U_i)^TB_{k_L}U_l+o(1)\Bigg)\\
                &+\sum_{I\neq L}^p\sum_{i=1+d_0+\cdots+d_{I-1}}^{d_0+\cdots+d_I}\alpha_{ij,\e} O(\e^{N-1+k_I+k_L})
                +O(\e^{N-2+2k_J+\frac{N-1}{2}+k_L}),\\
                \end{aligned}
\end{equation*} 
 where $U_l$ and $B_{ k_l}$ are defined by \eqref{defU} and Lemma \ref{defG}, respectively.
If the matrix $G_{k_L}=\Big((B_{k_L}U_i)^TB_{k_L}U_l\Big)$ is invertible, then we also obtain  
$$
\alpha_{ij,\e}=\sum_{I\neq L}^p\sum_{i=1+d_0+\cdots+d_{I-1}}^{d_0+\cdots+d_I}\alpha_{ij,\e}O(\e^{1+k_I-k_L})+O(\e^{2k_J+\frac{N-1}{2}-k_L}).
$$
The results are obtained by the same method as in Lemma \ref{thsa}.

Now, let us focus on the matrix $G_{k_L}$. First, $G_{k_L}$ is a Gram matrix. By the property of the Gram matrix, we only need to prove that $\{B_{k_L}U_{1+d_0+\cdots+d_{L-1}},\cdots,B_{k_L}U_{d_0+\cdots+d_{L}}\}$ are linearly independent. Indeed, suppose that there exist $c_{1+d_0+\cdots+d_L},\cdots,c_{d_0+\cdots+d_L}\in \R$ such that
        \begin{equation}\label{eq0414-4}
        c_{1+d_0+\cdots+d_{L-1}}B_{k_L}U_{1+d_0+\cdots+d_{L-1}}+\cdots+c_{d_0+\cdots+d_{L}}B_{k_L}U_{d_0+\cdots+d_{L}}=0.  
        \end{equation}
        Let 
        $$
        f_{L}(x)=c_{1+d_0+\cdots+d_{L-1}}u_{1+d_0+\cdots+d_{L-1}}(x)+\cdots+c_{d_0+\cdots+d_{L}}u_{d_0+\cdots+d_{L}}(x).
        $$
        Then $f_{L}\in E_L$. By  assumption \eqref{eq0414-4}, the definition of $U_i$ and the fact that $B_{k_L}$ is invertible, we see that $D^\alpha f_{L}(P)=0$ for any $|\alpha|\leq k_L$. 
       Thus,
         $f_{L}$ must be zero by the definition of $E_{L}$. Then $c_{1+d_0+\cdots+d_{L-1}}=\cdots=c_{d_0+\cdots+d_L}=0$, which deduces that $\{B_{k_L}U_{1+d_0+\cdots+d_{L-1}},\cdots,B_{k_L}U_{d_0+\cdots+d_{L}}\}$ are linearly independent.  This completes the proof.
\end{proof}
\begin{proof}[Proof of Theorem \ref{sim}]
We only  consider the case $N\ge3$, the result of $N=2$ can be obtained by the same procedures. 
    If  $E(\lambda)= E_0\oplus\cdots\oplus E_p,\ p\ge 1$, we repeat the proof of   Corollaries \ref{cor4.2} and \ref{cor1510-1}.
     We obtain that, for $1+d_0+\cdots+d_{L-1}\leq l\leq d_0+\cdots d_{L}$, 
      \begin{equation}
     \begin{aligned}
    LHS =&(\lambda_{j,\e}-\lambda)\left[\alpha_{lj,\e}\left(1+o(1)\right)+\sum_{i=1+d_0+\cdots+d_{L-1}}^{d_0+\cdots+d_L}\alpha_{ij,\e} O(\e^{N-2+2k_L})+\sum_{I\neq L}^p\sum_{i=1+d_0+\cdots+d_{I-1}}^{d_0+\cdots+d_I}\alpha_{ij,\e} O(\e^{N-1+k_I+k_L})\right]\\
                RHS=
				&\frac{(\la+o(1))}{\la }\e^{N-2+2k_L}\sum_{i=1+d_0+\cdots+d_{L-1}}^{d_0+\cdots+d_L}\alpha_{ij,\e}\Bigg((B_{k_L}U_i)^TB_{k_L}U_l+o(1)\Bigg)\\
                &+\sum_{I\neq L}^p\sum_{i=1+d_0+\cdots+d_{I-1}}^{d_0+\cdots+d_I}\alpha_{ij,\e} O(\e^{N-1+k_I+k_L})
                +O(\e^{N-2+2k_J+\frac{N-1}{2}+k_L})\\
                \end{aligned}
\end{equation}
First,  choosing  $l=1,\cdots,d_0$, and let $\e\to 0$, it follows that
\begin{equation}
    \lim_{\e\to0}\frac{\lambda_{j,\e}-\lambda}{\e^{N-2{ +2k_0}}}\alpha_{lj}=\alpha_{lj}\sum_{i=1}^{d_0}(B_{k_0}U_i)^TB_{k_0}U_l.
\end{equation}
  Then, for $ l={ 1+d_0},\cdots,m(\lambda)$, 
\begin{equation}
    \lim_{\e\to0}\frac{\lambda_{j,\e}-\lambda}{\e^{N-2{ +2k_0}}}\alpha_{lj}=0.
\end{equation}
Since the vectors $\vec{\alpha}_1,..\vec{\alpha}_{m(\la)}$ are orthogonal pairwise and $||\vec{\alpha}_j||=1$,   we may assume without loss of generality that the submatrix $(\alpha_{lj})_{l,j=1,\cdots d_0}$ is invertible. Then we see that $\lim_{\e\to0}\frac{\lambda_{j,\e}-\lambda}{\e^{N-2+2k_0}}$
are eigenvalues of the invertible matrix $G_{k_0}=\left((B_{k_0}U_i)^TB_{k_0}U_l\right)$ and  $(\alpha_{1j},\cdots,\alpha_{d_0j}),$ $ j=1,\cdots,d_0$, are the corresponding eigenvectors. Moreover $\alpha_{lj}=0,\  l=1+d_0,\cdots,m(\lambda),\  j=1,\cdots,d_0.$ Thus, \eqref{L2approx} holds for $j=1,\cdots,d_0.$ In particular, if $(u_1(P),..,u_l(P))\neq(0,..,0)$, then $k_0=0$, which yields $d_0=1$. It follows that $u_{1,\e}\to u_1$ in $L^2(\O)$.
 
Next, we consider $l=1+d_0,\cdots,d_0+d_1$ and $j=1+d_0,\cdots,m(\lambda)$. 
Using \eqref{eq2301-3}, we see
\begin{equation}\label{eq1205-3}
\begin{aligned}
      RHS=&
				\frac{(\la+o(1))}{\la }\e^{N-2+2k_1}\sum_{i=1+d_0}^{d_0+d_1}\alpha_{ij,\e}\Bigg((B_{k_1}U_i)^TB_{k_1}U_l+o(1)\Bigg)\\
                &+\sum_{I=2}^p\sum_{i=1+d_0+\cdots+d_{I-1}}^{d_0+\cdots+d_I}\alpha_{ij,\e}O(\e^{N-1+{ k_I}+{ k_1}})+O(\e^{N-2+2k_J+\frac{N-1}{2}+k_1}),\\
                \end{aligned}
\end{equation}
which  implies that 
\begin{equation}
    \lim_{\e\to0}\frac{\lambda_{j,\e}-\lambda}{\e^{N-2+2k_1}}\alpha_{lj}=\sum_{i=1+d_0}^{d_0+d_1}\alpha_{ij}(B_{k_1}U_i)^TB_{k_1}U_l,\  l=1+d_0,\cdots, d_0+d_1,
\end{equation}
and 
\begin{equation}
    \lim_{\e\to0}\frac{\lambda_{j,\e}-\lambda}{\e^{N-2+2k_1}}\alpha_{lj}=0,\ \ l=1+d_0+d_1,\cdots,m(\lambda).
\end{equation}
 WLOG, we assume  the submatrix $(\alpha_{lj})_{l,j=1+d_0,\cdots d_0+d_1}$ is invertible. Then we see that $\lim_{\e\to0}\frac{\lambda_{j,\e}-\lambda}{\e^{N-2+2k_1}}$
 are eigenvalues of the invertible matrix $G_{k_1}:=\left((B_{k_1}U_i)^TB_{k_1}U_l\right)$ and  $(\alpha_{(1+d_0)j},\cdots,\alpha_{(d_0+d_1)j}),$ $ j=1+d_0,\cdots,d_0+d_1$, are the corresponding eigenvectors. Finally, $\alpha_{lj}=0$ for any $  l=1+d_0+d_1,\cdots,m(\lambda),\  j=1+d_0,\cdots,d_0+d_1.$  Furthermore, combining Lemma \ref{lem2204}, we obtain that $\alpha_{lj}=0$, for any $l=1,\cdots,d_0,\  j=1+d_0,\cdots,d_0+d_1.$ Thus, \eqref{L2approx} holds for $j=1+d_0,\cdots,d_0+d_1.$
 
   Repeating the above procedures, we obtain the result step by step.
\end{proof}

In next lemma we show that the number $\Lambda_j$ appearing in \eqref{d12} of Theorem \ref{sim}  are independent of the choice of the eigenspace $E(\la)$.

 \begin{lemma}\label{ind}
     In Theorem \ref{sim},  $\Lambda_j$  are independent of the 
     choice of the orthonormal basis $\{u_1,\cdots,u_{m(\la)}\}$ of $E(\lambda)$. 
     \end{lemma}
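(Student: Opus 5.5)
The key point is that $\Lambda_j$ are intrinsic objects. They are limits of $(\lambda_{j,\e}-\lambda)/\e^{N-2+2k_J}$ (or of $|\log\e|(\lambda_{1,\e}-\lambda)$ when $N=2$, $k_0=0$). The perturbed eigenvalues $\lambda_{j,\e}$ depend only on $\O_\e$, not on any basis of $E(\la)$. So the plan is twofold: show that both the decomposition $E_0\oplus\cdots\oplus E_p$ and the spectrum of each block $G_{k_J}(P)$ are invariant under a change of admissible orthonormal basis. Then the identification in Theorem \ref{sim} of the limits with the eigenvalues of $G_{k_J}(P)$ gives the claim. I will argue at the matrix level rather than only through the limits, since the limits already prove it once the indexing is fixed.

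First I note that the subspaces $E_J$ are canonical. Consider the filtration $F_k=\{v\in E(\la): D^\beta v(P)=0 \ \forall |\beta|\le k-1\}$. Then $E_J$ is the orthogonal complement of $F_{k_J+1}$ inside $F_{k_J}$, and the $k_J$ are exactly the indices where the filtration jumps. This characterization uses only the $L^2$ structure and vanishing orders, so it is basis independent.

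An orthonormal basis associated to the order decomposition is therefore an orthonormal basis of each $E_J$. Any two such bases $\{u_i\}$ and $\{\tilde u_i\}$ of $E_J$ are related by an orthogonal matrix $O\in O(d_J)$, namely $\tilde u_i=\sum_m O_{mi}u_m$. The map $u\mapsto U$ of \eqref{defU} is linear, so $\tilde U_i=\sum_m O_{mi}U_m$.

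By Lemma \ref{defG}, $G_{k_J}=\big((B_{k_J}U_i)^TB_{k_J}U_l\big)=W^TW$, where $W$ has columns $B_{k_J}U_i$. For the new basis the matrix is $\tilde W=WO$, hence $\tilde G_{k_J}=O^TG_{k_J}O$. This is an orthogonal similarity, so the eigenvalues, with multiplicities, coincide. The eigenvectors transform by $O^T$, which is consistent with \eqref{L2approx}: $\sum\tilde\alpha_{ij}\tilde u_i=\sum\alpha_{ij}u_i$.

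The case $N=2$, $k_0=0$ works the same way. There $d_0=1$ and the coefficient is $2\pi\sum_i u_i^2(P)=2\pi\|(u_i(P))_i\|^2$, which is invariant under orthogonal changes of the full basis.

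The only delicate step is justifying that every admissible basis respects the same $E_J$. If the notion of basis in Theorem \ref{sim} allowed the $E_J$ themselves to vary, I would fall back on the intrinsic argument instead. The eigenvalue counting in Theorem \ref{sim} determines the multiset $\{\Lambda_j\}$ for group $J$ as the set of limits $\lim(\lambda_{j,\e}-\lambda)\e^{-(N-2+2k_J)}$. Since $\{\lambda_{j,\e}\}$ and the rates $k_J$ are basis free, so are the $\Lambda_j$.
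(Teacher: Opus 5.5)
Your proof is correct and follows the paper's argument. Any two orthonormal bases of $E_J$ differ by an orthogonal matrix $O$, under which $G_{k_J}$ becomes $O^T G_{k_J} O$, so its eigenvalues, with multiplicities, are unchanged. You also use the vanishing-order filtration $F_k$ to show that the subspaces $E_J$ themselves do not depend on the basis, a point the paper's proof takes for granted.
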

     \begin{proof}
    Assume that \[E(\lambda)=E_0\oplus \dots \oplus E_p,\] 
    and $\{e_{1+d_0+\cdots+d_{J-1}},\cdots,e_{d_0+\cdots+d_J}\}$ is the orthonormal basis of the subspace $E_J$ for any $J=0,1,\cdots,p$.  The eigenvalues of the matrix   $\tilde{G}_{k_J}$
    $$	\!\!\!\!\!\!\!\!\!\!\!\!\!\!\!\!\!\!\!\!\!\!\!
\tilde{G}_{k_J}=\begin{bmatrix}
\displaystyle\sum_{\substack{|\alpha|={ k_J}\\
           |\beta|={ k_J}}}\frac{1}{\alpha!\beta!}\frac{\partial^{{ k_J}} e_{1+..+d_{J-1}} (P)}{\partial x_{1}^{\alpha_1}\cdots\partial x_{N}^{\alpha_N}}\frac{\partial^{{ k_J}} e_{1+..+d_{J-1}} (P)}{\partial x_{1}^{\beta_1}\cdots\partial x_{N}^{\beta_N}} C_{\alpha\beta}&\dots & \displaystyle\sum_{\substack{|\alpha|={ k_J}\\
           |\beta|={ k_J}}}\frac{1}{\alpha!\beta!}\frac{\partial^{{ k_J}} e_{1+..+d_{J-1}} (P)}{\partial x_{1}^{\alpha_1}\cdots\partial x_{N}^{\alpha_N}}\frac{\partial^{{ k_J}} e_{d_0+..+d_J} (P)}{\partial x_{1}^{\beta_1}\cdots\partial x_{N}^{\beta_N}} C_{\alpha\beta}\\
    \vdots&  &\vdots\\
  \displaystyle \sum_{\substack{|\alpha|={ k_J}\\
           |\beta|={ k_J}}}\frac{1}{\alpha!\beta!}\frac{\partial^{{ k_J}} e_{1+..+d_{J-1}} (P)}{\partial x_{1}^{\alpha_1}\cdots\partial x_{N}^{\alpha_N}}\frac{\partial^{{ k_J}} e_{d_0+..+d_J} (P)}{\partial x_{1}^{\beta_1}\cdots\partial x_{N}^{\beta_N}} C_{\alpha\beta} &\dots & \displaystyle\sum_{\substack{|\alpha|={ k_J}\\
           |\beta|={ k_J}}}\frac{1}{\alpha!\beta!}\frac{\partial^{{ k_J}} e_{d_0+..+d_J} (P)}{\partial x_{1}^{\alpha_1}\cdots\partial x_{N}^{\alpha_N}}\frac{\partial^{{ k_J}} e_{d_0+..+d_J} (P)}{\partial x_{1}^{\beta_1}\cdots\partial x_{N}^{\beta_N}} C_{\alpha\beta}\\
\end{bmatrix}.
$$
         
         Since $\{e_{1+d_0+\cdots+d_{J-1}},\cdots,e_{d_0+\cdots+d_J}\}$ and $\{u_{1+d_0+\cdots+d_{J-1}},\cdots,u_{d_0+\cdots+d_J}\}$ are the  orthonormal basis of the subspace $E_J$, there exists an Orthogonal matrix $O_{k_J}$ such that 
       $$
       (u_{1+d_0+\cdots+d_{J-1}},\cdots,u_{d_0+\cdots+d_J})^T=O_{k_J}(e_{1+d_0+\cdots+d_{J-1}},\cdots,e_{d_0+\cdots+d_J})^T,
       $$
which follows that 
$$
G_{k_J}=O_{k_J}\tilde{G}_{k_J}O_{k_J}^T.
$$
Then the eigenvalues of $G_{k_J}$ and $\tilde{G}_{k_J}$ are the same.
 \end{proof}   

\begin{rem}\label{rem-withoutalm}
    To simplify the computation, we use the convergent rate of $\lambda_{j,\e}-\lambda$ obtained by \cite{alm1,alm2}. Actually, without using this tool, using our method, Theorem \ref{sim} also holds. Here, we only give a brief sketch of the proof.  

As an example, consider the case $N=3$.
    Recall that for any $j=1,\cdots,d_0$, we obtain the result  even if we do not know  the convergent rate of $\lambda_{j,\e}-\lambda$ and we see that $\lambda_{i,\e}-\lambda=o(\e^{N-2+2k_0})$ for any $i=1+d_0,\cdots,m(\lambda)$. Then, we see that the result holds for $J=0$. 

    In the next step, we consider $J=1$ and $j=1+d_0,\cdots,d_0+d_1$. Following the same computations in the proof of Lemma \ref{lem2204} and keeping $\lambda_{j,\e}-\lambda$, we obtain the decay rate of $\alpha_{lj,\e}$ for any $l=1,\cdots,d_0$ as following 
    \begin{equation}\label{eq1205-1}
         \alpha_{lj,\e}=\sum_{I=1}^p\sum_{i=1+d_0+\cdots+d_{I-1}}^{d_0+\cdots+d_I}\alpha_{ij,\e}O(\e^{1+k_I-k_0})+|\lambda_{j,\e}-\lambda|O(\e^{\frac{3-N}{2}-k_0}).
    \end{equation}
  Plugging \eqref{eq1205-1} into $RHS$ of \eqref{2d2}, we obtain 
    \begin{equation}\label{eq1205-2}
\begin{aligned}
      RHS=&
				\frac{(\la+o(1))}{\la }\e^{N-2+2k_1}\sum_{i=1+d_0}^{d_0+d_1}\alpha_{ij,\e}\Bigg((B_{k_1}U_i)^TB_{k_1}U_l+o(1)\Bigg)\\
                &+\sum_{I=2}^p\sum_{i=1+d_0+\cdots+d_{I-1}}^{d_0+\cdots+d_I}\alpha_{ij,\e}O(\e^{N-1+{ k_I}+{ k_1}})+|\lambda_{j,\e}-\lambda|O(\e^{\frac{N-1}{2}+k_1}),\\
                \end{aligned}
\end{equation}
Substituting \eqref{eq1205-2} for \eqref{eq1205-3}, the result holds for $J=1$.  After that, the proof continues as before.
\end{rem}

	\section{Quantitative estimates of eigenfunctions}\label{sec-function}
As a preliminary step, we begin by proving our results in the case where $\la$ is a simple eigenvalue. The case of a multiple eigenvalue will be handled using the same approximation as in the previous section.

\subsection{The case of simple eigenvalues}
    In this Section, we assume that $\lambda$ is simple and the space $H_0^1(\O_\e)$ will be regarded as a subspace of $H_0^1(\O)$. 	  

Let us give an idea of the proof. 

Given $u\in E(\lambda)$, the function $u-V_{\e,u}$ is the best approximation of $u$ in the space $H^1_0(\Omega_\e)$. Hence we write
\begin{equation}\label{ort}
u=\psi_\e+V_{\e,u}
\end{equation}
then, by the definition of $V_{\e,u}$ we get that $\int_\O\nabla\psi_\e\nabla V_{\e,u}=0$.

So \eqref{ort} provides an {\em orthogonal decomposition} of $u$. Next aim is to gives some properties of $\psi_\e$. 

Moving from $H^1_0(\Omega_\e)$ into $E(\lambda_\e)$, we take the orthogonal projection
	\[
	\Pi_\e: \ L^2(\Omega_\e) \to E(\lambda_\e),
	\]
so that $\Pi_\e(u-V_{\e,u})$ is the best approximation of $u-V_{\e,u}$ in the space $E(\lambda_\e)$.

Using the Spectral Theorem, the distance of $\psi_\e$ to $\Pi_\e\psi_\e$ can be computed. Since $dim (E(\lambda_\e))=1$, it is easy to see that 
      \begin{equation}\label{into-u-e}
          u_\e=\pm \frac{\Pi_\e\psi_\e}{||\Pi_\e\psi_\e||_{L^2(\Omega_\e)}}.
      \end{equation} Thus, for the simple case, if the estimates of $V_{\e,u}$ are given, then the estimates of corresponding perturbed eigenfunction $u_\e$ follows.\\
\begin{tikzpicture}[scale=1.2]

\fill[gray!10] (-3,-1) -- (3,-1) -- (4,1.5) -- (-2,1.5) -- cycle;
\draw[thick] (-3,-1) -- (3,-1) -- (4,1.5) -- (-2,1.5) -- cycle;
\node at (0,1) {$H_0^1(\O_\e)$};
\node at (-2,-0.32) {$E(\lambda_\varepsilon)$};

\draw[line width=1.2pt] (-1.5,-0.32) -- (2.7,-0.32);

\coordinate (p) at (1.2,-0.32);
\fill (p) circle (2.2pt);
\node[below] at (p) {$\Pi_\e \psi_\varepsilon$};

\coordinate (ue) at (1.9,0.25);
\fill (ue) circle (2.2pt);
\node[right] at (ue) {$\psi_\varepsilon$};

\coordinate (u) at (1.9,2.2);
\fill (u) circle (2.2pt);
\node[right] at (u) {$u$};

\draw[dashed] (u) -- (ue);
\draw[dashed] (ue) -- (p);


\end{tikzpicture}\\

We start by proving the following.
	\begin{prop}\label{p:main}
	Denote $\psi_\e:=u-V_{\e,u}$.	We obtain  
\begin{equation}\label{est1}
	|| u-\Pi_\e(\psi_\e) ||_{L^2(\Omega_\e)}=O(|| V_{\e,u} ||_{L^2(\Omega_\e)}) \quad \text{as }\e\to0,
\end{equation}
and
\begin{equation}\label{eq:normL2pepsie}
||\Pi_\e(\psi_\e)||_{L^2(\Omega_\e)} = 1 +O(|| V_{\e,u} ||_{L^2(\Omega_\e)}) \quad \text{as }\e\to0.
\end{equation}
\end{prop}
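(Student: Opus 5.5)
Throughout, $\psi_\e=u-V_{\e,u}\in H^1_0(\O_\e)$ solves $-\Delta\psi_\e=\lambda u$ in $\O_\e$, so
\[
-\Delta\psi_\e-\lambda\psi_\e=\lambda V_{\e,u}\quad\hbox{in }\O_\e,
\]
and the residual $\lambda V_{\e,u}$ is small in $L^2(\O_\e)$ by \eqref{eq:normaL2Ve}. The plan is to apply Theorem \ref{th_spectral} to the Dirichlet Laplacian $-\Delta_{\O_\e}$, a self-adjoint operator on $L^2(\O_\e)$ with compact resolvent, and to use the fact that $\lambda$ is simple.

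\textbf{Step 1: a uniform spectral gap.} Since $\lambda$ is simple, it is isolated in $\sigma(-\Delta_\O)$; let $\lambda^-<\lambda<\lambda^+$ be its neighbouring eigenvalues. By the classical spectral convergence for small holes (quoted in the introduction), the eigenvalues of $\O_\e$ converge to those of $\O$ with multiplicity. Hence for $\e$ small exactly one eigenvalue $\lambda_\e$ of $\O_\e$ lies in $(\lambda^-+\eta,\lambda^+-\eta)$ for some fixed $\eta>0$, and every other point of $\sigma(-\Delta_{\O_\e})$ is at distance at least some $d_0>0$ from $\lambda$. I expect this uniformity in $\e$ to be the main obstacle; if needed, it follows by a contradiction/compactness argument of the same type as Lemma \ref{lem1511}.

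\textbf{Step 2: estimate the component orthogonal to $E(\lambda_\e)$.} Apply Theorem \ref{th_spectral} on $\mathcal H=E(\lambda_\e)^\perp\subset L^2(\O_\e)$. This space is invariant under $-\Delta_{\O_\e}$, the restricted spectrum avoids $B(\lambda,d_0)$, and $(I-\Pi_\e)$ commutes with $-\Delta_{\O_\e}$. This gives
\[
d_0\,\|\psi_\e-\Pi_\e\psi_\e\|_{L^2(\O_\e)}\le \|(I-\Pi_\e)(-\Delta-\lambda)\psi_\e\|_{L^2(\O_\e)}\le\lambda\|V_{\e,u}\|_{L^2(\O_\e)}.
\]
By the triangle inequality,
\[
\|u-\Pi_\e\psi_\e\|_{L^2(\O_\e)}\le\|V_{\e,u}\|_{L^2(\O_\e)}+\|\psi_\e-\Pi_\e\psi_\e\|_{L^2(\O_\e)}=O(\|V_{\e,u}\|_{L^2(\O_\e)}),
\]
which is \eqref{est1}.

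\textbf{Step 3: the norm of $\Pi_\e\psi_\e$.} By \eqref{est1},
\[
\big|\|\Pi_\e\psi_\e\|_{L^2(\O_\e)}-\|u\|_{L^2(\O_\e)}\big|=O(\|V_{\e,u}\|_{L^2(\O_\e)}).
\]
It remains to compare $\|u\|_{L^2(\O_\e)}$ with $1$. We have
\[
\|u\|^2_{L^2(\O_\e)}=1-\int_{B(P,\e)}u^2=1+O(\e^{N+2k}),
\]
because $|u|\le C\e^k$ on $B(P,\e)$.

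This error is $O(\|V_{\e,u}\|_{L^2(\O_\e)})$. Indeed, by Lemmas \ref{l:improvedL2Ve} and \ref{2l:improvedL2Ve}, $\|V_{\e,u}\|_{L^2}\gtrsim$ the $L^2$ norm of the leading term there. That leading term is an explicit nonzero combination of derivatives of the Green function. Its $L^2$ norm on a fixed annulus around $P$ is of order $\e^{N-2+2k}$ (or $1/|\log\e|$ when $N=2$, $k=0$), which dominates $\e^{N+2k}$.

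Alternatively, one can avoid any lower bound on $V_{\e,u}$ by noting that the paper only needs the upper bounds from \eqref{V3} and \eqref{eq10102025-3}, and checking case by case that $\e^{N+2k}$ is smaller than those bounds. Either way, \eqref{eq:normL2pepsie} follows.
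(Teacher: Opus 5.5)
\textbf{Comparison with the paper.} Steps 1--2 are correct and slightly cleaner than the paper's argument. Step 1 follows directly from $\lambda_j(\O_\e)\ge\lambda_j(\O)$ together with $\lambda_j(\O_\e)\to\lambda_j(\O)$, and it is exactly the gap the paper assumes. The paper applies Theorem \ref{th_spectral} at $\lambda_\e$ to $\psi_\e-\Pi_\e\psi_\e$. That produces an extra term $|\lambda-\lambda_\e|\,\|\psi_\e\|_{L^2}$. Absorbing it into $O(\|V_{\e,u}\|_{L^2(\O_\e)})$ requires the asymptotics \eqref{eq:eigenvaluesbehavior}--\eqref{eq:N=2eigenvaluesbehavior} and, tacitly, a lower bound on $\|V_{\e,u}\|_{L^2(\O_\e)}$. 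You instead work at $\lambda$ and commute $I-\Pi_\e$ with $-\Delta_{\O_\e}$, so you get \eqref{est1} using only the uniform gap. In Step 3, your bound $\|u\|_{L^2(\O_\e)}=1+O(\e^{N+2k})$ is also sharper than the paper's $1+O(\|u\|_{L^2(B(P,\e))})=1+O(\e^{k+N/2})$.

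\textbf{The gap: the lower bound in Step 3.} To conclude you need $\e^{N+2k}=O(\|V_{\e,u}\|_{L^2(\O_\e)})$, so a lower bound on $\|V_{\e,u}\|_{L^2}$ is unavoidable. The paper needs one too and only asserts it. Your fixed-annulus argument does not supply it.
\begin{itemize}
\item For $N\ge3$: on a fixed annulus around $P$, the remainder $O(\e^{N+k})G_\O(x,P)$ in \eqref{V1} has size $\e^{N+k}$. This is not $o(\e^{N-2+2k})$ once $k\ge2$.
\item For $N=2$: the remainder $O(\e^{2+k}|\log\e|^{-1})G_\O$ in \eqref{eq10102025-2} fails against $\e^{2k}$ once $k\ge3$.
\end{itemize}
So the lemmas do not show that $V_{\e,u}$ is comparable to its leading term away from $P$. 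Your fallback is not a proof either. Checking that $\e^{N+2k}$ lies below the upper bounds \eqref{V3}, \eqref{eq10102025-3} proves only the weaker statement with $\|V_{\e,u}\|_{L^2}$ replaced by those bounds. That weaker statement is all that is used downstream, but it is not \eqref{eq:normL2pepsie}.

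\textbf{How to repair it.} Take the lower bound next to the hole, on $A_\e=\{\e<|x-P|<2\e\}$.
\begin{itemize}
\item For $N\ge3$: on $A_\e$ one has $G_\O(x,P)\sim\e^{2-N}$, so the remainder $O(\e^{N+k})G_\O$ is $O(\e^{k+2})$. Then \eqref{V1} and Corollary \ref{Cor-Q} give
\[
V_{\e,u}(x)=\frac{\e^{N-2+2k}}{|x-P|^{N-2+2k}}\,g(x)+O(\e^{k+1})\qquad\text{on } A_\e,
\]
with $g$ as in \eqref{forg}. Writing $g=|x-P|^k\Psi$ and scaling $r=\e s$ gives $\|V_{\e,u}\|_{L^2(\O_\e)}\ge\|V_{\e,u}\|_{L^2(A_\e)}\ge c\,\e^{k+N/2}$. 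Here $c>0$ because $g\not\equiv0$.
\item For $N=2$, $k\ge1$: the same computation works from \eqref{eq10102025-2}, since $G_\O\sim|\log\e|$ on $A_\e$.
\item For $N=2$, $k=0$: your fixed-annulus bound $\gtrsim 1/|\log\e|$ is valid as written.
\end{itemize}
Since $\e^{k+N/2}\gg\e^{N+2k}$, \eqref{eq:normL2pepsie} follows. The same bound also justifies the paper's cruder claim $\|u\|_{L^2(B(P,\e))}=O(\|V_{\e,u}\|_{L^2(\O_\e)})$.
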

\begin{proof}
		We recall here the argument contained in \cite{afhl}.
		Let us consider the so-called $u$-capacitary potential $V_{\e,u}$ defined in \eqref{eq:Veproblem}. By \eqref{eq:eigenvalueproblem}
		and \eqref{eq:Veproblem} we have that $\psi_\e$ is a solution to 
		\begin{equation*}
			\begin{cases}
				-\Delta \psi_\e= \lambda u &\text{in }\Omega_\e\\
				\psi_\e=0 &\text{on }\partial \Omega_\e.
			\end{cases}
		\end{equation*}
		From now on, let us denote $-\Delta_\e$ the standard Dirichlet Laplacian over $\Omega_\e$. Recalling that $u=\psi_\e + V_{\e,u}$, we have
		\[
		(-\Delta_\e - \lambda)\psi_\e = \lambda V_{\e,u}, 
		\]
		so that 
		\begin{equation}\label{eq:oppsie}
			|| (-\Delta_\e - \lambda)\psi_\e ||_{L^2(\Omega_\e)} = 
			\lambda||  V_{\e,u} ||_{L^2(\Omega_\e)} .
		\end{equation}
		In order to prove that $\psi_\e$ is very close to the space $E(\lambda_\e)$, we take into account the projection $\Pi_\e$ defined above and study 
		\[
		\Tilde{u}_\e:= \psi_\e - \Pi_\e(\psi_\e).
		\]
		Firstly, we note that $ \Pi_\e(\psi_\e)$ is in fact an eigenfunction of $\lambda_\e$, so that 
		\[
		(-\Delta_\e - \lambda_\e) \Tilde{u}_\e = (-\Delta_\e - \lambda_\e) \psi_\e
		\]
		and, adding and subtracting $\lambda\psi_\e$, by \eqref{eq:oppsie} we obtain 
		\begin{align*}
			|| (-\Delta_\e - \lambda_\e)\Tilde{u}_\e ||_{L^2(\Omega_\e)} 
			&\leq|\lambda - \lambda_\e|\, || \psi_\e||_{L^2(\Omega_\e)}
			+ || (-\Delta_\e - \lambda)\psi_\e ||_{L^2(\Omega_\e)}\\
			&=|\lambda - \lambda_\e|\, || \psi_\e||_{L^2(\Omega_\e)}
			+ \lambda || V_{\e,u} ||_{L^2(\Omega_\e)}\\
			&=O(|| V_{\e,u} ||_{L^2(\Omega_\e)}) \quad \text{as }\e\to0,
		\end{align*}
		where in the last equality we have taken into account \eqref{eq:eigenvaluesbehavior}, \eqref{eq:N=2eigenvaluesbehavior}, Lemma \ref{l:improvedL2Ve} and Lemma \ref{2l:improvedL2Ve}.  
		The function $\Tilde{u}_\e \in \mathrm{ker}\Pi_\e$ and the spectrum $\sigma({-\Delta_\e}|_{ \mathrm{ker}\Pi_\e})=\sigma(-\Delta_\e) \setminus \{\lambda_\e\}$, in such a way that $\mathrm{dist}\big(\sigma({-\Delta_\e}|_{ \mathrm{ker}\Pi_\e}), \lambda_\e\big)\geq\delta>0$ for $\e$ is sufficiently small, due to the simplicity assumption on $\lambda$.
		Then by the Spectral Theorem (see Theorem \ref{th_spectral})
		\begin{equation}\label{eq:normtildeue}
			|| \Tilde{u}_\e ||_{L^2(\Omega_\e)} \leq \dfrac{|| (-\Delta_\e - \lambda_\e)\Tilde{u}_\e ||_{L^2(\Omega_\e)}}{\mathrm{dist}\big(\sigma({-\Delta_\e}|_{ \mathrm{ker}\Pi_\e}), \lambda_\e\big)}
			=O(|| V_{\e,u} ||_{L^2(\Omega_\e)}) \quad \text{as }\e\to0.
		\end{equation}
		Finally, from the latter equation we obtain
		\begin{align*}
			|| u-\Pi_\e(\psi_\e) ||_{L^2(\Omega_\e)} &\leq || u-\psi_\e||_{L^2(\Omega_\e)} + ||\psi_\e- \Pi_\e(\psi_\e) ||_{L^2(\Omega_\e)} \\
			&= ||V_{\e,u}||_{L^2(\Omega_\e)} + ||\Tilde{u}_\e||_{L^2(\Omega_\e)} \\
			&=O(|| V_{\e,u} ||_{L^2(\Omega_\e)}) \quad \text{as }\e\to0,
		\end{align*}
		which in particular implies 
        	\begin{equation*}
            \begin{aligned}
                ||\Pi_\e(\psi_\e)||_{L^2(\Omega_\e)} = &||u||_{L^2(\Omega_\e)} + O(|| V_{\e,u} ||_{L^2(\Omega_\e)})\\
                =&1+O(||u||_{L^2( B(P,\e))}) + O(|| V_{\e,u} ||_{L^2(\Omega_\e)})\quad \text{as }\e\to0,
            \end{aligned}
		\end{equation*}
meaning in particular that the eigenfunction $\Pi_\e (u-V_{\e,u})\not\equiv0$. 

Finally, by Lemma \ref{l:improvedL2Ve} and \ref{2l:improvedL2Ve},  we obtain  that $\|u\|_{L^2(B(P,\e))} = O(\e^{k+\frac{N}{2}})$ we immediately see that 
\begin{equation}\label{eq:negligible}
\|u\|_{L^2(B(P,\e))} = O(\|V_{\e,u}\|_{L^2(\Omega_\e)}) \quad \text{as }\e\to0.
\end{equation}

The last estimate completes the proof of \eqref{est1} and \eqref{eq:normL2pepsie}. 
\end{proof}

Thanks to Proposition \ref{p:main}, we know that $\Pi_\e(u-V_{\e,u})$ is a nontrivial function generating $E(\lambda_\e)$. From now on, for $\e$ small enough, we denote
\begin{equation}\label{eq1405-1}
    u_\e := \dfrac{\Pi_\e (u-V_{\e,u})}{\|\Pi_\e (u-V_{\e,u})\|_{L^2(\Omega_\e)}}.
\end{equation}

\begin{cor}\label{l:s2}
As $\e\to0$, we have
\begin{equation}\label{s2}
			||u_\e-u||_{L^2(\Omega_\e)}=O(||V_{\e,u}||_{L^2(\Omega_\e)}).
\end{equation}
\end{cor}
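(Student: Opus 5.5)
Corollary \ref{l:s2} follows by combining the definition \eqref{eq1405-1} of $u_\e$ with the two estimates of Proposition \ref{p:main}. Set $w_\e:=\Pi_\e(u-V_{\e,u})$ and $\eta_\e:=\|V_{\e,u}\|_{L^2(\O_\e)}$. By \eqref{eq:normL2pepsie}, $\|w_\e\|_{L^2(\O_\e)}=1+O(\eta_\e)$, which is bounded away from zero for $\e$ small, because $\eta_\e\to0$ by \eqref{eq:normaL2Ve}. Hence $u_\e=w_\e/\|w_\e\|_{L^2(\O_\e)}$ is well defined.

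Split with the triangle inequality:
\[
\|u_\e-u\|_{L^2(\O_\e)}\le \Big\|\frac{w_\e}{\|w_\e\|}-w_\e\Big\|_{L^2(\O_\e)}+\|w_\e-u\|_{L^2(\O_\e)}.
\]
The second term is $O(\eta_\e)$ directly by \eqref{est1}. The first term equals $\big|1-\|w_\e\|\big|$, since $\|w_\e/\|w_\e\|\|=1$; by \eqref{eq:normL2pepsie} this is also $O(\eta_\e)$. Summing gives \eqref{s2}.

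The only delicate point is that $u_\e$ is defined up to sign. The normalization \eqref{eq1405-1} fixes the sign so that $u_\e$ is positively aligned with $u$, and this choice is exactly what makes the estimate hold without a $\pm$. Beyond that, the argument is a routine consequence of Proposition \ref{p:main}, and I expect no real obstacle.
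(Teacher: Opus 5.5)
Your proposal is correct and is essentially the paper's argument: you split $u_\e-u$ with the triangle inequality and apply the two estimates \eqref{est1} and \eqref{eq:normL2pepsie} of Proposition \ref{p:main}, with the sign fixed by \eqref{eq1405-1}. The paper splits slightly differently, factoring out $1/\|\Pi_\e(\psi_\e)\|_{L^2(\O_\e)}$ and carrying an $O(\|u\|_{L^2(B(P,\e))})$ term that it then removes via \eqref{eq:negligible}; that term is already absorbed in \eqref{eq:normL2pepsie}, so your version is, if anything, a bit cleaner.
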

\begin{proof}
		The proof relies essentially on the Triangle inequality:
		\begin{align*}
			||u_\e-u||_{L^2(\Omega_\e)}
			&= \Big\|\dfrac{\Pi_\e(\psi_\e)}{||\Pi_\e(\psi_\e)||_{L^2(\Omega_\e)}}-u\Big\|_{L^2(\Omega_\e)} 
			= \dfrac{\Big\|\Pi_\e(\psi_\e)- ||\Pi_\e(\psi_\e)||_{L^2(\Omega_\e)} u\Big\|_{L^2(\Omega_\e)}}{||\Pi_\e(\psi_\e)||_{L^2(\Omega_\e)}}\\
			&\leq \dfrac{1}{||\Pi_\e(\psi_\e)||_{L^2(\Omega_\e)}} \left(\left\|\Pi_\e(\psi_\e)- u \right\|_{L^2(\Omega_\e)} + \|u\|_{L^2(\Omega_\e)} \big|1- ||\Pi_\e(\psi_\e)||_{L^2(\Omega_\e)}\big|\right)\\
			&= O(\left\| V_{\e,u} \right\|_{L^2(\Omega_\e)})+O(||u||_{L^2( B(P,\e))}).
		\end{align*}
By \eqref{eq:negligible} we conclude the proof.
\end{proof}

\begin{cor}\label{c:1}
    As $\e\to0$, we have 
    \begin{equation}\label{s1}
			||u_\e-u+V_{\e,u}||_{L^2(\O_\e)}=O(||V_{\e,u}||_{L^2(\Omega_\e)}).
    \end{equation}
\end{cor}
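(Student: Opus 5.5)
The plan is to reduce Corollary \ref{c:1} to Corollary \ref{l:s2} by the triangle inequality:
\[
\|u_\e-u+V_{\e,u}\|_{L^2(\O_\e)}\le \|u_\e-u\|_{L^2(\O_\e)}+\|V_{\e,u}\|_{L^2(\O_\e)}.
\]
By \eqref{s2}, the first term on the right is $O(\|V_{\e,u}\|_{L^2(\O_\e)})$. The second term is trivially of that order. Adding the two gives \eqref{s1}.

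There is an alternative route that avoids Corollary \ref{l:s2}, and it can serve as a consistency check. Write $u_\e-u+V_{\e,u}=u_\e-\psi_\e$, with $\psi_\e=u-V_{\e,u}$, and split it as
\[
u_\e-\psi_\e=\big(u_\e-\Pi_\e\psi_\e\big)+\big(\Pi_\e\psi_\e-\psi_\e\big).
\]
The second bracket is $-\tilde u_\e$, which is controlled by \eqref{eq:normtildeue}. The first bracket equals $\Pi_\e\psi_\e\big(\|\Pi_\e\psi_\e\|^{-1}_{L^2(\O_\e)}-1\big)$. Its $L^2$ norm is $\big|1-\|\Pi_\e\psi_\e\|_{L^2(\O_\e)}\big|$, which is $O(\|V_{\e,u}\|_{L^2(\O_\e)})$ by \eqref{eq:normL2pepsie}. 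Both brackets are therefore of the required order.

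There is no genuine obstacle here. The only point to watch is that the normalization of $u_\e$ is the one fixed in \eqref{eq1405-1}, with the sign chosen so that $u_\e$ is close to $u$, so that Proposition \ref{p:main} applies directly.
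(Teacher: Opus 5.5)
Your proposal is correct and matches the paper's proof: the triangle inequality $\|u_\e-u+V_{\e,u}\|_{L^2(\O_\e)}\le\|u_\e-u\|_{L^2(\O_\e)}+\|V_{\e,u}\|_{L^2(\O_\e)}$ together with \eqref{s2} from Corollary \ref{l:s2}. Your alternative route is also valid: it splits $u_\e-\psi_\e=(u_\e-\Pi_\e\psi_\e)-\tilde u_\e$ and bounds the two pieces with \eqref{eq:normL2pepsie} and \eqref{eq:normtildeue}.
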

\begin{proof}
    This is a straightforward consequence of Corollary \ref{l:s2}, obtained by  applying the Triangle inequality.
\end{proof}

Thanks to the Corollary \ref{c:1} and by means of regularity theory we are in position to prove a uniform estimate on the difference between $u_\e$ and $u - V_{\e,u}$. We are now in a position to prove Theorem \ref{intr-T1}.

\begin{proof}[Proof of Theorem \ref{intr-T1}]
We first observe that the function $u_\e-u+V_{\e,u}$ is a solution to
\begin{equation}\label{eq}
		\begin{cases}
				-\Delta(u_\e-u+V_{\e,u})-\la(u_\e-u+V_{\e,u})=(\la_\e-\la)u_\e-\la V_{\e,u}
				&\text{in }\Omega_\e,\\
				u_\e-u+V_{\e,u}=0 &\text{on }\partial \Omega_\e.
		\end{cases}
	\end{equation}
		To simplify the exposition, we set $\phi_\e=u_\e-u+V_{\e,u}$ and
		$f_\e=(\la_\e-\la)u_\e-\la V_{\e,u}$. So \eqref{eq} now reads
		\begin{equation}
			\begin{cases}
				-\Delta\phi_\e-\la\phi_\e=f_\e
				&\text{in }\Omega_\e\\
				\phi_\e=0 &\text{on }\partial \Omega_\e.
			\end{cases}
		\end{equation}
		By standard regularity theory, $u_\e$, $u$ and $V_{\e,u}$ are in $L^p(\Omega_\e)$ for any $p\in[1,+\infty]$. 
       In order to apply Lemma \ref{t1}, we choose $q$ in a proper way, according to the dimension. If $N\geq4$, we set $q=\frac N2 + \delta$, with $\delta\in(0,\frac12)$. In this way we have both $q>2$ and $q>\frac{N}{N-2}$. In dimension $N=3$ and $N=2$, we simply choose $\frac{N}{2}<q<2$.
        	 By Lemma \ref{t1}, there exists a constant $C=C(N,|\O|)$, which is independent of $\e$ such that 
\begin{align}\label{fi}
\|\phi_\e\|_{L^\infty(\Omega_\e)}&\le C\left(
\|\phi_\e\|_{L^2(\Omega_\e)}+\|f_\e\|_{L^q(\Omega_\e)}\right)\notag\\
&\le C\|V_{\e,u}\|_{L^2(\Omega_\e)}
+C|\la-\la_\e|\,\|u_\e\|_{L^q(\Omega_\e)}+C\|V_{\e,u}\|_{L^q(\Omega_\e)},
		\end{align}
        where the last inequality holds due to Corollary \ref{c:1}.
		Thanks to our choice of $q$, Lemma \ref{l:improvedL2Ve} and Lemma \ref{2l:improvedL2Ve}  apply, yielding \begin{equation}
        ||V_{\e,u}||_{L^q(\Omega_\e)}=
            \begin{cases}
                O\left(\frac{1}{|\log\e|}\right)\quad& if  \ N=2  \ and\ k=0\\
                O\big(\e^2\big) & if  \ N=2  \ and\ k=1\\
                O\big(\e^{k+\frac{N}{q}}\big) & if  \ N=2  \ and\ k\ge 2\\
                O\big(\e\big) & if  \ N=3  \ and\ k=0\\
                O\big(\e^{k+\frac{N}{q}}\big) & if  \ N=3  \ and\ k\ge 1\\
                 O\big(\e^{k+\frac{N}{q}}\big) & if  \ N\ge 4  \ and\ k\ge 0.\\
            \end{cases}
        \end{equation}
        As already mentioned, in dimension $N=3$ and $N=2$ we have $\frac{N}{2}<q<2$, so that $k+\frac{N}{q}>k+\frac{N}{2}$. On the other hand, for $N\ge 4$ we have $q=\frac{N}{2}+\delta$, so that 
		\[
		k+\frac{N}{\frac N2+\delta} >k+ 2(1-\delta).
		\]
		In conclusion, we have \begin{equation}
        ||V_{\e,u}||_{L^q(\Omega_\e)}=
            \begin{cases}
                O\left(\frac{1}{|\log\e|}\right)\quad& if  \ N=2  \ and\ k=0\\
                O\big(\e^2\big) & if  \ N=2  \ and\ k=1\\
                O\big(\e^{k+\frac{N}{2}}\big) & if  \ N=2  \ and\ k\ge 2\\
                O\big(\e\big) & if  \ N=3  \ and\ k=0\\
                O\big(\e^{k+\frac{N}{2}}\big) & if  \ N=3  \ and\ k\ge 1\\
                 O\big(\e^{k+ 2(1-\delta)}\big) & if  \ N\ge 4  \ and\ k\ge 0.\\
            \end{cases}
        \end{equation}
		We now recall that
		\begin{itemize}
         	\item $|\la-\la_\e|= \begin{cases}
            O\left(\frac{1}{|\log\e|}\right),& if  \ N=2  \ and\ k=0,\\
                O\big( \e^{N-2+2k}\big), & otherwise.
            \end{cases}$
			\item $\|u_\e\|_{L^q(\Omega_\e)} \le \|u_\e\|_{L^\infty(\Omega_\e)}^{\frac{q-1}q} \,\|u_\e\|_{L^1(\Omega_\e)}^\frac1q \le C(\Omega)\, 
			\|u_\e\|_{L^\infty(\Omega_\e)}^\frac{q-1}q\,\|u_\e\|_{L^2(\Omega_\e)}^\frac1q\le C(\Omega)$ where in the first inequality we used H\"older inequality, in the second one the well-known embedding of $L^p$-spaces and in the last one we used Lemma \ref{t1} with $\phi_\e=u_\e$, $\la_0=\la_\e$ and $f_\e=0$.
		\end{itemize}
		Finally \eqref{fi} becomes 
		\begin{equation}\label{U1}
			||\phi_\e||_\infty= \begin{cases}
                O\left(\frac{1}{|\log\e|}\right)\quad& if  \ N=2  \ and\ k=0\\
                O\big(\e^2|\log\e|\big) & if  \ N=2  \ and\ k=1\\
                O\big(\e^{k+\frac{N}{2}}\big) & if  \ N=2  \ and\ k\ge 2\\
                O\big(\e\big) & if  \ N=3  \ and\ k=0\\
                O\big(\e^{k+\frac{N}{2}}\big) & if  \ N=3  \ and\ k\ge 1\\
                 O\big(\e^{k+ 2(1-\delta)}\big) & if  \ N\ge 4  \ and\ k\ge 0,\\
            \end{cases}
		\end{equation}
		which ends the proof.
	\end{proof}
\vskip0.1cm
	\begin{proof}[Proof of Corollary \ref{T2'}]
		 By Lemma \ref{l:improvedL2Ve} and Lemma \ref{2l:improvedL2Ve}, it follows that on compact set $K\subset \Omega$,
        \begin{equation}\label{eq13102025-1}
            ||V_{\e,u}||_{L^\infty(K)}= \begin{cases}
          \displaystyle  O\left(\frac{1}{|\log\e|}\right),\quad& if  \ N=2  \ and\ k=0,\\
          \displaystyle O\left(\e^{2k}\right)+O\left(\frac{\e^{2+k}}{|\log\e|}\right),&if\ N=2\ and\ k\ge 1,\\
           \displaystyle     O\big( \e^{N-2+2k}\big)+O\left(\e^{N+k}\right), & otherwise.
            \end{cases}
        \end{equation}
        Combining \eqref{eq13102025-1} and \eqref{U1},  we get

			\begin{equation*}
				||u_\e-u||_{L^\infty(K)}\le||\phi_\e||_{L^\infty(K)}+||V_{\e,u}||_{L^\infty(K)}=
				\begin{cases}
                O\left(\frac{1}{|\log\e|}\right)\quad& if  \ N=2  \ and\ k=0\\
                O\big(\e^2|\log\e|\big) & if  \ N=2  \ and\ k=1\\
                O\big(\e^{k+\frac{N}{2}}\big) & if  \ N=2  \ and\ k\ge 2\\
                O\big(\e\big) & if  \ N=3  \ and\ k=0\\
                O\big(\e^{k+\frac{N}{2}}\big) & if  \ N=3  \ and\ k\ge 1\\
                 O\big(\e^{k+ 2(1-\delta)}\big) & if  \ N\ge 4  \ and\ k\ge 0,\\
            \end{cases}
			\end{equation*}
		which gives the claim.
	\end{proof}
\begin{proof}[Proof of Corollary \ref{T2}]
		It is a straightforward consequence of \eqref{V1}, \eqref{eq10102025-1}, \eqref{eq10102025-2} and \eqref{U1}. Indeed, we have
			\begin{align*}
				u_\e(x)&=u(x)-V_{\e,u}(x)+ 	\begin{cases}
                O\left(\frac{1}{|\log\e|}\right)\quad& if  \ N=2  \ and\ k=0\\
                O\big(\e^2|\log\e|\big) & if  \ N=2  \ and\ k=1\\
                O\big(\e^{k+1}\big) & otherwise\\
            \end{cases}\\
            &\left(\hbox{using \eqref{V1}, \eqref{eq10102025-1} and \eqref{eq10102025-2}}\right)=\\
            &u(x)-\begin{cases}
              \displaystyle A(N,0)\frac{1}{|\log \e|}u(p)G_\Omega(x,P)+ O\left(\frac{1}{|\log\e|}\right)\quad& if  \ N=2  \ and\ k=0\\
            \displaystyle A(N,k)\e^{N-2+2k}\sum_{|\alpha|=k}\frac{\partial^ku(P)}{\partial x_1^{\alpha_1}\cdots\partial x_N^{\alpha_k}}  \frac{\partial^kG(x,P)}{\partial y_1^{\alpha_1}\cdots\partial y_N^{\alpha_N}} +O\big(\e^2|\log\e|\big) & if  \ N=2  \ and\ k=1\\
             \displaystyle   A(N,k)\e^{N-2+2k}\sum_{|\alpha|=k}\frac{\partial^ku(P)}{\partial x_1^{\alpha_1}\cdots\partial x_N^{\alpha_k}}  \frac{\partial^kG(x,P)}{\partial y_1^{\alpha_1}\cdots\partial y_N^{\alpha_N}} + O\big(\e^{k+1}\big) & otherwise\\
            \end{cases}\\
            &\left(\hbox{using \eqref{eq10102025-4} and Lemma \ref{Cor-Q}}\right)=\\
             &u(x)-\begin{cases}
              \displaystyle -\frac{\log|x-P|}{|\log \e|}u(p)+ O\left(\frac{1}{|\log\e|}\right)\quad  if  \ N=2  \ and\ k=0\\
            \displaystyle \frac{\e^{2k}}{|x-P|^{2k}}\sum_{|\alpha|=k}\frac{\partial^ku(P)}{\partial x_1^{\alpha_1}\partial x_2^{\alpha_2}}  (x_1-P_1)^{\alpha_1}(x_2-P_2)^{\alpha_2} +O\big(\e^2|\log\e|\big) \quad  if  \ N=2  \ and\ k=1\\
             \displaystyle   \frac{\e^{N-2+2k}}{|x-P|^{N-2+2k}}\sum_{|\alpha|=k}\frac{\partial^ku(P)}{\partial x_1^{\alpha_1}\cdots\partial x_N^{\alpha_k}}  (x_1-P_1)^{\alpha_1}\cdots(x_N-P_N)^{\alpha_N} + O\big(\e^{k+1}\big) \quad  otherwise\\
            \end{cases}\\
            &\left(\hbox{using the Taylor expansion for $u$ }\right)=\\
             & \begin{cases}
              \displaystyle\left  (1+\frac{\log |x-P|}{|\log\e|}\right)u(x)
              +O\left(\frac{1}{|\log\e|}\right), &if \ N=2  \ and\ k=0,\\
				 \displaystyle \left(1-\frac{\e^{2k}}{|x-P|^{2k}}\right)u(x)+O(\e^{2}|\log\e|),& if  \ N=2  \ and\ k=1,\\
                  \displaystyle  \left(1-\frac{\e^{N-2+2k}}{|x-P|^{N-2+2k}}
				\right)u(x)+O(\e^{k+1}), &otherwise,
                \end{cases} \\
			\end{align*}
			which gives the claim.
	\end{proof}
	\begin{rem}
	    Observe that the estimate \eqref{d2} holds everywhere in $\O_\e$ but it is meaningful only when $|x-P|\in [\e,C\e]$ for some $C>1$.
	\end{rem}

\subsection{The case of multiple eigenvalues}

	 In Theorem \ref{intr-T1}, we establish the $L^\infty-$estimate of $u_\e-u+V_{\e,u}$ in the case where $\lambda$ is simple. The key  step is to estimate $\Pi_\e (u-V_{\e,u})$, where  $\Pi_\e (u-V_{\e,u})$ is the best approximation of $u$ in the space $E(\lambda_\e)$ (see Proposition \ref{p:main}). However, when $\lambda$ is a multiple eigenvalue, even with the same  orthogonal projection 
\[
	\Pi_{j,\e}: \ L^2(\Omega_\e) \to E(\lambda_{j,\e}),
	\] 
    the argument used in the proof of Proposition \ref{p:main} does not lead to the same conclusion. 
   Indeed, with this definition, we still have
    $\sigma(-\Delta_\e\big|_{\mathrm{ker}\Pi_{j,\e}})=\sigma(-\Delta_\e)\setminus
    \{\lambda_{j,\e}\}$. However, since $\lambda$ is  a multiple eigenvalue, the eigenvalues $\lambda_{j,\e}$  bifurcate   (see Theorem \ref{sim}), and hence $\hbox{dist}\big(\sigma(-\Delta_\e\big|_{\mathrm{ker}\Pi_{j,\e}}),\lambda_{j,\e}\big)=o(1)$. At this stage, the Spectral Theorem no longer yields an appropriate estimate of $\|u_j-V_{\e,u_j}-\Pi_{j,\e}(u_j-V_{\e,u_j})\|_{L^2(\Omega_\e)}$. 

    On the other hand, let $E_\e=E(\lambda_{1,\e})\oplus\cdots\oplus E(\lambda_{m(\lambda),\e})$ and let $\Pi_\e: L^2(\O_\e)\to E_\e$. Then  $||u_j-V_{\e,u_j}-\Pi_\e(u_j-V_{\e,u_j})||_{L^2(\O_\e)}\leq C||V_{\e,u_j}||_{L^2{\O_\e)}}$ (see  \cite[Lemma 4]{alm2026}). Thus, it  suffices to prove that $\hat{u}_{j,\e}:=\frac{\Pi_\e(u_j-V_{\e,u_j})}{||\Pi_\e(u_j-V_{\e,u_j})||_{L^2(\O_\e)}}$, $j=1,\cdots,m(\lambda)$,  forms a basis of $E_\e$. If $m(\lambda)=1$, the $\hat{u}_{1,\e}$ must be the eigenfunction of $\lambda_{1,\e}$. However, if $m(\lambda)\ge 2$, it is not clear whether $\hat{u}_{j,\e}$ is an eigenfunction associated with $\lambda_{j,\e}$. Moreover, it is  not even clear whether $\hat{u}_{j,\e}$ are pairwise orthogonal. Therefore,  when $\lambda$ is a multiple eigenvalue, a different argument is required.
    Recall the decomposition introduced in \eqref{2con},
$$u_{j,\e}=\sum_{i=1}^{m(\la)}\alpha_{ij,\e} (u_i-V_{\e,u_i})+ v_{j,\e}.$$
Combining  Lemma \ref{lem1511} and  Lemma \ref{lem2204}, we obtain a precise  estimate of $\|v_{j,\e}\|_{L^2(\O_\e)}$. Consequently, $\sum_{i=1}^{m(\la)}\alpha_{ij,\e} (u_i-V_{\e,u_i})$ provides a good approximation of   $u_{j,\e}$. Thus, one may expect an analogue of Theorem \ref{intr-T1} to hold when $\lambda$ is a multiple eigenvalue.
 
Next Theorem establishes  a uniform estimate for the difference between $u_{j,\e}$ and a suitable linear combination of $u_i-V_{\e,u_i}$, $i=1,\cdots,m(\lambda)$.  The proof  remains valid when $\lambda$ is simple and in this case, the conclusions coincide with those of  Theorem \ref{intr-T1}. 
\begin{teo}\label{th-2204}
    Under the same assumption as Theorem \ref{sim}, for any $J=0,1,\cdots,p$, if $j=1+d_0+..+d_{J-1},\cdots,d_0+..+d_J$, then  for any $\delta\in (0,\frac{1}{2})$, we have 
     \begin{equation}\label{M2}
     u_{j,\e}-\sum_{i=1}^{m(\lambda)}\alpha_{ij,\e}(u_i-V_{\e,u_i})=\begin{cases}
                O\left(\frac{1}{|\log\e|}\right)\quad& if  \ N=2  \ and\ k_J=0\\
                O\big(\e^2|\log\e|\big) & if  \ N=2  \ and\ k_J=1\\
                O\big(\e^{k_J+\frac{N}{2}}\big) & if  \ N=2  \ and\ k_J\ge 2\\
                O\big(\e\big) & if  \ N=3  \ and\ k_J=0\\
                O\big(\e^{k_J+\frac{N}{2}}\big) & if  \ N=3  \ and\ k_J\ge 1\\
                 O\big(\e^{k_J+ 2(1-\delta)}\big) & if  \ N\ge 4  \ and\ k_J\ge 0\\
			\end{cases}
\end{equation}
as $\e\to 0$, uniformly for $x\in\O_\e$,
where $\alpha_{ij,\e}$ is the same as \eqref{eq15112025-1}.
\end{teo}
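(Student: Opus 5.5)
The plan is to mimic the proof of Theorem \ref{intr-T1}, applied now to the remainder $v_{j,\e}$ in the decomposition \eqref{eq15112025-1}. By construction, the left-hand side of \eqref{M2} is exactly $v_{j,\e}$, and $v_{j,\e}$ solves \eqref{eq2301-4}:
\[
-\Delta v_{j,\e}-\lambda_{j,\e}v_{j,\e}=f_{j,\e}
\quad\text{in }\O_\e,
\qquad v_{j,\e}=0\ \text{on }\partial\O_\e,
\]
with
\[
f_{j,\e}=(\lambda_{j,\e}-\lambda)\sum_i\alpha_{ij,\e}u_i-\lambda_{j,\e}\sum_i\alpha_{ij,\e}V_{\e,u_i}.
\]
Applying Theorem \ref{t1} with the same choice of $q$ as before ($\frac N2<q<2$ for $N=2,3$, and $q=\frac N2+\delta$ for $N\ge4$), we get
\[
\|v_{j,\e}\|_{L^\infty(\O_\e)}\le C\big(\|v_{j,\e}\|_{L^2(\O_\e)}+\|f_{j,\e}\|_{L^q(\O_\e)}\big).
\]
The proof therefore reduces to bounding $\|v_{j,\e}\|_{L^2}$ and $\|f_{j,\e}\|_{L^q}$ by the rates listed in \eqref{M2}, with $k=k_J$.

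The terms containing $\lambda_{j,\e}-\lambda$ are harmless. By Theorem \ref{sim} (or \eqref{capu}), $|\lambda_{j,\e}-\lambda|=O(\e^{N-2+2k_J})$, or $O(1/|\log\e|)$ when $N=2$ and $k_J=0$. The $\alpha_{ij,\e}$ are bounded and $\|u_i\|_{L^q}\le C$, so these terms are dominated by the target rates. By Lemma \ref{lem1511},
\[
\|v_{j,\e}\|_{L^2}\le C|\lambda_{j,\e}-\lambda|+C\sum_i|\alpha_{ij,\e}|\,\|V_{\e,u_i}\|_{L^2},
\]
so the whole estimate comes down to bounding
\[
\sum_i|\alpha_{ij,\e}|\,\|V_{\e,u_i}\|_{L^r(\O_\e)}\qquad\text{for } r=2 \text{ and } r=q.
\]
We split this sum by the block index $I$ of $u_i$. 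For $I\ge J$, i.e. $k_I\ge k_J$, we use only the boundedness of $\alpha_{ij,\e}$ together with Lemmas \ref{l:improvedL2Ve} and \ref{2l:improvedL2Ve}, which give $\|V_{\e,u_i}\|_{L^r}=O(\e^{k_I+N/r})$ (with the logarithmic and $N=2$, $k=0$ modifications). Since $k_I\ge k_J$, this is at most the $k_J$-rate, exactly as in the proof of Theorem \ref{intr-T1}.

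The main obstacle is the blocks with $I<J$, where $k_I<k_J$ and $\|V_{\e,u_i}\|_{L^r}$ alone is too large. Here the plan is to use the decay of the coefficients from Lemma \ref{lem2204}: for $N\ge3$,
\[
\alpha_{ij,\e}=O(\e^{1+k_J-k_I})+O\big(\e^{\frac{N-1}{2}+2k_J-k_I}\big).
\]
Multiplying by $\|V_{\e,u_i}\|_{L^r}=O(\e^{k_I+N/r})$ (or the $\e^{N-2}$-type bound when $k_I=0$ and $r$ is small), the powers $\e^{k_I}$ cancel. This leaves $O(\e^{1+k_J+N/r})+O(\e^{\frac{N-1}{2}+2k_J+N/r})$, which is better than needed, and even the worst case $k_I=0$ with $\|V_{\e,u_i}\|_{L^r}=O(\e^{N-2})$ still gives at least $\e^{N-1+k_J}$. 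For $N=2$ we use instead the second and third lines of \eqref{eq2301-3}; the only delicate case is $k_I=0$, where $\|V_{\e,u_i}\|=O(1/|\log\e|)$ but $\alpha_{ij,\e}=O(\e^{1+k_J}|\log\e|)+O(\e^{2k_J})$, so the product is $O(\e^{1+k_J})+O(\e^{2k_J}/|\log\e|)$. This is compatible with the stated rates for $k_J\ge1$; we need to check the $k_J=1$ case, which is where the $\e^2|\log\e|$ in \eqref{M2} comes from.

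Finally, we collect the bounds exactly as in \eqref{U1} to obtain \eqref{M2} uniformly on $\O_\e$. When $m(\lambda)=1$ the coefficients reduce to a single bounded $\alpha$, and the argument reproduces Theorem \ref{intr-T1}.
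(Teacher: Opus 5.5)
Your proposal is correct and is the paper's argument made explicit. The paper also applies Theorem \ref{t1} to $v_{j,\e}$ and bounds $\|v_{j,\e}\|_{L^2}$ and $\|f_{j,\e}\|_{L^q}$ using Lemma \ref{lem1511}, the $L^p$ bounds of Lemmas \ref{l:improvedL2Ve}--\ref{2l:improvedL2Ve}, and the coefficient decay of Lemma \ref{lem2204} for the blocks $I<J$, which is exactly your split. One small correction on the check you left open: for $N=2$, $k_J=1$ the $\e^2|\log\e|$ comes from $\|V_{\e,u_i}\|_{L^2}=O(\e^2|\log\e|)$ with $k_I=k_J=1$ in Lemma \ref{lem1511}, whereas the $k_I=0$ contribution you flag is only $O(\e^2)$, so that case goes through.
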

\begin{proof}
    By \eqref{eq15112025-1}, we see that 
    $
    u_{j,\e}=\sum_{i=1}^{m(\lambda)}\alpha_{ij,\e}(u_i-V_{\e,u_i})+v_{j,\e},
    $
    and $v_{j,\e}$ satisfies
    \begin{equation}
    \begin{cases}
       \displaystyle -\Delta v_{j,\e}-\lambda_{j,\e}v_{j,\e}=f_{j,\e}\ \ &in \ \ \O_\e,\vspace{2mm}\\
       \displaystyle v_{j,\e}=0\ \ &on \ \ \partial\O_\e,\vspace{2mm}
    \end{cases}
\end{equation}
where $f_{j,\e}=(\lambda_{j,\e}-\lambda)\sum_{i=1}^{m(\la)}\alpha_{ij,\e}  u_i-\lambda_{j,\e}\sum_{i=1}^{m(\la)}\alpha_{ij,\e}  V_{\e,u_i} .$
Using Theorem \ref{t1}, it follows that 
$
||v_{j,\e}||_{L^\infty(\O_\e)}\leq C(||v_{j,\e}||_{L^2(\O_\e)}+||f_{j,\e}||_{L^q(\O_\e)}),
$
for any $q>\frac{N}{2}$. We choose $q$  the same way as in Theorem \ref{t1}. Combining Lemma \ref{l:improvedL2Ve}, Lemma \ref{2l:improvedL2Ve}, Theorem \ref{sim},  Lemma \ref{lem1511} and Lemma \ref{lem2204}, we obtain
\begin{align*}
   ||v_{j,\e}||_{L^\infty(\O_\e)}\leq &C|\lambda_{j,\e}-\lambda|+C\sum_{i=1}^{m(\la)}|\alpha_{ij,\e}|  ||V_{\e,u_i}||_{L^2(\O_\e)}+C||f_{j,\e}||_{L^q(\O_\e)} \\
   \leq &C|\lambda_{j,\e}-\lambda|+C\sum_{i=1}^{m(\la)}|\alpha_{ij,\e}|  ||V_{\e,u_i}||_{L^2(\O_\e)}+C\sum_{i=1}^{m(\la)}|\alpha_{ij,\e}|  ||V_{\e,u_i}||_{L^q(\O_\e)} \\
   =& 
   \begin{cases}
                O\left(\frac{1}{|\log\e|}\right)\quad& if  \ N=2  \ and\ k_J=0\\
                O\big(\e^2|\log\e|\big) & if  \ N=2  \ and\ k_J=1\\
                O\big(\e^{k_J+\frac{N}{2}}\big) & if  \ N=2  \ and\ k_J\ge 2\\
                O\big(\e\big) & if  \ N=3  \ and\ k_J=0\\
                O\big(\e^{k_J+\frac{N}{2}}\big) & if  \ N=3  \ and\ k_J\ge 1\\
                 O\big(\e^{k_J+ 2(1-\delta)}\big) & if  \ N\ge 4  \ and\ k_J\ge 0.\\
			\end{cases}
\end{align*}
This completes the proof.
\end{proof}
\begin{rem}\label{rem2204}
Using Lemma \ref{lem2204}, it follows that $\alpha_{lj,\e}\to 0$ as $\e\to 0$, if $j=1+d_0+\cdots+d_{J-1},\cdots,d_0+\cdots+d_J$ and  $l=1,\cdots,d_0+\cdots+d_{J-1}$.
    In the proof of Theorem \ref{sim}, we see that $\alpha_{lj,\e}\to 0$, if $j=1+d_0+\cdots+d_{J-1},\cdots,d_0+\cdots+d_J$ and $l=1+d_0+\cdots+d_J,\cdots,m(\lambda)$. Therefore,  the following estimate holds.
    \begin{equation}
        u_{j,\e}=\sum_{i=1+d_1+\cdots+d_{J-1}}^{d_1+\cdots+d_J}\alpha_{ij,\e}(u_i-V_{\e,u_i})+o(1), \hbox{ as } \e\to 0,
    \end{equation}
    uniformly for $x\in\O_\e$.
\end{rem}
Using Lemma \ref{lem2204} again, we obtain a boundary estimate around $\partial B(P,\e)$. 
\begin{cor}\label{cor2204}
     Under the same assumption as Theorem \ref{sim}, we have 
    $$
u_{j,\e}(x)=\begin{cases}
              \displaystyle\left  (1+\frac{\log |x-P|}{|\log\e|}\right)\sum_{i=1+d_0+\cdots+d_{J-1}}^{d_0+\cdots+d_J}\alpha_{ij,\e}u_i(x)
              +O\left(\frac{1}{|\log\e|}\right),\  if \ N=2  \ and\ k_J=0,\\
				 \displaystyle \left(1-\frac{\e^{2k_J}}{|x-P|^{2k_J}}\right)\sum_{i=1+d_0+\cdots+d_{J-1}}^{d_0+\cdots+d_J}\alpha_{ij,\e}u_i(x)+O(\e^{k_J+1}|\log\e|),\  if  \ N=2  \ and\ k_J\ge1,\\
                 \displaystyle   \left(1-\frac{\e^{N-2+2k_J}}{|x-P|^{N-2+2k_J}}
				\right)\sum_{i=1+d_0+\cdots+d_{J-1}}^{d_0+\cdots+d_J}\alpha_{ij,\e}u_i(x)+O(\e^{k_J+1}), \ if\ N\ge 3,
                \end{cases} 
$$
   as $\e\to 0$,  uniformly for $|x-P|\leq C\e$.
\end{cor}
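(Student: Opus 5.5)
The plan is to imitate the proof of Corollary \ref{T2}, with the single function $u$ replaced by the combination $\sum_i \alpha_{ij,\e} u_i$. The starting point is Theorem \ref{th-2204}:
$$u_{j,\e}=\sum_{i=1}^{m(\la)}\alpha_{ij,\e}(u_i-V_{\e,u_i})+R_\e,$$
where $R_\e$ satisfies the bound \eqref{M2} uniformly in $\O_\e$. For $N\ge3$ this gives $R_\e=O(\e^{k_J+1})$, since $k_J+\frac N2\ge k_J+1$ and $k_J+2(1-\delta)\ge k_J+1$. For $N=2$ it gives $R_\e=O(\e^{k_J+1}|\log\e|)$ when $k_J\ge1$. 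In the case $N=2$, $k_J=0$ the remainder is $O(1/|\log\e|)$.

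Next I split the sum according to blocks, $I<J$, $I=J$ and $I>J$.

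For $I>J$, I use that for $|x-P|\le C\e$ both $u_i$ and $V_{\e,u_i}$ are $O(\e^{k_I})$ with $k_I\ge k_J+1$. For $u_i$ this is Taylor's formula. For $V_{\e,u_i}$ it follows from \eqref{V1}, or from the maximum principle, because $V_{\e,u_i}=u_i$ on $\partial B(P,\e)$. Since the $\alpha_{ij,\e}$ are bounded, these terms are absorbed in the remainder.

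For $I<J$, Lemma \ref{lem2204} gives
$$\alpha_{ij,\e}=O(\e^{1+k_J-k_I})+O(\e^{\frac{N-1}{2}+2k_J-k_I}),$$
with the analogous logarithmic version when $N=2$. Multiplying by $|u_i|+|V_{\e,u_i}|=O(\e^{k_I})$ gives $O(\e^{k_J+1})$, plus $O(\e^{k_J+1}|\log\e|)$ when $N=2$, $k_J\ge1$. When $N=2$, $k_J=0$ there are no blocks with $I<J$. So these terms are also remainder.

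The leading terms come from $I=J$. For each $i$ in block $J$, I apply the Corollary \ref{T2} computation to $u_i-V_{\e,u_i}$, which has vanishing order $k_J$. By \eqref{V1} (resp. \eqref{eq10102025-1}, \eqref{eq10102025-2}) and the homogeneous structure \eqref{eq0712-1} together with Corollary \ref{Cor-Q}, I get, for $|x-P|\le C\e$ and $N\ge3$,
$$V_{\e,u_i}(x)=\frac{\e^{N-2+2k_J}}{|x-P|^{N-2+2k_J}}g_i(x-P)+O(\e^{k_J+1}).$$
Here $g_i$ is the degree-$k_J$ Taylor polynomial of $u_i$ at $P$. The error terms $O(\e\,\partial^kG)$ and $O(\e^{N+k}G)$ are $O(\e^{k_J+1})$ in the region $|x-P|\sim\e$. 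Since $g_i(x-P)=u_i(x)+O(\e^{k_J+1})$, this yields
$$u_i-V_{\e,u_i}=\Big(1-\frac{\e^{N-2+2k_J}}{|x-P|^{N-2+2k_J}}\Big)u_i+O(\e^{k_J+1}).$$
When $N=2$ the factor is $\frac{\e^{2k_J}}{|x-P|^{2k_J}}$. When $k_J=0$, the Green function expansion $G=-\frac1{2\pi}\log|x-P|+O(1)$ gives the factor $1+\frac{\log|x-P|}{|\log\e|}$, up to an $O(1/|\log\e|)$ error. Summing over the block with the bounded coefficients $\alpha_{ij,\e}$ then gives the claim.

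The main obstacle is the bookkeeping in the $I<J$ case. There one must check that the decay of $\alpha_{ij,\e}$ from Lemma \ref{lem2204} always beats the loss from the lower vanishing order. The terms that require care are the exponent $\frac{N-1}{2}+2k_J$ for $N=3$ and the $|\log\e|$ factors for $N=2$. I would compare exponents case by case. I would also verify that the errors in \eqref{V1} are uniform in the annulus $\e\le|x-P|\le C\e$, where $\partial^kG(x,P)\sim\e^{-(N-2+k)}$.
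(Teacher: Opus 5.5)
Your proposal is correct and follows essentially the same route as the paper. You start from Theorem \ref{th-2204}, discard the blocks $I>J$, control the blocks $I<J$ via Lemma \ref{lem2204}, and run the computation of Corollary \ref{T2} on the block $I=J$; your exponent bookkeeping matches the paper's, including the $O(\e^{2k_J})$ term for $N=2$, $k_0=0$, which is harmless since $k_J\ge1$ there. For $I>J$, boundedness of $\alpha_{ij,\e}$ together with $u_i,V_{\e,u_i}=O(\e^{k_I})$ and $k_I\ge k_J+1$ already suffices, so the paper's appeal to $\alpha_{ij,\e}\to0$ is not needed.
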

\begin{proof}
For $|x-P|\leq C\e$ and for any $i=1+d_0+\cdots+d_{I-1},\cdots,d_0+\cdots+d_I$, $u_i-V_{\e,u_i}=O(\e^{k_I})$. In the proof of Theorem \ref{sim}, we see that $\alpha_{lj,\e}\to 0$, if $j=1+d_0+\cdots+d_{J-1},\cdots,d_0+\cdots+d_J$ and $l=1+d_0+\cdots+d_J,\cdots,m(\lambda)$. It follows that
\begin{equation}
    \sum_{I=J+1}^{p}\sum_{i=1+d_0+\cdots+d_{I-1}}^{d_0+\cdots+d_I}\alpha_{ij,\e} (u_i-V_{\e,u_i})=O(\e^{1+k_J}).
\end{equation}
Plugging \eqref{eq2301-3} into $ \sum_{I=1}^{J-1}\sum_{i=1+d_0+\cdots+d_{I-1}}^{d_0+\cdots+d_I}\alpha_{ij,\e} (u_i-V_{\e,u_i})$,  we have
    \begin{equation}
      \sum_{I=1}^{J-1}\sum_{i=1+d_0+\cdots+d_{I-1}}^{d_0+\cdots+d_I}\alpha_{ij,\e} (u_i-V_{\e,u_i})
      = \begin{cases}
\displaystyle O(\e^{1+k_J}|\log\e|) &\ for \ \ N=2\ and\  k_0=0,\\
    \displaystyle O(\e^{1+k_J})&\ for \ \ N=2\ and\  k_0\ge 1,\\
     \displaystyle O(\e^{1+k_J})&\ for \ \ N\ge3.\\
    \end{cases}
\end{equation}
Combining Theorem \ref{th-2204}, we obtain that 
 \begin{equation}\label{eq1707}
      u_{j,\e}-\sum_{i=1+d_0+\cdots+d_{J-1}}^{d_0+\cdots+d_J}\alpha_{ij,\e} (u_i-V_{\e,u_i})=
      \begin{cases}
      \displaystyle O\left(\frac{1}{|\log\e|}\right),&if\ N=2\ and \ k_J=0,\\
      \displaystyle O(\e^{k_J+1}|\log\e|), &if \ N=2\ and \ k_J\ge 1,\\
       \displaystyle O(\e^{k_J+1})&\ if \ \ N\ge3.\\
      \end{cases}
 \end{equation}
     It should be noted that, in the simple-eigenvalue case (see \eqref{d2}), if $N=2$, the remainder term is of order $O(\e^{k+1}|\log\e|)$ for $k=1$, whereas it is of order $O(\e^{k+1})$ for $k\ge2$.

The conclusion follows by arguing as in the proof of Corollary \ref{T2}. Indeed, since the vanishing order of  $u_i$ is   $k_J$ for any $i=1+d_0+\cdots+d_{J-1},\cdots,d_0+\cdots+d_{J}$, combining \eqref{V1}, \eqref{eq10102025-1}, \eqref{eq10102025-2} and \eqref{eq1707} with the Taylor expansion of $u_i$ we get the following results,
$$
u_{j,\e}(x)=\begin{cases}
              \displaystyle\left  (1+\frac{\log |x-P|}{|\log\e|}\right)\sum_{i=1+d_0+\cdots+d_{J-1}}^{d_0+\cdots+d_J}\alpha_{ij,\e}u_i(x)
              +O\left(\frac{1}{|\log\e|}\right),\  if \ N=2  \ and\ k_J=0,\\
				 \displaystyle \left(1-\frac{\e^{2k_J}}{|x-P|^{2k_J}}\right)\sum_{i=1+d_0+\cdots+d_{J-1}}^{d_0+\cdots+d_J}\alpha_{ij,\e}u_i(x)+O(\e^{k_J+1}|\log\e|),\  if  \ N=2  \ and\ k_J\ge1,\\
                 \displaystyle   \left(1-\frac{\e^{N-2+2k_J}}{|x-P|^{N-2+2k_J}}
				\right)\sum_{i=1+d_0+\cdots+d_{J-1}}^{d_0+\cdots+d_J}\alpha_{ij,\e}u_i(x)+O(\e^{k_J+1}), \ if\ N\ge 3,
                \end{cases} 
$$
uniformly for $|x-P|\leq C\e$.
\end{proof}

\section{Nodal lines of eigenfunctions $u_\e$}\label{sec-application}
	In this section we want to compare the nodal set of a perturbed eigenfunction $u_\e$, namely
	\begin{equation}
		\mathcal{N_\e}=\overline{\{x\in\O_\e\hbox{ such that }u_\e=0\}}
	\end{equation}
	with the nodal set of a limit eigenfunction $u$
	\begin{equation}
		\mathcal{N}=\overline{\{x\in\O\hbox{ such that }u=0\}}
	\end{equation}
      First we recall that by Theorem \ref{intr-T1}, we get
\begin{equation}\label{eq2901-1}
    u_\e(x)=u(x)-V_{\e,u}(x)+o(1), \ \hbox{as } \e\to0,
\end{equation}
 uniformly for $x\in \O_\e$. 

Next we write the boundary estimates of $u_\e$ in Corollary \ref{T2} as follows. For any fixed $C>0$ we have 
\begin{equation}\label{eq2204-02}
    u_\e(x)  =\begin{cases}
              \displaystyle  \left(1+\frac{\log |x-P|}{|\log\e|}\right)g(x)
              +O\left(\frac{1}{|\log\e|}\right), &if \ N=2  \ and\ k=0,\vspace{2mm}\\
			\displaystyle	  \left(1-\frac{\e^{2k}}{|x-P|^{2k}}\right)g(x)+O(\e^{2}|\log\e|),& if  \ N=2  \ and\ k=1,\vspace{2mm}\\
            \displaystyle        \left(1-\frac{\e^{N-2+2k}}{|x-P|^{N-2+2k}}
				\right)g(x)+O(\e^{k+1}), &otherwise,
                \end{cases}
\end{equation}
as $\e\to 0$, uniformly for $|x-P|\leq C\e$,
where
     \begin{equation}\label{forg}
        g(x)= \sum_{|\alpha|=k}\frac1{\alpha!}
					\frac{\partial^ku(P)}{\partial x_1^{\alpha_1}\cdots\partial x_N^{\alpha_N}}(x_1-P_1)^{\alpha_1}\cdots(x_N-P_N)^{\alpha_N}.
     \end{equation}
These are the basic ingredients that will be required in the subsequent proofs.

  For sake of simplicity, we only consider the case where $\lambda$ is a simple eigenvalue. The case of multiple eigenvalues can be treated in a similar way; however, in order not to make the paper excessively heavy, we briefly outline below how this situation can be handled.

 When $\lambda$ has multiplicity $m(\lambda)\ge2$, recalling the decomposition in \eqref{eq15112025-1}, by Remark \ref{rem2204}, for any $j=1+d_0+\cdots+d_{J-1},\cdots,d_0+\cdots+d_J$, we have 
  \begin{equation}\label{eq2204-03}
        u_{j,\e}=\sum_{i=1+d_0+\cdots+d_{J-1}}^{d_0+\cdots+d_J}\alpha_{ij,\e}(u_i-V_{u_i})+o(1), \hbox{ as } \e\to 0,
    \end{equation}
  uniformly for $x\in\O_\e$. By Corollary \ref{cor2204}, it follows that 
\begin{equation}\label{eq2204-04}
    u_{j,\e}(x)=\begin{cases}
              \displaystyle\left  (1+\frac{\log |x-P|}{|\log\e|}\right)\sum_{i=1+d_0+\cdots+d_{J-1}}^{d_0+\cdots+d_J}\alpha_{ij,\e}g_i(x)
              +O\left(\frac{1}{|\log\e|}\right),\  if \ N=2  \ and\ k_J=0,\\
				 \displaystyle \left(1-\frac{\e^{2k_J}}{|x-P|^{2k_J}}\right)\sum_{i=1+d_0+\cdots+d_{J-1}}^{d_0+\cdots+d_J}\alpha_{ij,\e}g_i(x)+O(\e^{k_J+1}|\log\e|),\  if  \ N=2  \ and\ k_J\ge1,\\
                 \displaystyle   \left(1-\frac{\e^{N-2+2k_J}}{|x-P|^{N-2+2k_J}}
				\right)\sum_{i=1+d_0+\cdots+d_{J-1}}^{d_0+\cdots+d_J}\alpha_{ij,\e}g_i(x)+O(\e^{k_J+1}), \ if\ N\ge 3,
                \end{cases} 
\end{equation}
as $\e\to 0$, uniformly for $|x-P|\leq C\e$, where for any $i=1+d_0+\cdots+d_{J-1},\cdots,d_0+\cdots+d_J$,
     $$
   g_i(x):= \sum_{|\alpha|=k_J}\frac1{\alpha!}
					\frac{\partial^{k_J}u_i(P)}{\partial x_1^{\alpha_1}\cdots\partial x_N^{\alpha_N}}(x_1-P_1)^{\alpha_1}\cdots(x_N-P_N)^{\alpha_N}.
    $$
Formulas \eqref{eq2204-03} and \eqref{eq2204-04} when $\lambda$ is multiple play the same role as \eqref{eq2901-1} and \eqref{eq2204-02} when $\lambda$ is simple. 

 \begin{rem}
    If $\lambda$ is multiple, the perturbed eigenvalues bifurcate by $\la$ with suitable rates. At this time, we study the nodal sets of eigenfunctions on each eigenbranch. Thus,
    we discuss the properties of the nodal set    $$\mathcal{N}_{j}=\overline{\{x\in\O\hbox{ such that }u_{j}=0\}}\hbox{ and } \mathcal{N}_{j,\e}=\overline{\{x\in\O_\e\hbox{ such that }u_{j,\e}=0\}}.$$
    The idea of proof is the same as that of the simple one. We only need to use \eqref{eq2204-03} to replace \eqref{eq2901-1} and use \eqref{eq2204-04} to replace \eqref{eq2204-02}. 
     After that, all the proofs follow without any changes.
\end{rem}
 Let us now return to the case of a simple eigenvalue.	We split the proof in the different cases $k=0$ and $k>0$.
	\vskip0.2cm
	\subsection{Case $k=0$}
	\vskip0.2cm
    In this case $u(P)\ne0$ and so $P\notin\mathcal{N}$. A first important consequence of the Corollary is that the nodal set $\mathcal{N_\e}$ does not hit $\partial B(P,\e)$.
	\begin{prop}\label{p0}
		 If $P\notin\mathcal{N}$, there exists $\delta>0$ such that  $\mathcal{N_\e}\cap B(P,\delta)=\emptyset$.
	\end{prop}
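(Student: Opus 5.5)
Assume, up to replacing $u$ by $-u$ (and $u_\e$ accordingly, with the sign fixed so that $u_\e\to u$), that $u(P)>0$. The idea is not to read the sign of $u_\e$ directly off the expansions near $\partial B(P,\e)$. Near the hole the error terms in Corollary \ref{T2} are of the same size as the main term, since the factor $1-\e^{N-2}/|x-P|^{N-2}$ vanishes on $\partial B(P,\e)$. Instead I would combine the uniform estimate \eqref{eq2901-1} away from $P$ with a small-domain maximum principle (Lemma \ref{sml}) in a thin annulus around the hole.

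\emph{Step 1 (choice of $\delta$ and positivity on the outer sphere).} By continuity pick $\delta>0$ with $u\ge u(P)/2$ on $\overline{B(P,2\delta)}$. Also take $\delta$ so small that the annulus $A_\e:=B(P,\delta)\setminus\overline{B(P,\e)}$ satisfies the smallness hypothesis of Lemma \ref{sml} with $a\equiv\la_\e\le\la+1$:
\[
(\la+1)|B(P,\delta)|^{2/N}<S \ \ (N\ge3),\qquad (\la+1)|B(P,\delta)|<4\pi \ \ (N=2).
\]
On $\partial B(P,\delta)$, Lemma \ref{l:improvedL2Ve} and Lemma \ref{2l:improvedL2Ve} give $V_{\e,u}=O(\e^{N-2})$, respectively $O(1/|\log\e|)$. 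Together with \eqref{eq2901-1} this yields $u_\e\ge u(P)/4>0$ on $\partial B(P,\delta)$ for $\e$ small.

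\emph{Step 2 (no negative part in the annulus).} Let $D:=\{x\in A_\e:\ u_\e(x)<0\}$. Since $u_\e=0$ on $\partial B(P,\e)$ and $u_\e>0$ on $\partial B(P,\delta)$, the function $u_\e^-$ restricted to $D$ belongs to $H^1_0(D)$ and solves $-\Delta u_\e^- = \la_\e u_\e^-$ there, at least weakly. This is the standard fact obtained by testing the equation against $u_\e^-$ extended by zero. Because $|D|\le|B(P,\delta)|$, the choice of $\delta$ makes Lemma \ref{sml} applicable, so $u_\e^-\equiv 0$. If one prefers to avoid the weak formulation on an irregular set $D$, the same conclusion follows from the Poincar\'e/Sobolev inequality on $B(P,\delta)$: integrating $|\nabla u_\e^-|^2=\la_\e\int (u_\e^-)^2$ over the annulus and using that the first Dirichlet eigenvalue of $B(P,\delta)$ exceeds $\la+1$ for $\delta$ small.

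\emph{Step 3 (strict positivity).} We now have $u_\e\ge0$ in $A_\e$ and $-\Delta u_\e=\la_\e u_\e\ge0$, with $u_\e\not\equiv0$ by Step 1. The strong maximum principle then gives $u_\e>0$ in $A_\e$, so $\mathcal N_\e\cap B(P,\delta)=\emptyset$; points of $\partial B(P,\e)$ themselves are boundary points and are not in $\{u_\e=0\}\cap\O_\e$.

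The main obstacle is the ``closure'' in the definition of $\mathcal N_\e$. Strictly, $\partial B(P,\e)$ could lie in the closure of interior zeros, so one must exclude zeros of $u_\e$ in $\O_\e$ accumulating at the hole. Step 3 does this, since it gives positivity on the whole open annulus, which is a neighbourhood of the inner boundary inside $\O_\e$. The only delicate analytic point is justifying Step 2 uniformly in $\e$; the Poincar\'e-inequality version is the one I would write out, because its constant depends only on $\delta$ and not on $\e$.
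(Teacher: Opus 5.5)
Your argument is correct and follows essentially the same route as the paper: positivity of $u_\e$ on $\partial B(P,\delta)$ from the uniform estimate away from $P$, a small-domain principle ruling out a negative region in the annulus $B(P,\delta)\setminus\overline{B(P,\e)}$, and the strong maximum principle. The only difference is in how the steps are ordered and justified. The paper argues by contradiction: it applies the strong maximum principle first to produce a negative point, then applies Lemma \ref{sml} (with unique continuation) to a nodal domain $\mathcal D_\e$ it constructs. Your direct test with $u_\e^-$, combined with the Poincar\'e inequality on $B(P,\delta)$, handles that step more cleanly and uniformly in $\e$, without having to construct $\mathcal D_\e$.
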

	\begin{proof}
		By contradiction assume that there exist points $r_\e\in\O_\e$ with dist$(r_\e,\partial B(P,\e))\to0$ such that $u_\e(r_\e)=0$. Next fix $\delta>0$ and observe that  by Corollary \ref{T2'} we have 
$u_\e(x)>\frac{u(P)}2>0$ on $\partial B(P,\delta)$. Moreover, by the strong maximum principle, it is not possible that $u_\e\ge0$ in $B(P,\delta)\setminus B(P,\e)$. So there exits $s_\e\in B(P,\delta)\setminus B(P,\e)$ such that $u_\e(s_\e)<0$ and this allows to build a domain 
 $\mathcal{D}_\e\subset \overline{B(P,\delta)\setminus B(P,\e)}$ such that for any $\e$ small enough we have that $u_\e(x)=0$ for any $x\in\partial \mathcal{D}_\e$.
        
		Now we apply Lemma \ref{sml} with $a(x)=\lambda$ and $\mathcal{D}=\mathcal{D}_\e$ so that $meas(\mathcal{D}_\e)<C_N\delta^N$. So choosing $\delta$ small such that $$\lambda C_N^\frac2N\delta^2<
		\begin{cases}
			S&if\ N>2\\
			1&if\ N=2,
		\end{cases}$$ 
		we derive by Lemma \ref{sml} that $u_\e\equiv0$ in $\mathcal{D}_\e$ and by the unique continuation we have that $u_\e\equiv0$ in $\Omega_\e$, a contradiction.
	\end{proof}
    \begin{teo}\label{th-0804}
    If $P\notin \mathcal{N}$, then 
          $$
    \sharp\{\hbox{nodal regions of } u_{\e}\}\leq \sharp \{\hbox{nodal regions of } u\}.
    $$
    \end{teo}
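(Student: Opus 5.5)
The plan is to construct an injection from the nodal regions of $u_\e$ into those of $u$. It uses the uniform convergence $u_\e = u - V_{\e,u} + o(1)$ on $\O_\e$ from \eqref{eq2901-1}, together with Proposition \ref{p0}. Since $k=0$, Lemma \ref{l:improvedL2Ve} and Lemma \ref{2l:improvedL2Ve} give $V_{\e,u}\to 0$ uniformly on $\O\setminus B(P,\delta)$ for every fixed $\delta>0$. Hence $u_\e\to u$ uniformly on $\O\setminus B(P,\delta)$, which is Corollary \ref{T2'}. By Proposition \ref{p0} we may fix $\delta$ so that $B(P,\delta)\cap\mathcal N=\emptyset$, $u$ has a fixed sign (say positive) on $\overline{B(P,\delta)}$, and $u_\e>0$ on $B(P,\delta)\setminus B(P,\e)$.

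Let $\mathcal D_\e$ be a nodal region of $u_\e$. The first step is to show that $\mathcal D_\e$ cannot be small. If $|\mathcal D_\e|$ were small, Lemma \ref{sml} applied with $a=\la_\e$ on $\mathcal D_\e$ (as in the proof of Proposition \ref{p0}) would force $u_\e\equiv0$. So $|\mathcal D_\e|\ge c_0>0$ uniformly in $\e$, and in particular $\sup_{\mathcal D_\e}|u_\e|$ is not small on a set of non-negligible measure away from $P$.

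Next I pick a point $x_\e\in\mathcal D_\e\setminus B(P,\delta)$ with $|u_\e(x_\e)|\ge\eta>0$, where $\eta$ is independent of $\e$. Such a point should exist because $u_\e$ solves $-\Delta u_\e=\la_\e u_\e$ in $\mathcal D_\e$ with zero boundary data and $\|u_\e\|_{L^\infty}\le C$. The lower bound on the Rayleigh quotient in $\mathcal D_\e$ prevents $u_\e$ from being uniformly tiny there, unless $\mathcal D_\e$ essentially lies where $u$ is small, which is near $\mathcal N$. By uniform convergence, $u(x_\e)$ then has the same sign as $u_\e(x_\e)$ and $|u(x_\e)|\ge\eta/2$, so $x_\e$ lies in some nodal region $D$ of $u$. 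Define the map $\mathcal D_\e\mapsto D$.

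For injectivity, suppose two distinct nodal regions $\mathcal D^1_\e,\mathcal D^2_\e$ of $u_\e$ map to the same $D$. Since $D$ is connected and open, the points $x^1_\e,x^2_\e$ can be joined by a path $\gamma\subset D$ along which $|u|\ge\eta'>0$; this uses compactness of $\{|u|\ge\eta/2\}\cap D$ and a fixed finite family of paths. If $\gamma$ enters $B(P,\delta)$, I replace that portion by an arc inside the annulus $B(P,\delta)\setminus B(P,\e)$, where $u_\e$ keeps its sign by Proposition \ref{p0}. This is consistent because $B(P,\delta)\subset D$ whenever $D$ is the region containing $P$. Uniform convergence off $B(P,\delta)$ then gives $u_\e\ne0$ along the modified path for small $\e$, so $x^1_\e$ and $x^2_\e$ lie in the same nodal region of $u_\e$, a contradiction.

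The main obstacle is the previous step, producing $x_\e$ with $|u_\e(x_\e)|\ge\eta$ uniformly. To get it, I will argue by contradiction: if $\sup_{\mathcal D_\e}|u_\e|\to0$ along a subsequence, normalize $w_\e=u_\e\chi_{\mathcal D_\e}/\|u_\e\|_{L^2(\mathcal D_\e)}$. Then $w_\e$ is bounded in $H^1_0(\O)$ with $\int|\nabla w_\e|^2=\la_\e$, and its weak limit is supported in $\{u=0\}\cup\{P\}$, which has measure zero. The quantitative version of this, a measure or capacity lower bound for $\{|u_\e|>\eta\}\cap\mathcal D_\e$ via Lemma \ref{sml}, is what will require care.
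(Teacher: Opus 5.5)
Your proposal is correct, and it takes a genuinely different route from the paper.

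The paper first localizes the nodal set. Besides Proposition \ref{p0} and uniform convergence away from $P$, it proves a Claim that $\mathcal N_\e$ lies in a $\tau$-tubular neighborhood of $\mathcal N$. That Claim needs a separate boundary analysis: the Hopf lemma at points of $\partial\O\setminus\mathcal N$, $C^2$ convergence up to $\partial\O$ via Schauder estimates, and the implicit function theorem. The paper then argues that if $u_\e$ had more nodal regions than $u$, one of them would sit inside that thin neighborhood, which Lemma \ref{sml} forbids.

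You instead build an explicit injection from nodal regions of $u_\e$ into nodal regions of $u$. You use only interior uniform convergence on $\O\setminus B(P,\delta)$ and the Faber--Krahn type bound from Lemma \ref{sml}. This gains you two things:
\begin{itemize}
\item You never need the boundary Claim.
\item Your fixed compact family of paths in $D$, along which $|u|\ge\eta'$, handles explicitly the connectivity issue that the paper's counting step leaves implicit. That issue is that the part of a nodal region $D$ of $u$ away from $\mathcal N$ could a priori be split among several nodal regions of $u_\e$.
\end{itemize}
What the paper's route gives in return is the geometric localization of $\mathcal N_\e$ near $\mathcal N$, which it reuses in Corollary \ref{cor-intersect}.

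The step you flag as the main obstacle is already settled, and no quantitative capacity estimate is needed. Your compactness argument closes as written. Since $\|w_\e\|_{L^2}=1$, Rellich gives a strong $L^2$ limit of norm $1$. That limit vanishes a.e.\ off $\{u=0\}\cup\{P\}$, which is a null set, so you have a contradiction. Because it is a contradiction argument along sequences $\e_n\to0$, it yields $\eta$ uniform both in $\e$ and in the choice of nodal region.

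There is also a more direct way to see this. Suppose $\sup_{\mathcal D_\e}|u_\e|<\eta$ with $\eta$ small.
\begin{itemize}
\item Then $\mathcal D_\e$ cannot meet $B(P,\delta)\setminus\overline{B(P,\e)}$. Otherwise it would contain this annulus and hence the points of $\partial B(P,\delta)$, where $u_\e$ is uniformly close to $u$, which is bounded away from $0$ there.
\item So for small $\e$ uniform convergence gives $\mathcal D_\e\subset\{|u|<2\eta\}$.
\item Since $|\{|u|<2\eta\}|\to|\{u=0\}|=0$ as $\eta\to0$, this contradicts $|\mathcal D_\e|\ge c_0$.
\end{itemize}
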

   \begin{proof}
       Since $P\notin \mathcal{N}$,  by Proposition \ref{p0}, there exists $\delta>0$ such that 
       $$
       \mathcal{N}_\e\cap B(P,\delta)=\mathcal{N}\cap B(P,\delta)=\emptyset.
       $$
       Then we only need to consider the nodal set $\mathcal{N}_\e \cap \big(\O\setminus B(P,\delta)\big)$.
      As a consequence of Corollary \ref{T2'}  we see that 
    \begin{equation}\label{eq0104-1}
        u_\e=u+\begin{cases}
            O\left(\frac{1}{|\log\e|}\right) &if \ N=2,\\
            O(\e)&if\ N\ge3,\\
        \end{cases}
    \end{equation}
    uniformly  for $x\in \O\setminus B(P,\delta)$.
    For any connected compact set $\mathcal{K}\subset \O\setminus \big(\mathcal{N}\cup B(P,\delta)\big)$  with $u\neq 0$ in $\mathcal{K}$, it follows from \eqref{eq0104-1} that $u_\e \neq 0$ in  the interior of $\mathcal{K}$. Then we see that the nodal set $\mathcal{N}_\e$ is  contained in a neighborhood of   $\mathcal{N}\cup \partial\O$. 
     In next claim we show that the nodal set $\mathcal{N}_\e$ is  contained just in the neighborhood of $\mathcal{N}$.
    
\textbf{Claim:} \emph{For any fixed $\tau>0$, there exists $\e_0$ such that for $\e<\e_0$, the nodal set $\mathcal{N}_\e$ is contained in the $\tau-$tubular neighborhood of $\mathcal{N}$.}
    
We prove the Claim by contradiction.
    Assume that there exist $\tau_0>0$ and $\e_i\to 0$ as $i\to\infty$,   such that $u_{\e_i}(x_{\e_i})=0$, $x_{\e_i}\in \O$, $dist(x_{\e_i},\mathcal{N})\ge \tau_0$ and $dist(x_{\e_i},\partial \O)\to0$. Then there exists $x_0\in \partial\O$ such that $x_{\e_i}\to x_0$ and $dist(x_0,\mathcal{N})\ge \tau_0$. Since $x_0\in\partial\O$ and $dist(x_0,\mathcal{N})\ge \tau_0$,  it follows from the Hopf lemma that $\frac{\partial u}{\partial\nu}(x_0)\neq 0$, where $\frac{\partial u}{\partial\nu}$ is the outnormal derivative of $u$ on the boundary $\partial\O$. 
    Since 
    \begin{equation}
        \begin{cases}
            -\Delta(u_\e-u)-\lambda_\e(u_\e-u)=(\lambda_\e-\lambda)u\ \hbox{in}\ \O_\e,\\
            u_\e-u=0\ \hbox{on}\ \partial\O,\\
            \end{cases}
    \end{equation}
    it follows from Lemma \ref{GTlem6.5} that for some $\delta>0$ there is a ball $B=B(x_0,\delta)$ such that 
    \begin{equation}\label{eq0904-1}
        u_{\e_i}\to u \ \hbox{in}\ C^2(B\cap\O)
    \end{equation}
    Without loss of generality, we assume that $x_0=0$ and $\nu=(0,\cdots,0,1)$.  Then $\frac{\partial u}{\partial x_i}(0)=0$, $i=1,\cdots,N-1$ and $\frac{\partial u}{\partial x_N}(0)\neq 0$.  By the approximation \eqref{eq0904-1}, we get  $\frac{\partial u_{\e_i}}{\partial x_i}(0)=o(1)$, $i=1,\cdots,N-1$ and $\frac{\partial u_{\e_i}}{\partial x_N}(0)= \frac{\partial u}{\partial x_N}(0)+o(1)\neq 0$.  Let $x'=(x_1,\cdots,x_{N-1})$. There exists $\delta>0$ such that if  $|x|\leq \delta$, then $u_{\e_i}(x)=o(1)x_1+\cdots+o(1)x_{N-1}+\left(\frac{\partial u}{\partial x_N}(0)+o(1)\right)x_N+O(|x'|^2+x_N^2)$. Then by the implicit function theorem,  given $x'$ such that $|x'|<\delta$, there exists a unique $x_{N,\e_i}(x')$ satisfying $|x_{N,\e_i}(x')|< \delta$ and  $u_{\e_i}(x',x_{N,\e_i})=0$. Since $u_{\e_i}=0$ on $\partial\O\cap B$ and by the uniqueness of $x_{N,\e_i}(x')$, it deduce that if $x\in \Bar{\O}\cap B$ and $u_{\e_i}(x)=0$, then $x\in \partial\O$ which contradicts to the assumption that there exists $x_{\e_i}\in\O$ and $u_{\e_i}(x_{\e_i})=0$. The proof of the Claim is completed.

     Next, we  prove that 
    $$
    \sharp\{\hbox{nodal regions of } u_{\e}\}\leq \sharp \{\hbox{nodal regions of } u\}.
    $$
    
      Assume that there exists $\e_i\to 0$ such that
      $$
    \sharp\{\hbox{nodal regions of } u_{\e_i}\}> \sharp \{\hbox{nodal regions of } u\}.
    $$
     By precompactness in Hausdorff metric (See Theorem 2.2.25 of \cite{Henrot2018}), there exists a subsequence called $\mathcal{N}_{\e_i}$  which converges to a set $X\subset\O\setminus B(P,\delta)$. It follows from  the Claim  that  for some small $\tau$, if $i$ large enough,  then $\mathcal{N}_{\e_i}$ is contained in the $\tau-$tubular neighborhood of $\mathcal{N}$. By the assumption that  
     $$
    \sharp\{\hbox{nodal regions of } u_{\e_i}\}> \sharp \{\hbox{nodal regions of } u\},
    $$
    it can be seen that there exists a nodal region $\mathcal{D}_{\e_i}$ of $u_{\e_i}$  which is contained in the $\tau-$tubular neighborhood of $\mathcal{N}$. Let $\tau\to 0$, we derive by Lemma \ref{sml} that $u_{\e_i}\equiv 0$ in $\mathcal{D}_{\e_i}$ and by the unique continuation we have that $u_{\e_i}\equiv0$ in $\Omega_{\e_i}$, a contradiction.
   \end{proof}
    An immediate consequence of the previous result is the following
   \begin{cor}
 If $\sharp \{\hbox{nodal regions of } u\}=2$, then $\sharp \{\hbox{nodal regions of } u_\e\}= 2$.
   \end{cor}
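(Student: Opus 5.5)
The plan is to read this corollary straight off Theorem \ref{th-0804}, adding only a matching lower bound.

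\emph{Upper bound.} We are in the case $P\notin\mathcal{N}$. If $u$ has exactly two nodal regions, Theorem \ref{th-0804} gives
\[
\sharp\{\hbox{nodal regions of } u_\e\}\le 2
\]
for all $\e$ small.

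\emph{Lower bound.} It remains to show that $u_\e$ has at least two nodal regions. Since $u$ has two nodal regions, it changes sign. Choose points $x_+,x_-\in\O\setminus\overline{B(P,\delta)}$ with $u(x_+)>0$ and $u(x_-)<0$. Corollary \ref{T2'} (equivalently \eqref{eq0104-1}) gives $u_\e\to u$ uniformly on compact subsets of $\O\setminus\{P\}$. Hence $u_\e(x_+)>0$ and $u_\e(x_-)<0$ for $\e$ small, so $u_\e$ has at least one positive and at least one negative nodal region. For the sign of $u_\e$ we use the normalization \eqref{eq1405-1}, under which $u_\e$ is close to $u$ itself rather than to $-u$; in any case a sign flip does not change the number of nodal regions.

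There is a second route to the lower bound. Since $\la$ is simple and $u$ changes sign, $\la$ is not the first eigenvalue. Then $\la_\e\to\la$ is not the first eigenvalue of $\O_\e$ either, because the first eigenvalue of $\O_\e$ converges to $\la_1(\O)<\la$. So $u_\e$ is orthogonal to the positive first eigenfunction of $\O_\e$ and must change sign.

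\emph{Main point of care.} There is no real obstacle beyond the existence of the sign-changing points. The only thing to check is that $x_\pm$ can be taken away from $P$, which holds because nodal regions are open sets.
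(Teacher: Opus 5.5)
Your proposal is correct and follows the paper's argument: the upper bound comes from Theorem \ref{th-0804}, and the lower bound comes from $u_\e$ changing sign away from $P$, by the uniform convergence \eqref{eq0104-1}. Your second, spectral route is also valid. It uses that $\O_\e$ is connected, that $\la_1(\O_\e)\to\la_1(\O)<\la$, and that $u_\e$ is orthogonal to the positive first eigenfunction of $\O_\e$, so it gives the lower bound without the quantitative estimates.
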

    \begin{proof}
In fact,  if $\sharp \{\hbox{nodal regions of } u\}=2$, then $u$ must change sign in $\O$.  Since $u(P)\neq 0$, it follows from \eqref{eq0104-1} that $u_\e$ must change sign in $\O\setminus B(P,\delta)$ where $\delta$ is defined in Proposition \ref{p0}. Then $\sharp \{\hbox{nodal regions of } u_\e\}\ge 2$. Combining Theorem \ref{th-0804}, we have $\sharp \{\hbox{nodal regions of } u_\e\}= 2$.
   \end{proof}
   The above statement states a special case that the equality holds. In the case of $N=2$, if $\sharp \{\hbox{nodal regions of } u\}=2$, we consider the following 2 configurations of $\mathcal{N}\subset\R^2$,
   \begin{itemize}
       \item [(I)] $\mathcal{N}$ is a simple curve  which intersects the boundary,
       \item[(II)] $\mathcal{N}$ is a Jordan curve and is compactly contained in $\O$.
   \end{itemize}
   By the Claim  established in Theorem \ref{th-0804}, we will deduce the same result as \cite[Proposition 2.6, $N=2$]{MukherjeeSaha2025}.
   More precisely, if $(\hbox{I})$ holds, then   the nodal set $\mathcal{N}_\e$ of $u_\e$ intersects $\partial \O$ at exactly 2 points. If $(\hbox{II})$ holds, then $\mathcal{N}_\e$ is also a Jordan curve  compactly contained in $\O_\e$..
   \begin{cor}\label{cor-intersect}
     Assume that $P\notin \mathcal{N}$ and $\sharp \{\hbox{nodal regions of } u\}=2$.  If the nodal set $\mathcal{N}$ of $u$ intersects $\partial \O$ at exactly 2 points, then the nodal set $\mathcal{N}_\e$ of $u_\e$ intersects $\partial \O$ at exactly 2 points. If the nodal set $\mathcal{N}$ of $u$ does not intersect $\partial \O$, then the nodal set $\mathcal{N}_\e$ of $u_\e$ does not intersect $\partial \O$. 
   \end{cor}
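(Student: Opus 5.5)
We work in $N=2$, where $\mathcal{N}$ is a union of curves, and we reduce the statement to a local analysis near $\partial\O$ combined with the global control coming from the Claim in the proof of Theorem \ref{th-0804}.

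\textbf{Step 1 (localization).} Since $P\notin\mathcal{N}$, Proposition \ref{p0} gives $\delta>0$ with $\mathcal{N}_\e\cap B(P,\delta)=\emptyset$. The Claim in Theorem \ref{th-0804} says that, for any $\tau>0$ and $\e$ small, $\mathcal{N}_\e$ lies in the $\tau$-tubular neighborhood of $\mathcal{N}$. Hence every point of $\mathcal{N}_\e\cap\partial\O$ lies within distance $\tau$ of $\mathcal{N}\cap\partial\O$. This already settles the second assertion: if $\mathcal{N}\cap\partial\O=\emptyset$, then $\mathcal{N}$ is compact in $\O$, so for $\tau<\mathrm{dist}(\mathcal{N},\partial\O)$ we get $\mathcal{N}_\e\cap\partial\O=\emptyset$.

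\textbf{Step 2 (local structure at the two boundary points).} Let $\mathcal{N}\cap\partial\O=\{q_1,q_2\}$. By Lemma \ref{LCS}, $\frac{\partial u}{\partial\nu}(q_i)=0$. Moreover, $q_i$ is a boundary nodal point at which exactly one nodal arc arrives, because there are only two nodal regions and the nodal line is a simple curve. We flatten $\partial\O$ near $q_i$ and reflect $u$ oddly across it, using the standard boundary analysis of \cite{CSLin1987, cheng1976}. This shows that $u$ vanishes to order exactly $2$ at $q_i$ with respect to the boundary-adapted coordinates. Equivalently, $u=\rho\,w$, where $\rho$ is a defining function of $\partial\O$ and $w$ is a $C^1$ function on the closed domain near $q_i$ with $w(q_i)=0$ and $\nabla w(q_i)\neq0$, $\nabla w$ transversal to $\partial\O$.

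By Lemma \ref{FurGT}(i) (the same argument giving \eqref{eq0904-1}), $u_\e\to u$ in $C^{2,\alpha}(B(q_i,r)\cap\O)$, since the hole is far away. We then write $u_\e=\rho\,w_\e$ and need $w_\e\to w$ in $C^1$ near $q_i$. This follows from the $C^{2,\alpha}$ convergence together with the division lemma $\|f/\rho\|_{C^{1,\alpha}}\le C\|f\|_{C^{2,\alpha}}$ for functions vanishing on $\partial\O$.

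The implicit function theorem applied to $w_\e$ then shows that $\{w_\e=0\}\cap B(q_i,r)$ is a single $C^1$ arc. This arc is close to $\{w=0\}$ and hits $\partial\O$ at exactly one point $q_{i,\e}\to q_i$. Hence $\mathcal{N}_\e\cap\partial\O\cap B(q_i,r)$ is exactly one point. Here the nodal set is understood as the closure of the interior zero set, so boundary points count only where the closure of $\{u_\e=0\}\cap\O$ reaches them.

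\textbf{Step 3 (conclusion).} Choose $\tau<r$ in Step 1. Then $\mathcal{N}_\e\cap\partial\O\subset B(q_1,r)\cup B(q_2,r)$, and Step 2 shows each ball contributes exactly one point, giving exactly $2$ points.

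\textbf{Main obstacle.} The expected obstacle is Step 2, namely establishing the nondegeneracy $\nabla w(q_i)\neq0$, that is, that exactly one nodal arc meets each $q_i$. The plan is to argue that if $w$ vanished to higher order at $q_i$, then by \cite{cheng1976} at least two arcs of $\mathcal{N}$ would emanate from $q_i$ into $\O$, producing at least three nodal regions locally. Combined with the hypotheses (two regions, a simple curve, two boundary points), this gives a contradiction, so the order is minimal.
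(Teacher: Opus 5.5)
\textbf{Comparison with the paper.} Your route is correct in substance if the hypothesis is read as configuration (I), i.e.\ $\mathcal N$ is a simple arc joining the two boundary points. It differs from the paper's proof. The paper uses only the Claim in the proof of Theorem \ref{th-0804}. For the second assertion it argues exactly as in your Step 1. For the first it goes directly from ``$\mathcal N_\varepsilon$ lies in a $\tau$-neighbourhood of $\mathcal N$'' to ``exactly two points''. It then adds a Lemma \ref{sml} argument that $\mathcal N_\varepsilon$ cannot close into a Jordan curve.

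Localization alone only places the boundary nodal points of $u_\varepsilon$ near $q_1,q_2$. It gives neither existence nor uniqueness there, and a degenerate boundary zero of $u$ could split into $0$, $1$ or several crossings. Your Step 2 supplies this local count and correctly identifies nondegeneracy of $q_i$ as the key input. The paper's version is shorter and records the Jordan-curve dichotomy; yours actually justifies the word ``exactly''.

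A leaner Step 2, avoiding the division lemma, is the mechanism of Theorem \ref{apmain}. By Lemma \ref{LCS} applied to $u_\varepsilon$ on $\Omega_\varepsilon$, the points of $\mathcal N_\varepsilon\cap\partial\Omega$ are the zeros of $\partial_\nu u_\varepsilon$ on $\partial\Omega$. The $C^{2,\alpha}$ convergence gives $\partial_\nu u_\varepsilon\to\partial_\nu u$ in $C^1(\partial\Omega\cap B(q_i,r))$. Moreover, $\partial_\nu u$ has a simple zero at $q_i$ exactly when the boundary vanishing order is $2$. The implicit function theorem then gives exactly one zero near each $q_i$.

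\textbf{Three points to fix.}

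First, ``$\nabla w$ transversal to $\partial\Omega$'' is the wrong condition. Take coordinates centred at $q_i$ with $x_2$ along the inner normal. An order-$2$ boundary zero has $u=a\,x_1x_2+O(|x|^3)$ with $a\neq0$, because the quadratic term is harmonic and vanishes on the tangent line. Hence $w=a\,x_1+O(|x|^2)$, and $\nabla w(q_i)$ is \emph{tangent} to $\partial\Omega$. It is the curve $\{w=0\}$ that crosses $\partial\Omega$ transversally, and this is what makes the crossing of $\{w_\varepsilon=0\}$ unique. If $\nabla w(q_i)$ were normal, $\{w=0\}$ would be tangent to $\partial\Omega$, and a perturbation could produce $0$ or $2$ crossings.

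Second, in your ``main obstacle'' paragraph, ``three nodal regions locally'' does not contradict two global nodal regions. Local sectors of the same sign may belong to the same global nodal domain. The contradiction must come from $\mathcal N$ being a simple arc whose endpoints are the only boundary nodal points $q_1,q_2$. Such an arc has one branch at each endpoint, whereas boundary order $m\ge3$ produces $m-1\ge2$ branches. This hypothesis is genuinely needed. In a multiply connected $\Omega$, two nodal regions and two boundary nodal points do not by themselves force order $2$. For example, two arcs joining a point of the outer boundary of an annulus to a point of its inner boundary, both points of order $3$, already give exactly two regions.

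Third, the Claim does not say that boundary nodal points of $u_\varepsilon$ lie within $\tau$ of $\mathcal N\cap\partial\Omega$, since the nearby point of $\mathcal N$ may be interior. What is true, by compactness of $\mathcal N$, is that for each $r>0$ there is $\tau(r)$ with $\{y\in\partial\Omega:\ \mathrm{dist}(y,\mathcal N)<\tau(r)\}\subset B(q_1,r)\cup B(q_2,r)$. Step 3 should use this $\tau(r)$, not merely some $\tau<r$.
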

   \begin{proof}
      If  the nodal set $\mathcal{N}$ of $u$ intersects $\partial \O$ at exactly 2 points, by the Claim stated in Theorem \ref{th-0804},  the nodal set $\mathcal{N}_\e$ is contained in the $\tau-$tubular neighborhood of $\mathcal{N}$. Then  the nodal set $\mathcal{N}_\e$ of $u_\e$ intersects $\partial \O$ at exactly 2 points. 
      
       Observe that in this case $\mathcal{N}_\e$ cannot be a Jordan curve. Indeed, in this case there exists a nodal region $\mathcal{D}_\e$ is contained in   the $\tau-$tubular neighborhood of $\mathcal{N}$. Then using Lemma \ref{sml} and the unique continuation again, we obtain that $u_\e\equiv 0$ in $\O_\e$, a contradiction. 

       On the other hand if $\mathcal{N}$ is a Jordan curve and is compactly contained in $\O$, also by the Claim in Theorem \ref{th-0804}, $\mathcal{N}_\e$ is  contained in the $\tau-$tubular neighborhood of $\mathcal{N}$ which is contained in a compact set of $\O_\e$ for $\tau$ small. Then $\mathcal{N}_\e$ is also a Jordan curve and is compactly contained in $\O_\e$.

       This completes the proof.
   \end{proof}

	\subsection{ Case $k>0$, $N\ge3$}
	\vskip0.2cm
	First of all let us show that $\mathcal{N_\e}$  hits the boundary of $B(P,\e)$.
	\begin{prop}\label{zon}
		We have that $\mathcal{N_\e}\cap \partial B(P,\e)\ne\emptyset$.
	\end{prop}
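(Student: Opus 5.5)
The plan is to exploit the sign change of the harmonic polynomial $g$ from \eqref{forg} on spheres centered at $P$, together with the boundary estimate \eqref{eq2204-02}. Since $k\ge1$, $g$ is a nonzero homogeneous harmonic polynomial of degree $k$ (Lemma \ref{lem2212-1}), so $g(P+\cdot)$ has zero mean on $\partial B(0,1)$ by the mean value property. Being nonzero, it therefore takes strictly positive and strictly negative values there. Fix unit vectors $\theta_+,\theta_-\in\S^{N-1}$ with $g(P+\theta_+)=:a>0$ and $g(P+\theta_-)=:-b<0$. By homogeneity, $g(P+r\theta_\pm)=\pm r^k\,(a\text{ or }b)$.

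Next, evaluate $u_\e$ at the points $x_\pm=P+2\e\theta_\pm$, which satisfy $|x_\pm-P|=2\e\le C\e$. By \eqref{eq2204-02} (case $N\ge3$, $k\ge1$) we get
\[
u_\e(x_+)=\bigl(1-2^{-(N-2+2k)}\bigr)(2\e)^k a+O(\e^{k+1}),\qquad u_\e(x_-)=-\bigl(1-2^{-(N-2+2k)}\bigr)(2\e)^k b+O(\e^{k+1}).
\]
For $\e$ small, this gives $u_\e(x_+)>0>u_\e(x_-)$.

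To produce a zero near the hole rather than merely somewhere in $\O_\e$, we work in the annulus $A_\e=\{\e\le|x-P|\le 2\e\}$, which is connected for $N\ge2$. Connect $x_+$ to $x_-$ along the sphere $|x-P|=2\e$ (connected, since $N\ge3$). Continuity gives a zero of $u_\e$ on that sphere, but we want one on $\partial B(P,\e)$ itself. For this, consider the radial segments from $P+\e\theta$ to $P+2\e\theta$. On the sphere $|x-P|=\rho$ with $\rho\in(\e,2\e]$, \eqref{eq2204-02} shows that the sign of $u_\e$ agrees with the sign of $g$ wherever $|g(P+\rho\theta)|\gg \e^{k+1}$. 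Hence the positive and negative sets of $u_\e$ on these spheres are nonempty for all $\rho\in[\rho_0\e,2\e]$, for any fixed $\rho_0>1$. It follows that the sets $\{u_\e>0\}\cap A_\e$ and $\{u_\e<0\}\cap A_\e$ are both nonempty with closures reaching arbitrarily close to $\partial B(P,\e)$.

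The main obstacle is this last step: since $u_\e=0$ on all of $\partial B(P,\e)$, the statement must be read as saying that the closure of the interior nodal set $\{x\in\O_\e: u_\e(x)=0\}$ meets $\partial B(P,\e)$. I would handle it by rescaling. Set $\tilde u_\e(y)=\e^{-k}u_\e(P+\e y)$ on $1\le|y|\le 2$. By \eqref{eq2204-02}, combined with the $C^2$ boundary estimate of Lemma \ref{FurGT}(ii), $\tilde u_\e\to \tilde g(y):=(1-|y|^{-(N-2+2k)})g(P+y)$ in $C^1$ up to $|y|=1$. The limit $\tilde g$ vanishes on the cone $\{g=0\}$, which reaches $|y|=1$. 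At a point $y_0\in\{|y|=1\}$ where $g(P+y_0)=0$ but the tangential gradient of $g$ is nonzero, the function $\tilde g$ has nonvanishing mixed derivative $\pa_r\pa_\theta\tilde g$. The implicit function theorem applied to $\tilde u_\e/(|y|-1)$, which converges in $C^1$ to $\tilde g/(|y|-1)$, whose interior zero set is transversal, then gives interior zeros of $u_\e$ accumulating at a point of $\partial B(P,\e)$. Such regular points $y_0$ exist because the zero set of a nonzero spherical harmonic contains points where its gradient does not vanish.
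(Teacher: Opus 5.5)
Your proof is correct, and its decisive step differs from the paper's. The paper's proof stops at your first step. It evaluates \eqref{eq2204-02} on a sphere $|x-P|=R\e$ with $R>1$ fixed, notes that $1-R^{-(N-2+2k)}>0$ and that $g$ changes sign, and concludes. That argument by itself only yields nodal points at distance $(R-1)\e$ from the hole for each fixed $R$. Indeed, once $|x-P|-\e\lesssim\e^2$ the main term $(1-(\e/|x-P|)^{N-2+2k})g$ no longer dominates the $O(\e^{k+1})$ remainder, which is exactly the obstacle you identify.

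Your fix is to rescale and invoke the boundary estimate of Lemma \ref{FurGT}(ii). This is the $N\ge3$ counterpart of what the paper does only for $N=2$ in Lemma \ref{lem2901-1}. There, $h_\e=(1-\e^{N-2+2k}|x-P|^{-(N-2+2k)})g$ is still harmonic, since it is $g$ minus a multiple of its Kelvin transform. One then gets $\partial_\nu u_\e=\frac{N-2+2k}{\e}g+O(\e^{k})$ on $\partial B(P,\e)$, so the normal derivative inherits the sign changes of $g$.

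Each route buys something different. The paper's is shorter and uses only the $C^0$ expansion, but it only shows that the nodal set comes within $o(\e)$ of the hole. Yours costs a Schauder estimate up to the boundary, but it shows that the closure of the interior nodal set actually meets $\partial B(P,\e)$ for every small $\e$, which is the meaningful reading of the statement.

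Two minor points.
\begin{enumerate}
\item Convergence of $\tilde u_\e/(|y|-1)$ in $C^1$ requires $C^2$ convergence of $\tilde u_\e$, not the $C^1$ convergence you state. It is available from the $C^{2,\alpha}$ bound in Lemma \ref{FurGT}(ii) together with Arzel\`a--Ascoli and the uniform convergence to $\tilde g$.
\item The regular nodal points and the implicit function theorem are unnecessary. The $C^1$ convergence plus the mean value theorem in $r$ already give uniform convergence of $\tilde u_\e/(|y|-1)$ to $\phi(|y|)\,g(P+y/|y|)$, where $\phi(r)=r^k(1-r^{-(N-2+2k)})/(r-1)>0$. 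Hence $\tilde u_\e$ changes sign on every sphere $|y|=r$ with $1<r\le 1+\eta$, and by compactness its zeros at radii $r\to1^+$ accumulate on $|y|=1$.
\end{enumerate}
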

	\begin{proof}
Let $R>1$ and $x\in B(P,R\e)$. Using Theorem \ref{T2} we get as $\e\to0$
\begin{align}	
u_\e(x)=&\left(1-\frac{\e^{N-2+2k}}{|x-P|^{N-2+2k}}		\right)u(x)+O(\e^{k+1})\\=
&\underbrace{\left(1-\frac1{R^{N-2+2k}}		\right)}_{>0}u(x)+O(\e^{k+1}).
\end{align}
Since $u(P)=0$, using Lemma \ref{lem1}, we have that $u$ behaves like an homogeneous harmonic polynomial of degree $k$ and by $k>0$ we get that $u$ does change sign for $\e$ small and any $R>1$. So also $u_\e$ does it and this proves the claim.
	\end{proof}

   \subsection{ Case $k>0$, $N=2$}
     Since $ u(x)=g(x)+O(|x-P|^{k+1})$ and $g$ is harmonic in $\R^2$,  the shape of the nodal line of $u$ in a neighborhood of $P$ is  well understood. 
    \begin{prop}\label{prop2612}\cite[Theorem 2.5 (ii)]{cheng1976}
        There exists $\delta_0>0$ such that the nodal line $\mathcal{N}\cap B(P,\delta_0)$ is a union of $k$ curves that only intersect at point $P$ and the angle at which two adjacent curves intersect at point $P$ is $\frac{\pi}{k}$. (See Picture below for $k=2$.) 
        \end{prop}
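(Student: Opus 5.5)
The plan is to reduce everything to the leading homogeneous harmonic polynomial of Lemma \ref{lem1} and then perturb its zero set with the implicit function theorem, working in polar coordinates centred at $P$.

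\textbf{Step 1: the model zero set.} By Lemma \ref{lem1}, Remark \ref{rmi1} and Lemma \ref{lem2212-1}, $u(P+x)=g(x)+R(x)$, where $g$ is the harmonic homogeneous polynomial \eqref{eq-1304-2} of degree $k\ge1$. The remainder $R(x)=\sum_{l\ge k+1}g_l(x)$ is a convergent sum of homogeneous polynomials. In $\R^2$ every harmonic homogeneous polynomial of degree $k$ has the form
\[
g(re^{i\theta})=c\,r^k\cos\big(k(\theta-\theta_0)\big)
\]
for some $c\neq0$ and $\theta_0\in\R$. Hence the zero set of $g$ consists of the $2k$ rays at the angles
\[
\theta_j=\theta_0+\frac{\pi}{2k}+\frac{j\pi}{k},\qquad j=0,\dots,2k-1,
\]
that is, $k$ lines through the origin, with consecutive rays separated by the angle $\frac{\pi}{k}$.

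\textbf{Step 2: desingularize.} Define
\[
F(r,\theta):=r^{-k}u(P+re^{i\theta})=c\cos\big(k(\theta-\theta_0)\big)+\sum_{l\ge k+1}r^{\,l-k}g_l(e^{i\theta}).
\]
By the real analyticity of $u$, the series converges for $|r|<\rho$, so $F$ is real-analytic in $(r,\theta)\in(-\rho,\rho)\times\R$, including at $r=0$. Moreover $F(r,\theta)=c\cos(k(\theta-\theta_0))+O(r)$ with the same bound on $\partial_\theta F$. For $r>0$, the zeros of $u$ in $B(P,\rho)\setminus\{P\}$ coincide with the zeros of $F$.

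\textbf{Step 3: implicit function theorem.} At each $\theta_j$ we have
\[
F(0,\theta_j)=0,\qquad \partial_\theta F(0,\theta_j)=\mp ck\neq0 .
\]
The implicit function theorem therefore gives $\delta_0>0$, $\eta>0$ and analytic functions $\theta=\vartheta_j(r)$ on $(-\delta_0,\delta_0)$ with $\vartheta_j(0)=\theta_j$. These are the only zeros of $F$ in $(-\delta_0,\delta_0)\times(\theta_j-\eta,\theta_j+\eta)$.

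Away from these windows, $|\cos(k(\theta-\theta_0))|\ge c_\eta>0$. Since $F=c\cos(k(\theta-\theta_0))+O(r)$, shrinking $\delta_0$ ensures $F\neq0$ there. Thus
\[
\mathcal N\cap B(P,\delta_0)=\{P\}\cup\bigcup_{j}\big\{P+re^{i\vartheta_j(r)}:0<r<\delta_0\big\}.
\]
Each arc is a $C^1$ (indeed analytic) curve leaving $P$ with tangent direction $e^{i\theta_j}$. Distinct arcs meet only at $P$, because the angles $\vartheta_j(r)$ stay in disjoint windows.

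\textbf{Step 4: pair the arcs into $k$ curves.} The series gives the identity
\[
F(-r,\theta)=(-1)^kF(r,\theta+\pi),
\]
and $\theta_{j+k}=\theta_j+\pi$. By uniqueness in the implicit function theorem, $\vartheta_{j+k}(r)=\vartheta_j(-r)+\pi$. Hence the map $r\mapsto P+re^{i\vartheta_j(r)}$ for $r\in(-\delta_0,\delta_0)$ parametrizes the union of the arcs $j$ and $j+k$ as a single smooth curve through $P$. This yields exactly $k$ curves, meeting only at $P$. Their tangent lines at $P$ have directions $\theta_j$, $j=0,\dots,k-1$, so adjacent curves meet at angle $\frac{\pi}{k}$.

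\textbf{Main obstacle.} The delicate point is controlling $F$ and $\partial_\theta F$ uniformly down to $r=0$, so that the implicit function theorem is applied at $r=0$ rather than only for $r>0$. I would handle this through analyticity of $F$ in $(r,\theta)$, as above. As an alternative, one can use the $C^{1,\tau}$ convergence $r^{-k}u(P+r\,\cdot)\to g$ from Lemma \ref{lem1} applied on the unit circle, which controls $\partial_\theta F$ uniformly. That route gives $C^1$ curves but loses the symmetric extension to negative $r$, so Step 4 would then have to be done by hand, by matching arcs through their tangent directions.
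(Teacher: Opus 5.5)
Your proof is correct, and it differs from what the paper does: the paper gives no argument and imports the statement directly from Cheng's Theorem 2.5(ii). Cheng's result covers general surfaces and solutions of $(\Delta+h)u=0$ with smooth $h$, where real analyticity is not available. His argument starts from Bers' local expansion $u=g+O(|x-P|^{k+\epsilon})$, with derivative bounds, rather than from analyticity.

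You instead use a special feature of the present setting: $-\Delta u=\lambda u$ has constant coefficients on a planar domain, so $u$ is real analytic (Remark \ref{rmi1}). This lets you desingularize in polar coordinates and treat $F(r,\theta)=r^{-k}u(P+re^{i\theta})$ as a convergent series, analytic up to and across $r=0$. You can then apply the implicit function theorem directly at the $2k$ simple zeros of $\cos(k(\theta-\theta_0))$. The identity $F(-r,\theta)=(-1)^kF(r,\theta+\pi)$ pairs opposite arcs into $k$ analytic curves through $P$, whose tangent lines are equally spaced by $\pi/k$.

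Your route gives a self-contained, elementary proof with a slightly sharper conclusion: the branches are analytic and tangent to the nodal lines of the leading polynomial $g$, which is exactly the information used later in Theorem \ref{apmain}. The citation gives brevity and validity for variable-coefficient operators on surfaces, where your analyticity-based blow-up would have to be replaced by finite-regularity estimates.
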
 
  \centerline{ \begin{tikzpicture}[scale=0.6, transform shape, font=\large]

    \draw[thick] plot [smooth cycle, tension=0.7] coordinates {
        (-3.5, 0.5)
        (-2, 3)
        (2, 3.2)
        (3.5, 1)
        (2.5, -2.5)
        (-1.5, -3)
    };

    \node at (2.5, 2) {$\Omega$};

    \node at (0.2,0.4) {$P$};

    \draw[thick] (0,0) circle (1.5cm);

    \node at (0.2, 1.9) {$B(P, \delta_0)$};

    \filldraw (0,0) circle (2pt);

    \draw[thick] (210:1.5) -- (30:1.5);
    \draw[thick] (135:1.5) -- (-45:1.5);

\end{tikzpicture}}
By  Lemma 1.2 in \cite{CSLin1987}, the proof of Theorem \ref{apmain} is reduced to computing $\frac{\partial u_\e}{\partial\nu}=0$. Therefore, we give a delicate estimate of $\frac{\partial u_\e}{\partial\nu}$ as follows. 
  \begin{lemma}\label{lem2901-1}
  For some $\delta>0$, we obtain that at each $x_0\in \partial B(P,\e)$,
    \begin{equation}\label{eq0304-2}
       \e|D(u_\e-h_\e)|_{0;B(x_0,\e\delta)\cap \O_\e }+\e^2|D^2(u_\e-h_\e)|_{0;B(x_0,\e\delta)\cap \O_\e }=O(\e^{k}|\log\e|),
    \end{equation}  
    where 
    \begin{equation}
      h_\e(x)=  \left(1-\frac{\e^{2k}}{|x-P|^{2k}}\right)g(x).
    \end{equation}
    Furthermore,
         \begin{equation}\label{eq0304-3}
        \frac{\partial u_\e(x)}{\partial\nu}=\frac{2k}{\e}g(x)+O(\e^{k}|\log\e|)\ \hbox{on } \partial B(P,\e),
    \end{equation}
     (see \eqref{forg} for the definition of $g$),
    where 
    $\frac{\partial u_\e}{\partial\nu}$ is the outnormal derivative of $u_\e$ on the boundary $\partial B(P,\e)$.
    \end{lemma}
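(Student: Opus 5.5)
The plan is to apply the rescaled boundary Schauder estimate of Lemma \ref{FurGT}(ii) to the difference $w_\e:=u_\e-h_\e$ in the annulus $\mathcal{O}_\e=B(P,c\e)\setminus B(P,\e)$. Since $N=2$, the function $|x-P|^{-2k}g(x)$ is the Kelvin transform of a homogeneous harmonic polynomial of degree $k$, hence harmonic. It follows that $h_\e$ is harmonic in $\R^2\setminus\{P\}$, and $h_\e\equiv0$ on $\partial B(P,\e)$. Therefore $w_\e$ vanishes on $\partial B(P,\e)$ and solves
\[
-\Delta w_\e-\lambda_\e w_\e=\lambda_\e h_\e=:f_\e \quad\hbox{in } \mathcal{O}_\e ,
\]
because $-\Delta u_\e=\lambda_\e u_\e$. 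Lemma \ref{FurGT}(ii) is stated with the boundary condition on all of $\partial\O_\e$, but its proof is purely local: it is the rescaled Lemma \ref{GTlem6.5} near $\partial B(0,1)$. I would therefore invoke it in the local form, which only needs $w_\e=0$ on $\partial B(P,\e)$.

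The first step is to bound the right-hand side of Lemma \ref{FurGT}(ii).

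\textbf{The $C^0$ term.} Corollary \ref{T2} (or its proof, which combines \eqref{U1} with \eqref{eq10102025-2} and a Taylor expansion of $u$) gives $|w_\e|_{0;\mathcal{O}_\e}=O(\e^{k+1}|\log\e|)$ for $k\ge1$. For $k=1$ this is the stated $O(\e^2|\log\e|)$; for $k\ge2$ the bound is even $O(\e^{k+1})$. The replacement of $u$ by $g$ in the main term costs $O(|x-P|^{k+1})=O(\e^{k+1})$ on $\mathcal{O}_\e$, which is within this bound.

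\textbf{The forcing terms.} On $\mathcal{O}_\e$ we have $|h_\e|=O(\e^k)$ and $[h_\e]_{\alpha;\mathcal{O}_\e}=O(\e^{k-\alpha})$, by the homogeneity of $g$ and of the factor $(1-\e^{2k}|x-P|^{-2k})$. Hence $\e^2|f_\e|_{0}+\e^{2+\alpha}[f_\e]_\alpha=O(\e^{k+2})$.

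Plugging these bounds into Lemma \ref{FurGT}(ii) gives
\[
\e|Dw_\e|_{0}+\e^2|D^2w_\e|_{0}=O(\e^{k+1}|\log\e|),
\]
which is stronger than \eqref{eq0304-2}. If I have misread Corollary \ref{T2} and its error is only $O(\e^{k}|\log\e|)$, that weaker bound still suffices for \eqref{eq0304-2}.

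The second step computes the normal derivative of $h_\e$ on $|x-P|=\e$. Since $h_\e$ vanishes there, only the radial derivative of the factor survives:
\[
\partial_r\bigl(1-\e^{2k}r^{-2k}\bigr)\big|_{r=\e}=\frac{2k}{\e},
\]
so $\partial_r h_\e=\frac{2k}{\e}g(x)$ on $\partial B(P,\e)$. The sign convention for $\nu$ should be taken as the direction pointing away from $P$, consistent with the formula $\frac{2k}{\e}g$ in the statement.

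To obtain \eqref{eq0304-3}, I then add the bound $|Dw_\e|=O(\e^{k}|\log\e|)$, which follows from the previous step after dividing by $\e$. Dividing loses one power of $\e$. That is why one needs the $C^0$ bound on $w_\e$ to be $O(\e^{k+1}|\log\e|)$ rather than merely $O(\e^{k}|\log\e|)$ to obtain \eqref{eq0304-3} as stated.

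The main obstacle is exactly this point: securing the sharp $C^0$ bound on $w_\e$ in the annulus $\mathcal{O}_\e$, uniformly up to $\partial B(P,\e)$. This requires the $L^\infty$ estimate of Theorem \ref{intr-T1} together with the pointwise expansion of $V_{\e,u}$ from Lemma \ref{2l:improvedL2Ve}, with careful tracking of the $\e^{2+k}|\log\e|^{-1}G_\O$ and $\e\,\partial^kG_\O$ error terms on $|x-P|\sim\e$. The bound $G_\O(x,P)=O(|\log\e|)$ on $\mathcal{O}_\e$ is what produces the logarithmic factor in the final estimate.
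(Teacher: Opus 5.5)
Your proposal is correct and follows essentially the same route as the paper. Both arguments establish harmonicity of $h_\e$ (you via the Kelvin transform, the paper by direct computation), apply Lemma \ref{FurGT}(ii) to $u_\e-h_\e$ using the $C^0$ bound $O(\e^{k+1}|\log\e|)$ from \eqref{eq2204-02}/Corollary \ref{T2}, and then compute $\partial_\nu h_\e=\frac{2k}{\e}g$. You are also right that the argument actually gives $O(\e^{k+1}|\log\e|)$ on the left of \eqref{eq0304-2}, which is exactly what \eqref{eq0304-3} needs.

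One small correction to your closing remark: the logarithm does not come from $G_\O(x,P)=O(|\log\e|)$ on $\mathcal{O}_\e$, since that factor is cancelled by the $1/|\log\e|$ in the error term of \eqref{eq10102025-2}. It comes instead from $\|V_{\e,u}\|_{L^2(\O_\e)}$ in the $L^\infty$ bound of Theorem \ref{intr-T1} when $k=1$; for $k\ge2$ no logarithm appears at all.
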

    \begin{proof}
    By direct computation we have that 
    \begin{equation}
      h_\e(x)=  \left(1-\frac{\e^{2k}}{|x-P|^{2k}}\right)g(x),
    \end{equation}
    verifies 
    \begin{equation}\label{eq29012026-3}
    \begin{cases}
         \Delta h_\e=0 &\hbox{in }\ \O_\e,\\
         h_\e=0&\hbox{on }\  \partial B(P,\e).\\
         
    \end{cases}
    \end{equation}
    Indeed,
    \begin{equation}
         \Delta h_\e(x)=\left(1-\frac{\e^{2k}}{|x-P|^{2k}}\right)\Delta g+g\Delta \left(1-\frac{\e^{2k}}{|x-P|^{2k}}\right)+2\nabla g\cdot \nabla \left(1-\frac{\e^{2k}}{|x-P|^{2k}}\right).
    \end{equation}
    By Lemma \ref{lem2212-1}, it follows that
    $\Delta g=0$ in $\R^N$  and then
    \begin{equation}
         \Delta \frac{1}{|x-P|^{2k}}=\frac{4k^2}{|x-P|^{2+2k}}
    \end{equation}
    and
    \begin{equation}
        \nabla  \frac{1}{|x-P|^{2k}}\cdot\nabla g(x)=\frac{-2k^2}{|x-P|^{2+2k}}g(x).
    \end{equation}
 So \eqref{eq29012026-3} follows.
    
    Next, we consider 
    \begin{equation}\label{eq29012026-4}
        \begin{cases}
               -\Delta (u_\e-h_\e)-\lambda_\e (u_\e-h_\e)=\lambda_\e h_\e &\hbox{in }\ \O_\e,\\
        u_\e- h_\e=0&\hbox{on }\  \partial B(P,\e).\\
        \end{cases}
    \end{equation}
    Applying  Lemma \ref{FurGT} to equation \eqref{eq29012026-4}, we obtain that there exists $\delta>0$ such that for each $x_0\in \partial B(P,\e)$, the following estimate holds
    \begin{equation}
        \e|D(u_\e-h_\e)|_{0;B(x_0,\e\delta)\cap \O_\e }+\e^2|D^2(u_\e-h_\e)|_{0;B(x_0,\e\delta)\cap \O_\e }\leq C\left(|u_\e-h_\e|_{0;\mathcal{O}_\e}+\e^2|h_\e|_{0;\mathcal{O}_\e}+\e^{2+\alpha}[h_\e]_{\alpha;\mathcal{O}_\e}\right),
    \end{equation}
    where $\mathcal{O}_\e:=B(P,c\e)\setminus B(P,\e)$ for some $c>1$  defined in Lemma \ref{FurGT}. Using \eqref{eq2204-02},  we see $|u_\e-h_\e|_{0;\mathcal{O}_\e}=O(\e^{1+k}|\log\e|)$,  and by direct computations, it follows that $|h_\e|_{1;\mathcal{O}_\e}=O(\e^{k-1})$.
    Then we  get \eqref{eq0304-2}.  

   By \eqref{eq0304-2}, it is  straightforward  that
    \begin{equation}
        \frac{\partial u_\e}{\partial\nu}=\frac{\partial h_\e}{\partial\nu}+O(\e^{k}|\log\e|)=\frac{2k}{\e}g(x)+O(\e^{k}|\log\e|).
    \end{equation}
   For any $x\in \partial B(P,\e)$,  let us compute
    \begin{equation}
        \begin{aligned}
            \frac{\partial h_\e(x)}{\partial\nu}=&\nabla\left[\left(1-\frac{\e^{2k}}{|x-P|^{2k}}\right)g(x)\right]\cdot \frac{x-P}{\e}\\
            =&g(x)\nabla\left(1-\frac{\e^{2k}}{|x-P|^{2k}}\right)\cdot \frac{x-P}{\e}+\left(1-\frac{\e^{2k}}{|x-P|^{2k}}\right)\nabla g(x)\cdot \frac{x-P}{\e}\\
            =&2k\frac{\e^{2k-1}}{|x-P|^{2k}}g(x)+\frac{k}{\e}\left(1-\frac{\e^{2k}}{|x-P|^{2k}}\right)g(x)\\
            =&\frac{k}{\e}\left(1+\frac{\e^{2k}}{|x-P|^{2k}}\right)g(x)\\
            =&\frac{2k}{\e}g(x).\\
        \end{aligned}
    \end{equation}
     This ends the proof.
     \end{proof}
      \begin{teo}\label{apmain}
        $\mathcal{N}_{\e}$  intersects $\partial B(P,\e)$ at $2k$ points.
    \end{teo}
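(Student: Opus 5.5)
The proof reduces the count of nodal points on $\partial B(P,\e)$ to a count of sign changes of $\frac{\partial u_\e}{\partial\nu}$ along that circle. Following the remark preceding Lemma \ref{lem2901-1}, we use Lemma \ref{LCS} (Lemma 1.2 of \cite{CSLin1987}), applied in the domain $\O_\e$ at boundary points $x_0\in\partial B(P,\e)$. It says that $x_0\in\mathcal N_\e$ if and only if $\frac{\partial u_\e}{\partial\nu}(x_0)=0$. So it suffices to show that $\frac{\partial u_\e}{\partial\nu}$, restricted to $\partial B(P,\e)$, has exactly $2k$ zeros.

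Parametrize $\partial B(P,\e)$ by $x=P+\e(\cos\theta,\sin\theta)$. Since $g$ is a homogeneous harmonic polynomial of degree $k$ in $\R^2$ (Lemma \ref{lem2212-1}), we can write
\[
g(x)=\e^k\,c\cos\big(k(\theta-\theta_0)\big)
\]
for some $c>0$ and $\theta_0$. Then \eqref{eq0304-3} gives
\[
\frac{\partial u_\e}{\partial\nu}=\e^{k-1}\Big(2kc\cos\big(k(\theta-\theta_0)\big)+O(\e|\log\e|)\Big)=:\e^{k-1}F_\e(\theta).
\]
Thus $F_\e\to F(\theta):=2kc\cos(k(\theta-\theta_0))$ uniformly on $[0,2\pi]$. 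The limit $F$ has exactly $2k$ zeros, all simple, with $|F'|=2k^2c$ at each of them.

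\emph{Existence of $2k$ zeros.} Choose $2k$ points between consecutive zeros of $F$ at which $|F|\ge kc$. For small $\e$, $F_\e$ keeps the sign of $F$ at these points, so by the intermediate value theorem $F_\e$ has at least one zero in each of the $2k$ arcs. It also has no zeros outside small neighbourhoods of the zeros of $F$.

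\emph{Uniqueness within each arc (main obstacle).} Here $C^0$ closeness of $F_\e$ to $F$ is not enough; we need $C^1$ control of $F_\e$ in $\theta$. We get it from the second-derivative part of \eqref{eq0304-2}: $\e^2|D^2(u_\e-h_\e)|=O(\e^k|\log\e|)$ on $B(x_0,\e\delta)\cap\O_\e$. Differentiating $\nabla(u_\e-h_\e)\cdot\nu$ along the circle, where $\frac{d}{d\theta}$ corresponds to $\e$ times a tangential derivative plus a term coming from $\frac{d\nu}{d\theta}$, yields
\[
\frac{d}{d\theta}\Big(\frac{\partial(u_\e-h_\e)}{\partial\nu}\Big)=O(\e^{k-1}|\log\e|).
\]
Hence $F_\e'=F'+O(\e|\log\e|)$ uniformly, where the identity $\frac{\partial h_\e}{\partial\nu}=\frac{2k}{\e}g$ on $\partial B(P,\e)$, computed in Lemma \ref{lem2901-1}, takes care of the main term. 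So $F_\e'$ is bounded away from zero near each zero of $F$, $F_\e$ is strictly monotone there, and each arc contains exactly one zero. This gives exactly $2k$ zeros in total.

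Two points need checking along the way. First, Lemma \ref{LCS} must remain valid on the inner boundary component of $\O_\e$; it is a local statement (vanishing normal derivative at a boundary point forces the nodal set to reach that point, and the Hopf lemma gives the converse), so it applies there. Second, we must confirm that the differentiation bound above really follows uniformly from \eqref{eq0304-2}. If it did not, we would instead rescale $y=(x-P)/\e$ and use the $C^{2,\alpha}$ convergence of $\e^{-k}u_\e(P+\e y)$ to the rescaled $h_\e$ near $|y|=1$, which follows from Lemma \ref{FurGT}.
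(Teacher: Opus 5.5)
Your proof follows the paper's. Lemma \ref{LCS} reduces the count to the zeros of $\partial_\nu u_\e$ on $\partial B(P,\e)$. One then shows that $\e^{1-k}\partial_\nu u_\e\to 2k\e^{-k}g$ in $C^1$ in $\theta$, so that the $2k$ simple zeros of the limit persist uniquely. The paper uses the implicit function theorem for this last step. Your intermediate-value-plus-monotonicity argument, together with the $C^0$ exclusion of zeros away from the $\theta_i$, is the same thing made explicit.

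\textbf{The $C^1$ step fails as written.} This is the step you flagged. With \eqref{eq0304-2} as stated, $\e|D(u_\e-h_\e)|+\e^2|D^2(u_\e-h_\e)|=O(\e^k|\log\e|)$. Your tangential differentiation correctly gives $\frac{d}{d\theta}\partial_\nu(u_\e-h_\e)=O(\e^{k-1}|\log\e|)$. But after multiplying by $\e^{1-k}$ this is only $F_\e'-F'=O(|\log\e|)$, not $O(\e|\log\e|)$, so no $C^1$ convergence follows. That bound would not even give the $C^0$ statement \eqref{eq0304-3}, since it only yields $|D(u_\e-h_\e)|=O(\e^{k-1}|\log\e|)$, which is as large as the main term $\frac{2k}{\e}g$.

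\textbf{The fix.} You need one more power of $\e$: $\e|D(u_\e-h_\e)|+\e^2|D^2(u_\e-h_\e)|=O(\e^{k+1}|\log\e|)$. This is what the proof of Lemma \ref{lem2901-1} actually delivers. In the estimate from Lemma \ref{FurGT}, $|u_\e-h_\e|_{0;\mathcal O_\e}=O(\e^{k+1}|\log\e|)$ by \eqref{eq2204-02}, while the terms involving $h_\e$ are $O(\e^{k+2})$. Equivalently, this is your rescaling fallback: $\e^{-k}(u_\e-h_\e)(P+\e y)=O(\e|\log\e|)$ in $C^2$ near $|y|=1$. With this bound the tangential differentiation gives $O(\e^{k}|\log\e|)$, hence $F_\e'=F'+O(\e|\log\e|)$, and the rest of your argument goes through.

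So the fallback is not optional. It, or the sharpened form of \eqref{eq0304-2}, must replace the displayed computation. The paper's own claim that $w_\e$ converges in $C^1$ implicitly relies on the same sharper bound.
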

    \begin{proof}
    Let us introduce (using polar coordinates and assuming $P=0$)
\[
v_\varepsilon(\rho,\theta)=u_\varepsilon(\varepsilon\rho,\theta),
\qquad \rho\ge1,\ \theta\in[0,2\pi].
\]
We have that
\[
v_\varepsilon(1,\theta)=u_\varepsilon(\varepsilon,\theta)=0
\qquad \forall \theta\in[0,2\pi],
\]
and
\[
\frac{\partial v_\varepsilon}{\partial \rho}(1,\theta)
=
\varepsilon \frac{\partial u_\varepsilon}{\partial \nu}(x),
\qquad x\in \partial B(0,\varepsilon).
\]
By \eqref{eq0304-3} we obtain, for $x\in \partial B(0,\varepsilon)$,
\[
\frac{\partial v_\varepsilon}{\partial \rho}(1,\theta)
=
\varepsilon \frac{\partial u_\varepsilon}{\partial \nu}(x)
=
2k\, g(x)+O(\varepsilon^{k+1} |\log \varepsilon|)
,
\]
for any $\theta\in[0,2\pi]$.
Since $g$ is a harmonic homogeneous polynomial of degree $k$ in $\R^2$, then for $x\in\partial B(0,\e)$, using polar coordinates, 
$$
g(x)=a\e^k\sin(k\theta+\theta_0).
$$
Thus, we have, for $x\in\partial B(0,\e)$,
$$
\frac{\partial v_\varepsilon}{\partial \rho}(1,\theta)
=2k a\, \varepsilon^{k}\sin(k\theta+\theta_0)
+O(\varepsilon^{k+1} |\log \varepsilon|),
$$
for any $\theta\in[0,2\pi]$.
Hence, if we introduce
\[
w_\varepsilon(\rho,\theta)=\frac{1}{\varepsilon^{k}} \frac{\partial v_\varepsilon}{\partial \rho}(\rho,\theta),
\]
 by \eqref{eq0304-2} in Lemma \ref{lem2901-1}, it can be seen that 
\[
w_\varepsilon(1,\theta)\to 2ka\,\sin(k\theta+\theta_0)
\quad  \hbox{in}\ C^1([0,2\pi]).
\]
Next denoting by $\theta_1,\dots,\theta_{2k}$ the solutions of
\[
\sin(k\theta+\theta_0)=0,
\]
we derive that 
$$ \frac{\partial w_\e}{\partial \theta}(1,\theta_i)\to 2k^2c\cos(k\theta_i+\theta_0)=\pm 2k^2a\quad \text{(according to the point $\theta_i$)}.
$$
By the implicit function theorem, this implies that there exists the unique $\theta_{i,\varepsilon}$ around $\theta_i$
such that
\[
w_\varepsilon(1,\theta_{i,\varepsilon})=0,
\qquad i=1,\dots,2k,
\]
and therefore there exist exactly 2k points $P_{i,\varepsilon}=(\e \cos \theta_{i,\varepsilon}, \e\sin \theta_{i,\varepsilon})$, $i=1,\ldots,2k$, such that 
\[
\frac{\partial u_\varepsilon}{\partial \nu}(P_{i,\varepsilon})=0.
\]
Hence, by  Lemma \ref{LCS}, we obtain that $\mathcal{N}_\varepsilon$ intersects
$\partial B(0,\varepsilon)$  exactly with $2k$ points.
    \end{proof}

	\section{Appendix}\label{sec-appendix}
    \subsection{Useful estimates for $Q_\alpha(x,y)$}
    In this section, we establish a  Lemma for $Q_\alpha(x,y)$, which will be useful in the proofs of  Lemma \ref{l:improvedL2Ve} and Lemma \ref{2l:improvedL2Ve}. 
    
 Recall that $\alpha=(\alpha_1,\cdots,\alpha_N) $ and $|\alpha|=\alpha_1+\cdots+\alpha_N=k$. For $N\ge 3$, let $C(N,0)=1$ and  $C(N,k)=(N-2)N\cdots(N-4+2k)$ for $k\ge 1$. 
 Define that
 $Q_\alpha(x,y)$  is a polynomial of degree $k$ depending on $\alpha $  such that the following equality holds
				\begin{equation*}
					\frac{\partial^k\left(|x-y|^{2-N}\right)}{\partial y_1^{\alpha_1}\cdots\partial y_N^{\alpha_N}}=C(N,k)\frac{(x_1-y_1)^{\alpha_1}\cdots(x_N-y_N)^{\alpha_N}+Q_\alpha(x,y)}
					{|x-y|^{N-2+2k}}.
				\end{equation*}
              For the convenience of representation, we define 
             $Q_{i_1,i_2,\cdots,i_k}(x,y)$ which  is a polynomial of degree $k$ depending on $i_1,\cdots,i_k$ such that the following equality holds
                	\begin{equation}\label{eq10102025-4}
					\frac{\partial^k\left(|x-y|^{2-N}\right)}{\partial y_{i_1} \partial y_{i_2}\cdots \partial y_{i_k}}=C(N,k)\frac{(x_{i_1}-y_{i_1})(x_{i_2}-y_{i_2})\cdots (x_{i_k}-y_{i_k})+Q_{i_1,i_2,\cdots,i_k}(x,y)}
					{|x-y|^{N-2+2k}}.
				\end{equation}
	 We see that 
        \begin{equation}\label{eq2904}
               \sum_{|\alpha|=k}\frac1{\alpha!}
					\frac{\partial^ku(P)}{\partial x_1^{\alpha_1}\cdots\partial x_N^{\alpha_N}}Q_\alpha(x,P) =\frac{1}{k!}\sum_{i_1,i_2,\cdots,i_k=1}^{N}\frac{\partial^k u(P)}{\partial x_{i_1} \partial x_{i_2}\cdots \partial x_{i_k}}Q_{i_1,i_2,\cdots,i_k}(x,P).
        \end{equation}
          For $N=2$, let   $C(N,0)=C(N,1)=1$ and $C(N,k)=2\times\cdots\times2(k-1)$ for any $k\ge 2$. For $k\ge1$, define  $Q_\alpha(x,y)$ as a polynomial of degree $k$ depending  on $\alpha_1$ and $\alpha_2$ such that the following equality holds,
                    \begin{equation}
                        \frac{\partial^k(-\log|x-y|)}{\partial y_1^{\alpha_1}\partial y_2^{\alpha_2}} = C(N,k)\frac{(x_1 - y_1)^{\alpha_1}(x_2 - y_2)^{\alpha_2}+Q_\alpha(x,y)}{|x - y|^{2k}},
                    \end{equation}
                   and  $Q_{i_1,i_2,\cdots,i_k}(x,y)$ is a polynomial of degree $k$ depending on $i_1,\cdots,i_k$ such that the following equality holds
                	\begin{equation}\label{eq2206-1}
					\frac{\partial^k(-\log|x-y|)}{\partial y_{i_1} \partial y_{i_2}\cdots \partial y_{i_k}}=C(N,k)\frac{(x_{i_1}-y_{i_1})(x_{i_2}-y_{i_2})\cdots (x_{i_k}-y_{i_k})+Q_{i_1,i_2,\cdots,i_k}(x,y)}
					{|x-y|^{2k}}.
				\end{equation}
              We see that   \eqref{eq2904} also holds for $N=2$ and $k\ge 1$.
              
      Then we only need to compute $\sum_{i_1,i_2,\cdots,i_k=1}^{N}\frac{\partial^k u(P)}{\partial x_{i_1} \partial x_{i_2}\cdots \partial x_{i_k}}Q_{i_1,i_2,\cdots,i_k}(x,P)$ for any $N\ge 2$. By direct observation the case of $k=2$ and $k=3$, we find that \eqref{eq1609-1} holds.
    Using mathematical induction, we prove the  Lemma \ref{lem10102025-1}.
       \begin{lemma}\label{lem10102025-1}
             Let $N\ge 2$.   We claim that, for any  $k\ge2$,
					\begin{equation}\label{eq1609-1}
						\sum_{i_1,i_2,\cdots,i_k=1}^{N}\frac{\partial^k u(x)}{\partial x_{i_1} \partial x_{i_2}\cdots \partial x_{i_k}}Q_{i_1,i_2,\cdots,i_k}(x,y)=	\sum_{i_1,i_2,\cdots,i_{k-2}=1}^{N}\frac{\partial^{k-2}(\Delta u) (x)}{\partial x_{i_1} \partial x_{i_2}\cdots \partial x_{i_{k-2}}} \widetilde{Q}_{i_1,i_2,\cdots,i_{k-2}}(x,y),
					\end{equation}
					where $\widetilde{Q}_{i_1,i_2,\cdots,i_{k-2}}(x,y)$ is a polynomial of degree k depending on $i_1,i_2,\cdots,i_{k-2}$.
            \end{lemma}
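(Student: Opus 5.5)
The plan is to prove an explicit structural formula for the $k$-th $y$-derivatives of the fundamental solution, by induction on $k$, and then contract it with the symmetric tensor $\partial^k u(x)$. Write $z=x-y$ and $r=|z|$. For $N\ge3$ the claim to prove by induction on $k\ge1$ is
\begin{equation*}
\frac{\partial^k\left(|x-y|^{2-N}\right)}{\partial y_{i_1}\cdots\partial y_{i_k}}=\sum_{j=0}^{\lfloor k/2\rfloor} c_{k,j}\, r^{2-N-2k+2j}\sum_{\pi\in\mathcal P_j}\prod_{\{a,b\}\in\pi}\delta_{i_a i_b}\prod_{c\notin\cup\pi} z_{i_c}.
\end{equation*}
Here $\mathcal P_j$ is the set of families of $j$ disjoint pairs in $\{1,\dots,k\}$, and the constants $c_{k,j}$ are independent of the indices, with $c_{k,0}=C(N,k)$.

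The inductive step differentiates one more time in $y_{i_{k+1}}$, using the product rule on a generic term.
\begin{itemize}
\item When the derivative falls on $r^{2-N-2k+2j}$, it produces $(N-2+2k-2j)\,r^{2-N-2(k+1)+2j}z_{i_{k+1}}$. This is a term of the same type with the same $j$ and one more $z$ factor.
\item When it falls on a factor $z_{i_c}$, it produces $-\delta_{i_c i_{k+1}}$. This gives a term with $j+1$ pairs and the correct power $r^{2-N-2(k+1)+2(j+1)}$.
\end{itemize}
Symmetry under permutations of the indices forces the coefficients to depend only on $(k,j)$. The recursion $c_{k+1,0}=(N-2+2k)c_{k,0}$ recovers $C(N,k+1)$. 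For $N=2$ I will start from $\partial_{y_i}(-\log|x-y|)=z_i/r^2$, which is the same structure with exponent $-2$ in place of $2-N$, and run the identical induction for $k\ge1$.

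Comparing with the definition \eqref{eq10102025-4} (respectively \eqref{eq2206-1}), this gives an explicit formula for $Q$:
\begin{equation*}
Q_{i_1,\dots,i_k}(x,y)=\frac{1}{C(N,k)}\sum_{j\ge1}c_{k,j}\,r^{2j}\sum_{\pi\in\mathcal P_j}\prod_{\{a,b\}\in\pi}\delta_{i_ai_b}\prod_{c\notin\cup\pi}z_{i_c}.
\end{equation*}
Each term is a polynomial of degree $2j+(k-2j)=k$. Crucially, every term contains at least one Kronecker delta.

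Next I contract with $\partial^k u(x)/\partial x_{i_1}\cdots\partial x_{i_k}$ and sum over all indices. Take a term with pairing $\pi$, $j\ge1$, and fix one distinguished pair $\{a,b\}\in\pi$. Mixed partial derivatives commute, so I may reorder the indices so that $a,b$ are the last two. Summing over $i_a=i_b$ then replaces $\partial^k u$ by $\partial^{k-2}(\Delta u)$ in the remaining $k-2$ indices. The rest of that term, namely $r^{2j}$, the other $j-1$ deltas and the $z$ factors, is a polynomial of degree $k$ in $(x,y)$ carrying the remaining indices.

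Summing all these contributions defines $\widetilde Q_{i_1,\dots,i_{k-2}}(x,y)$, and this gives \eqref{eq1609-1}. The main obstacle is the bookkeeping in the inductive step: verifying that the index-independence of $c_{k,j}$ is preserved, and regrouping the terms under relabeling of indices so that a single family $\widetilde Q$ in the free indices $i_1,\dots,i_{k-2}$ emerges. I would handle this by summing over ordered choices of the distinguished pair and absorbing the resulting combinatorial multiplicities into $\widetilde Q$, since only its polynomial nature and degree matter.
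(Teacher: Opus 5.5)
Your proof is correct, but it takes a genuinely different route from the paper's.

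\emph{The paper's route.} The paper never writes $Q$ in closed form. It derives the one-step recursion \eqref{eq1709-5}, which expresses $Q_{i_1,\dots,i_{k+1}}$ through $Q_{i_1,\dots,i_k}$, its $y$-derivative, and a block of Kronecker-delta terms. It then inducts on the contracted identity \eqref{eq1609-1} itself. The delta block gives $\Delta u$ directly. The other two terms are reduced to the inductive hypothesis by differentiating it in $x_{i_{k+1}}$ and $y_{i_{k+1}}$, using that $Q$ and $\widetilde Q$ depend only on $x-y$.

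\emph{Your route.} You establish the full pairing expansion of $\partial_y^k|x-y|^{2-N}$ (resp.\ $\partial_y^k(-\log|x-y|)$). You then observe that every correction term carries some $\delta_{i_ai_b}$, which contracts against the symmetric tensor $\partial^k u(x)$ to give $\partial^{k-2}(\Delta u)(x)$. This is more transparent, avoids the paper's free-index bookkeeping, and produces $\widetilde Q$ explicitly. The cost is the pairing combinatorics, which the paper's recursion sidesteps.

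\emph{The step you flag as the main obstacle is unnecessary.} The contraction only uses two facts:
\begin{itemize}
\item the delta-free part of the $k$-th derivative is exactly $C(N,k)\,z_{i_1}\cdots z_{i_k}\,r^{2-N-2k}$, since a delta-free term can only arise by differentiating the power of $r$ in a delta-free term;
\item every other term, once multiplied by $r^{N-2+2k}$, is a polynomial homogeneous of degree $k$ in $z$.
\end{itemize}
Whether the coefficients depend on the pairing is irrelevant for this.

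\emph{If you do want uniform $c_{k,j}$.} ``Symmetry forces'' needs a word of justification, because uniqueness of the coefficients in front of the pairing tensors is not automatic. You can either:
\begin{itemize}
\item average over permutations of the index slots, which works because the action on $\mathcal P_j$ is transitive; or
\item check that the two recursions your inductive step produces, $c_{k+1,j}=(N-2+2k-2j)c_{k,j}$ (for pairings leaving $k+1$ unpaired) and $c_{k+1,j}=-c_{k,j-1}$ (for pairings containing $k+1$), are compatible. For instance, $c_{k,j}=(-1)^j\prod_{m=0}^{k-j-1}(N-2+2m)$ satisfies both when $N\ge3$.
\end{itemize}
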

   \begin{proof}
   We prove this lemma by mathematical induction. We only prove the case that $N\ge3$. For $N=2$, the proof is the same.

By  a straightforward computation, it follows that
   \begin{itemize}
				\item For $k = 1$, $Q_{i_1}(x,y) = 0$.
				
				\item For $k = 2$,  $Q_{i_1,i_2}(x,y) = -\frac{1}{N}\delta_{i_1}^{i_2}|x - y|^2$ and $\widetilde{Q}(x,y)=-\frac{1}{N} |x - y|^2$.
				
				\item For $k = 3$,
					$
					Q_{i_1,i_2,i_3}(x,y) = -\frac{1}{N+2}|x - y|^2\left( \delta_{i_1}^{i_2}(x_{i_3} - y_{i_3}) + \delta_{i_1}^{i_3}(x_{i_2} - y_{i_2}) + \delta_{i_2}^{i_3}(x_{i_1} - y_{i_1}) \right),
					$
                    and 
                    $\widetilde{Q}_{i_1}=-\frac{3}{N+2}|x - y|^2(x_{i_1} - y_{i_1}) $.
                    \end{itemize}
                
                Now we suppose that \eqref{eq1609-1} holds for $k$, where $k\geq 2$ and $k\in \N$. We will prove that \eqref{eq1609-1} is valid for $k+1$.
				By assumption, we have
				\begin{equation*}
					\frac{\partial^k\left(|x-y|^{2-N}\right)}{\partial y_{i_1} \partial y_{i_2}\cdots \partial y_{i_k}}=C(N,k)\frac{(x_{i_1}-y_{i_1})(x_{i_2}-y_{i_2})\cdots (x_{i_k}-y_{i_k})+Q_{i_1,i_2,\cdots,i_k}(x,y)}
					{|x-y|^{N-2+2k}},
				\end{equation*}
				then 
				\begin{align*}
					&\frac{\partial^{k+1}\left(|x-y|^{2-N}\right)}{\partial y_{i_1} \partial y_{i_2}\cdots \partial y_{i_{k+1}}}=\frac{\partial}{\partial y_{i_{k+1}}}\frac{\partial^k\left(|x-y|^{2-N}\right)}{\partial y_{i_1} \partial y_{i_2}\cdots \partial y_{i_k}}\\
					=&-C(N,k)\frac{(x_{i_2}-y_{i_2})\cdots (x_{i_k}-y_{i_k})\delta_{i_{k+1}}^{i_1}+\cdots+(x_{i_1}-y_{i_1})\cdots (x_{i_{k-1}}-y_{i_{k-1}})\delta_{i_{k+1}}^{i_k}-\frac{\partial}{\partial y_{i_{k+1}}}Q_{i_1,i_2,\cdots,i_k}(x,y)}
					{|x-y|^{N-2+2k}}\\
					&+C(N,k)(N-2+2k)\frac{\Big[(x_{i_1}-y_{i_1})(x_{i_2}-y_{i_2})\cdots (x_{i_k}-y_{i_k})+Q_{i_1,i_2,\cdots,i_k}(x,y)\Big](x_{i_{k+1}}-y_{i_{k+1}})}
					{|x-y|^{N-2+2(k+1)}}.
				\end{align*}
				Hence, 
				\begin{equation}\label{eq1709-5}
					\begin{aligned}
						&Q_{i_1,i_2,\cdots,i_{k+1}}(x,y)\\
						=&-\frac{1}{N-2+2k}|x-y|^2\Big[(x_{i_2}-y_{i_2})\cdots (x_{i_k}-y_{i_k})\delta_{i_{k+1}}^{i_1}+\cdots+(x_{i_1}-y_{i_1})\cdots (x_{i_{k-1}}-y_{i_{k-1}})\delta_{i_{k+1}}^{i_k}\Big]\\
						&+\frac{1}{N-2+2k}|x-y|^2\frac{\partial}{\partial y_{i_{k+1}}}Q_{i_1,i_2,\cdots,i_k}(x,y)+Q_{i_1,i_2,\cdots,i_k}(x,y)(x_{i_{k+1}}-y_{i_{k+1}}).
					\end{aligned}
				\end{equation}
				Next we multiply each term by $\frac{\partial^{k+1} u(x)}{\partial x_{i_1} \cdots \partial x_{i_{k+1}}}$ and sum over indices $i_1,\cdots,i_{k+1}$.
				\begin{equation}\label{eq1709-4}
					\begin{aligned}
						&\sum_{i_1,\cdots,i_{k+1}=1}^{N}\frac{\partial^{k+1} u(x)}{\partial x_{i_1} \cdots \partial x_{i_{k+1}}}|x-y|^2\Big[(x_{i_2}-y_{i_2})\cdots (x_{i_k}-y_{i_k})\delta_{i_{k+1}}^{i_1}+\cdots+(x_{i_1}-y_{i_1})\cdots (x_{i_{k-1}}-y_{i_{k-1}})\delta_{i_{k+1}}^{i_k}\Big]\\
						&=k\sum_{i_1,\cdots,i_{k-1}=1}^{N}\frac{\partial^{k-1} (\Delta u)(x)}{\partial x_{i_1} \cdots \partial x_{i_{k-1}} }|x-y|^2(x_{i_1}-y_{i_1})\cdots(x_{i_{k-1}}-y_{i_{k-1}}).\\
					\end{aligned}
				\end{equation}
				By differentiating both sides of equation \eqref{eq1609-1} with respect to $x_{i_{k+1}}$, we obtain that 
				\begin{equation}\label{eq1609-2}
					LHS=	\sum_{i_1,i_2,\cdots,i_k=1}^{N}\frac{\partial^{k+1} u(x)}{\partial x_{i_1} \partial x_{i_2}\cdots \partial x_{i_{k+1}}}Q_{i_1,i_2,\cdots,i_k}(x,y)+\sum_{i_1,i_2,\cdots,i_k=1}^{N}\frac{\partial^{k} u(x)}{\partial x_{i_1} \partial x_{i_2}\cdots \partial x_{i_{k}}}\frac{\partial Q_{i_1,i_2,\cdots,i_k}(x,y)}{\partial x_{i_{k+1}}},
				\end{equation}
				\begin{equation}\label{eq1609-3}
					\begin{aligned}
						RHS&=	\sum_{i_1,i_2,\cdots,i_{k-2}=1}^{N}\frac{\partial^{k-1}(\Delta u)(x)}{\partial x_{i_1}\cdots \partial x_{i_{k-2}}\partial x_{i_{k+1}}}  \widetilde{Q}_{i_1,i_2,\cdots,i_{k-2}}(x,y)\\
						&+\sum_{i_1,i_2,\cdots,i_{k-2}=1}^{N}\frac{\partial^{k-2}(\Delta u)(x)}{\partial x_{i_1}\cdots \partial x_{i_{k-2}}} \frac{\partial\widetilde{Q}_{i_1,i_2,\cdots,i_{k-2}}(x,y)}{\partial x_{i_{k+1}}}.
					\end{aligned}
				\end{equation}
				Making the same with respect to $y_{i_{k+1}}$, we have
				\begin{equation}\label{eq1609-4}
					\sum_{i_1,i_2,\cdots,i_k=1}^{N}\frac{\partial^k u(x)}{\partial x_{i_1} \partial x_{i_2}\cdots \partial x_{i_k}}\frac{\partial Q_{i_1,i_2,\cdots,i_k}(x,y)}{\partial y_{i_{k+1}} }=	\sum_{i_1,i_2,\cdots,i_{k-2}=1}^{N}\frac{\partial^{k-2}}{\partial x_{i_1} \partial x_{i_2}\cdots \partial x_{i_{k-2}}} (\Delta u)(x) \frac{\partial\widetilde{Q}_{i_1,i_2,\cdots,i_{k-2}}(x,y)}{\partial y_{i_{k+1}} }.
				\end{equation}
			By the structure of $Q_{i_1,i_2,\cdots,i_k}(x,y)$ and $\widetilde{Q}_{i_1,i_2,\cdots,i_{k-2}}(x,y)$, we can see that
				\begin{equation}\label{eq1609-5}
					\sum_{i_1,i_2,\cdots,i_k=1}^{N}\frac{\partial^k u(x)}{\partial x_{i_1} \partial x_{i_2}\cdots \partial x_{i_k}}\frac{\partial Q_{i_1,i_2,\cdots,i_k}(x,y)}{\partial y_{i_{k+1}} }=	-\sum_{i_1,i_2,\cdots,i_k=1}^{N}\frac{\partial^k u(x)}{\partial x_{i_1} \partial x_{i_2}\cdots \partial x_{i_k}}\frac{\partial Q_{i_1,i_2,\cdots,i_k}(x,y)}{\partial x_{i_{k+1}} },
				\end{equation}
				and 
				\begin{equation}\label{eq1609-6}
					\sum_{i_1,\cdots,i_{k-2}=1}^{N}\frac{\partial^{k-2}}{\partial x_{i_1} \cdots \partial x_{i_{k-2}}} (\Delta u)(x) \frac{\partial\widetilde{Q}_{i_1,\cdots,i_{k-2}}(x,y)}{\partial y_{i_{k+1}} }=-\sum_{i_1,\cdots,i_{k-2}=1}^{N}\frac{\partial^{k-2}}{\partial x_{i_1} \cdots \partial x_{i_{k-2}}} (\Delta u)(x) \frac{\partial\widetilde{Q}_{i_1,\cdots,i_{k-2}}(x,y)}{\partial x_{i_{k+1}} }.
				\end{equation}
				By \eqref{eq1609-2}--\eqref{eq1609-6}, we obtain  
				\begin{equation}\label{eq1609-7}
					\sum_{i_1,i_2,\cdots,i_k=1}^{N}\frac{\partial^{k+1} u(x)}{\partial x_{i_1} \partial x_{i_2}\cdots \partial x_{i_{k+1}}}Q_{i_1,i_2,\cdots,i_k}(x,y)
					=\sum_{i_1,i_2,\cdots,i_{k-2}=1}^{N}\frac{\partial^{k-1}(\Delta u)(x)}{\partial x_{i_1}\cdots \partial x_{i_{k-2}}\partial x_{i_{k+1}}}  \widetilde{Q}_{i_1,i_2,\cdots,i_{k-2}}(x,y).
				\end{equation}
				Then 
				\begin{equation}\label{eq1709-3}
					\begin{aligned}
						&\sum_{i_1,i_2,\cdots,i_{k+1}=1}^{N}\frac{\partial^{k+1} u(x)}{\partial x_{i_1} \partial x_{i_2}\cdots \partial x_{i_{k+1}}}Q_{i_1,i_2,\cdots,i_k}(x,y)(x_{i_{k+1}}-y_{i_{k+1}})\\
						&=\sum_{i_1,i_2,\cdots,i_{k-1}=1}^{N}\frac{\partial^{k-1}(\Delta u)(x)}{\partial x_{i_1}\cdots \partial x_{i_{k-2}}\partial x_{i_{k-1}}}  \widetilde{Q}_{i_1,i_2,\cdots,i_{k-2}}(x,y)(x_{i_{k-1}}-y_{i_{k-1}}),\\
					\end{aligned}
				\end{equation}
				where the equality holds due to index rearrangement.
				By differentiating both sides of equation \eqref{eq1609-7} with respect to $y_{i_{k+1}}$, we have
				\begin{equation}\label{eq1709-1}
					\sum_{i_1,i_2,\cdots,i_k=1}^{N}\frac{\partial^{k+1} u(x)}{\partial x_{i_1} \partial x_{i_2}\cdots \partial x_{i_{k+1}}}\frac{\partial Q_{i_1,i_2,\cdots,i_k}(x,y)}{\partial y_{i_{k+1}}}
					=\sum_{i_1,i_2,\cdots,i_{k-2}=1}^{N}\frac{\partial^{k-1}(\Delta u)(x)}{\partial x_{i_1}\cdots \partial x_{i_{k-2}}\partial x_{i_{k+1}}}  \frac{\partial\widetilde{Q}_{i_1,i_2,\cdots,i_{k-2}}(x,y)}{\partial y_{i_{k+1}}}.
				\end{equation}
				Then by index rearrangement, we obtain 
				\begin{equation}\label{eq1709-2}
					\begin{aligned}
						&\sum_{i_1,i_2,\cdots,i_{k+1}=1}^{N}\frac{\partial^{k+1} u(x)}{\partial x_{i_1} \partial x_{i_2}\cdots \partial x_{i_{k+1}}}\frac{\partial Q_{i_1,i_2,\cdots,i_k}(x,y)}{\partial y_{i_{k+1}}}|x-y|^2\\
						&=\sum_{i_1,i_2,\cdots,i_{k-1}=1}^{N}\frac{\partial^{k-1}(\Delta u)(x)}{\partial x_{i_1}\cdots \partial x_{i_{k-2}}\partial x_{i_{k-1}}}  \frac{\partial\widetilde{Q}_{i_1,i_2,\cdots,i_{k-2}}(x,y)}{\partial y_{i_{k-1}}}|x-y|^2.\\
					\end{aligned}
				\end{equation}
				By \eqref{eq1709-5}, \eqref{eq1709-4}, \eqref{eq1709-3} and \eqref{eq1709-2}, we have 
				\begin{equation}
					\sum_{i_1,i_2,\cdots,i_{k+1}=1}^{N}\frac{\partial^{k+1} u(x)}{\partial x_{i_1} \partial x_{i_2}\cdots \partial x_{i_{k+1}}}Q_{i_1,i_2,\cdots,i_{k+1}}(x,y)=\sum_{i_1,i_2,\cdots,i_{k-1}=1}^{N}\frac{\partial^{k-1}(\Delta u)(x)}{\partial x_{i_1}\cdots \partial x_{i_{k-2}}\partial x_{i_{k-1}}} \widetilde{Q}_{i_1,i_2,\cdots,i_{k-1}}(x,y),
				\end{equation}
				where $$
				\begin{aligned}
					&\widetilde{Q}_{i_1,i_2,\cdots,i_{k-1}}(x,y)\\
                    =&-\frac{k}{N-2+2k}|x-y|^2(x_{i_1}-y_{i_1})\cdots(x_{i_{k-1}}-y_{i_{k-1}})\\
					&+\frac{1}{N-2+2k}\frac{\partial\widetilde{Q}_{i_1,i_2,\cdots,i_{k-2}}(x,y)}{\partial y_{i_{k-1}}}|x-y|^2+\widetilde{Q}_{i_1,i_2,\cdots,i_{k-2}}(x,y)(x_{i_{k-1}}-y_{i_{k-1}}).
				\end{aligned}
				$$
				Thus the proof is complete.
                 \end{proof}
                 \begin{cor}\label{Cor-Q}
                 Assume that $k\in\N$ is the vanishing order of $u$ at $P$.
                     If $u$ is an eigenfunction of Dirichlet Laplacian on $\O$ satisfying \eqref{eq:eigenvalueproblem}, then
                      \begin{equation}\label{eq2904-2}
					\sum_{|\alpha|=k}\frac1{\alpha!}
					\frac{\partial^ku(P)}{\partial x_1^{\alpha_1}\cdots\partial x_N^{\alpha_N}}Q_\alpha(x,P)=0\ \ \hbox{and}\ \ 	\sum_{|\beta|=k+1}\frac1{\beta!}
					\frac{\partial^{k+1}u(P)}{\partial x_1^{\beta_1}\cdots\partial x_N^{\beta_N}}Q_\beta(x,P)=0.
				\end{equation}   
                 \end{cor}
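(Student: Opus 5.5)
We want to prove Corollary \ref{Cor-Q}. The plan is to reduce both identities in \eqref{eq2904-2} to Lemma \ref{lem10102025-1}, combined with the eigenvalue equation $\Delta u=-\lambda u$ and the vanishing of the low-order derivatives of $u$ at $P$.

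\textbf{Step 1: pass to symmetric index sums.} By \eqref{eq2904}, which the paper notes holds for every $N\ge 2$ and for every order $m\ge1$,
\[
\sum_{|\alpha|=m}\frac1{\alpha!}D^\alpha u(P)\,Q_\alpha(x,P)=\frac1{m!}\sum_{i_1,\dots,i_m=1}^N \frac{\partial^m u(P)}{\partial x_{i_1}\cdots\partial x_{i_m}}\,Q_{i_1,\dots,i_m}(x,P).
\]
So it suffices to show that the right-hand side vanishes for $m=k$ and for $m=k+1$.

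\textbf{Step 2: the low orders.} For $m=0$ there is nothing to prove, since differentiating zero times gives $|x-y|^{2-N}$ itself and hence $Q=0$. For $m=1$ the paper's computation gives $Q_{i_1}=0$. This settles the first identity when $k\in\{0,1\}$ and the second identity when $k=0$. In the remaining cases the order is $m\ge 2$, and Lemma \ref{lem10102025-1} applies.

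\textbf{Step 3: where derivatives and the point of evaluation get decoupled.} Lemma \ref{lem10102025-1} is stated with the derivatives of $u$ taken at the same point $x$ that appears in $Q(x,y)$. We need derivatives at $P$ together with $Q(x,P)$ at an arbitrary $x$. The identity \eqref{eq1609-1} is a pointwise algebraic identity: its proof only uses symmetry of the derivative tensor and index manipulations, and it holds for any smooth function. So for fixed $x$ we apply it to the translate $v(z):=u(z-x+P)$, evaluated at $z=x$ and with $y=P$. Then $\partial^m v(x)=\partial^m u(P)$ and $\partial^{m-2}(\Delta v)(x)=\partial^{m-2}(\Delta u)(P)$, and we obtain
\[
\sum_{i_1,\dots,i_m}\partial^m_{i_1\cdots i_m}u(P)\,Q_{i_1,\dots,i_m}(x,P)=\sum_{i_1,\dots,i_{m-2}}\partial^{m-2}_{i_1\cdots i_{m-2}}(\Delta u)(P)\,\widetilde Q_{i_1,\dots,i_{m-2}}(x,P).
\]

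\textbf{Step 4: conclude with the equation.} Since $\Delta u=-\lambda u$, the right-hand side equals $-\lambda\sum \partial^{m-2}u(P)\,\widetilde Q$. For $m=k$ and $m=k+1$ the relevant orders are $k-2$ and $k-1$, both strictly less than $k$. By Definition \ref{defvanishing} all derivatives of $u$ at $P$ of order $\le k-1$ vanish, so these sums are zero. For instance, when $k=1$ and $m=2$, the right-hand side is $\Delta u(P)\,\widetilde Q=-\lambda u(P)\widetilde Q=0$.

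The case $N=2$ follows in exactly the same way, using \eqref{eq2206-1} and the $N=2$ version of Lemma \ref{lem10102025-1}.

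The main obstacle is the justification in Step 3: one has to check that the induction proving Lemma \ref{lem10102025-1} never uses any link between the point where $u$ is differentiated and the first argument of $Q$. Once that is confirmed, the translation trick makes the remaining steps immediate.
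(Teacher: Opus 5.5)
Your proposal is correct and is essentially the paper's proof: reduce to index sums via \eqref{eq2904}, apply \eqref{eq1609-1}, replace $\Delta u$ by $-\lambda u$, and use that every derivative of order $\le k-1$ vanishes at $P$, with the lowest orders settled directly. Your translation step spells out something the paper does silently, namely evaluating the lemma with derivatives at $P$ and $Q(x,P)$. That step only needs Lemma \ref{lem10102025-1} to hold for every smooth function, which it does because its proof never uses $-\Delta u=\lambda u$. So the ``obstacle'' you flag is not a real one. Nothing requires the induction to keep the evaluation point separate from $x$, and in fact the paper's induction step does differentiate \eqref{eq1609-1} in $x$.
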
  
                 \begin{proof}
                     By direct computation, we see that \eqref{eq2904-2} holds for $k=0$ and $k=1$. 
                     For $k\ge2$, combining \eqref{eq2904} and \eqref{eq1609-1}, we have
                     \begin{align*}
                         &\sum_{|\alpha|=k}\frac1{\alpha!}
					\frac{\partial^ku(P)}{\partial x_1^{\alpha_1}\cdots\partial x_N^{\alpha_N}}Q_\alpha(x,P)\\
                    =&\frac{1}{k!}\sum_{i_1,i_2,\cdots,i_{k-2}=1}^{N}\frac{\partial^{k-2}}{\partial x_{i_1} \partial x_{i_2}\cdots \partial x_{i_{k-2}}} (\Delta u)(P) \widetilde{Q}_{i_1,i_2,\cdots,i_{k-2}}(x,P)\\
                   =& -\frac{\lambda}{k!}\sum_{i_1,i_2,\cdots,i_{k-2}=1}^{N}\frac{\partial^{k-2}}{\partial x_{i_1} \partial x_{i_2}\cdots \partial x_{i_{k-2}}}  u(P) \widetilde{Q}_{i_1,i_2,\cdots,i_{k-2}}(x,P)\\
                     \end{align*}
                     and 
                      \begin{align*}
                         &\sum_{|\beta|=k+1}\frac1{\beta!}
					\frac{\partial^{k+1}u(P)}{\partial x_1^{\beta_1}\cdots\partial x_N^{\beta_N}}Q_\beta(x,P)\\
                   =& -\frac{\lambda}{(k+1)!}\sum_{i_1,i_2,\cdots,i_{k-1}=1}^{N}\frac{\partial^{k-1}}{\partial x_{i_1} \partial x_{i_2}\cdots \partial x_{i_{k-1}}}  u(P) \widetilde{Q}_{i_1,i_2,\cdots,i_{k-1}}(x,P).\\
                     \end{align*}
                     Since the vanishing order of $u$ at $P$ is $k$, \eqref{eq2904-2} holds.
                 \end{proof}
            
  \textbf{Acknowledgments:}
  
Massimo Grossi is supported by Indam-GNAMPA
  
 Ying Li is supported by the Program of China Scholarship Council (Grant: 202506770037).

	\bibliographystyle{abbrv}
	\bibliography{LauraMaxYing.bib}

@article {Uhlenbeck,
    AUTHOR = {Uhlenbeck, K.},
     TITLE = {Generic properties of eigenfunctions},
   JOURNAL = {Amer. J. Math.},
  FJOURNAL = {American Journal of Mathematics},
    VOLUME = {98},
      YEAR = {1976},
    NUMBER = {4},
     PAGES = {1059--1078},
      ISSN = {0002-9327,1080-6377},
   MRCLASS = {58G99 (58C25)},
  MRNUMBER = {464332},
MRREVIEWER = {A.\ J.\ Tromba},
       DOI = {10.2307/2374041},
       URL = {https://doi.org/10.2307/2374041},
}

@article {Micheletti,
    AUTHOR = {Micheletti, Anna Maria},
     TITLE = {Perturbazione dello spettro dell'operatore di {L}aplace, in
              relazione ad una variazione del campo},
   JOURNAL = {Ann. Scuola Norm. Sup. Pisa Cl. Sci. (3)},
  FJOURNAL = {Annali della Scuola Normale Superiore di Pisa. Classe di
              Scienze. Serie III},
    VOLUME = {26},
      YEAR = {1972},
     PAGES = {151--169},
      ISSN = {0391-173X},
   MRCLASS = {35P05},
  MRNUMBER = {367480},
MRREVIEWER = {W.\ M.\ Greenlee},
}

@article{alm2026,
  title={Corrigendum to" Ramification of multiple eigenvalues for the Dirichlet-Laplacian in perforated domains" published in Journal of Functional Analysis 283 (2022) 109718},
  author={Abatangelo, Laura and L{\'e}na, Corentin and Musolino, Paolo},
  journal={arXiv preprint arXiv:2606.03996},
  year={2026}
}

@article {MukherjeeSaha2025,
    AUTHOR = {Mukherjee, Mayukh and Saha, Soumyajit},
     TITLE = {On the effects of small perturbation on low energy {L}aplace
              eigenfunctions},
   JOURNAL = {J. Spectr. Theory},
  FJOURNAL = {Journal of Spectral Theory},
    VOLUME = {15},
      YEAR = {2025},
    NUMBER = {3},
     PAGES = {1045--1087},
      ISSN = {1664-039X,1664-0403},
   MRCLASS = {58J50 (35J05)},
  MRNUMBER = {4949372},
       DOI = {10.4171/jst/570},
       URL = {https://doi.org/10.4171/jst/570},
}

@article{cheng1976,
title = "Eigenfunctions and nodal sets",
author = "Cheng, \{Shiu Yuen\}",
year = "1976",
month = dec,
doi = "10.1007/BF02568142",
language = "English",
volume = "51",
pages = "43--55",
journal = "Commentarii Mathematici Helvetici",
issn = "0010-2571",
publisher = "European Mathematical Society Publishing House",
number = "1",
}

@misc{milne1945,
  title={A treatise on the theory of Bessel functions},
  author={Milne-Thomson, LM},
  year={1945},
  publisher={Nature Publishing Group UK London}
}

@article{BCM2021,
author = {Beck, Thomas and Canzani, Yaiza and Marzuola, Jeremy},
year = {2021},
month = {03},
pages = {323-353},
title = {Nodal line estimates for the second Dirichlet eigenfunction},
volume = {11},
journal = {Journal of Spectral Theory},
doi = {10.4171/JST/342}
}

@article{MHTONN1997,
author = {M. Hoffmann-Ostenhof and T. Hoffmann-Ostenhof and N. Nadirashvili},
title = {{The nodal line of the second eigenfunction of the Laplacian in $\mathbb{R}^2$ can be closed}},
volume = {90},
journal = {Duke Mathematical Journal},
number = {3},
publisher = {Duke University Press},
pages = {631 -- 640},
year = {1997},
doi = {10.1215/S0012-7094-97-09017-7},
URL = {https://doi.org/10.1215/S0012-7094-97-09017-7}
}

@article{melas1992,
  title={On the nodal line of the second eigenfunction of the Laplacian in R2},
  author={Melas, Antonios D},
  journal={J. Differential Geom},
  volume={35},
  number={1},
  pages={255--263},
  year={1992}
}

@article{PU1990,
    AUTHOR = {P\"utter, Rolf},
     TITLE = {On the nodal lines of second eigenfunctions of the fixed
              membrane problem},
   JOURNAL = {Comment. Math. Helv.},
  FJOURNAL = {Commentarii Mathematici Helvetici},
    VOLUME = {65},
      YEAR = {1990},
    NUMBER = {1},
     PAGES = {96--103},
      ISSN = {0010-2571,1420-8946},
   MRCLASS = {35P05 (35J05 58G25)},
  MRNUMBER = {1036131},
       DOI = {10.1007/BF02566596},
       URL = {https://doi.org/10.1007/BF02566596},
}

@article{payne1967,
author = {Payne, L. E.},
title = {Isoperimetric Inequalities and Their Applications},
journal = {SIAM Review},
volume = {9},
number = {3},
pages = {453-488},
year = {1967},
doi = {10.1137/1009070},

URL = { 
    
        https://doi.org/10.1137/1009070
    
    

},
eprint = { 
    
        https://doi.org/10.1137/1009070
    
    

}

}

@article{Courtois1995,
title = {Spectrum of Manifolds with Holes},
journal = {Journal of Functional Analysis},
volume = {134},
number = {1},
pages = {194-221},
year = {1995},
issn = {0022-1236},
doi = {https://doi.org/10.1006/jfan.1995.1142},
url = {https://www.sciencedirect.com/science/article/pii/S0022123685711421},
author = {G. Courtois}
}

@article {MMSS2022,
    AUTHOR = {Mukherjee, Mayukh and Saha, Soumyajit},
     TITLE = {Nodal sets of {L}aplace eigenfunctions under small
              perturbations},
   JOURNAL = {Math. Ann.},
  FJOURNAL = {Mathematische Annalen},
    VOLUME = {383},
      YEAR = {2022},
    NUMBER = {1-2},
     PAGES = {475--491},
      ISSN = {0025-5831,1432-1807},
   MRCLASS = {58J50},
  MRNUMBER = {4444128},
MRREVIEWER = {Maxime\ Ingremeau},
       DOI = {10.1007/s00208-021-02144-3},
       URL = {https://doi.org/10.1007/s00208-021-02144-3},
}

@book {helffer2013,
    AUTHOR = {Helffer, Bernard},
     TITLE = {Spectral theory and its applications},
    SERIES = {Cambridge Studies in Advanced Mathematics},
    VOLUME = {139},
 PUBLISHER = {Cambridge University Press, Cambridge},
      YEAR = {2013},
     PAGES = {vi+255},
      ISBN = {978-1-107-03230-9},
   MRCLASS = {47-02 (34L05 35Pxx 47A10 47A25 47B40 47J10 81Q05)},
  MRNUMBER = {3027462},
MRREVIEWER = {Pavel\ V.\ Exner},
}

@article{hormander1963linear,
  title={Linear partial differential operators Springer-Verlag},
  author={H{\"o}rmander, L},
  journal={New York},
  year={1963}
}

@article {DeCarliHudson,
    AUTHOR = {De Carli, L. and Hudson, S. M.},
     TITLE = {Geometric remarks on the level curves of harmonic functions},
   JOURNAL = {Bull. Lond. Math. Soc.},
  FJOURNAL = {Bulletin of the London Mathematical Society},
    VOLUME = {42},
      YEAR = {2010},
    NUMBER = {1},
     PAGES = {83--95},
      ISSN = {0024-6093,1469-2120},
   MRCLASS = {31B05 (31A05)},
  MRNUMBER = {2586969},
MRREVIEWER = {Steven\ M.\ Deckelman},
       DOI = {10.1112/blms/bdp099},
       URL = {https://doi.org/10.1112/blms/bdp099},
}

@article {GrossiMolle,
    AUTHOR = {Grossi, Massimo and Molle, Riccardo},
     TITLE = {On the shape of the solutions of some semilinear elliptic
              problems},
   JOURNAL = {Commun. Contemp. Math.},
  FJOURNAL = {Communications in Contemporary Mathematics},
    VOLUME = {5},
      YEAR = {2003},
    NUMBER = {1},
     PAGES = {85--99},
      ISSN = {0219-1997,1793-6683},
   MRCLASS = {35J60 (35B33 35J25)},
  MRNUMBER = {1958020},
MRREVIEWER = {Nichiro\ Kawano},
       DOI = {10.1142/S0219199703000914},
       URL = {https://doi.org/10.1142/S0219199703000914},
}

@article {CSLin1987,
    AUTHOR = {Lin, Chang Shou},
     TITLE = {On the second eigenfunctions of the {L}aplacian in {${\bf
              R}^2$}},
   JOURNAL = {Comm. Math. Phys.},
  FJOURNAL = {Communications in Mathematical Physics},
    VOLUME = {111},
      YEAR = {1987},
    NUMBER = {2},
     PAGES = {161--166},
      ISSN = {0010-3616,1432-0916},
   MRCLASS = {35P05 (35J05)},
  MRNUMBER = {899848},
MRREVIEWER = {J.-P.\ Gossez},
       URL = {http://projecteuclid.org/euclid.cmp/1104159536},
}

@book {Gradshteyn2007,
    AUTHOR = {Gradshteyn, I. S. and Ryzhik, I. M.},
     TITLE = {Table of integrals, series, and products},
   EDITION = {Seventh},
      NOTE = {Translated from the Russian,
              Translation edited and with a preface by Alan Jeffrey and
              Daniel Zwillinger,
              With one CD-ROM (Windows, Macintosh and UNIX)},
 PUBLISHER = {Elsevier/Academic Press, Amsterdam},
      YEAR = {2007},
     PAGES = {xlviii+1171},
      ISBN = {978-0-12-373637-6; 0-12-373637-4},
   MRCLASS = {00A22 (33-00 65-00 65A05)},
  MRNUMBER = {2360010},
}

@inproceedings{Henrot2018,
  title={Shape Variation and Optimization: A Geometrical Analysis},
  author={Antoine Henrot and Michel Pierre},
  year={2018},
  url={https://api.semanticscholar.org/CorpusID:126344585}
}

@book{Groemer_1996, place={Cambridge}, series={Encyclopedia of Mathematics and its Applications}, title={Geometric Applications of Fourier Series and Spherical Harmonics}, publisher={Cambridge University Press}, author={Groemer, Helmut}, year={1996}, collection={Encyclopedia of Mathematics and its Applications}}

@article {alm1,
    AUTHOR = {Abatangelo, Laura and L\'ena, Corentin and Musolino, Paolo},
     TITLE = {Ramification of multiple eigenvalues for the
              {D}irichlet-{L}aplacian in perforated domains},
   JOURNAL = {J. Funct. Anal.},
  FJOURNAL = {Journal of Functional Analysis},
    VOLUME = {283},
      YEAR = {2022},
    NUMBER = {12},
     PAGES = {Paper No. 109718, 50},
      ISSN = {0022-1236,1096-0783},
   MRCLASS = {35P20 (31B10 31C15 35B25 35C20)},
  MRNUMBER = {4489278},
       DOI = {10.1016/j.jfa.2022.109718},
       URL = {https://doi.org/10.1016/j.jfa.2022.109718},
}

@article {alm2,
    AUTHOR = {Abatangelo, Laura and L\'ena, Corentin and Musolino, Paolo},
     TITLE = {Asymptotic behavior of generalized capacities with
              applications to eigenvalue perturbations: the higher
              dimensional case},
   JOURNAL = {Nonlinear Anal.},
  FJOURNAL = {Nonlinear Analysis. Theory, Methods \& Applications. An
              International Multidisciplinary Journal},
    VOLUME = {238},
      YEAR = {2024},
     PAGES = {Paper No. 113391, 34},
      ISSN = {0362-546X,1873-5215},
   MRCLASS = {35P15 (31B10 31C15 35B25 35C20)},
  MRNUMBER = {4648492},
MRREVIEWER = {Dmitriy\ Karp},
       DOI = {10.1016/j.na.2023.113391},
       URL = {https://doi.org/10.1016/j.na.2023.113391},
}

@article {afhl,
    AUTHOR = {Abatangelo, Laura and Felli, Veronica and Hillairet, Luc and
              L\'ena, Corentin},
     TITLE = {Spectral stability under removal of small capacity sets and
              applications to {A}haronov-{B}ohm operators},
   JOURNAL = {J. Spectr. Theory},
  FJOURNAL = {Journal of Spectral Theory},
    VOLUME = {9},
      YEAR = {2019},
    NUMBER = {2},
     PAGES = {379--427},
      ISSN = {1664-039X,1664-0403},
   MRCLASS = {35P20 (31C15 35J10 35P15)},
  MRNUMBER = {3950657},
MRREVIEWER = {Rodica\ Luca},
       DOI = {10.4171/JST/251},
       URL = {https://doi.org/10.4171/JST/251},
}

@book {at,
    AUTHOR = {Abate, Marco and Tovena, Francesca},
     TITLE = {Curves and surfaces},
    SERIES = {Unitext},
    VOLUME = {55},
      NOTE = {Translated from the 2006 Italian original by Daniele A.
              Gewurz},
 PUBLISHER = {Springer, Milan},
      YEAR = {2012},
     PAGES = {xiv+390},
      ISBN = {978-88-470-1940-9; 978-88-470-1941-6},
   MRCLASS = {53-01 (53A04 53A05)},
  MRNUMBER = {2964051},
MRREVIEWER = {Andrew Bucki},
       DOI = {10.1007/978-88-470-1941-6},
       URL = {https://doi.org/10.1007/978-88-470-1941-6},
}

@InCollection{b,
  author     = {Bandle, Catherine},
  title      = {Asymptotic behaviour of large solutions of quasilinear elliptic problems},
  year       = {2003},
  note       = {Special issue dedicated to Lawrence E. Payne},
  number     = {5},
  pages      = {731--738},
  volume     = {54},
  doi        = {10.1007/s00033-003-3207-0},
  fjournal   = {Zeitschrift f\"{u}r Angewandte Mathematik und Physik. ZAMP. Journal of Applied Mathematics and Physics. Journal de Math\'{e}matiques et de Physique Appliqu\'{e}es},
  issn       = {0044-2275},
  journal    = {Z. Angew. Math. Phys.},
  mrclass    = {35J60 (35B40)},
  mrnumber   = {2019176},
  mrreviewer = {Pierpaolo Esposito},
  url        = {https://doi.org/10.1007/s00033-003-3207-0},
}

@article{fl,
  title     = {Payne's nodal line conjecture fails on doubly-connected planar domains},
  author    = {Freitas, P. and Leylekian, Romeo},
  journal   = {arXiv},
  year      = {2025},
  eprint    = {CodiceArXiv},
  archivePrefix = {arXiv:2510.24436},
  primaryClass = {math.AP}
}

@book {f,
    AUTHOR = {Friedman, Avner},
     TITLE = {Variational principles and free-boundary problems},
    SERIES = {A Wiley-Interscience Publication},
 PUBLISHER = {John Wiley \& Sons, Inc., New York},
      YEAR = {1982},
     PAGES = {ix+710},
      ISBN = {0-471-86849-3},
   MRCLASS = {35R35 (35J85 35K85 35Q20 49A29 76D25 76S05)},
  MRNUMBER = {679313},
MRREVIEWER = {Emmanuele di Benedetto},
}

@book {gt,
    AUTHOR = {Gilbarg, David and Trudinger, Neil S.},
     TITLE = {Elliptic partial differential equations of second order},
    SERIES = {Classics in Mathematics},
      NOTE = {Reprint of the 1998 edition},
 PUBLISHER = {Springer-Verlag, Berlin},
      YEAR = {2001},
     PAGES = {xiv+517},
      ISBN = {3-540-41160-7},
   MRCLASS = {35-02 (35Jxx)},
  MRNUMBER = {1814364},
}

@Article{h,
  author    = {Han, Zheng Chao},
  journal   = {Annales de l'I.H.P. Analyse non linéaire},
  title     = {Asymptotic approach to singular solutions for nonlinear elliptic equations involving critical Sobolev exponent},
  year      = {1991},
  number    = {2},
  pages     = {159-174},
  volume    = {8},
  language  = {eng},
  publisher = {Gauthier-Villars},
  url       = {http://eudml.org/doc/78248},
}

@Article{i,
  author   = {Iacopetti, Alessandro},
  journal  = {Annali di Matematica Pura ed Applicata},
  title    = {Asymptotic analysis for radial sign-changing solutions of the Brezis-Nirenberg problem},
  year     = {2015},
  issn     = {1618-1891},
  number   = {6},
  pages    = {1649--1682},
  volume   = {194},
  refid    = {Iacopetti2015},
  url      = {https://doi.org/10.1007/s10231-014-0438-y},
}

@book {k,
    AUTHOR = {Kawohl, Bernhard},
     TITLE = {Rearrangements and convexity of level sets in {PDE}},
    SERIES = {Lecture Notes in Mathematics},
    VOLUME = {1150},
 PUBLISHER = {Springer-Verlag, Berlin},
      YEAR = {1985},
     PAGES = {iv+136},
      ISBN = {3-540-15693-3},
   MRCLASS = {35-02 (35B50 35J60 49A50)},
  MRNUMBER = {810619},
MRREVIEWER = {Michael Wiegner},
       DOI = {10.1007/BFb0075060},
       URL = {https://doi.org/10.1007/BFb0075060},
}

@article {l,
    AUTHOR = {Lin, Chang Shou},
     TITLE = {Uniqueness of least energy solutions to a semilinear elliptic
              equation in {${\bf R}^2$}},
   JOURNAL = {Manuscripta Math.},
  FJOURNAL = {Manuscripta Mathematica},
    VOLUME = {84},
      YEAR = {1994},
    NUMBER = {1},
     PAGES = {13--19},
      ISSN = {0025-2611},
   MRCLASS = {35J60},
  MRNUMBER = {1283323},
       DOI = {10.1007/BF02567439},
       URL = {https://doi.org/10.1007/BF02567439},
}

@article {O,
    AUTHOR = {Ozawa, Shin},
     TITLE = {Singular variation of domains and eigenvalues of the
              {L}aplacian},
   JOURNAL = {Duke Math. J.},
  FJOURNAL = {Duke Mathematical Journal},
    VOLUME = {48},
      YEAR = {1981},
    NUMBER = {4},
     PAGES = {767--778},
      ISSN = {0012-7094},
   MRCLASS = {35P20 (47A55 47F05)},
  MRNUMBER = {782576},
MRREVIEWER = {G. V. Rozenblum},
       DOI = {10.1215/S0012-7094-81-04842-0},
       URL = {https://doi.org/10.1215/S0012-7094-81-04842-0},
}

@Article{p,
  author     = {Pacella, Filomena},
  journal    = {J. Funct. Anal.},
  title      = {Symmetry results for solutions of semilinear elliptic equations with convex nonlinearities},
  year       = {2002},
  issn       = {0022-1236},
  number     = {1},
  pages      = {271--282},
  volume     = {192},
  doi        = {10.1006/jfan.2001.3901},
  fjournal   = {Journal of Functional Analysis},
  mrclass    = {35J60 (35A30 35B05 35J25 58J70)},
  mrnumber   = {1918496},
  mrreviewer = {Stanislaus Maier-Paape},
  url        = {https://doi.org/10.1006/jfan.2001.3901},
}

@article {s,
    AUTHOR = {Shih, Ying},
     TITLE = {A counterexample to the convexity property of the first
              eigenfunction on a convex domain of negative curvature},
   JOURNAL = {Comm. Partial Differential Equations},
  FJOURNAL = {Communications in Partial Differential Equations},
    VOLUME = {14},
      YEAR = {1989},
    NUMBER = {7},
     PAGES = {867--876},
      ISSN = {0360-5302,1532-4133},
   MRCLASS = {58G25 (35P99)},
  MRNUMBER = {1003017},
MRREVIEWER = {P.\ G\"{u}nther},
       DOI = {10.1080/03605308908820634},
       URL = {https://doi.org/10.1080/03605308908820634},
}

@Article{t,
  author     = {Temam, R.},
  journal    = {Comm. Partial Differential Equations},
  title      = {Remarks on a free boundary value problem arising in plasma physics},
  year       = {1977},
  issn       = {0360-5302},
  number     = {6},
  pages      = {563--585},
  volume     = {2},
  doi        = {10.1080/03605307708820039},
  fjournal   = {Communications in Partial Differential Equations},
  mrclass    = {35J65 (82.35)},
  mrnumber   = {0602544},
  mrreviewer = {Pierre-Louis Lions},
  url        = {https://doi.org/10.1080/03605307708820039},
}

@article {w,
    AUTHOR = {Wang, Feng-Yu},
     TITLE = {On estimation of the {D}irichlet spectral gap},
   JOURNAL = {Arch. Math. (Basel)},
  FJOURNAL = {Archiv der Mathematik},
    VOLUME = {75},
      YEAR = {2000},
    NUMBER = {6},
     PAGES = {450--455},
      ISSN = {0003-889X,1420-8938},
   MRCLASS = {35P15 (35J05)},
  MRNUMBER = {1799430},
MRREVIEWER = {Robert\ G.\ Smits},
       DOI = {10.1007/s000130050528},
       URL = {https://doi.org/10.1007/s000130050528},
}
\end{document}